\documentclass[12pt]{amsart}
\usepackage{amssymb,tikz-cd,amsmath}
\usepackage{mathrsfs}
\usepackage{verbatim}
\usepackage{mathtools}
\newcommand{\Pro}{\mathcal{P}}
\newcommand{\Fl}{\mathcal{F}l}
\newcommand{\C}{\mathbb{C}}
\newcommand{\Z}{\mathbb{Z}}
\newcommand{\GL}{\operatorname{GL}}
\newcommand{\Hcal}{H}
\newcommand{\gr}{\operatorname{gr}}
\newcommand{\Dcal}{\mathcal{D}}
\newcommand{\End}{\operatorname{End}}
\newcommand{\Loc}{\mathsf{Loc}}
\newcommand{\Hom}{\operatorname{Hom}}
\newcommand{\F}{\mathbb{F}}
\newcommand{\Ocal}{\mathcal{O}}
\newcommand{\Ecal}{\mathcal{E}}

\newcommand{\h}{\mathfrak{t}}
\newcommand{\g}{\mathfrak{g}}
\newcommand{\Q}{\mathbb{Q}}
\renewcommand{\GL}{\operatorname{GL}}

\newcommand{\Fr}{\operatorname{Fr}}
\newcommand{\Lcal}{\mathcal{L}}
\newcommand{\Ring}{\mathsf{R}}
\newcommand{\Field}{\mathsf{F}}
\newtheorem{Thm}{Theorem}[section]
\newtheorem{Prop}[Thm]{Proposition}
\newtheorem{Cor}[Thm]{Corollary}
\newtheorem{Lem}[Thm]{Lemma}
\theoremstyle{definition}
\newtheorem{Ex}[Thm]{Example}
\newtheorem{defi}[Thm]{Definition}
\newtheorem{Rem}[Thm]{Remark}
\newtheorem{Conj}[Thm]{Conjecture}
\newtheorem{Claim}[Thm]{Claim}
\numberwithin{equation}{section}
\evensidemargin=0cm\oddsidemargin=0cm\textwidth=16.5cm \topmargin=-1.2cm
\textheight=232mm
\unitlength=1mm
\title{Affine Springer Fibers, Procesi bundles, and Cherednik algebras}
\author{Pablo Boixeda Alvarez and Ivan Losev}
\address{P.B.A:  School of Mathematics, IAS, Princeton, NJ USA}
\email{pboixeda@ias.edu}
\address{I.L.: Department
of Mathematics, Yale University, New Haven CT USA \& School of Mathematics, IAS, Princeton, NJ USA}
\email{ivan.loseu@gmail.com}
\thanks{MSC 2010: 16G99}
\begin{document}
\begin{abstract}
Let $\mathfrak{g}$ be a semisimple Lie algebra, $\mathfrak{t}$ its Cartan subalgebra and $W$ the Weyl group.
    The goal of this paper is to prove an isomorphism between suitable completions of the equivariant Borel-Moore homology of certain affine Springer fibers for $\mathfrak{g}$ and the global sections of a bundle related to a Procesi bundle on the smooth locus of a partial resolution of $(\mathfrak{t}\oplus \mathfrak{t}^*)/W$. We deduce some applications of our isomorphism including a conditional application to the center of the small quantum group. Our main method is to compare certain bimodules over rational and trigonometric Cherednik algebras.
\end{abstract}
\maketitle
\tableofcontents
\section{Introduction}\label{intro}
Let $G$ be a connected reductive algebraic group over $\C$, $\g$ be its Lie algebra, $\h\subset \g$ a Cartan subalgebra, $T$ the corresponding maximal torus, and $W$  the Weyl group. Pick a nonnegative integer $d$.

The goal of this paper is to relate two different geometric objects, ``coherent'' and
``constructible'', constructed from these data.

First, we describe the ``coherent'' object. Consider the Poisson variety
$Y:=(\h\oplus \h^*)/W$.
We will choose a suitable partial Poisson resolution $X$ of $Y$, Section \ref{SS_Y_partial_resol}.
For example, in the case of $G=\operatorname{GL}_n$, the variety $X$ is going to be the Hilbert scheme of points in $\C^2$.
When $\g$ is simply laced, $X$ is going to be the so called $\Q$-factorial terminalization of $Y$,
see \cite{BCHM} for the general construction or \cite[Section 2.2]{SRA_der}
for a discussion in the present settings.  In types $B/C$, $F_4$, and $G_2$ we get some intermediate partial resolution. See Section \ref{SS_Y_partial_resol} for details. In all cases,
we are going to have $\operatorname{codim}_{X}X^{sing}\geqslant 4$.

The smooth locus $X^{reg}$ comes with several important vector bundles.  There is a ``Procesi bundle'' $\Pro^{reg}$ that will be constructed in Section
\ref{SS_Procesi_sheaves} based on results from
\cite{SRA_der}.   One important property of $\Pro^{reg}$ we need right now is that its endomorphism algebra is
\begin{equation}\label{eq:H_definition} H:=\C[\h\oplus \h^*]\#W\end{equation}
In the case when $G=\operatorname{GL}_n(\C)$, we recover Haiman's
Procesi bundle on the Hilbert scheme, \cite{Haiman}. When $\g$ is simply laced
and hence $X$ is a $\Q$-factorial terminalization of $Y$, $\Pro^{reg}$ is the restriction of the Procesi sheaf $\Pro$ on $X$, \cite[Section 4]{SRA_der} to $X^{reg}$. In types $B/C,F_4,G_2$, we consider the sheaf $\Pro$  obtained by  the pushforward of  the Procesi sheaf from the $\Q$-factorial terminalization to $X$ and then restrict it to $X^{reg}$. The sign invariants in $\Pro^{reg}$ is a line bundle to be denoted by $\Ocal^{reg}(1)$. Its $d$th tensor power will be denoted by $\Ocal^{reg}(d)$.

So, for $d\in \Z_{>0}$, we can consider the
$H$-bimodule
\begin{equation}\label{eq:bimodule1}
B_d:=\Gamma(X^{reg}, \Pro^{reg,*}\otimes \Ocal^{reg}(d)\otimes \Pro^{reg}).
\end{equation}
This is the first of the two  objects we are interested in.

There is a number of reasons to be interested in the bimodule $B_d$. First, consider  the case when $G=\operatorname{GL}_n$. The bimodule $B_d$ is closely related to the $d$th power of the so called $\nabla$ operator on symmetric polynomials, compare to \cite{CM}. It would be interesting to see whether this observation can be generalized to the case of general $G$.

Another reason to care about $B_d$ is that these bimodules (or their variants) are expected to appear in a variety of other contexts. The subject of this paper is their connection to the affine Springer theory. Another prospective appearance is the study of character sheaves on semisimple Lie algebras and the usual Springer theory: the bimodules $B_d$ are expected to be related to the central elements $T_{w_0}^{2d}$ in the Hecke category. A related appearance should be in the study of invariants of torus knots, see
\cite{GH}.

The second object we care about, the ``constructible'' one, is  the equivariant Borel-Moore homology of a suitable affine Springer fiber for the group $G$.


Fix a regular element $s\in \h^*$. Let $t$ be an indeterminate so that
we can form the loop algebra $\g((t))$.  Consider the element $e_d:=st^d\in \g((t))$.
This element gives rise to the affine Springer fiber $\Fl_{e_d}$ in the affine flag variety $\Fl$ for $G$, sometimes it is called an {\it equivalued unramified} affine Springer fiber. The maximal torus $T$, the centralizer of $s$, acts on $\Fl_{e_d}$. So we can consider the equivariant Borel-Moore homology $H^{BM}_T(\Fl_{e_d})$.

It turns out that $H^{BM}_T(\Fl_{e_d})$ also carries a bimodule structure but for a somewhat different algebra. Namely, let $T^\vee$ denote the Langlands dual torus. Consider the algebra $H^\times:=\C[T^* T^\vee]\# W$. The algebra $H^\times$ acts on $H^{BM}_T(\Fl_{e_d})$ from the left by what we call the CS (Chern-Springer) action, such an action exists for any homogenous affine Springer fiber, as the construction in Section \ref{SS_affine_Springer_reminder} shows.
For our particular choices of $e_d$ we also have a commuting $H^\times$-action that we call the ECM (equivariant-centralizer-monodromy) action.

Now we explain a relation between $H$ and $H^\times$.
For $G=\GL_n$, the algebra $H^\times$ is a localization of $H$. In the general case, the algebras $H$ and $H^\times$ share a common ``completion''. Namely, set \begin{equation}\label{completion:def}
    H^\wedge:= H\otimes_{\C[\h]}\C[\h]^{\wedge_0},
\end{equation}
where $\C[\h]^{\wedge_0}$ is the completion of $\C[\h]$ at zero. The same algebra $H^\wedge$ arises as $H^\times\otimes_{\C[T]}\C[T]^{\wedge_1}$, where we identify $\C[\h]^{\wedge_0}$ with $\C[T]^{\wedge_1}$
by means of $\exp: \h\rightarrow T$.
Then we consider
$$B_d^\wedge:=B_d\otimes_{\C[\h]}\C[\h]^{\wedge_0},$$
an $H^\wedge$-bimodule, as well as
$$H^T_{BM}(\Fl_{e_d})^\wedge:=H^T_{BM}(\Fl_{e_d})\otimes_{\C[T]}\C[T]^{\wedge_1},$$
also an $H^\wedge$-bimodule.

\begin{Thm}\label{Thm:iso}
There is an $H^\wedge$-bilinear
isomorphism $B^\wedge_d\xrightarrow{\sim} H^{BM}_T(\Fl_{e_d})^\wedge$.
\end{Thm}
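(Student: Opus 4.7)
The strategy is the one flagged in the abstract: compare bimodules over rational and trigonometric Cherednik algebras and exploit the fact that their completions agree. The argument is a two-sided quantization: deform the coherent bimodule $B_d^\wedge$ to a bimodule over the completed rational Cherednik algebra, deform the equivariant homology $H^{BM}_T(\Fl_{e_d})^\wedge$ to a bimodule over the completed trigonometric Cherednik algebra, and then invoke the isomorphism of completions to identify them.

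On the coherent side, I would promote $B_d$ to a one-parameter family $B_{d,\kappa}$ of bimodules over the rational Cherednik algebra $H_\kappa$. The Procesi sheaf $\Pro^{reg}$ quantizes, using the SRA framework of \cite{SRA_der}, to a coherent module over a deformation quantization of $X^{reg}$; the line bundle $\Ocal^{reg}(d)$ deforms as a twist. Taking global sections of $\Pro^{reg,*}\otimes \Ocal^{reg}(d)\otimes \Pro^{reg}$ in the quantized setting produces $B_{d,\kappa}$. At $\kappa=0$ one recovers $B_d$; completing in the polynomial direction at $0\in\h$ produces an $H^\wedge$-bimodule deformation of $B_d^\wedge$.

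On the constructible side, I would upgrade $H^{BM}_T(\Fl_{e_d})$ to its loop-rotation-equivariant refinement $H^{BM}_{T\times \C^*}(\Fl_{e_d})$, which should carry a bimodule structure for the trigonometric Cherednik algebra, with the Cherednik parameter supplied by the $\C^*$-weight. The CS and ECM actions give the two commuting sides, and classical specialization together with completion at $1\in T$ returns a deformation of the $H^\wedge$-bimodule $H^{BM}_T(\Fl_{e_d})^\wedge$. The two deformations become comparable through the standard isomorphism between completed rational and completed trigonometric Cherednik algebras at $0\in\h$ versus $1\in T$, transported via $\exp:\h\to T$. Under this identification both bimodules live over the same completed algebra, so it remains to match them.

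The main obstacle is this final identification. The natural tool is rigidity: deformations of an $H^\wedge$-bimodule over the Cherednik parameter are controlled by a low-degree Hochschild-type invariant, which reduces the problem to matching discrete data. Concretely, I would compare the two sides at the $T$-fixed points of $\Fl_{e_d}$, whose torus-equivariant contributions can be related to fibers of $\Pro^{reg,*}\otimes \Ocal^{reg}(d)\otimes \Pro^{reg}$ at the corresponding $T$-fixed points of $X^{reg}$ (where the Procesi bundle restricts to the regular representation of $W$). Verifying that the CS and ECM actions go, under this matching, to the left and right Procesi-bundle actions — rather than just that some shadow of the two bimodules agrees — is the technical heart of the argument, as it bridges two very different geometric constructions.
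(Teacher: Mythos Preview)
Your framework matches the paper's exactly: deform $B_d$ to an $H_{\hbar,c+d}$-$H_{\hbar,c}$-bimodule $B_{\hbar,c+d\leftarrow c}$ via quantized Procesi bundles, deform $H^{BM}_T(\Fl_{e_d})$ to an $H^\times_{\hbar,d}$-$H^\times_{\hbar,0}$-bimodule by adding loop rotation, and compare after completing. The gap is in your final identification. The Hochschild-rigidity idea is too vague to carry weight here---no relevant vanishing is established, and these bimodules are large enough that one should not expect it. The fixed-point comparison is structurally impossible: the $T\times\C^\times$-fixed locus of $\Fl_{e_d}$ is the \emph{infinite} set $\widetilde{W}$, while the torus-fixed points of $X^{reg}$ are finite (indexed by partitions, in type $A$); there is no correspondence between them, and the fact that the Procesi fiber is the regular representation of $W$ does not pin down the bimodule structure.

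What the paper actually does---and what you are missing---is \emph{induction on $d$}. The base case $d=0$ is trivial: both sides are the regular $H^\wedge_{\hbar,0}$-bimodule. The induction step rests on two parallel ``spherical shift'' lemmas. On the coherent side, $\epsilon_-B_{\hbar,c+d+1\leftarrow c}\cong \epsilon B_{\hbar,c+d\leftarrow c}$ follows from $\Pro^{reg}\epsilon_-\cong\Ocal^{reg}(1)$ (Lemma~\ref{Lem:bimod_sph_isom}). On the constructible side, $\epsilon_-H^{BM}_{T\times\C^\times}(\Fl_{e_{d+1}})\cong \epsilon H^{BM}_{T\times\C^\times}(\Fl_{e_d})$ is proved by an explicit map in the GKM model (Lemma~\ref{Claim:iso_spherical_general}). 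Combined with the inductive hypothesis, these give an isomorphism on $\epsilon_-$-components; a microlocalization argument (Proposition~\ref{Prop:isom_extension}), using that $d$ is $\epsilon_-$-spherical and that the constructible side is flat over $\C[\h^*]$, extends it to the full bimodule. This inductive shift via idempotents is the key idea your proposal lacks.
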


Note that both sides are graded: $H^{BM}_T(\Fl_{e_d})^{\wedge}$ is graded by the homological degree, and $B_d^\wedge$ from a $\C^\times$-equivariant structure on $\Pro^{reg,*}\otimes\mathcal{O}^{reg}(d)\otimes \Pro^{reg}$ that will be explained in
Section \ref{SS_Cherednik_deform_constr}. We will see below that one can achieve that the isomorphism in Theorem \ref{Thm:iso} is grading preserving.

Now we explain how Theorem \ref{Thm:iso} relates to the previous work. In \cite{Kivinen}, Kivinen studied the spherical version of $\Fl_{e_d}$ and  proved a spherical version of Theorem \ref{Thm:iso} in the case of  $G=\operatorname{GL}_n$.
``Spherical'' means that $B_d$ is replaced with $\epsilon B_d$
for the trivial idempotent $\epsilon$ in $\C W=\C S_n$. On the level of Springer fibers, this means
that we take the Springer fiber in the affine Grassmannian instead of the affine flag variety.
Also note that Kivinen works with localizations, which is only possible for $G=\operatorname{GL}_n$.
In fact, one can prove an analog of Theorem \ref{Thm:iso} for localizations using the methods of this paper but we are not going to discuss this. In fact, one can prove a version of Theorem \ref{Thm:iso} for $B_d$ itself and a suitable modification of $\Fl_{e_d}$, but it will be proved elsewhere.

The bimodule $B_1$ for $G=\operatorname{GL}_n$ also appears in the recent paper of Carlsson and
Mellit, \cite[Conjecture 3.7]{CM}. We will deduce that conjecture from Theorem \ref{Thm:iso} combined with other statements that are used in its proof in Section \ref{SS_proofs_type_A}.


Here is another important  application of Theorem \ref{Thm:iso}. Let $\C_{triv}$ denote
the one-dimensional irreducible representation of $H^\wedge$, where $\h$ and $\h^*$ act by $0$ and $W$ acts via the trivial representation.

\begin{Thm}\label{Thm:dim}
We have
$$\dim B_d\otimes_{H}\C_{triv}=
\dim H_T^{BM}(\Fl_{e_d})\otimes_{H^\times}\C_{triv}=(dh+1)^{\dim \h},$$
where $h$ denotes the Coxeter number of
$W$. Moreover, as a $W$-module, $B_d\otimes_{H}\C_{triv}$ is isomorphic to $\C(\Lambda_0/(dh+1)\Lambda_0)$, where we write $\Lambda_0$ for the co-root lattice.
\end{Thm}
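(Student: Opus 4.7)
My plan has three stages.

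First, the equality between $B_d\otimes_H\C_{\text{triv}}$ and $H^T_{BM}(\Fl_{e_d})\otimes_{H^\times}\C_{\text{triv}}$ follows almost formally from Theorem \ref{Thm:iso}. The module $\C_{\text{triv}}$ is annihilated by $\h\subset H$ and by the augmentation ideal of $\C[T]$ at $1\in T$, so it extends naturally to an $H^\wedge$-module via \emph{both} identifications entering the definition of $H^\wedge$. Hence $B_d\otimes_H\C_{\text{triv}} = B_d^\wedge\otimes_{H^\wedge}\C_{\text{triv}}$ and likewise on the homology side, and the isomorphism of Theorem \ref{Thm:iso} transports one into the other.

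Second, and most substantively, I would compute $B_d\otimes_H\C_{\text{triv}}$ directly by means of the Cherednik deformation of $\Pro^{reg}$ to be constructed in Section \ref{SS_Cherednik_deform_constr}. This is a one-parameter flat family $\Pro^{reg}_c$ whose relative endomorphism algebra is the rational Cherednik algebra $H_c(W)$ (reducing at $c=0$ to $\End(\Pro^{reg}) = H$), and in which the sign-isotypic line bundle $\Ocal^{reg}(1)$ is lifted to an $H_{c+1}(W)$--$H_c(W)$ bimodule realizing the Berest--Etingof--Ginzburg shift functor. Iterating, the geometric bimodule $B_d$ deforms to a family of $H_{c+d}(W)$--$H_c(W)$-bimodules of shift type. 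Right-tensoring with $\C_{\text{triv}}$ and specializing at the Cherednik parameter $c = 1/h$ (so that $c+d = (dh+1)/h$) identifies $B_d\otimes_H\C_{\text{triv}}$, as a $W$-module, with the finite-dimensional irreducible spherical representation $L_{(dh+1)/h}(\text{triv})$ of the rational Cherednik algebra at parameter $(dh+1)/h$, by the standard shift-bimodule formalism; note that $\gcd(dh+1,h)=1$, so this is a spherical parameter for which $L$ is a quotient of the polynomial representation.

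Third, the dimension and $W$-module structure of $L_{m/h}(\text{triv})$ for $m$ coprime to $h$ is known: it is the permutation representation $\C[\Lambda_0/m\Lambda_0]$ on the quotient of the coroot lattice by the scalar $m$, of total dimension $m^{\dim\h}$. This was established in type $A$ by Berest--Etingof--Ginzburg and in full generality by Varagnolo--Vasserot (with a related treatment by Gordon). Applied with $m = dh+1$, this yields both the dimension $(dh+1)^{\dim \h}$ and the asserted $W$-module isomorphism.

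The main obstacle is the second stage: the identification of $B_d\otimes_H\C_{\text{triv}}$ with $L_{(dh+1)/h}(\text{triv})$. This demands careful control of how the Cherednik deformation of Section \ref{SS_Cherednik_deform_constr} behaves under tensoring with $\C_{\text{triv}}$, including flatness properties ensuring that the $W$-character may be computed at the deformed parameter $c=1/h$ rather than at $c=0$, and a precise match between the Procesi line bundle $\Ocal^{reg}(1)$ and the shift functor that pins down the parameter shift as exactly $+1$. Once these points are settled the whole statement drops out.
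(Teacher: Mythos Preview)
Your first and third stages are essentially the paper's own ingredients, but the second stage hides a genuine gap that cannot be closed by the method you sketch.

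The deformation $B_{\hbar,d+1/h\leftarrow 1/h}$ is indeed flat over $\C[\hbar]$, and at $\hbar=1$ it is a Morita equivalence bimodule, so $B_{d+1/h\leftarrow 1/h}\otimes_{H_{1/h}}L_{1/h}\cong L_{d+1/h}$. But the tensor product $B_{\hbar,d+1/h\leftarrow 1/h}\otimes_{H_{\hbar,1/h}}\C_{\text{triv}}$ is \emph{not} known to be flat over $\C[\hbar]$: tensoring a flat module by a non-flat one over a noncommutative base need not preserve flatness. What you actually get, by the standard filtered/graded argument, is only a surjection
\[
B_d\otimes_H\C_{\text{triv}}\twoheadrightarrow \operatorname{gr}\bigl(B_{d+1/h\leftarrow 1/h}\otimes_{H_{1/h}}L_{1/h}\bigr),
\]
hence only the lower bound $\dim B_d\otimes_H\C_{\text{triv}}\geqslant (dh+1)^{\dim\h}$ together with a surjection onto the permutation module (this is exactly the paper's Proposition~\ref{Prop:lower_bound}). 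Your ``flatness properties ensuring that the $W$-character may be computed at $c=1/h$'' is not a technical wrinkle to be ironed out; it is equivalent to the theorem itself.

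The paper closes the gap from the other side: it proves the upper bound $\dim H^{BM}_T(\Fl_{e_d})\otimes_{H^\times}\C_{\text{triv}}\leqslant (dh+1)^{\dim\h}$ directly on the Springer fiber (Proposition~\ref{Prop:upper_bound}), via the affine paving, an analysis of how the Springer action of a simple reflection moves cell classes, and an alcove-counting argument inside a dilated fundamental alcove. This geometric input is entirely absent from your plan, and without it the argument does not terminate.
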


In fact, we show that the first dimension is $\geqslant (dh+1)^{\dim \h}$, while the second dimension is $\leqslant (dh+1)^{\dim \h}$. The latter is done by using an argument similar to one in \cite{BBASV}.

Now we explain a reason to be interested in $H^{BM}_T(\Fl_{e_d})$.
It is expected that for $d=1$ this bimodule  is closely related to  the center of the principal block of the small quantum group $\mathfrak{u}_\epsilon(\g^\vee)$, where $\epsilon$ is an odd root of unity,  \cite{BBASV}.  We remark that
$$(H^T_{BM}(\Fl_{e_1})^{\wedge}\otimes_{\C[T^*T^\vee]^\wedge}\C_{triv})^*=H^*(\Fl_{e_1})^\Lambda,$$
where $\Lambda$ stands for the character lattice of $T$. Let $G^\vee$ denote the Langlands
dual group and $T^\vee$ is its maximal torus. Let $Z$ denote the center of
$\mathfrak{u}_\epsilon(\g^\vee)$. The group $G^\vee$ acts on $Z$ by algebra automorphisms.
The main conjecture of \cite{BBASV} relates the subalgebra $Z^{T^\vee}$ of $Z$
to the cohomology of $\Fl_{e_1}$ (there are also connections of the equivariant cohomology to the center but we are not going to discuss that).
Namely, it is conjectured in \cite{BBASV} that $Z^{T^\vee}$ is isomorphic to
$H^*(\Fl_e)^\Lambda$.
Modulo
the conjecture from \cite{BBASV}, Theorem \ref{Thm:dim} shows
that the dimension of the $W$-invariant part in $Z^{T^{\vee}}$ has dimension
$(h+1)^{\dim \h}$.

For $G=\operatorname{SL}_n$, we can say more.
Using Theorem \ref{Thm:iso} combined with Haiman's $n!$ theorem, \cite{Haiman}, one can show that,
modulo the conjecture from \cite{BBASV}, $W$ acts trivially on $Z^{T^\vee}$. This implies that
$G^\vee$ acts trivially on $Z$, so $\dim Z=(n+1)^{n-1}$. This will confirm a conjecture
from \cite{LY}. See Section \ref{SS_proof_center} for details.


Now we explain two key ideas of the proof of Theorem
\ref{Thm:iso}. First, unsurprisingly, we use the induction on $d$. Our second, and main, idea
is to use a one-parameter deformation: it turns out that we can deform both $B_d$ and $H^T_{BM}(\Fl_{e_d})$.
For a complex number $c$, we can consider the rational Cherednik algebra $H_{\hbar,c}$ over $\C[\hbar]$, see
Section \ref{SS_RCA}, deforming $H$, and the trigonometric Cherednik algebra $H^\times_{\hbar,c}$,
see Section \ref{SS_TCA}, deforming $H^\times$.
The $H$-bimodule $B_d$ deforms to a bimodule over $H_{\hbar,d}$ (acting on the left) and $H_{\hbar,0}$ (acting on the right).
This is achieved by quantizing the Procesi bundle $\Pro^{reg}$ and the  line bundle
$\mathcal{O}^{reg}(1)$. The bimodule
$H^T_{BM}(\Fl_{e_d})$  deforms
to a bimodule over $H^\times_{\hbar,d}$
and $H^\times_{\hbar,0}$. The deformation in this case is done by considering the equivariant BM homology for
$T\times \C^\times$, where $\C^\times$ acts by the loop rotation. Note that, for each $c\in \C$, the algebras $H_{\hbar,c}, H^\times_{\hbar,c}$ share common partial completions (at $0$ and $1$, respectively).
We will see that we have a deformed version of the isomorphism from Theorem \ref{Thm:iso}, which turns out to be easier to establish.

In fact, the representations of rational  Cherednik algebras appeared in the context
of affine Springer theory previously, \cite{OY}. In particular, for a suitable
``elliptic'' element $e'_d$ (different
from $e_d$), it was shown that $H^{BM}_{\C^\times}(\Fl_{e'_d})$ admits a filtration with an action of $H^\times_{\hbar,d+1/h}$ on the associated graded space turning  $\operatorname{gr}H^{BM}_{\C^\times}(\Fl_{e'_d})$ into a deformation
of the unique irreducible finite dimensional module of the quotient $H_{d+1/h}:=
H_{\hbar,d+1/h}/(\hbar-1)$. Our construction and techniques that go into the proof
of the main result are very different from those of \cite{OY}. 

We finish the introduction by describing the content of the paper. In Section \ref{S_Hilbert_background} we discuss generalities on partial Poisson resolutions of $Y=(\h\oplus \h^*)/W$, Procesi sheaves on them and rational and trigonometric Cherednik algebras. This section mostly contains known results and their easy modifications.

In Section \ref{S_Procesi_bimodule_deformation} we construct a deformation of $B_d$. A key result used in the construction is  that the pushforward from $X^{reg}$ to $X$ of the vector bundle $\Pro^{reg,*}\otimes \Ocal^{reg}(d)\otimes \Pro^{reg}$ is a Cohen-Macaulay sheaf without higher cohomology. Two key ingredients for this result are the construction of the Procesi sheaves via quantizations in characteristic $p$ and the following claim of independent interests: the pushforward to $X$ of a line bundle on $X^{reg}$ is Cohen-Macaulay.

In Section \ref{S_BM_background} we provide some background on the equivariant Borel-Moore homology and equivalued unramified affine Springer fibers. This section does not contain any new results.

In Section \ref{S_BM_actions} we construct actions of $H^\times_{\hbar,d}, H^\times_{\hbar,0}$ on $H^{BM}_{T\times \C^\times}(\Fl_{e_d})$ and establish some properties of the resulting bimodule. A key technique is the localization theorem for equivariant BM homology. This section relies
on Appendix by the authors and Kivinen to check the relations for the action of $H^\times_{\hbar,d}$.

In Section \ref{S_proof_1} we prove Theorems \ref{Thm:iso} and \ref{Thm:dim}. And then in Section \ref{S_small_quantum} we discuss applications of own main results to conjectures of Carlsson and Mellit and to the center of the small quantum group.

{\bf Acknowledgements}. We would like to thank Roman Bezrukavnikov, Erik Carlsson, Evgeny Gorsky, Oscar Kivinen,
and Yoshinori Namikawa  for stimulating discussions. We also thank Oscar for working on the appendix together
with the authors. The work of P.B.A. was partially supported by the NSF under grant DMS-1926686.
The work of I.L. was partially supported by the NSF
under grant DMS-2001139.

\section{Procesi sheaves and Cherednik algebras}\label{S_Hilbert_background}
In this section we recall various generalities related to the algebras $H=\C[T^*\h^*]\#W,
H^\times=\C[T^*T^\vee]\#W$, their
deformations -- the rational and trigonometric Cherednik algebras,
and the bimodule $B_d$. In particular, we discuss a partial resolution $X$
of $Y$, and Procesi sheaves on $X$.

\subsection{Partial Poisson resolutions of $Y$}\label{SS_Y_partial_resol}
Let $Y=(\h\oplus \h^*)/W$. The goal of this section is to construct a partial Poisson resolution $X$ of $Y$ mentioned in the introduction.

The variety $Y$ is a conical symplectic singularity. As such, it admits a $\Q$-factorial terminalization, to be denoted by $\tilde{X}$, see \cite{BCHM} or \cite[Section 2.2]{SRA_der}. This is another, generally, singular symplectic variety together with a projective birational morphism
$\rho: \tilde{X}\rightarrow Y$. The variety $\tilde{X}$ is $\Q$-factorial and has terminal singularities. In particular, $\operatorname{codim}_{\tilde{X}}\tilde{X}^{sing}\geqslant 4$,
\cite{Namikawa_symplectic}. We remark that $\tilde{X}$ is not unique.

Note that $Y$ carries a natural action of $(\C^\times)^2$, by dilations of $\h$ and
of $\h^*$. This action lifts to $\tilde{X}$ making $\rho$ equivariant, compare to
\cite[Proposition A.7]{Namikawa_flop}. We will also consider the contracting torus $\{(t,t)| t\in \C^\times\}\subset (\C^\times)^2$.
The Poisson bracket on $\Ocal_{\tilde{X}}$ has weight $-2$ with respect to the action of this torus.

We will need to understand the structure of the exceptional divisor $D$ of $\tilde{X}\rightarrow Y$. For each irreducible component of this divisor, its image in $Y$ is the closure of a codimension $2$ leaf, see the proof of \cite[Proposition 2.14]{orbit_method}.
Such leaves are in bijection with conjugacy classes reflections in $W$.
All formal slices to these leaves in $Y$ are of type $A_1$. Therefore the preimage of the closure of such a leaf is irreducible. So we get a bijection between the conjugacy classes of reflections in $W$ and the irreducible components of the exceptional divisor. For a reflection $s$ we write $D_s$ for the corresponding component. So in the class group we have $D=\sum D_s$, where the sum is taken over the representatives of conjugacy classes.

We proceed to defining a partial resolution $X$ of $Y$.

When $\g$ is simple and  simply laced, we set $X:=\tilde{X}$.
For example, for $\g=\mathfrak{sl}_n$, we get a slight modification of $\operatorname{Hilb}_n(\C^2)$, the Hilbert scheme of $n$ points in $\C^2$. Namely, this variety maps to $(\C^n\oplus \C^{n*})/S_n$ and our $X$ is the preimage of $(\h\oplus \h^*)/S_n$. Note that in this case $X$ is smooth (and symplectic).

Assume again that $\g$ is simple and simply laced.
Note that since $\tilde{X}$ is $\Q$-factorial, there is
$\ell>0$ such that the line bundle $\mathcal{O}(\ell D)$ on $\tilde{X}^{reg}$ extends to  a line bundle on $\tilde{X}$. The extension, also
denoted by $\Ocal(\ell D)$, is ample.

Now consider the case when $W$ is of type $B_n,F_4$ or $G_2$. In this case, there are two codimension $2$ symplectic leaves in $Y$, corresponding to the two conjugacy classes of reflections.  We consider the bundle $\mathcal{O}(D)$ on $\tilde{X}^{reg}$ associated to the divisor $D$.  Again, we can find $\ell$ such that $\mathcal{O}(\ell D)$ extends to $\tilde{X}$. But now
$\mathcal{O}(\ell D)$ may fail to be ample (for any choice of $\tilde{X}$). For example, this is the case in type $B_n$ for $n>1$. We choose $\tilde{X}$
so that this bundle lies in the closure of the ample cone of $\tilde{X}$.

\begin{Prop}\label{Prop:X_non_simply_laced}
There is an irreducible singular symplectic variety $X$ with projective birational morphisms
$\bar{\rho}:\tilde{X}\rightarrow X$ and $\underline{\rho}:X\rightarrow Y$ such that
\begin{itemize}
    \item[(i)] $\operatorname{codim}_{X}X^{sing}\geqslant 4$,
    \item[(ii)] For some $\ell>0$, the bundle $\mathcal{O}(\ell D)$ is lifted from an ample line bundle on $X$.
\end{itemize}
\end{Prop}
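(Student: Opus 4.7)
The plan is to construct $X$ as the contraction of $\tilde X$ defined by the nef line bundle $\mathcal{O}(\ell D)$. Since $\tilde X$ is a $\Q$-factorial terminalization of the conical symplectic singularity $Y$, it has Gorenstein canonical singularities with trivial canonical class; by hypothesis $\mathcal{O}(\ell D)$ lies in the closure of the ample cone, hence is nef. Kawamata's base-point free theorem applied relative to $Y$ then shows that some multiple $\mathcal{O}(\ell m D)$ is $\rho$-globally generated, and Stein-factorizing the resulting morphism $\tilde X \to \mathbb{P}_Y(\rho_*\mathcal{O}(\ell m D))$ over $Y$ produces a normal variety $X$ together with projective birational morphisms $\bar\rho \colon \tilde X \to X$ (satisfying $\bar\rho_*\mathcal{O}_{\tilde X}=\mathcal{O}_X$) and $\underline\rho \colon X \to Y$. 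By construction $\mathcal{O}(\ell m D)$ descends to a $\underline\rho$-very ample line bundle on $X$, which is ample absolutely since $Y$ is affine; replacing $\ell$ by $\ell m$ gives (ii).

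For (i), I would note first that, since $K_{\tilde X}=0$, every curve $C$ contracted by $\bar\rho$ satisfies $K_{\tilde X}\cdot C=0$, so $\bar\rho$ is automatically crepant. Hence the symplectic form on $\tilde X^{reg}$ descends over the isomorphism locus of $\bar\rho$ and, by Hartogs together with normality, extends to a symplectic form on $X^{reg}$, making $X$ a symplectic variety in Beauville's sense. The singular locus satisfies $X^{sing}\subset \bar\rho(\tilde X^{sing})\cup \bar\rho(E)$, where $E\subset \tilde X$ is the exceptional locus of $\bar\rho$. The first piece has codimension $\geqslant 4$ in $X$ because $\bar\rho$ is birational and $\tilde X^{sing}$ has codimension $\geqslant 4$; the remaining task is to bound the codimension of $\bar\rho(E)$. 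Using the Wierzba--Namikawa structure theorem for birational contractions of symplectic varieties (isotropic fibers, control of exceptional loci), together with the specific position of $\mathcal{O}(\ell\sum_s D_s)$ inside the nef cone of $\tilde X$ in each non-simply-laced type, the sum of codim $E$ in $\tilde X$ and the generic fiber dimension of $\bar\rho|_E$ should be at least four.

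The main obstacle is establishing this last codimension bound on $\bar\rho(E)$. The construction of $\bar\rho$ and the descended ample bundle follow routinely from the minimal model theory of symplectic varieties, but $\operatorname{codim}_X X^{sing}\geqslant 4$ does not hold for a generic contraction of a terminal symplectic variety: small contractions can produce transverse-Kleinian codimension-two singularities in the target, and divisorial contractions of individual components $D_{s_0}$ would create codimension-two singular loci $Z_{s_0}$. Ruling these out depends essentially on the symmetric choice $D=\sum_s D_s$, which weights both conjugacy classes of reflections equally, and on the concrete geometry of the movable cone of $\tilde X$. In types $B/C$ an explicit model via (equivariant) Hilbert schemes on the Kleinian surface $\C^2/(\Z/2)$ is available; for $F_4$ and $G_2$ one likely proceeds by direct analysis of the small number of extremal rays in the Mori cone.
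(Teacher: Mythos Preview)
Your construction of $X$ in the first paragraph is correct and essentially coincides with the paper's: the paper phrases the same contraction as the partial resolution corresponding to the face of the ample cone of $\tilde X$ containing $c_1(\mathcal O(\ell D))$, citing \cite[Section 3-2]{KMM}, but the content is the same base-point-free-theorem argument you give.

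The divergence is in part (i). You propose to bound $\operatorname{codim}_X\bar\rho(E)$ via structure theorems for symplectic contractions and then a type-by-type analysis of the Mori cone, and you correctly identify this as the unfinished step. The paper avoids all of this with a short, uniform contradiction argument. Suppose $X^{\mathrm{sing}}$ had a codimension-$2$ component $L$. Since $\operatorname{codim}_{\tilde X}\tilde X^{\mathrm{sing}}\geq 4$, over a generic point of $L$ the variety $\tilde X$ is smooth while $X$ is not, so $\bar\rho$ is not a local isomorphism there and hence has positive-dimensional fibers; thus $\bar\rho^{-1}(L)$ contains an irreducible divisor $\hat D\subset\tilde X$. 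Its image in $Y$ misses $Y^{\mathrm{reg}}$ (because $\underline\rho$ is an isomorphism over $Y^{\mathrm{reg}}$), and the paper invokes the fact (from the proof of \cite[Proposition~2.14]{orbit_method}) that any irreducible divisor in $\tilde X$ whose $\rho$-image misses $Y^{\mathrm{reg}}$ must be one of the exceptional components $D_s$. So $\hat D=D_s$ for some reflection class $s$, and $\bar\rho$ contracts the generic $\mathbb P^1$-fiber $C$ of $D_s$ over the corresponding leaf. But then $\mathcal O(\ell D)\cdot C=\bar\rho^*(\text{ample})\cdot C=0$, while $D\cdot C=D_s\cdot C\neq 0$ since the other components $D_{s'}$ do not meet $C$. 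Contradiction.

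The idea you are missing is that the only divisors $\bar\rho$ could possibly contract are the $D_s$ themselves, and the symmetric choice $D=\sum_s D_s$ (which you noticed was relevant) guarantees that $\mathcal O(\ell D)$ is nontrivial on \emph{every} such $\mathbb P^1$. This replaces the case-by-case geometry entirely.
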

\begin{proof}
Intermediate partial resolutions between $\tilde{X}$ and $Y$ (that are normal, hence singular symplectic) are classified by faces of the ample
cone of $\tilde{X}$ in such a way that for a given face, $C_0$, for any rational point $\chi\in C_0$,
a positive rational multiple of $\chi$ is the 1st Chern class  of an ample line bundle  on the
corresponding partial resolution.  This follows, for example, \cite[Section 3-2]{KMM}. Note that in \cite{KMM} the result is stated in terms of the nef cone. By a theorem of Kleiman, in our situation, the nef cone is dual to the ample cone, and so we can talk about the ample cone instead.

In particular, we get a unique partial resolution $X$ satisfying (ii). We need to show that it satisfies
(i) as well. Assume the contrary: $\operatorname{codim}_{X}X^{sing}=2$. Since
$\operatorname{codim}_{\tilde{X}}\tilde{X}^{sing}=4$, an irreducible component of $\bar{\rho}^{-1}(X^{sing})$ is a divisor.
On the other hand, as argued in the proof of \cite[Proposition 2.14]{orbit_method},
the image in $Y$ of an irreducible divisor  under $\rho$ either intersects $Y^{reg}$ or coincides
with the closure of a codimension $2$ leaf. It follows that a codimension $2$ leaf in
$X$ maps to a codimension $2$ leaf in $Y$. This contradicts the claim that
some multiple of $D$ corresponds to an ample line bundle on $X$.
So $X$ satisfies (i) as well, which finishes the proof.
\end{proof}

We note that, by the construction, $(\C^\times)^2$ acts on $X$ and the morphisms
$\bar{\rho},\underline{\rho}$ are equivariant.

\begin{Rem}\label{Rem:X_type_BC}
When $W$ is of type $B_n$, the varieties $\tilde{X}$ (which is actually smooth) and $Y$ can be realized as Nakajima quiver
varieties for the affine quiver of type $\tilde{A}_2$ with dimension vector $n\delta$
and unit framing at the extending vertex $0$. We can construct $X$ as a quiver variety
as well: the character defining stability can be shown to be $(0,1)$. We will not use
this observation.
\end{Rem}

\subsection{Procesi sheaves}\label{SS_Procesi_sheaves}
The goal of this section is to produce a Procesi sheaf on $X$. The case of Procesi sheaves on $\tilde{X}$ was handled in \cite[Section 4]{SRA_der}.

Let us recall the construction of the latter. We can reduce $\tilde{X}$ mod $p$ for $p\gg 0$. Namely, set $\F:=\overline{\F}_p$. Then we can define the reduction $\tilde{X}_\F$ to $\F$. Since $p$
is sufficiently large, $\tilde{X}_\F$ is a singular symplectic variety with $\operatorname{codim}_{\tilde{X}_\F}\tilde{X}_\F^{sing}\geqslant 4$ and vanishing higher cohomology of the structure sheaf. In \cite[Section 4.2]{SRA_der} the second named author  constructed a  filtered quantization $\mathcal{D}_{\F}$ of the structure sheaf $\Ocal_{\tilde{X}_\F}$ whose global sections is $\mathbb{A}(\h_\F\oplus \h_\F^*)^W$, where $\mathbb{A}$ stands for the Weyl algebra of a symplectic vector space. Consider the Frobenius
morphism $\Fr: \tilde{X}_\F\rightarrow \tilde{X}_\F^{(1)}$ and the pushforward $\Fr_* \mathcal{D}_\F$. The restriction of this sheaf of algebras to the regular locus is an Azumaya algebra,
\cite[Lemma 4.3]{SRA_der}. Consider the completion $\F[Y^{(1)}]^{\wedge_0}$
of $\F[Y^{(1)}]$ at $0$. We denote its spectrum by $Y^{(1)\wedge}_\F$. Consider the scheme
\begin{equation}\label{eq:X_wedge}
\tilde{X}_\F^{(1),\wedge}:=Y^{(1)\wedge}_\F\times_{Y^{(1)}_\F}\tilde{X}^{(1)}_\F.
\end{equation}
It was shown in
\cite[Section 4.3]{SRA_der} that the restriction of $\Fr_*\mathcal{D}_\F$ to the regular locus in
$\tilde{X}_\F^{(1),\wedge}$ splits. Moreover, it was shown there that
we can find a Morita equivalent sheaf of algebras $\mathcal{A}_\F$ on
$\tilde{X}_\F^{(1),\wedge}$ whose global sections
are $\F[\h^{(1)}\oplus \h^{(1)*}]^{\wedge_0}\# W$.
Let $\epsilon$ denote the averaging idempotent in
$\F W$. Set $\tilde{\Pro}^\wedge_\F:=\mathcal{A}_\F \epsilon$. Then the restriction of $\tilde{\Pro}_\F^\wedge$ to
$\tilde{X}_\F^{(1),\wedge,reg}$ is a splitting bundle for the Azumaya algebra  $$\mathcal{A}_\F|_{\tilde{X}_\F^{(1),\wedge,reg}}.$$ Also note that $\mathcal{A}_\F$ is a maximal Cohen-Macaulay sheaf that coincides with the endomorphism sheaf of $\tilde{\Pro}^\wedge_\F$. Note that, by the construction, we have $$\epsilon\tilde{\Pro}^{\wedge}_\F=
\Ocal_{\tilde{X}_\F^{(1),\wedge}}.$$

Consider the contracting $\F^\times$-action on $X_\F^{(1)\wedge}$. Then $\tilde{\Pro}^\wedge_\F$
can be shown to admit an $\F^\times$-equivariant structure. Using this, we can extend $\tilde{\Pro}^\wedge_\F$ to an $\F^\times$-equivariant maximal Cohen-Macaulay sheaf on $\tilde{X}^{(1)}_\F$ to be denoted by $\tilde{\Pro}_\F$, see \cite[Lemma 4.6]{SRA_der}.
By the same lemma, we can modify the $\F^\times$-equivariant structure on $\tilde{\Pro}_\F$ so that we get a graded algebra isomorphism $\operatorname{End}(\tilde{\Pro}_\F)\xrightarrow{\sim} \F[\h^{(1)}\oplus \h^{(1)*}]\# W$.

Finally, we can lift $\tilde{\Pro}_\F$ to characteristic $0$, \cite[Section 4.4]{SRA_der}.
We get a maximal Cohen-Macaulay sheaf $\tilde{\Pro}$ on $\tilde{X}$ with the following properties:
\begin{itemize}
    \item[(i)] We have a graded algebra isomorphism $\End(\tilde{\Pro})\xrightarrow{\sim} H$,
    \item[(ii)] $\mathcal{E}nd(\tilde{\Pro})$ is a maximal Cohen-Macaulay module,
    \item[(iii)] $H^i(\tilde{X}, \mathcal{E}nd(\tilde{\Pro}))=0$ for $i>0$.
    \item[(iv)] $\epsilon \tilde{\Pro}\xrightarrow{\sim}\Ocal_{\tilde{X}}$,
    a $\C^\times$-equivariant isomorphism.
\end{itemize}

Sheaves $\tilde{\Pro}$ satisfying (i)-(iv) are called {\it Procesi sheaves} on $\tilde{X}$.

We note that, for the same reason as in \cite[Lemma 4.6]{SRA_der},  $\tilde{\Pro}$ can also be made equivariant with respect to $(\C^\times)^2$ and the isomorphisms in (i) and (iv) can assumed to be $(\C^\times)^2$-equivariant. As remarked in \cite[Remark 4.8]{SRA_der}, the argument in \cite{Losev_Procesi} classifying the Procesi bundles in the smooth case carries over to the singular case. So the bundles $\tilde{\Pro}$ on $\tilde{X}$ satisfying (i)-(iv) are classified by the elements of the Namikawa-Weyl group of $Y$ introduced in \cite{Namikawa2}. We will denote this group by $W_Y$.
This group is $\prod_{s}(\Z/2\Z)$, where $s$ runs over representatives of conjugacy classes of reflections in $W$.
Below, Section \ref{SS_RCA}, we will recall how the classification of Procesi sheaves works.

To finish the section, we discuss Procesi sheaves on $X$. Recall the birational
projective morphism $\bar{\rho}:\tilde{X}\rightarrow X$ from
Proposition \ref{Prop:X_non_simply_laced}. Set
$$\Pro:=\bar{\rho}_* \tilde{\Pro}.$$

\begin{Lem}\label{Lem:Pro_X}
The sheaf $\Pro$ on $X$ has properties completely analogous to (i)-(iv).
\end{Lem}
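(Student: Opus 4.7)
The plan is to reduce everything to the corresponding properties of $\tilde\Pro$ on $\tilde X$ via the birational map $\bar\rho:\tilde X\to X$. The single nontrivial geometric input needed is that $\bar\rho$ is \emph{small}, i.e.\ contracts no divisor; once that is established, the four properties for $\Pro=\bar\rho_*\tilde\Pro$ follow by standard arguments about pushforwards of maximal Cohen-Macaulay sheaves along small projective birational morphisms between varieties with symplectic singularities.

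First I would prove smallness by contradiction. Suppose $\bar\rho$ contracts a prime divisor $D'\subset\tilde X$. Since $\rho=\underline\rho\circ\bar\rho$ is birational, $D'$ is $\rho$-exceptional, so $D'=D_s$ for some conjugacy class of reflections $s$, and $\rho(D_s)=\overline{\mathcal L}_s$ has codimension $2$ in $Y$. Then $\bar\rho(D_s)$ surjects onto $\overline{\mathcal L}_s$, so it has codimension exactly $2$ in $X$. Taking a formal slice in $Y$ at a generic point of $\mathcal L_s$ (which is of type $A_1$) and pulling back along $\underline\rho$ yields a transverse surface in $X$ which is either the $A_1$ singularity itself or its minimal resolution $T^*\mathbb P^1$. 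The first case contradicts $\operatorname{codim}_X X^{sing}\geqslant 4$ from Proposition~\ref{Prop:X_non_simply_laced}(i); the second case means $\bar\rho$ is already an isomorphism over that slice, contradicting the assumption that $D_s$ is contracted. Hence $\bar\rho$ is small.

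Because $\bar\rho$ is small and crepant between Gorenstein varieties with rational (in fact symplectic) singularities, a standard argument using Grothendieck duality (or a local depth computation exploiting $\operatorname{codim}(X\setminus U)\geqslant 2$, where $U$ is the isomorphism locus of $\bar\rho$) gives, for any maximal Cohen-Macaulay sheaf $\mathcal F$ on $\tilde X$: (a) $\bar\rho_*\mathcal F$ is maximal Cohen-Macaulay on $X$, and (b) $R^i\bar\rho_*\mathcal F=0$ for $i>0$. Applying this to $\mathcal F=\tilde\Pro$ and $\mathcal F=\mathcal{E}nd(\tilde\Pro)$, and observing that reflexivity combined with agreement over $U$ forces the natural map $\bar\rho_*\mathcal{E}nd(\tilde\Pro)\to\mathcal{E}nd(\bar\rho_*\tilde\Pro)=\mathcal{E}nd(\Pro)$ to be an isomorphism on all of $X$, we obtain property (ii) for $\Pro$.

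Finally I would verify (i), (iii), (iv). Property (iv) is immediate: $\epsilon\Pro=\bar\rho_*(\epsilon\tilde\Pro)=\bar\rho_*\Ocal_{\tilde X}=\Ocal_X$, the last equality by normality of $X$. Property (iii) follows from the Leray spectral sequence combined with (b): $H^i(X,\mathcal{E}nd(\Pro))=H^i(\tilde X,\mathcal{E}nd(\tilde\Pro))$, which vanishes for $i>0$ by (iii) on $\tilde X$; taking $i=0$ gives (i), namely $\End(\Pro)=\End(\tilde\Pro)=H$, and the grading is preserved since $\bar\rho$ and all the identifications used are $(\C^\times)^2$-equivariant. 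The main obstacle is the smallness of $\bar\rho$; the rest is essentially formal once smallness is in hand.
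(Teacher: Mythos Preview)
Your smallness argument for $\overline{\rho}$ is correct and essentially parallels the proof of Proposition~\ref{Prop:X_non_simply_laced}. The gap is in the next step: it is not true that for a small crepant projective birational map between symplectic varieties one has $R^i\overline{\rho}_*\mathcal{F}=0$ for $i>0$ for every maximal Cohen--Macaulay $\mathcal{F}$. A counterexample is $\overline{\rho}\colon T^*\mathbb{P}^2\to\overline{\Ocal}_{\mathrm{min}}(\mathfrak{sl}_3)$, the resolution of the closure of the minimal nilpotent orbit: the zero section $\mathbb{P}^2$ has codimension $2$ and is contracted to the unique singular point (of codimension $4$), so $\overline{\rho}$ is small; but for the line bundle $\Lcal=\pi^*\Ocal_{\mathbb{P}^2}(-3)$ one has $H^2(T^*\mathbb{P}^2,\Lcal)\supset H^2(\mathbb{P}^2,\Ocal(-3))\neq 0$, so $R^2\overline{\rho}_*\Lcal\neq 0$. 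Neither Grothendieck duality nor a codimension-$2$ depth computation rules this out: higher direct images see cohomology along the fibers, and the MCM hypothesis on $\tilde{X}$ gives no control there.

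The paper does not push forward an arbitrary MCM sheaf. It returns to the characteristic-$p$ construction: since $R\overline{\rho}_*\Ocal_{\tilde{X}}=\Ocal_X$ holds for any projective birational morphism between symplectic singularities (no smallness required), the quantization $\mathcal{D}_\F$ of $\Ocal_{\tilde{X}_\F}$ pushes forward to a quantization of $\Ocal_{X_\F}$ with vanishing higher direct images and higher cohomology. One then re-runs the construction of Section~\ref{SS_Procesi_sheaves} on $X_\F$ using $\overline{\rho}_*\mathcal{D}_\F$ in place of $\mathcal{D}_\F$; this produces $\overline{\rho}_*\mathcal{A}_\F$, which is MCM without higher cohomology for the same reasons as on $\tilde{X}$, and the lift to characteristic $0$ goes through verbatim. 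The MCM and vanishing properties thus come from the quantization, not from a general pushforward principle.
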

\begin{proof}
First of all, note that $R\overline{\rho}_*
\Ocal_{\tilde{X}}=\Ocal_X$ because $\tilde{X},X$ are singular symplectic and $\overline{\rho}$
is birational and projective. For similar reasons,  $H^i(X, \Ocal_X)=0$ for all $i>0$.  So the same is true
over $\F$ (assuming, as always, that $p\gg 0$).
Therefore $R^i\overline{\rho}_* \mathcal{D}=0$
for $i>0$, the sheaf $\overline{\rho}_* \mathcal{D}$ is a filtered quantization of
$\Ocal_{X_\F}$ and has no higher cohomology.
From here we deduce that $\overline{\rho}_*\mathcal{A}_\F$ is a maximal Cohen-Macaulay sheaf without higher cohomology.
Moreover, $\overline{\rho}_* \tilde{\Pro}_\F^{\wedge}= \epsilon \overline{\rho}_*\mathcal{A}_\F$. Now note that $\Pro$
is obtained from $\overline{\rho}_* \tilde{\Pro}_\F^{\wedge}$ in the same way as
$\tilde{\Pro}$ is obtained from $\tilde{\Pro}_\F^{\wedge}$. It follows that the natural homomorphism $\operatorname{End}(\tilde{\Pro})\rightarrow \operatorname{End}(\Pro)$ is an isomorphism yielding (i).
Conditions (ii) and (iii) also follow, while (iv) is immediate from the construction of $\Pro$.
\end{proof}

\subsection{Rational Cherednik algebras}\label{SS_RCA}
Let us write $S$ for the set of reflections in
$W$. Let $c:S\rightarrow \C$ be a $W$-invariant function. Let $\hbar$ be an independent variable. Then we can define the rational Cherednik algebra $H_{\hbar,c}$ as the quotient of $T(\h\oplus \h^*)[\hbar]\# W$ by the following relations
$$[x,x']=[y,y']=0, [y,x]=\hbar \left(\langle y,x\rangle-\sum_{s\in S} c(s)\langle y,\alpha_s^\vee\rangle\langle x,\alpha_s\rangle s\right).$$
Here $x,x'\in \h, y,y'\in \h^*$ and $\alpha_s,\alpha_s^\vee$ denote the positive root and the positive coroot corresponding to a reflection $s$.  For example, $H_{\hbar,0}=D_\hbar(\h^*)\# W$, where we write
$D_\hbar(\h^*)$ for the algebra of homogenized differential operators on $\h^*$.

We will write $H_c$ for the specialization of $H_{\hbar,c}$ to $\hbar=1$.

Now we will discuss a connection between the rational Cherednik algebras and Procesi sheaves.
We start with the Procesi sheaves on $\tilde{X}$,
the case treated in \cite[Section 5.1]{SRA_der}.

The formal quantizations  of $\tilde{X}^{reg}$ with a compatible action of the contracting torus are classified by the points of $H^2(\tilde{X}^{reg},\C)$, \cite[Section 2.3]{quant_iso}.
We note that the 1st Chern class map induces an isomorphism $\C\otimes_{\Z}\operatorname{Pic}(\tilde{X}^{reg},\C)\xrightarrow{\sim} H^2(\tilde{X}^{reg},\C)$, both spaces have dimensions equal to the number of conjugacy classes of reflections in $W$.
The quantizations of $\tilde{X}$ are in a natural bijection with those of $\tilde{X}^{reg}$
via push-forward and pullback, see \cite[Proposition 3.4]{BPW}. Let us write $\tilde{\mathcal{D}}_{\hbar,\lambda}$ for the formal quantization of $\tilde{X}$ corresponding to $\lambda$.
Note that $\tilde{\Dcal}_{\hbar,\lambda}$ also has an action of the torus $(\C^\times)^2$, and the action of the Hamiltonian subtorus $\{(t,t^{-1})| t\in \C^\times\}$ is still Hamiltonian.

The algebra of global sections $\Gamma(\tilde{\Dcal}_{\hbar,\lambda})$ is related to the rational Cherednik algebra $H_{\hbar,c}$
as follows. Consider the spherical subalgebra $\epsilon H_{\hbar,c}\epsilon$, a graded quantization of $\C[Y]$.  We can consider the subalgebra
$\Gamma(\tilde{\Dcal}_{\hbar,\lambda})^{fin}$ of $\C^\times$-locally finite elements in
$\Gamma(\tilde{\Dcal}_{\hbar,\lambda})$ with respect to the contracting $\C^\times$-action. Then we have  $$\Gamma(\tilde{\Dcal}_{\hbar,\lambda})^{fin}\cong  \epsilon H_{\hbar,c_\lambda} \epsilon,$$ where $c_\lambda$ is computed as follows.
 The Chern classes of the line bundles $\mathcal{O}(D_s)$ form a basis in $H^2(\tilde{X}^{reg},\C)$. Let $\lambda_s$ be the coefficient of the basis element corresponding to $s$ in $\lambda$.

\begin{defi}\label{defi:c_from_lambda}
By definition, $c_\lambda$ sends $s\in S$ to $\lambda_s-\frac{1}{2}$.
\end{defi}

Note that the Namikawa-Weyl group $W_Y$ acts on $H^2(\tilde{X}^{reg},\C)$ by changing signs of the coordinates $\lambda_s$, this follows, for example, from
\cite[Section 3.6]{orbit_method}. In particular, we get a $W_Y$-action on the affine space of parameters $c$. Two $W_Y$-conjugate parameters give rise to the same algebra $\Gamma(\Dcal_{\hbar,\lambda})^{fin}$,
\cite[Proposition 3.10]{BPW}.

Now we discuss a connection of rational Cherednik algebras with Procesi sheaves,
established in \cite{quant_iso} in a special case and in \cite{SRA_der} in the general case. See,
in particular, \cite[Section 5.1]{SRA_der}. Let $\tilde{\Pro}^{reg}$ denote the restriction of $\tilde{\Pro}$ to $\tilde{X}^{reg}$, this is a vector bundle.  Let $\tilde{\Dcal}_{\hbar,\lambda}^{reg}$ be the restriction of $\tilde{\Dcal}_{\hbar,\lambda}$. Since $\mathcal{E}nd(\tilde{\Pro})$ is a maximal Cohen-Macaulay module that has no higher cohomology,
see Section \ref{SS_Procesi_sheaves}, we see that $\Gamma(\mathcal{E}nd(\tilde{\Pro}^{reg}))=H$
and $H^i(\mathcal{E}nd(\tilde{\Pro}^{reg}))=0$
for $i=1,2$. So we have a unique quantization of $\tilde{\Pro}^{reg}$ to a left $\tilde{\Dcal}_{\hbar,\lambda}^{reg}$-module to be denoted by $\tilde{\Pro}_{\hbar,\lambda}^{reg}$. This quantization is $(\C^\times)^2$-equivariant. Set
\begin{equation}\label{eq:ecal_reg_defn} \tilde{\Ecal}^{reg}_{\hbar,\lambda}:=
\mathcal{E}nd_{\tilde{\Dcal}_{\hbar,\lambda}^{reg}}
(\tilde{\Pro}_{\hbar,\lambda}^{reg})^{opp}.
\end{equation}
This is a sheaf of $\C[[\hbar]]$-algebras on $\tilde{X}^{reg}$ deforming $\mathcal{E}nd(\tilde{\Pro}^{reg})^{opp}$.

As was argued in
\cite[Section 5.1]{SRA_der}, \begin{equation}\label{eq:Procesi_RCA_iso}\Gamma(\tilde{\Ecal}^{reg}_{\hbar,\lambda})^{fin}\xrightarrow{\sim} H_{\hbar,c'(\lambda)}
\end{equation}
for an affine map $\lambda\mapsto c'(\lambda)$.
By multiplying the source and the target of (\ref{eq:Procesi_RCA_iso}) with $\epsilon$ on the left and on the right, we get $\Gamma(\tilde{\Dcal}_{\hbar,\lambda}^{reg})^{fin}\xrightarrow{\sim} \epsilon H_{\hbar,c'(\lambda)} \epsilon$ that gives the identity endomorphism of
$\C[Y]$ after taking the quotient by $\hbar=0$. So, as argued in \cite[Section 5.1]{SRA_der},
we have an element $w\in W_Y$ such that $c'(\lambda)=w c_\lambda$. This element depends on the choice of $\tilde{\Pro}$. This defines a bijection between the set of possible Procesi bundles and $W_Y$, already mentioned in Section
\ref{SS_Procesi_sheaves}. This was proved in \cite[Theorem 1.1]{Losev_Procesi} in the case when $\tilde{X}$ is smooth and carries over to the general case verbatim.

We will always choose $\tilde{\Pro}$ corresponding to the unit element in $W_Y$.

Let $\epsilon_-$ denote the sign idempotent in $\C W$. Using the previous discussion we can describe
$\tilde{\Pro}^{reg} \epsilon_-$, the sign component of
$\tilde{\Pro}^{reg}$.

\begin{Lem}\label{Lem:sgn_component}
We have $c_1(\tilde{\Pro}^{reg}\epsilon_-)=\frac{1}{2}c_1(\mathcal{O}(D))$.
\end{Lem}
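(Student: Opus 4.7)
The plan is to reduce the computation of $c_1(\tilde{\Pro}^{reg}\epsilon_-)$ to the rank-one case via formal slices to the codimension-$2$ symplectic leaves of $Y$. Since $H^2(\tilde{X}^{reg},\Q) = \bigoplus_{[s]}\Q\cdot c_1(\Ocal(D_s))$, where $[s]$ runs over conjugacy classes of reflections in $W$, it is enough to show that the coefficient of $c_1(\Ocal(D_s))$ in $c_1(\tilde{\Pro}^{reg}\epsilon_-)$ equals $\tfrac{1}{2}$ for each $[s]$. Both quantities can be read off from the restrictions to a formal neighborhood of a generic point of $D_s$.

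Fix $s$. A formal slice at a generic point of the leaf labelled by $[s]$ is the Kleinian singularity $\C^2/\langle s\rangle$ of type $A_1$, and its preimage in $\tilde{X}$ is the minimal resolution $T^*\mathbb{P}^1$. By the slice description, the formal neighborhood of the corresponding locus in $\tilde{X}$ factorizes as a formally smooth factor (of dimension $\dim Y-2$) times $T^*\mathbb{P}^1$. The characteristic-$p$ construction of $\tilde{\Pro}$ recalled in Section \ref{SS_Procesi_sheaves}, being intrinsic and local on $\tilde{X}$, restricts to the analogous construction on the local model. Combined with the classification of Procesi bundles by $W_Y$ from \cite{Losev_Procesi}, and with the fact that the restriction map $W_Y = \prod_{[s]}\Z/2 \twoheadrightarrow \Z/2 = W_{\C^2/\langle s\rangle}$ sends the identity to the identity, the restriction of $\tilde{\Pro}$ is (up to the trivial factor along the slice directions) the pullback of the ``standard'' $A_1$-Procesi bundle on $T^*\mathbb{P}^1$. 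Since the restriction of the sign of $W$ to $\langle s\rangle$ is the sign of $\langle s\rangle$ and has multiplicity one in the regular representation, the sign-isotypic component $\tilde{\Pro}\epsilon_-$ restricts to the pullback of the $\langle s\rangle$-sign component of this local Procesi bundle.

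For type $A_1$ with this normalization the Procesi bundle on $T^*\mathbb{P}^1$ is $\Ocal_{T^*\mathbb{P}^1}\oplus\Ocal_{T^*\mathbb{P}^1}(-1)$, with sign component $\Ocal_{T^*\mathbb{P}^1}(-1)$. The exceptional divisor is the zero section $\mathbb{P}^1$, whose self-intersection is $-2$; hence $c_1(\Ocal(D)) = -2\,c_1(\Ocal(1))$, and so $c_1(\Ocal(-1)) = \tfrac{1}{2} c_1(\Ocal(D))$ on $T^*\mathbb{P}^1$. Transferring via the slice factorization, the $[D_s]$-coefficient of $c_1(\tilde{\Pro}^{reg}\epsilon_-)$ equals $\tfrac{1}{2}$, as required; summing over $[s]$ yields the global identity.

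The main obstacle is the naturality step: verifying that the global choice of $\tilde{\Pro}$ corresponding to the unit element of $W_Y$ restricts, under the formal slice at $[s]$, to the Procesi bundle on $T^*\mathbb{P}^1$ corresponding to the unit of the local $\Z/2$. This reduces to checking that the entire construction of Section \ref{SS_Procesi_sheaves} (Frobenius pushforward of the canonical quantization $\mathcal{D}_\F$, splitting of the Azumaya algebra on the regular locus, and the subsequent lift to characteristic zero) is compatible with passage to formal neighborhoods; once this is established, the uniqueness part of the classification in \cite{Losev_Procesi} pins down the local Procesi bundle uniquely, and the explicit $A_1$ computation completes the proof.
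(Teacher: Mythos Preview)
Your approach is essentially the paper's: reduce to formal slices at codimension-$2$ leaves and compute on $T^*\mathbb{P}^1$. The substantive difference is in how the local Procesi bundle $\underline{\Pro}$ is pinned down. You propose to check that the characteristic-$p$ construction of Section~\ref{SS_Procesi_sheaves} is compatible with passage to formal slices and then invoke the $W_Y$-classification; you correctly flag this as the main obstacle and do not carry it out. The paper instead works on the Cherednik-algebra side: the completion $H_{\hbar,c}^{\wedge_y}$ is a matrix algebra over the rank-one completed Cherednik algebra $\underline{H}_{\hbar,c(s)}^{\wedge_0}$ by \cite[Section~3.3]{BE}, and $\tilde{\Pro}^{\wedge_y}\cong\Hom_{\C\{1,s\}}(\C W,\underline{\Pro}^{\wedge_0})$ by the analog of \cite[Proposition~4.1]{Losev_Procesi}. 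Comparing the global parameter map $\lambda\mapsto c_\lambda$ of Definition~\ref{defi:c_from_lambda} with the one induced by $\underline{\Pro}$ forces the latter to send $i^*(\lambda)\mapsto i^*(\lambda)-\tfrac{1}{2}$, and by \cite[Section~4.2]{Losev_Procesi} this selects $\underline{\Pro}=\Ocal\oplus\Ocal(1)$ among the two possibilities. This route bypasses any need to verify slice-compatibility of the Azumaya splitting and the lift to characteristic zero.

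One caution on signs: you take $\underline{\Pro}=\Ocal\oplus\Ocal(-1)$ and $c_1(\Ocal(D))=-2\,c_1(\Ocal(1))$; the paper takes $\underline{\Pro}=\Ocal\oplus\Ocal(1)$ and $\Ocal(\mathbb{P}^1)=\Ocal(2)$. Both are internally consistent and give $\tfrac{1}{2}$, but they are opposite conventions for what $\Ocal(1)$ denotes on $T^*\mathbb{P}^1$ relative to the exceptional divisor. Since the paper fixes its convention by declaring $\Ocal(\ell D)$ ample and by Definition~\ref{defi:c_from_lambda}, you should reconcile your choice with those normalizations rather than with the self-intersection computation alone.
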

\begin{proof}
Let $s$ be a reflection in $W$. Pick a
point $y\in Y$ lying in the symplectic leaf corresponding to $s$. We set $Y^{\wedge_y}:=\operatorname{Spec}(\C[Y]^{\wedge_y})$ and $X^{\wedge_y}:=Y^\wedge_y\times_{Y}\tilde{X}$.

Pick $\lambda\in H^2(\tilde{X}^{reg},\C)$ and set $c:=c_\lambda$.
We can also consider the completion $H_{\hbar,c}^{\wedge_y}$. As was checked in
\cite[Section 3.3]{BE}, this is a matrix algebra of size $|W|/2$ over $\underline{H}_{\hbar,c(s)}^{\wedge_0}$ the  completed rational Cherednik algebra for
$(\h, \langle s \rangle)$ with parameter $c(s)$.
On the other hand, analogously to \cite[Proposition 4.1]{Losev_Procesi}, $\tilde{\Pro}^{\wedge_y}:=\tilde{\Pro}|_{\tilde{X}^{\wedge_y}}$, coincides with
$\Hom_{\C\{1,s\}}(\C W, \underline{\Pro}^{\wedge_0})$, where $\underline{\Pro}$ is the Procesi bundle over $(\h\oplus \h^*)^s\times T^*\mathbb{P}^1$.
Let $i:
X^{\wedge_y}\rightarrow \tilde{X}^{reg}$ be the embedding.
Then we have the pullback map
$$i^*:H^2(\tilde{X}^{reg},\C)=H^2_{DR}(\tilde{X}^{reg})\rightarrow H^2_{DR}(X^{\wedge_y})
\cong H^2\left((\h\oplus \h^*)^s\times T^*\mathbb{P}^1,\C\right)=\C$$
The isomorphism $\End(\Pro_{\hbar,\lambda})\xrightarrow{\sim}
H^{\wedge_\hbar}_{\hbar,\lambda}$ gives rise to
an isomorphism $\End(\underline{\Pro}^{\wedge_0}_{\hbar,i^*(\lambda)})\xrightarrow{\sim}
\underline{H}^{\wedge_0}_{\hbar,c(s)}$.
By Definition \ref{defi:c_from_lambda}, the isomorphism of parameter spaces corresponding to $\underline{\Pro}$ sends $i^*(\lambda)$ to
$i^*(\lambda)-\frac{1}{2}$. The two possibilities for $\underline{\Pro}$ are $\Ocal\oplus \Ocal(1)$
and $\Ocal\oplus \Ocal(-1)$. The map between the parameter spaces we have is realized by the former.
This is an easy special case of \cite[Section 4.2]{Losev_Procesi}, for example.

In particular, using the direct analog of \cite[Proposition 4.1]{Losev_Procesi} again, we see that the restriction of the line bundle $\Pro\epsilon_-$
to $X^{\wedge_y}$ is $\mathcal{O}(1)$ on that scheme. Since the line bundle $\mathcal{O}(\mathbb{P}^1)$ on $T^*\mathbb{P}^1$ is $\mathcal{O}(2)$, the claim of the lemma follows.
\end{proof}

Now we explain how to relate the rational Cherednik algebras to quantizations of $\tilde{\Pro}$
(instead of $\tilde{\Pro}^{reg}$). Let $\iota$ denote the embedding $\tilde{X}^{reg}\hookrightarrow \tilde{X}$.
Set $\tilde{\Pro}_{\hbar,\lambda}:=\iota_*(\tilde{\Pro}_{\hbar,\lambda}^{reg})$. Since $H^1(\tilde{X}^{reg},\tilde{\Pro}^{reg})=0$, we see that $\tilde{\Pro}_{\hbar,\lambda}$ is a quantization of $\tilde{\Pro}$. Similarly, $\tilde{\Ecal}_{\hbar,\lambda}:=\iota_* \tilde{\Ecal}_{\hbar,\lambda}^{reg}$ is the endomorphism sheaf of $\tilde{\Pro}_{\hbar,\lambda}$ (with opposite multiplication).

Let us proceed to quantizations of $\Pro$, the Procesi sheaf on $X$, and its endomorphism sheaf.
Similarly to the proof of Lemma \ref{Lem:Pro_X}, we see that $R^i\overline{\rho}_* \tilde{\Ecal}=0$ for $i>0$. So we get that  $\Ecal_{\hbar,\lambda}:=\overline{\rho}_*
\tilde{\Ecal}_{\hbar,\lambda}$ is a quantization of $\mathcal{E}nd(\Pro)$. Further, we set $\Pro_{\hbar,\lambda}:=\epsilon \Ecal_{\hbar,\lambda}$. This is a quantization of $\Pro$. 
Also $$\Ecal_{\hbar,\lambda}=
\mathcal{E}nd_{\Dcal_{\hbar,\lambda}}(\Pro_{\hbar,\lambda})^{opp},$$
where $\Dcal_{\hbar,\lambda}$ is the pushforward of $\tilde{\Dcal}_{\hbar,\lambda}$ to $X$.


In what follows we will write $\mathcal{O}^{reg}(1):=\Pro^{reg}\epsilon_-$.
This is a line bundle on $X^{reg}$.

Note that $\bar{\rho}^*$ induces an isomorphism $\operatorname{Pic}(X^{reg})
\xrightarrow{\sim}\operatorname{Pic}(\tilde{X}^{reg})$. This allows us to view $c_1(\mathcal{O}^{reg}(1))$  as an element of $H^2(\tilde{X}^{reg},\C)$.
If $\lambda\in H^2(\tilde{X}^{reg},\C)$ corresponds to a Cherednik parameter $c=c_\lambda$, then the Cherednik parameter, say $c'$, corresponding to $\lambda+c_1(\mathcal{O}^{reg}(1))$ satisfies $c'(s)=c(s)+1$ for all $s\in S$.

\subsection{Trigonometric Cherednik algebras}\label{SS_TCA} In this section we will discuss the trigonometric Cherednik algebras and their connection to rational Cherednik algebras.
Assume that $G$ is reductive. Recall that $T$ denotes a maximal torus.

Let $\Lambda$ denote the co-character lattice of $T$ and $\Lambda_0$ be the co-root lattice of $\g$, a sublattice of $\Lambda$. Consider the extended affine Weyl group $\widetilde{W}:=W\ltimes \Lambda$ that contains the affine Weyl group $W^a:=W\ltimes \Lambda_0$ as a normal subgroup. We have the length function
$\ell: \widetilde{W}\rightarrow \Z_{\geqslant 0}$.
The subgroup of length $0$ elements is identified with $\Lambda/\Lambda_0$ under the projection $\widetilde{W}\twoheadrightarrow \widetilde{W}/W^a\cong \Lambda/\Lambda_0$. We have the decomposition $\widetilde{W}=(\Lambda/\Lambda_0)\ltimes W^a$.

The group $\widetilde{W}$ acts on $\Lambda\times \mathbb{Z}$ by
\begin{equation}\label{eq:affine_Weyl_action_lattice}
w(\mu,a):=(w\mu,a), \chi(\mu,a):=(\mu+ a\chi, a), \quad\chi\in \Lambda\subset \widetilde{W},w\in W\subset \widetilde{W}, \mu\in \Lambda, a\in \Z.
\end{equation}
We consider the dual action of $\widetilde{W}$ on
$\h^*\oplus \C$. It is given by
$$w(y,z)=(wy,z), \chi(y,z)=(y,z+\langle \chi,y\rangle),
\quad\chi\in \Lambda,w\in W, y\in \h^*, z\in \C.$$

Let $s_1,\ldots,s_r$ denote the simple reflections in $W$ and $s_0$ denote the simple affine reflection. Let $\alpha^\vee_1,\ldots,\alpha^\vee_r$ denote the simple co-roots and $\alpha^\vee_0$ denote the minimal (negative) co-root. Pick a $W$-invariant function $c:S\rightarrow \C$.
Set $c(s_0):=c(s_{\alpha_0})$.

The trigonometric Cherednik algebra $H^\times_{\hbar,c}$ is defined as the algebra generated by two subalgebras:
$\C \widetilde{W}$ and $\C[\h,\hbar]$ subject to the following cross-relations
\begin{equation}\label{eq:trig_CA_relns}
\begin{split}
& s_i y-(s_i.y)s_i=\hbar c(s_i)\langle y,\alpha_i^\vee\rangle, \quad i=0,\ldots,r, y\in \h^*,\\
& \pi y=(\pi.y)\pi, \quad y\in \h^*, \pi\in \Lambda/\Lambda_0\subset \widetilde{W}, \\
& x \hbar=\hbar x,\quad x\in \widetilde{W}.
\end{split}
\end{equation}
Here we write $x.y$ for the image of $y\in \h^*$ under $x\in \widetilde{W}$ for  the action of $\widetilde{W}$ on $\h\oplus \C$ described above (with $\hbar$ corresponding to $1\in \C$).

The algebra $H^\times_{\hbar,c}$ admits an embedding into the algebra $D_\hbar(T^{\vee,reg})\# W$, where $T^{\vee,reg}$ denotes the complement to the union of root codimension $1$ subgroups in the Langlands dual torus $T^\vee$;  we write $D_\hbar$ for the algebra of homogenized differential operators. Namely, let us write $e^\lambda$ for the function on $T^\vee$ given by $\lambda$. The embedding maps $\lambda\in \Lambda\subset \widetilde{W}$ to $e^\lambda$, $w\in W$ to $w\in W$, $\hbar$ to $\hbar$ and $y\in \h$ to the trigonometric Dunkl operator (see \cite[(2.12.16)]{Cherednik}):

$$D^{trig}_y=\partial_y+\sum_{\alpha>0}\hbar c(s_\alpha)\frac{\langle\alpha,y\rangle}{1-e^{-\alpha^\vee}}(1-s_\alpha)-\langle\sum_{\alpha>0}\hbar c(s_\alpha)\alpha^\vee,y\rangle. $$

This embedding can be used to establish the following well-known result that plays an important role in our paper.

\begin{Lem}\label{Lem:trig_rat_iso}
We have an algebra isomorphism
$$H_{\hbar,c}\otimes_{\C[\h^*]}\C[\h^*]^{\wedge_0}\cong H^\times_{\hbar,c}\otimes_{\C[T^\vee]}\C[T^\vee]^{\wedge_1}$$
\end{Lem}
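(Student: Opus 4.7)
The plan is to construct the isomorphism via Dunkl embeddings of both algebras into a common completed ring of differential operators. First, the exponential map $\exp:\operatorname{Lie}(T^\vee)\to T^\vee$ induces a $W$-equivariant isomorphism $\C[T^\vee]^{\wedge_1}\cong \C[\h^*]^{\wedge_0}$ (sending $e^\lambda$ to $\exp(\lambda)=1+\lambda+\lambda^2/2+\cdots$), which extends to an isomorphism of completed rings $D_\hbar((T^\vee)^{reg})^{\wedge_1}\#W\cong D_\hbar((\h^*)^{reg})^{\wedge_0}\#W$. Under this identification, the lattice $\Lambda\subset\widetilde{W}$ sitting in $H^\times_{\hbar,c}$ embeds via $\lambda\mapsto e^\lambda$ into the common invertible completed position algebra, so the group algebra of $\widetilde W\subset H^\times_{\hbar,c}$ lands in $\C[\h^*]^{\wedge_0}\#W$ after completion.

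Next, I would compare the Dunkl operators on the two sides. The rational Cherednik algebra embeds into $D_\hbar((\h^*)^{reg})\# W$ by sending $y$ to the rational Dunkl operator $D^{rat}_y=\partial_y+\sum_{\alpha>0}\hbar c(s_\alpha)\tfrac{\langle\alpha,y\rangle}{\alpha^\vee}(1-s_\alpha)$, while the trigonometric Dunkl operator $D^{trig}_y$ is recalled in the excerpt. Writing $(1-e^{-\alpha^\vee})^{-1}=(\alpha^\vee)^{-1}+g_\alpha$ with $g_\alpha\in\C[\alpha^\vee]^{\wedge_0}$ a regular power series, the difference
\[D^{trig}_y-D^{rat}_y=\sum_{\alpha>0}\hbar c(s_\alpha)\langle\alpha,y\rangle\, g_\alpha(1-s_\alpha)-\Bigl\langle\sum_{\alpha>0}\hbar c(s_\alpha)\alpha^\vee,y\Bigr\rangle\]
lies entirely in $\C[\h^*]^{\wedge_0}\# W$ (the second summand being a scalar in $\C[\hbar]$). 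Hence, after pulling back to the common completed differential operator algebra, the two Dunkl embeddings agree modulo the common subring $\C[\h^*]^{\wedge_0}\# W$.

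Finally, I would invoke PBW to conclude. Both $H_{\hbar,c}$ and $H^\times_{\hbar,c}$ admit PBW decompositions of the form $\C[\textrm{position}]\otimes \C W\otimes \C[\h,\hbar]$, with positions $\C[\h^*]$ and $\C[T^\vee]$ respectively; completing in position gives the same underlying vector space $\C[\h^*]^{\wedge_0}\otimes\C W\otimes\C[\h,\hbar]$ on both sides. The Dunkl embeddings are injective, and by Step 2 their images inside $D_\hbar((\h^*)^{reg})^{\wedge_0}\# W$ are generated by the same data: $\C[\h^*]^{\wedge_0}$, $W$, and Dunkl operators that differ by elements already lying in $\C[\h^*]^{\wedge_0}\# W$. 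Thus the images coincide, yielding the desired ring isomorphism; it is the identity on $\C[\h^*]^{\wedge_0}\# W$ and on $\C[\h,\hbar]$, and sends $y$ on the rational side to $y$ plus the correction from Step 2 on the trigonometric side. The main bookkeeping obstacle is to check that this correction is $W$-equivariantly consistent with all the defining cross-relations on both sides; this follows directly from the explicit expansion of $(1-e^{-\alpha^\vee})^{-1}$ in Step 2, so no further input is required.
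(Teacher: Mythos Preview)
Your proposal is correct and follows essentially the same approach as the paper's proof: identify the completed position algebras via the exponential map, then observe that the difference between the trigonometric and rational Dunkl operators lies in $\C[\h^*]^{\wedge_0}\#W$, so the two subalgebras generated inside $D_\hbar(\h^{*,\wedge_0,reg})\#W$ coincide. Your version is slightly more explicit about the PBW bookkeeping and the expansion $(1-e^{-\alpha^\vee})^{-1}=(\alpha^\vee)^{-1}+g_\alpha$, but the argument is the same.
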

\begin{proof}
We can identify $\C[\h^*]^{\wedge_0}\cong \C[T^\vee]^{\wedge_1}$ by sending $x\in \h$ to $e^x$. This identification is $W$-equivariant. It remains to show that the subalgebra in $D_\hbar(\h^{*,\wedge_0,reg})\# W$
generated by $\C[\h^*]^{\wedge_0}\# W$
and the rational Dunkl operators coincides with the subalgebra generated by $\C[\h^*]^{\wedge_0}\# W$ and the trigonometric Dunkl operators. This is because the difference between the trigonometric and rational
Dunkl operators associated to $y\in \h^*$ lies in $\C[\h^*]^{\wedge_0}\# W$. The latter subalgebra
lies in both images.
\end{proof}

\subsection{Representation theory of rational Cherednik algebras}
In this section we will recall several known constructions and facts related to the representation theory of rational Cherednik algebras. Set $H_c:=H_{\hbar,c}/(\hbar-1)$, this is a filtered deformation of $\C[T^*\h^*]\# W$. Let $\epsilon,\epsilon_-$
denote the trivial and sign idempotents in
$\C W$.

We abuse the notation and write $c+1$ for the map $S\rightarrow W$ sending $s$ to
$c(s)+1$.
We start with the following classical result,
see, e.g., \cite{BEG_HC}, that will also be established below,
Lemma \ref{Lem:bimod_sph_isom}.

\begin{Lem}\label{Lem:translation}
We have a filtered algebra isomorphism
$\epsilon H_c \epsilon\cong \epsilon_- H_{c+1}\epsilon_-$ that is the identity on the associated
graded algebras.
\end{Lem}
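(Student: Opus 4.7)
The plan is to realize both spherical subalgebras as quantizations of $\Ocal_X$ differing by a period shift, using the setup of Section \ref{SS_RCA}. Recall that $\Ecal_{\hbar,\lambda}=\mathcal{E}nd_{\Dcal_{\hbar,\lambda}}(\Pro_{\hbar,\lambda})^{opp}$ is a sheaf of algebras on $X$ quantizing $\mathcal{E}nd(\Pro)$, with $\Gamma(\Ecal_{\hbar,\lambda})^{fin}\cong H_{\hbar,c_\lambda}$. Since $\Pro\epsilon=\Ocal_X$ and $\Pro^{reg}\epsilon_-=\Ocal^{reg}(1)$, the corner subalgebras $\epsilon\Ecal_{\hbar,\lambda}\epsilon$ and $\epsilon_-\Ecal_{\hbar,\lambda}\epsilon_-$ are both $\C^\times$-equivariant quantizations of $\Ocal_X$ (the second after identifying $\mathcal{E}nd(\Ocal^{reg}(1))$ with $\Ocal^{reg}$). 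The first is $\Dcal_{\hbar,\lambda}^{opp}$ by construction, yielding $\epsilon H_{\hbar,c_\lambda}\epsilon\cong \Gamma(\Dcal_{\hbar,\lambda})^{fin}$ after passing to $\C^\times$-locally finite global sections.

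The heart of the argument is to identify the period of the second corner algebra. On $X^{reg}$ the left $\Dcal_{\hbar,\lambda}$-module $\Pro^{reg}_{\hbar,\lambda}\epsilon_-$ is a quantization of the line bundle $\Ocal^{reg}(1)$, and I invoke the standard period-shift principle for quantizations of smooth symplectic varieties: conjugation by a quantization of a line bundle $\mathcal{L}$ transforms $\Dcal^{reg}_{\hbar,\lambda}$ into $\Dcal^{reg}_{\hbar,\lambda\pm c_1(\mathcal{L})}$, with the sign pinned down by a direct first-order computation. Applied to $\mathcal{L}=\Ocal^{reg}(1)$ this yields an isomorphism
$$\epsilon_-\Ecal^{reg}_{\hbar,\lambda}\epsilon_-\ \cong\ \bigl(\Dcal^{reg}_{\hbar,\,\lambda-c_1(\Ocal^{reg}(1))}\bigr)^{opp}.$$
Since $\operatorname{codim}_X X^{sing}\geqslant 4$, passing between $X$ and $X^{reg}$ preserves locally finite global sections. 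Combined with the observation at the end of Section \ref{SS_RCA} that shifting $\lambda$ by $c_1(\Ocal^{reg}(1))$ shifts the Cherednik parameter by $+1$, this gives
$$\epsilon_-H_{\hbar,c_\lambda}\epsilon_-\ \cong\ \Gamma\bigl(\Dcal_{\hbar,\,\lambda-c_1(\Ocal^{reg}(1))}\bigr)^{fin}\ \cong\ \epsilon H_{\hbar,c_\lambda-1}\epsilon.$$
Reindexing $c:=c_\lambda-1$ and specializing $\hbar=1$ produces the asserted isomorphism $\epsilon H_c\epsilon\xrightarrow{\sim}\epsilon_-H_{c+1}\epsilon_-$.

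Both sides carry the filtration induced by the contracting $\C^\times$-action, and every step above is $\C^\times$-equivariant, so the isomorphism is filtered. On associated graded, $\epsilon H_c\epsilon$ and $\epsilon_-H_{c+1}\epsilon_-$ canonically identify with $\C[Y]=\Gamma(X,\Ocal_X)$---the first via the averaging idempotent applied to $\C[\h\oplus\h^*]$, the second via the trivialization $\Ocal^{reg}(1)^{-1}\otimes\Ocal^{reg}(1)\cong\Ocal_{X^{reg}}$ extended across the codimension-$4$ singular locus---and the composed identification is the identity on $\C[Y]$. The only non-formal ingredient is the period-shift identity for line-bundle conjugation; this is standard over the smooth symplectic locus $X^{reg}$ given the classification of its $\C^\times$-equivariant formal quantizations by their period recalled in Section \ref{SS_RCA}, and will be the main computation to verify carefully.
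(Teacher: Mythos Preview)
Your approach is correct and is essentially the same as the paper's, which proves this statement as part (1) of Lemma~\ref{Lem:bimod_sph_isom}: both arguments identify $\epsilon_-\Ecal^{reg}_{\hbar,\lambda}\epsilon_-=\mathcal{E}nd_{\Dcal^{reg}_{\hbar,\lambda}}(\Pro^{reg}_{\hbar,\lambda}\epsilon_-)^{opp}$ as a quantization of $\Ocal_{X^{reg}}$ whose period is shifted by $c_1(\Ocal^{reg}(1))$, and then pass to $\C^\times$-finite global sections. The paper phrases the period shift by noting that $\Pro^{reg}_{\hbar,c+1}\epsilon_-$ is the unique quantization of $\Ocal^{reg}(1)$ to a left $\Dcal^{reg}_{\hbar,c+1}$-module, hence coincides with the twisting bimodule $\Dcal^{reg}_{\hbar,c+1\leftarrow c}$, whose opposite endomorphism sheaf is $\Dcal^{reg}_{\hbar,c}$; your ``period-shift principle'' is the same statement said more abstractly.
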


We say that a parameter $c$ is {\it $\epsilon$-spherical} if $H_c=H_c\epsilon H_c$. In this case the categories $H_c\operatorname{-mod}$ and $\epsilon H_c \epsilon\operatorname{-mod}$ are equivalent via the bimodules $H_c\epsilon, \epsilon H_c$. The following result, due to Bezrukavnikov, is \cite[Theorem 5.5]{Etingof_affine}.

\begin{Prop}\label{Prop:spher_crit}
The parameter $c$ is $\epsilon$-spherical if and only if the algebra $\epsilon H_c \epsilon$ has finite homological dimension.
\end{Prop}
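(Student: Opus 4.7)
Consider the adjoint pair $F := \epsilon(-) : H_c\operatorname{-mod} \to \epsilon H_c\epsilon\operatorname{-mod}$ with left adjoint $G := H_c\epsilon \otimes_{\epsilon H_c\epsilon}(-)$. Since $H_c\epsilon$ is a direct summand of $H_c$ it is a projective $H_c$-module, so $F$ is exact; the unit $\operatorname{id}\to FG$ is a tautological isomorphism (both functors send $N$ to $\epsilon H_c\epsilon\otimes_{\epsilon H_c\epsilon}N = N$). Hence the $\epsilon$-spherical condition $H_c = H_c\epsilon H_c$ is equivalent to the counit $GF\to\operatorname{id}$ being an isomorphism, i.e., to $F$ being an equivalence of categories. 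A standing input is that $H_c$ itself has finite global dimension: $H_c$ carries a filtration whose associated graded is the skew group algebra $\C[\h\oplus\h^*]\# W$ of the finite group $W$ on the smooth affine variety $\h\oplus\h^*$, whose global dimension equals $2\dim\h$, and a standard Rees/PBW argument transfers this finiteness bound to the filtered deformation $H_c$.

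\textbf{Forward direction.} If $H_c = H_c\epsilon H_c$, then $H_c\epsilon$ is a projective generator of $H_c\operatorname{-mod}$ with endomorphism ring $\operatorname{End}_{H_c}(H_c\epsilon)^{\mathrm{op}}\cong \epsilon H_c\epsilon$. Classical Morita theory upgrades $F,G$ to mutually inverse equivalences $H_c\operatorname{-mod}\simeq \epsilon H_c\epsilon\operatorname{-mod}$, and since global dimension is a Morita invariant, the finiteness transports from $H_c$ to $\epsilon H_c\epsilon$.

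\textbf{Backward direction.} Assume $\epsilon H_c\epsilon$ has finite global dimension and, for contradiction, that $J := H_c\epsilon H_c \neq H_c$. Then $N := H_c/J$ is a nonzero $H_c$-module with $\epsilon N = 0$, i.e.\ $F(N) = 0$. The finite global dimension of $\epsilon H_c\epsilon$ promotes $G$ to a bounded functor $LG : D^b(\epsilon H_c\epsilon\operatorname{-mod})\to D^b(H_c\operatorname{-mod})$, and the identity $F\circ LG\simeq \operatorname{id}$ persists on derived categories. The cone $K(M)$ of the derived counit $LG\circ F(M)\to M$ therefore lies in $\ker F$ for every $M$, so the problem reduces to ruling out nonzero $H_c$-modules killed by $\epsilon$. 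The essential non-formal input, following Bezrukavnikov as recorded in \cite[Theorem 5.5]{Etingof_affine}, is a double-centralizer assertion: the finite-homological-dimension hypothesis forces the natural algebra map $H_c \to \operatorname{End}_{(\epsilon H_c\epsilon)^{\mathrm{op}}}(H_c\epsilon)$ to be an isomorphism; applying $F$ to the identity endomorphism and comparing $\epsilon$-parts then forces $J = H_c$, contradicting $N \neq 0$.

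\textbf{Main obstacle.} The genuine difficulty is the backward direction, and specifically the double-centralizer step: no formal adjunction manipulation rules out nonzero $H_c$-modules with $\epsilon N = 0$, because $F$ is a priori only a Serre quotient functor. The finite-homological-dimension hypothesis on $\epsilon H_c\epsilon$ is used precisely to make $LG$ behave like a fully faithful functor on $D^b$, equivalently to make $H_c\epsilon$ a compact tilting object for $(\epsilon H_c\epsilon)^{\mathrm{op}}$; this is exactly the content of the theorem and the only place the hypothesis enters non-trivially.
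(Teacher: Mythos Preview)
The paper does not give its own proof of this proposition; it simply records it as \cite[Theorem 5.5]{Etingof_affine}. So there is no ``paper's proof'' to compare against, and the question is whether your argument stands on its own.

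Your forward direction is correct and complete: if $H_c=H_c\epsilon H_c$ then $H_c\epsilon$ is a projective generator, Morita theory applies, and the finite global dimension of $H_c$ (via the PBW filtration) transfers to $\epsilon H_c\epsilon$.

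Your backward direction, however, is not a proof. You reduce the claim to a ``double-centralizer assertion'' and then cite \cite[Theorem 5.5]{Etingof_affine} as the source for that assertion---but that \emph{is} the theorem you are trying to prove. You acknowledge this yourself in the ``Main obstacle'' paragraph, where you say the double-centralizer step ``is exactly the content of the theorem.'' So the argument is circular. Moreover, even granting the double-centralizer isomorphism $H_c\xrightarrow{\sim}\operatorname{End}_{(\epsilon H_c\epsilon)^{\mathrm{op}}}(H_c\epsilon)$, your final sentence (``applying $F$ to the identity endomorphism and comparing $\epsilon$-parts then forces $J=H_c$'') is not a deduction I can follow: a faithfully balanced bimodule need not be a progenerator, and the double-centralizer property alone does not rule out nonzero $H_c$-modules annihilated by $\epsilon$.

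The actual argument of Bezrukavnikov recorded in \cite{Etingof_affine} is not a formal adjunction manipulation; it uses specific features of Cherednik algebras (deformation over the parameter space and behavior at generic $c$). If you want to give a self-contained proof, that is where the work lies.
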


Similarly, we can talk about $\epsilon_-$-spherical parameters. A complete analog of Proposition \ref{Prop:spher_crit} holds. In particular, we can use Lemma \ref{Lem:translation} to prove the following result.

\begin{Cor}\label{Cor:spher_reln}
The parameter $c$ is $\epsilon$-spherical if and only if $c+1$ is $\epsilon_-$-spherical.
\end{Cor}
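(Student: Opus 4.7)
The plan is to chain together the two versions of the sphericality criterion (Proposition \ref{Prop:spher_crit} and its $\epsilon_-$-analog) via the isomorphism of spherical subalgebras provided by Lemma \ref{Lem:translation}. Concretely, I would argue as follows.

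By Proposition \ref{Prop:spher_crit}, $c$ is $\epsilon$-spherical if and only if the spherical subalgebra $\epsilon H_c \epsilon$ has finite homological (global) dimension. By the analog of Proposition \ref{Prop:spher_crit} stated in the paragraph preceding the corollary, $c+1$ is $\epsilon_-$-spherical if and only if $\epsilon_- H_{c+1} \epsilon_-$ has finite homological dimension. Lemma \ref{Lem:translation} gives a filtered algebra isomorphism $\epsilon H_c \epsilon \cong \epsilon_- H_{c+1} \epsilon_-$, and since isomorphic algebras have the same global dimension, the two finite-homological-dimension conditions are equivalent. Stringing the three equivalences together yields the corollary.

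There is essentially no obstacle: the substantive inputs (the sphericality criterion in both its $\epsilon$- and $\epsilon_-$-forms, and the ``translation'' isomorphism between the two spherical subalgebras at shifted parameters) have already been recorded, and the corollary is simply their formal combination. The only thing worth flagging is that one must use the $\epsilon_-$-version of Proposition \ref{Prop:spher_crit}; the paper explicitly asserts that ``a complete analog'' holds, so I would invoke it without reproving it.
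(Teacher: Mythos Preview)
Your proposal is correct and follows exactly the approach the paper indicates: combine Lemma \ref{Lem:translation} with Proposition \ref{Prop:spher_crit} and its stated $\epsilon_-$-analog, using that isomorphic algebras have the same global dimension. There is nothing to add.
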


We will be interested in two classes of parameters $c$. The first class is the parameters $c$ with $c(s)\in \Z$ for all
$s$. The following result was obtained in
\cite[Corollary 4.2]{BEG_HC}.

\begin{Lem}\label{Lem:integr_spher}
If $c(s)\in \Z$ for all $s$, then the algebra $H_c$ is simple. In particular,
$c$ is both $\epsilon$- and $\epsilon_-$-spherical.
\end{Lem}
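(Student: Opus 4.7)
The plan is to prove simplicity of $H_c$ first by an induction on integer shifts of the parameter; both sphericity statements will then follow as immediate formal consequences. Indeed, once $H_c$ is known to be simple, the two-sided ideal $H_c\epsilon H_c$ is nonzero (it contains the nonzero idempotent $\epsilon$), so by simplicity $H_c\epsilon H_c=H_c$, which is exactly the definition of $\epsilon$-sphericity; the identical argument with $\epsilon_-$ in place of $\epsilon$ gives $\epsilon_-$-sphericity.

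The base case of the induction is $c=0$: here $H_0=D(\h^*)\#W$ is the crossed product of the (simple) polynomial Weyl algebra on $\h^*$ with the finite group $W$ acting faithfully by outer automorphisms, and such a crossed product is simple by standard crossed-product theory. For the inductive step I would transport simplicity from $H_c$ to $H_{c+1}$ via a Heckman--Opdam type shift bimodule. Lemma \ref{Lem:translation} already provides an isomorphism of spherical subalgebras $\epsilon H_c\epsilon\cong\epsilon_- H_{c+1}\epsilon_-$; I would upgrade this to a Morita equivalence between the full Cherednik algebras $H_c$ and $H_{c+1}$ by constructing an invertible $(H_{c+1},H_c)$-bimodule from the line bundle $\Ocal^{reg}(1)$ quantized as in Section \ref{SS_RCA}. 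On global sections this produces the classical rational shift operator of Berest--Etingof--Ginzburg interpolating between parameters $c$ and $c+1$.

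The main obstacle is establishing invertibility of this shift bimodule at integer parameters: this is precisely where the hypothesis $c(s)\in\Z$ is used, since for non-integer parameters the shift operator develops resonance poles that prevent the bimodule from being invertible, even though the spherical-level isomorphism of Lemma \ref{Lem:translation} continues to hold in that generality. Once invertibility is in hand, simplicity propagates along the integer lattice of parameters starting from $c=0$, and the sphericity conclusions follow as in the first paragraph.
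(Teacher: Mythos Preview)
The paper does not give its own proof of this lemma; it simply cites \cite[Corollary~4.2]{BEG_HC}. Your proposal is therefore a reconstruction of (a variant of) the Berest--Etingof--Ginzburg argument, and the overall strategy---induction on integer shifts starting from the simple algebra $H_0=D(\h^*)\#W$---is correct. A cleaner version of your inductive step is in fact available using only results already stated in the paper: if $H_c$ is simple then $c$ is $\epsilon$-spherical (as you note), so by Corollary~\ref{Cor:spher_reln} the parameter $c+1$ is $\epsilon_-$-spherical, and then Lemma~\ref{Lem:translation} together with the two sphericity statements yields Morita equivalences $H_{c+1}\sim \epsilon_-H_{c+1}\epsilon_-\cong\epsilon H_c\epsilon\sim H_c$, whence $H_{c+1}$ is simple. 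No separate invertibility check for a shift bimodule is needed---it comes for free from the inductive hypothesis. Your remark about resonance poles is therefore unnecessary, and also inaccurate as phrased: the shift is a Morita equivalence for \emph{generic} $c$, integer or not, failing only on a locally finite union of hyperplanes that happen to avoid the integer points.

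There is, however, a genuine gap in your induction. The shift in Lemma~\ref{Lem:translation} sends $c$ to $c+1$, meaning $c(s)\mapsto c(s)+1$ for \emph{all} reflections $s$ simultaneously---this is exactly what the line bundle $\Ocal^{reg}(1)$ encodes. Iterating $\pm 1$ diagonal shifts from $c=0$ you reach only parameters with all $c(s)$ equal. When $W$ has more than one conjugacy class of reflections (types $B_n$, $F_4$, $G_2$), the lemma as stated allows the integers $c(s)$ to differ across classes, and your induction never reaches those parameters. To close this gap you need the finer Heckman--Opdam shifts that move one reflection class at a time; these are available in \cite{BEG_HC} (and correspond geometrically to the individual divisors $D_s$ rather than to their sum), together with the corresponding analogues of Lemma~\ref{Lem:translation} and Corollary~\ref{Cor:spher_reln}.
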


The second class of parameters is as follows. Assume that $\g$ is simple. Let $h$ denote the Coxeter number for $W$ and let $d\in \Z_{\geqslant 0}$.
We consider constant functions $c:S\rightarrow \C$ such that $c(s)=d+\frac{1}{h}$ for all $s\in S$.

The  following result was obtained in \cite[Theorem 1.4, Proposition 1.7]{BEG}.

\begin{Prop}\label{Prop:BEG_comput}
There is a unique irreducible finite dimensional $H_c$-module. Its dimension is $(dh+1)^{\dim\h}$. Moreover, as a $W$-representation, it is isomorphic to the permutation module $\C(\Lambda_0/(dh+1)\Lambda_0)$, where, recall, $\Lambda_0$ is the co-root lattice.
\end{Prop}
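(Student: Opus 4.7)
Let me write $m = dh+1$, which is coprime to $h$, so the parameter $c$ is the constant function $m/h$ on $S$. My plan is to work inside category $\mathcal{O}_c$ for $H_c$, whose standard modules $\Delta_c(\tau) \cong \tau \otimes \C[\h]$ are indexed by irreducible $W$-representations $\tau$ and have unique simple quotients $L_c(\tau)$. The goal is to identify the claimed finite-dimensional irreducible with $L_c(\mathrm{triv})$ and then compute its dimension and $W$-character separately.

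For existence and uniqueness, I would use two complementary tools. The Dunkl embedding $H_c \hookrightarrow D(\h^{reg}) \# W$ shows that any finite-dimensional $H_c$-module must be supported at $0 \in \h$, so lies in category $\mathcal{O}_c$. At $c = m/h$ the Shapovalov contravariant form on $\Delta_c(\mathrm{triv})$ degenerates in degree $m$, producing a singular submodule of finite codimension whose quotient is $L_c(\mathrm{triv})$; its bigraded Hilbert series can be extracted from a Koszul-type resolution by standards, giving $\dim L_c(\mathrm{triv}) = m^{\dim \h}$. For uniqueness, I would invoke the KZ functor $\mathrm{KZ}\colon \mathcal{O}_c \to \mathcal{H}_q\text{-mod}$ at $q = e^{2\pi i m/h}$: by Ginzburg--Guay--Opdam--Rouquier its kernel is exactly the Serre subcategory of finite-dimensional modules in $\mathcal{O}_c$, and at our parameter a comparison of the number of simples of $\mathcal{H}_q$ with those of $\mathcal{O}_c$ forces $\ker \mathrm{KZ}$ to contain exactly one simple, necessarily $L_c(\mathrm{triv})$.

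To identify the $W$-module structure with $\C(\Lambda_0/m\Lambda_0)$, I would compute the $W$-character of $L_c(\mathrm{triv})$ at each $w \in W$ and compare it with the character $w \mapsto m^{\dim \h^w}$ of the permutation representation on $\Lambda_0/m\Lambda_0$. Writing $\chi_{L_c(\mathrm{triv})}$ as an alternating sum over a BGG-type resolution and using the Opdam spectral theory of the trigonometric Dunkl operators at root-of-unity parameters, the character at $w$ comes out as a product of trigonometric factors indexed by the exponents $e_i(w)$ of $w$ on $\h^*$, roughly of the form $\prod_i \sin(\pi m e_i(w)/h)/\sin(\pi e_i(w)/h)$.

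The main obstacle is to massage this trigonometric product into the clean form $m^{\dim \h^w}$. This rests on a nontrivial combinatorial identity that uses $\gcd(m,h) = 1$ and the invariance of the multiset of exponents of $w$ modulo $h$ under multiplication by units in $\Z/h\Z$, so that the nonvanishing factors pair off and collapse to a simple power. This identity is the heart of the Berest--Etingof--Ginzburg argument; once it is in hand, the dimension count $(dh+1)^{\dim \h}$ and the identification of the $W$-structure with $\C(\Lambda_0/(dh+1)\Lambda_0)$ both follow immediately.
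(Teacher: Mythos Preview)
The paper does not prove this proposition at all: it is stated with the attribution ``The following result was obtained in \cite[Theorem 1.4, Proposition 1.7]{BEG}'' and no argument is given. So there is no in-paper proof to compare your proposal against; what you have written is essentially a sketch of the Berest--Etingof--Ginzburg argument itself.

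As such a sketch, your outline is in the right spirit but has some imprecisions worth noting. First, your uniqueness argument via the KZ functor and a count of Hecke-algebra simples is not the route taken in \cite{BEG}; they argue more directly inside category $\mathcal{O}_c$ using the $\mathfrak{sl}_2$-triple $(E,H,F)$ and the constraint it places on the lowest-weight $W$-type of a finite-dimensional module. Your KZ argument would require as input the number of simple $\mathcal{H}_q$-modules at $q=e^{2\pi i/h}$ for an arbitrary Weyl group, which is itself nontrivial. Second, the combinatorial identity you invoke at the end --- ``invariance of the multiset of exponents of $w$ modulo $h$ under multiplication by units in $\Z/h\Z$'' --- is not the correct mechanism; the reduction of the trigonometric product to $m^{\dim\h^w}$ in \cite{BEG} goes through the factorization of $\det_{\h}(1-t^m w)/\det_{\h}(1-tw)$ and a limit $t\to 1$, not through any such symmetry of exponents. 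These are fixable, but as written the final paragraph does not actually close the argument.
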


The following is
\cite[Lemma 4.5]{Gordon}.

\begin{Prop}\label{Prop:spheric_param}
The parameter $c=d+\frac{1}{h}$ is $\epsilon$-spherical for $d\geqslant 0$
and $\epsilon_-$-spherical for $d>0$.
\end{Prop}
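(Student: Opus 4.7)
The plan is to first use Corollary~\ref{Cor:spher_reln} to reduce the $\epsilon_-$-sphericity of $c = d + \tfrac{1}{h}$ (for $d > 0$) to the $\epsilon$-sphericity of $(d-1) + \tfrac{1}{h}$: by that corollary, $c'$ is $\epsilon$-spherical iff $c' + 1$ is $\epsilon_-$-spherical, and applying this with $c' = (d-1) + \tfrac{1}{h}$ reduces the $\epsilon_-$ assertion to the $\epsilon$ assertion. So it suffices to prove that $c := d + \tfrac{1}{h}$ is $\epsilon$-spherical for every $d \geqslant 0$.

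For the $\epsilon$-claim, Proposition~\ref{Prop:spher_crit} reduces us to showing that $\epsilon H_c \epsilon$ has finite homological dimension. I would pick $\lambda \in H^2(\tilde X^{reg}, \mathbb C)$ with $\lambda_s = d + \tfrac{1}{h} + \tfrac{1}{2}$ for every conjugacy class $s$ of reflections, so that $c_\lambda = c$ by Definition~\ref{defi:c_from_lambda}. The identification in Section~\ref{SS_RCA} then gives
\[
\epsilon H_c \epsilon \;\cong\; \Gamma\!\left(\tilde{\Dcal}_{\hbar = 1,\lambda}\right)^{fin}.
\]
Abelian localization at $\lambda$ — the statement that $\Gamma\colon \tilde{\Dcal}_{\hbar=1,\lambda}\operatorname{-mod}^{coh} \to \epsilon H_c \epsilon\operatorname{-mod}$ is an equivalence of abelian categories — would then transport finite homological dimension from the source to the target. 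The source has finite homological dimension bounded by $\dim_{\mathbb C}\tilde X$, since $\tilde X$ has symplectic terminal singularities with $\operatorname{codim}_{\tilde X}\tilde X^{sing} \geqslant 4$, and coherent $\mathrm{Ext}^\bullet$ over $\tilde{\Dcal}_{\hbar=1,\lambda}$ therefore vanishes above that degree.

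The main obstacle is establishing abelian localization at the specific $\lambda$ above. The standard strategy exploits that the failure locus for abelian localization is a countable union of rational affine hyperplanes in $H^2(\tilde X^{reg}, \mathbb C)$: translating $\lambda$ by an integer multiple of $c_1(\mathcal{O}(\ell D))$ — a class which lifts from an ample line bundle on $X$ by Proposition~\ref{Prop:X_non_simply_laced}(ii) (or, in the simply laced case, from the $\mathbb Q$-factorial terminalization discussion preceding that proposition) — shifts $c$ by the corresponding integer multiple of $\ell$ (end of Section~\ref{SS_RCA}) and realizes a Morita equivalence of quantizations. This reduces the verification of abelian localization for every $d \geqslant 0$ to a single base parameter, namely $c = \tfrac{1}{h}$, and the verification that this base parameter lies in the localization chamber is \cite[Lemma~4.5]{Gordon} — the reference under which Proposition~\ref{Prop:spheric_param} is stated.
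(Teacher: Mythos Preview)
The paper does not give its own proof of this proposition; it simply records it as \cite[Lemma~4.5]{Gordon}. So there is nothing to compare against on the paper's side beyond the citation.

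Your reduction of the $\epsilon_-$-claim (for $d>0$) to the $\epsilon$-claim (for $d-1\geqslant 0$) via Corollary~\ref{Cor:spher_reln} is correct and is exactly how these two statements are linked. The problem is with the $\epsilon$-sphericity argument. The step ``translating $\lambda$ by an integer multiple of $c_1(\mathcal{O}(\ell D))$ \ldots\ realizes a Morita equivalence of quantizations'' is fine at the level of \emph{sheaves of quantizations} on $\tilde X$ (tensoring with a quantized line bundle is always an equivalence there), but it does not automatically descend to a Morita equivalence of the \emph{global section algebras} $\epsilon H_c\epsilon$. The translation bimodule $\Gamma(\tilde{\Dcal}_{\hbar=1,\lambda+\chi\leftarrow\lambda})$ is a Morita equivalence between $\epsilon H_{c_\lambda}\epsilon$ and $\epsilon H_{c_{\lambda+\chi}}\epsilon$ precisely when abelian localization holds at \emph{both} parameters, which is what you are trying to establish. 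So you cannot bootstrap from the base case $c=\tfrac{1}{h}$ to all $d+\tfrac{1}{h}$ this way without already knowing the answer at the target. (There is also a mismatch: shifting by $c_1(\mathcal O(\ell D))$ moves $c$ by a multiple of $\ell$, not by $1$, so even granting the equivalence you would only reach $d$ divisible by $\ell$.)

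Finally, the argument is circular in a more basic sense: you appeal to \cite[Lemma~4.5]{Gordon} for the base case, but that lemma already covers all the parameters $d+\tfrac{1}{h}$ at once. Once you are willing to cite it, there is nothing left to prove, and the abelian-localization detour adds no content.
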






\section{Deformation of $B_d$}\label{S_Procesi_bimodule_deformation}
Let $d$ be a positive integer.
The goal of this section is, for a Cherednik parameter $c$, to define an $H_{\hbar,c+d}$-$H_{\hbar,c}$-bimodule $B_{\hbar, c+d\leftarrow c}$ that is
a $\C[\hbar]$-flat deformation of $B_d$.  This is done
in Section \ref{SS_Cherednik_deform_constr}. This construction is based
on two algebro-geometric results of independent interest,
Propositions \ref{Prop:cohom_vanishing} and
\ref{Prop:CM}.

\subsection{Main geometric results}\label{SS_geom_results}
Recall the vector bundle $\Pro^{reg}$  and the line $\mathcal{O}^{reg}(1)$ on $X^{reg}$.
Let $\iota$ denote the inclusion $X^{reg}\hookrightarrow X$. We write $\mathcal{O}^{reg}(j)$
for the $j$th tensor power of $\mathcal{O}^{reg}(1)$.

Here is the first important result in this
section.

\begin{Prop}\label{Prop:cohom_vanishing}
The following claims hold:
\begin{enumerate}
\item
For all $j>0$,  the sheaf $\iota_*(\Pro^{reg,*}\otimes \mathcal{O}^{reg}(j)\otimes \Pro^{reg})$
on $X$ is maximal Cohen-Macaulay and its higher cohomology vanishes.
\item
In particular, we have $H^i(X^{reg}, \Pro^{reg,*}\otimes \mathcal{O}^{reg}(j)\otimes \Pro^{reg})=0$ for all $j>0$ and $i=1,2$.
\end{enumerate}
\end{Prop}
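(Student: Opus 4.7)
\emph{Plan.} First I would rewrite the sheaf canonically as $\mathcal{E}nd(\Pro^{reg})\otimes_{\Ocal_{X^{reg}}}\Ocal^{reg}(j)$, then reduce (2) to (1), and finally prove (1) by reducing modulo a large prime $p$ and using the quantization-theoretic construction of $\Pro$ from Section \ref{SS_Procesi_sheaves} together with the ``line bundle claim'' mentioned in the introduction (that $\iota_*\mathcal{L}$ is Cohen--Macaulay for any line bundle $\mathcal{L}$ on $X^{reg}$).

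\emph{Reduction of (2) to (1).} The variety $X$ is Gorenstein (as a symplectic singularity) and $\operatorname{codim}_X X^{sing}\geq 4$. If $\mathcal{G}=\iota_*\mathcal{F}$ (with $\mathcal{F}$ a vector bundle on $X^{reg}$) is maximal Cohen--Macaulay on $X$, then its depth along $X^{sing}$ is $\geq 4$, so the local cohomology sheaves $\mathcal{H}^i_{X^{sing}}(\mathcal{G})$ vanish for $i<4$. The exact triangle $R\Gamma_{X^{sing}}(\mathcal{G})\to \mathcal{G}\to R\iota_*\mathcal{F}$ (using $\iota^*\iota_*=\operatorname{id}$) then gives $R^j\iota_*\mathcal{F}\cong \mathcal{H}^{j+1}_{X^{sing}}(\mathcal{G})=0$ for $1\leq j\leq 2$. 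The Leray spectral sequence accordingly yields $H^i(X^{reg},\mathcal{F})\cong H^i(X,\mathcal{G})$ for $i=0,1,2$, which vanishes for $i=1,2$ by (1).

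\emph{Proof of (1).} The case $j=0$ is Lemma \ref{Lem:Pro_X}. For $j>0$, the plan is to reduce modulo $p\gg 0$, work over $\F=\overline{\F}_p$, and lift to characteristic $0$ at the end by the procedure of \cite[Section 4.4]{SRA_der}. Over $\F$, by Section \ref{SS_Procesi_sheaves}, we have the maximal Cohen--Macaulay sheaf of algebras $\mathcal{A}_\F$ (with no higher cohomology) whose restriction to the Azumaya locus is split by $\Pro^{reg}_\F$. Since $\Ocal^{reg}_\F(j)$ is a line bundle, $\Pro^{reg}_\F\otimes \Ocal^{reg}_\F(j)$ is another splitting bundle for the same Azumaya algebra, and
\[
\mathcal{E}nd(\Pro^{reg}_\F)\otimes \Ocal^{reg}_\F(j)\;\cong\;\mathcal{H}om\bigl(\Pro^{reg}_\F,\,\Pro^{reg}_\F\otimes \Ocal^{reg}_\F(j)\bigr)
\]
is the bimodule between the two Morita presentations of the Azumaya algebra; locally on the Azumaya locus it looks like $\Ocal^{reg}_\F(j)^{\oplus |W|^2}$. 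Combining (i) the Cohen--Macaulayness of $\iota_*\Ocal^{reg}_\F(j)$ (the independent claim), (ii) the global Cohen--Macaulayness and cohomology vanishing of $\mathcal{A}_\F$ and of its pushforward to $X_\F^{(1)}$ (as used in Lemma \ref{Lem:Pro_X}), and (iii) the fact that the twist by a line bundle is an invertible Morita operation on $\mathcal{A}_\F$-modules, one deduces that $\iota_*(\mathcal{E}nd(\Pro^{reg}_\F)\otimes \Ocal^{reg}_\F(j))$ is maximal Cohen--Macaulay with vanishing higher cohomology on $X_\F^{(1)}$.

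\emph{Main obstacle.} The hard step is precisely the local-to-global passage above: the Morita decomposition as a sum of copies of $\Ocal^{reg}_\F(j)$ is only available on the Azumaya (= regular) locus, while the pushforward is taken all the way to the singular variety $X_\F^{(1)}$. Consequently, Cohen--Macaulayness of $\iota_*(\mathcal{E}nd(\Pro^{reg}_\F)\otimes \Ocal^{reg}_\F(j))$ does not follow formally from that of $\iota_*\Ocal^{reg}_\F(j)$; one must exploit the global Cohen--Macaulay structure of $\mathcal{A}_\F$ together with the line-bundle claim to bound depth along the singular locus. Controlling this bound uniformly in the quantization parameter, and ensuring the lift to characteristic $0$ preserves the CM property, will be the central technical points.
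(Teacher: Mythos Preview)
Your reduction of (2) to (1) is correct and matches the paper. Your overall architecture for (1)---reduce mod $p\gg 0$, use the characteristic-$p$ construction of the Procesi sheaf via Azumaya algebras, then lift back---is also the paper's strategy. But the step you flag as the ``main obstacle'' is in fact a genuine gap, and the ingredients you list do not close it.

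The problem is twofold. First, your local description $\mathcal{E}nd(\Pro^{reg}_\F)\otimes\Ocal^{reg}_\F(j)\cong \Ocal^{reg}_\F(j)^{\oplus |W|^2}$ is only \'etale-local on the Azumaya locus; there is no way to promote it to a statement about $\iota_*$ on all of $X$. Second, even if you could, the Cohen--Macaulayness of $\iota_*\Ocal^{reg}_\F(j)$ (Proposition~\ref{Prop:CM}) gives you no control on higher cohomology for small $j$; Serre vanishing only kicks in for $j\gg 0$. Your ``Morita invertibility'' remark does not produce a global depth bound across the singular locus, and the ``uniformly in the quantization parameter'' worry is a symptom of not having the right mechanism.

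The paper's missing idea is to exploit the Frobenius together with the \emph{filtered deformation} structure of the quantization. On $X_\F^{(1)}$, the sheaf $\mathcal{A}_\F$ has the same coherent summands as $\Fr_*\mathcal{D}_\F$, so one is reduced to $\iota_*\bigl(\Fr_*\mathcal{D}_\F|_{reg}\otimes\Ocal_\F^{(1),reg}(j)\bigr)$. Now $\mathcal{D}_\F$ is a filtered deformation of $\Fr_*\Ocal_{X_\F}$, and by the projection formula
\[
(\Fr_*\Ocal_{X_\F^{reg}})\otimes \Ocal_\F^{(1),reg}(j)\;\cong\;\Fr_*\bigl(\Ocal_\F^{reg}(pj)\bigr).
\]
So the sheaf in question is a filtered deformation of $\Fr_*\Ocal_\F^{reg}(pj)$. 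Since $p\gg 0$, the integer $pj$ is in the stable range where $\iota_*\Ocal_\F^{reg}(pj)$ is maximal Cohen--Macaulay (Proposition~\ref{Prop:CM}) \emph{and} has vanishing higher cohomology (Serre vanishing for the ample $\Ocal(\ell)$, set up uniformly over the base ring in advance). Both properties then pass from the associated graded to the filtered deformation. This is what replaces your attempted Morita argument: the Frobenius multiplies the twist by $p$, pushing you into the range where the line-bundle claim and ampleness together suffice.
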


We will prove this proposition using another major result of this section:

\begin{Prop}\label{Prop:CM}
Let $\mathcal{L}$ be a line bundle on $X^{reg}$ and let $\iota$ denote the inclusion $X^{reg}\hookrightarrow X$.
Suppose there is $\ell>0$ such that
$\iota_*(\Lcal^{\otimes \ell})$ is a line bundle on $X$.
Then $\iota_* \mathcal{L}$ is a  Cohen-Macaulay sheaf.
\end{Prop}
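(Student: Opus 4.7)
My plan is to realize $\iota_*\mathcal{L}$ as a direct summand of the pushforward of the structure sheaf from a certain $\ell$-fold cyclic cover $\pi:Z\to X$ determined by $\mathcal{L}$, and then to show that $Z$ inherits symplectic singularities (hence Cohen-Macaulayness) from $X$. The conclusion will follow because a direct summand of a Cohen-Macaulay module of the same dimension is Cohen-Macaulay.

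Since the claim is local on $X$, I pass to an affine open $U\subseteq X$ on which $\mathcal{M}:=\iota_*(\mathcal{L}^{\otimes \ell})$ is trivial; a trivialization restricts to an isomorphism $\phi:\mathcal{L}^{\otimes \ell}|_{U^{reg}}\xrightarrow{\sim}\mathcal{O}_{U^{reg}}$. Using $\phi$ to supply the missing multiplication, I endow $\mathcal{A}^{reg}:=\bigoplus_{i=0}^{\ell-1}\mathcal{L}^{\otimes i}$ with the structure of a $\Z/\ell$-graded $\mathcal{O}_{U^{reg}}$-algebra, so that $Z^{reg}:=\operatorname{Spec}_{U^{reg}}\mathcal{A}^{reg}\to U^{reg}$ is a $\mu_\ell$-torsor and in particular étale; hence $Z^{reg}$ is smooth. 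Pushing forward, set $\mathcal{A}:=\iota_*\mathcal{A}^{reg}=\bigoplus_i \iota_*\mathcal{L}^{\otimes i}$, a coherent $\mathcal{O}_U$-algebra whose summands are reflexive rank-one sheaves on the normal variety $U$, and define $Z:=\operatorname{Spec}_U\mathcal{A}$. The morphism $\pi:Z\to U$ is finite, agrees with the torsor over $U^{reg}$, and carries a $\mu_\ell$-action. As a finite sum of reflexives on a normal scheme, $\mathcal{A}$ is $S_2$, and $\pi^{-1}(U^{sing})$ has codimension $\geqslant 4$ in $Z$, so $Z$ is regular in codimension one; combined with $S_2$, this shows $Z$ is normal.

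The next step is to verify that $Z$ inherits symplectic singularities from $X$. The symplectic form $\omega$ on $X^{reg}$ pulls back along the étale map $\pi|_{Z^{reg}}$ to a symplectic form $\omega_Z$ on $Z^{reg}$. Choose a resolution $\sigma:\widehat Z\to Z$ and a resolution $\tau:\widehat X\to X$, and then let $\widetilde X$ be a resolution of the main component of $\widehat Z\times_X\widehat X$, with projections $\alpha:\widetilde X\to \widehat Z$ and $\beta:\widetilde X\to \widehat X$ satisfying $\sigma\pi\alpha=\tau\beta$. Since $X$ has symplectic singularities, $\tau^*\omega$ is regular on $\widehat X$, so $\beta^*\tau^*\omega$ is regular on $\widetilde X$; on the dense open where $\alpha$ lands in $\sigma^{-1}(Z^{reg})$ this form coincides with $\alpha^*\sigma^*\omega_Z$, so $\alpha^*\sigma^*\omega_Z$ extends regularly to all of $\widetilde X$. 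As $\alpha$ is a proper birational morphism between smooth varieties, the locus in $\widehat Z$ over which $\alpha^{-1}$ fails to be defined has codimension $\geqslant 2$; by Hartogs' theorem for $p$-forms on smooth varieties, $\sigma^*\omega_Z$ is regular on all of $\widehat Z$. Thus $Z$ is a symplectic variety, and in particular Cohen-Macaulay.

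Finally, $\mathcal{A}=\pi_*\mathcal{O}_Z$ is Cohen-Macaulay as an $\mathcal{O}_U$-module since $\pi$ is finite. The $\mu_\ell$-isotypic decomposition $\mathcal{A}=\bigoplus_i \iota_*\mathcal{L}^{\otimes i}$ realizes $\iota_*\mathcal{L}$ as a direct summand of $\mathcal{A}$, all summands having support of dimension $\dim X$; since a direct summand of a Cohen-Macaulay module of equal dimension is Cohen-Macaulay, the sheaf $\iota_*\mathcal{L}$ is Cohen-Macaulay, as desired. The main obstacle I anticipate is the extension of $\sigma^*\omega_Z$ to a regular form on $\widehat Z$: this requires choosing the common dominating resolution $\widetilde X$ correctly and invoking the codimension $\geqslant 2$ bound for the indeterminacy of $\alpha^{-1}$ in order to apply Hartogs.
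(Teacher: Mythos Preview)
Your proof is correct and takes a genuinely different route from the paper's. The paper exploits the specific presentation of $X$ as a partial resolution of $(\h\oplus\h^*)/W$: it first shows, via quantization theory, that $\mathcal{L}|_{U^{reg}}$ carries a flat connection (Lemma~\ref{Lem:Poisson_structure}), then passes to the analytic topology, uses that $\pi_1$ of a small punctured neighborhood is a quotient of a stabilizer $W_v$ (Lemma~\ref{Lem:alg_fun_grp_fin}), and realizes the completed stalk of $\iota_*\mathcal{L}$ as an isotypic summand in the functions on a cover of the form $(\h\oplus\h^*)/W^0\times_Y X$, which has symplectic singularities. Your argument instead uses the classical index-one (cyclic) cover from birational geometry: the algebra $\bigoplus_{i=0}^{\ell-1}\iota_*\mathcal{L}^{\otimes i}$ gives a finite cover $Z\to U$ that is \'etale over $U^{reg}$, and a standard dominating-resolution argument shows $Z$ inherits symplectic singularities. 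Both approaches end the same way---$\iota_*\mathcal{L}$ as a direct summand of a Cohen--Macaulay pushforward---but yours avoids quantizations, D-modules, and analytic topology entirely, and uses nothing about $X$ beyond the fact that it is a symplectic variety with $\operatorname{codim}_X X^{sing}\geqslant 2$. In particular, your argument already proves the generalization the authors anticipate in Remark~\ref{Rem:CM_generalization}. The paper's approach, on the other hand, makes the finite cover more explicit in terms of the Weyl group data, which may be useful elsewhere.
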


Proposition \ref{Prop:CM} will be proved in Section \ref{SS_CM}.  Now we prove Proposition \ref{Prop:cohom_vanishing}
assuming Proposition \ref{Prop:CM}.

\begin{proof}[Proof of Proposition \ref{Prop:cohom_vanishing}]
We note that (1) implies (2): if $\mathcal{F}$ is a maximal Cohen-Macaulay sheaf on $X$
then $R^i\iota_*(\iota^*\mathcal{F})=0$ for $i=1,2$ because $\operatorname{codim}_X X^{sing}\geqslant 4$.
It follows that $H^i(X^{reg}, \Pro^{reg,*}\otimes \mathcal{O}^{reg}(j)\otimes \Pro^{reg})=
H^i(X, \iota_*(\Pro^{reg,*}\otimes \mathcal{O}^{reg}(j)\otimes \Pro^{reg}))$ for $i=1,2$.
The right hand side vanishes by (1).

The proof of (1) is in several steps.

{\it Step 1}. Recall that there is a positive integer $\ell$ such that $\Ocal^{reg}(\ell)$ is obtained by restricting an ample line bundle on $X$ that will be denoted by $\Ocal(\ell)$. So, for each coherent sheaf $\mathcal{F}$ on $X$, there is a positive integer $d(\mathcal{F})$ such that $H^i(X, \mathcal{F}\otimes \Ocal(d\ell))=0$ for all $i>0$ and all
$d>d(\mathcal{F})$. Set $d_0$ to be the maximum of $d(\mathcal{F})$, where $\mathcal{F}$ runs over $\iota_* \Ocal^{reg}(j)$
for $j=0,\ldots,\ell-1$. Now, by Proposition \ref{Prop:CM}, each of the sheaves $(\iota_* \Ocal^{reg}(j))\otimes \Ocal(d\ell)$ is a
maximal Cohen-Macaulay $\Ocal_X$-module.
We conclude that $\iota_*(\Ocal^{reg}(j))\otimes \Ocal(d\ell)$ is Cohen-Macaulay and has vanishing higher cohomology for all $d$ sufficiently large and all $j=0,\ldots,\ell-1$.

{\it Step 2}. Note that $X,\Ocal^{reg}(1)$ are defined over a finite localization of a  ring of algebraic integers, say $R$. After a further finite localization of $R$ we can achieve the following:
\begin{itemize}
    \item $(X^{reg})_R$ is smooth and $\Ocal^{reg}(1)$ is a base change of a line bundle, $\Ocal^{reg}_R(1)$, on $X_R^{reg}$,
    \item $\Ocal_R(\ell):=\iota_*\Ocal^{reg}_R(\ell)$ is an ample line bundle on $X_R$,
    \item $\iota_* \Ocal^{reg}_R(j)$ is a maximal Cohen-Macaulay sheaf on $X_R$ for all $j=0,\ldots,\ell-1$.
\end{itemize}
Using these properties we see that
$\iota_* \Ocal^{reg}_R(j)\otimes \Ocal_R(d\ell)$ is a maximal Cohen-Macaulay sheaf with vanishing higher cohomology for all
$d$ sufficiently large, say $d\geqslant d_1$, and all $j=0,\ldots,\ell-1$. We conclude
that for any field $\F$ that is an
$R$-algebra, we have
\begin{itemize}
\item[($\heartsuit$)]
$\iota_* \Ocal^{reg}_\F(j)\otimes \Ocal_\F(d\ell)$ is a maximal Cohen-Macaulay sheaf on $X_\F$
with vanishing higher cohomology for all $d\geqslant d_1$.
\end{itemize}

{\it Step 3}. We will use property ($\heartsuit$) to establish (1) in this and subsequent
step. Recall that we write $\Ecal$ for endomorphism sheaf of $\Pro$. Consider the scheme $X_\F^{\wedge}$ defined analogously to (\ref{eq:X_wedge}). It is enough to show
the direct analog of (1) over $\F$ for $p:=\operatorname{char}\F\gg 0$.
Let $\Ecal_\F^{\wedge}$ denote restriction of $\Ecal_\F$ to $X_\F^{\wedge}$
and, similarly, let $\Ocal_\F^{\wedge,reg}$ denote the restriction of $\Ocal_\F^{reg}$
to $X_\F^{\wedge,reg}$. Similarly to
Step 1 in the proof of \cite[Lemma 3.4]{HC}, we see that the restriction of
$\iota_*(\Ecal_\F^{reg}\otimes \mathcal{O}_\F^{reg}(j))$ to $X_\F^{\wedge}$
coincides with
\begin{equation}\label{eq:CM_sheaf1}
\iota^{\wedge}_*(\Ecal_\F^{\wedge,reg}\otimes \mathcal{O}_\F^{\wedge,reg}(j)),\end{equation}
where we write $\iota^\wedge$ for the inclusion $X_\F^{\wedge,reg}\hookrightarrow X_\F^{\wedge}$.
So we need to show that (\ref{eq:CM_sheaf1}) is maximal Cohen-Macaulay with vanishing
higher cohomology for all $j>0$. It is enough to do this after a Frobenius twist.
In the notation of Section \ref{SS_Procesi_sheaves}, (\ref{eq:CM_sheaf1}) becomes
\begin{equation}\label{eq:CM_sheaf2}
\iota^{\wedge_0}_*(\mathcal{A}_\F|_{X_\F^{(1),\wedge,reg}}\otimes \mathcal{O}_\F^{(1),\wedge,reg}(j)),\end{equation}
Recall that $\mathcal{A}_\F$ has the same direct summands as a quantization $\mathcal{D}_\F^{\wedge}$
of $X_\F^{\wedge}$. So we need to show that
\begin{equation}\label{eq:CM_sheaf3}
\iota^{\wedge}_*(\mathcal{D}^{\wedge}_\F|_{X_\F^{(1),\wedge,reg}}\otimes \mathcal{O}_\F^{(1),\wedge,reg}(j)),\end{equation}
is maximal Cohen-Macaulay with vanishing higher cohomology.
Again, arguing as in Step 1 of the proof of \cite[Lemma 3.4]{HC}, we see that
this sheaf is the restriction to $X_\F^{(1),\wedge}$ of
\begin{equation}\label{eq:CM_sheaf4}
\iota_*(\mathcal{D}_\F|_{X_\F^{(1),reg}}\otimes \mathcal{O}_\F^{(1),reg}(j)).\end{equation}
So it is enough to show that (\ref{eq:CM_sheaf4}) is maximal Cohen-Macaulay with vanishing higher cohomology. We will do this in
the next step.

{\it Step 4}. We note that $\mathcal{D}_\F$ is a filtered deformation of $\operatorname{Fr}_* \Ocal_{X_\F}$. It follows that  $$\mathcal{D}_\F|_{X_\F^{(1),reg}}\otimes \mathcal{O}_\F^{(1),reg}(j)$$
is a filtered deformation of
$$(\operatorname{Fr}_* \Ocal_{X^{reg}_\F})\otimes \mathcal{O}_\F^{(1),reg}(j)\cong
\operatorname{Fr}_*\mathcal{O}_\F^{reg}(pj).$$
Since $p$ is sufficiently large,  by ($\heartsuit$), $\iota_*\operatorname{Fr}_*\mathcal{O}_\F^{reg}(pj)$ is maximal Cohen-Macaulay
with vanishing higher cohomology. Similarly to the derivation of (1)$\Rightarrow$(2) in the beginning
of the proof, we see that  $R^1\iota_*\operatorname{Fr}_*\mathcal{O}_\F^{reg}(pj)=0$.
It follows that (\ref{eq:CM_sheaf4}) is a filtered deformation of $\iota_*\operatorname{Fr}_*\mathcal{O}_\F^{reg}(pj)$. Since the latter is maximal Cohen-Macaulay
with vanishing higher cohomology, so is (\ref{eq:CM_sheaf4}). This finishes the proof.
\end{proof}

\subsection{Cohen-Macaulay property}\label{SS_CM}
In this section we prove Proposition \ref{Prop:CM}. Let $\Lcal$ be as in Proposition
\ref{Prop:CM}. We need to prove that $\iota_* \Lcal$ is Cohen-Macaulay. We start with the following lemma.

\begin{Lem}\label{Lem:Poisson_structure} Every point $x\in X$ has a  Zariski open neighborhood, say $U$, such that $\mathcal{L}|_{U^{reg}}$ has a D-module structure.  \end{Lem}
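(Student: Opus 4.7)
The plan is to exploit the hypothesis that $\iota_*(\Lcal^{\otimes \ell})$ is a line bundle on $X$ for some $\ell>0$, together with the fact that line bundles on a scheme are Zariski-locally trivial.

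First I would pick a Zariski open neighborhood $U\ni x$ on which $\iota_*(\Lcal^{\otimes \ell})|_U$ is trivial, and choose an isomorphism $\sigma\colon\iota_*(\Lcal^{\otimes \ell})|_U\xrightarrow{\sim}\Ocal_U$. Restricting to $U^{reg}:=U\cap X^{reg}$, which is smooth as an open subscheme of $X^{reg}$, this gives a trivialization $\sigma\colon\Lcal^{\otimes \ell}|_{U^{reg}}\xrightarrow{\sim}\Ocal_{U^{reg}}$.

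Second, I would use an ``$\ell$-th root of $d$'' construction to produce a flat connection on $\Lcal|_{U^{reg}}$. For a local generator $t$ of $\Lcal$, the element $u:=\sigma(t^{\otimes \ell})$ is a local unit in $\Ocal_{U^{reg}}$, and I set
$$\nabla(t):=\tfrac{1}{\ell}\, t\otimes d\log u.$$
A short calculation using $d\log(f^\ell u)=\ell\, d\log f+d\log u$ shows this is compatible with the Leibniz rule $\nabla(ft)=f\nabla(t)+t\otimes df$, so it extends uniquely to a connection $\nabla$ on $\Lcal|_{U^{reg}}$. Flatness is immediate: the local connection $1$-form $\tfrac{1}{\ell}d\log u$ is closed, hence the curvature of $\nabla$ vanishes. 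A flat connection on a vector bundle over a smooth variety is exactly a left $\Dcal_{U^{reg}}$-module structure, which is the D-module structure sought.

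There is no substantial obstacle. The only point that requires real care is the consistency check in the previous paragraph, which reduces to the identity above under a unit change of local generator $t\mapsto ft$; this implicitly uses characteristic zero, as we divide by $\ell$. The finer geometry of $X$ -- its Poisson structure, its description as a partial resolution of $Y$, the structure of $\Pro$ -- plays no role at this stage; one uses only smoothness of $X^{reg}$ and the hypothesis that some power $\Lcal^{\otimes \ell}$ extends from $X^{reg}$ to a genuine line bundle on $X$.
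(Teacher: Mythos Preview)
Your argument is correct and considerably more elementary than the paper's. You are using the classical fact that a torsion line bundle on a smooth variety in characteristic zero carries a flat connection: fix a trivialization of $\Lcal^{\otimes\ell}|_{U^{reg}}$, then take the unique connection on $\Lcal|_{U^{reg}}$ whose $\ell$-th tensor power is the trivial connection $d$ on $\Ocal_{U^{reg}}$. Your formula $\nabla(t)=\tfrac{1}{\ell}\,t\otimes d\log u$ is exactly this, and your consistency check under $t\mapsto ft$ is what shows the local prescriptions glue; flatness follows since the connection form is exact. All that is needed downstream (in the proof of Proposition~\ref{Prop:CM}) is that $\Lcal^{an}|_{\tilde{Z}^{reg}}$ be a local system, and your flat connection provides that.

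The paper instead routes the argument through the quantization machinery already set up in Section~\ref{SS_RCA}: since $H^1(X^{reg},\Ocal)=0$, the line bundle $\Lcal$ quantizes to a $\Dcal_{\hbar,\lambda+c_1(\Lcal)}$-$\Dcal_{\hbar,\lambda}$-bimodule; on $U^{reg}$ one has $c_1(\Lcal|_{U^{reg}})=0$ (from the same triviality of $\Lcal^{\otimes\ell}|_{U^{reg}}$), so by the classification of formal quantizations the two quantizations agree there, and a bimodule over a single quantization induces a Poisson module structure, which over a smooth symplectic variety is a D-module. This approach has the advantage of tying the D-module structure to the ambient symplectic geometry and the quantizations already in play, but it invokes substantially heavier input (Bezrukavnikov--Kaledin periods, uniqueness of quantizations on $U^{reg}$ via $H^i(U^{reg},\Ocal)=0$ for $i=1,2$). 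Your approach trades this for a direct two-line construction that uses only smoothness and characteristic zero; for the purposes of Proposition~\ref{Prop:CM} nothing is lost.
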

\begin{proof}
Since $H^1(X^{reg},\Ocal)=0$, the line bundle $\mathcal{L}$ quantizes to a $\Dcal_{\hbar,\lambda+c_1(\Lcal)}$-$\Dcal_{\hbar,\lambda}$-bimodule for any $\lambda\in H^2(X^{reg},\C)$,
see, e.g., \cite[Section 5.1]{BPW}. Take a Zariski open neighborhood $U$ of $x$ in $X$ such that the line bundle $\iota_*(\mathcal{L}^{\otimes \ell})$ trivializes on $U$. We can assume that $U$ is affine. Then
$H^i(U^{reg},\Ocal)=0$ for $i=1,2$ (because $U$ is Cohen-Macaulay and $\operatorname{codim}_U U^{sing}\geqslant 4$), hence the formal quantizations of $U^{reg}$ are classified by their period, see \cite[Theorem 1.8]{BK}.

Since $\iota_*(\mathcal{L}^{\otimes \ell})$ trivializes on $U$, it follows that
$\mathcal{L}^{\otimes \ell}$ is trivial on $U^{reg}$, hence $c_1(\mathcal{L}|_{U^{reg}})=0$.
Therefore, we have
$$\Dcal_{\hbar,\lambda}|_{U^{reg}}\cong \Dcal_{\hbar,\lambda+c_1(\Lcal)}|_{U^{reg}}.$$
A vector bundle that quantizes to a bimodule over the same formal quantization on the left and on the right, gets a Poisson structure. But over a smooth symplectic variety, a coherent Poisson module is the same thing as a D-module, see, e.g., Step 3 of the proof of \cite[Lemma 3.9]{B_ineq}.
\end{proof}

Recall the morphism $\underline{\rho}:X\rightarrow Y$. Set $y:=\underline{\rho}(x)$.
Let $v\in \h\oplus \h^*$ be a point
in the preimage of $y$. Choose a $W_v$-stable small disc $Z$ around $v$.
Then $Z/W_v$ is a neighborhood of $y$ in the complex analytic topology. Set $\tilde{Z}:=\underline{\rho}^{-1}(Z/W_v)$.

\begin{Lem}\label{Lem:alg_fun_grp_fin}
The group $\pi_1(\tilde{Z}^{reg})$
is a quotient of $W_v$.
\end{Lem}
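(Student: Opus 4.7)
The plan is to realize $\tilde Z^{reg}$ as the orbit space of a $W_v$-action on a simply connected space, and then invoke the classical theorem of Armstrong on fundamental groups of orbit spaces: if a discrete group $G$ acts on a simply connected topological space $S$, then $\pi_1(S/G)\cong G/N$ where $N\trianglelefteq G$ is generated by point stabilizers; in particular, $\pi_1(S/G)$ is a quotient of $G$. So it suffices to construct such a cover.

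To construct one, I would form the fiber product $\hat Z:=Z\times_{Z/W_v}\tilde Z$ and let $\hat Z^\nu$ denote its normalization. The $W_v$-action on $Z$ lifts to $\hat Z^\nu$, and because $\tilde Z$ is normal one has $\hat Z^\nu/W_v=\tilde Z$. Let $q\colon \hat Z^\nu\to\tilde Z$ be the resulting quotient projection and $p\colon \hat Z^\nu\to Z$ the other projection, and set $\hat Z^\circ:=q^{-1}(\tilde Z^{reg})$, so that $\hat Z^\circ/W_v=\tilde Z^{reg}$ as required.

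The crux of the argument is then to show that $\hat Z^\circ$ is simply connected. The map $p$ is projective and birational, being a base-change of $\underline{\rho}\colon \tilde Z\to Z/W_v$ along the quotient $Z\to Z/W_v$, followed by normalization; in particular, $p$ is an isomorphism off its exceptional locus $E\subset \hat Z^\nu$, and $p(E)\subset Z$ has codimension $\geq 2$. Since $Z$ is a contractible disc, $Z\setminus p(E)$ is simply connected, hence so is $\hat Z^\nu\setminus E\cong Z\setminus p(E)$. The inclusion $\hat Z^\nu\setminus E\hookrightarrow \hat Z^\nu$ then induces a surjection on $\pi_1$ (codimension $\geq 1$ complement of a normal variety), so $\pi_1(\hat Z^\nu)=1$. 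Next, $\hat Z^\nu\setminus \hat Z^\circ=q^{-1}(\tilde Z^{sing})$ has codimension $\geq 4$ in $\hat Z^\nu$, since $\tilde Z^{sing}\subset X^{sing}\cap \tilde Z$ has codimension $\geq 4$ in $\tilde Z$ by Proposition~\ref{Prop:X_non_simply_laced}(i) and $q$ is finite. A purity argument should then give $\pi_1(\hat Z^\circ)=\pi_1(\hat Z^\nu)=1$, and Armstrong's theorem completes the proof.

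The main technical obstacle is the last $\pi_1$-comparison between $\hat Z^\circ$ and $\hat Z^\nu$: Zariski--Nagata purity handles this cleanly for the algebraic fundamental group, but for the topological fundamental group, removing a codimension-$\geq 2$ subset from a normal (non-smooth) variety can in principle introduce new loops coming from local fundamental groups of singularities. The codimension-$\geq 4$ bound on $\hat Z^\nu\setminus\hat Z^\circ$ ought nevertheless to suffice, perhaps after verifying that $\hat Z^\nu$ is smooth along a generic point of each component of $q^{-1}(\tilde Z^{sing})$, so that standard transversality lets us move loops and null-homotopies off the removed locus.
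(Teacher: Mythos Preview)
Your construction is considerably more elaborate than what the paper does, and the gap you yourself identify at the end is real and unnecessary.

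The paper's argument is a two-line observation that bypasses all of this machinery. Over the regular locus $Y^{reg}$ the partial resolution $\underline{\rho}$ is an isomorphism, so $(Z/W_v)^{reg}$ embeds into $\tilde Z^{reg}$ as the complement of the closed analytic subset $\underline{\rho}^{-1}((Z/W_v)^{sing})\cap \tilde Z^{reg}$. Since $\tilde Z^{reg}$ is a connected complex manifold, the inclusion of any Zariski-open (or complement of a closed analytic) subset induces a surjection on $\pi_1$; hence $\pi_1((Z/W_v)^{reg})\twoheadrightarrow \pi_1(\tilde Z^{reg})$. Finally $\pi_1((Z/W_v)^{reg})=W_v$, because $(Z/W_v)^{reg}=Z^{free}/W_v$ where $Z^{free}$ is the disc $Z$ with the codimension-$2$ non-free locus removed (hence still simply connected), and $W_v$ acts freely on it. That is the entire proof.

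Your route via the normalized fiber product $\hat Z^\nu$ is not wrong in spirit, and in fact the paper uses essentially this construction (called $X^0=Y^0\times_Y X$) later in Step~1 of the proof of Proposition~\ref{Prop:CM}. But for the present lemma it is overkill, and the step you flag as problematic---deducing $\pi_1(\hat Z^\circ)=\pi_1(\hat Z^\nu)$ from a codimension bound in a singular normal space---is genuinely delicate for the topological fundamental group and would need a separate argument. There is also a smaller issue you glossed over: the fiber product $\hat Z=Z\times_{Z/W_v}\tilde Z$ need not be irreducible, so you would have to pass to the component dominating $Z$ before normalizing. The paper's direct argument avoids both of these points entirely by working downstairs in the smooth locus $\tilde Z^{reg}$, where the standard $\pi_1$-surjectivity for open dense subsets of complex manifolds applies without any purity subtleties.
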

\begin{proof}
Indeed, $(Z/W_v)^{reg}$ embeds into $\tilde{Z}^{reg}$ as the complement to a
closed complex analytic subspace
hence $\pi_1((Z/W_v)^{reg})\twoheadrightarrow \pi_1(\tilde{Z}^{reg})
$. But $\pi_1((Z/W_v)^{reg})$ is easily seen to coincide with $W_v$.
\end{proof}

\begin{proof}[Proof of Proposition \ref{Prop:CM}]
What we need to show is that the completion $(\iota_* \Lcal)^{\wedge_x}$ is Cohen-Macaulay, this implies
that the stalk of $\iota_*\Lcal$ is Cohen-Macaulay.
The proof is in several steps.

{\it Step 1}.
Let $W^0$ denote the kernel of $\pi_1((Z/W_v)^{reg})\twoheadrightarrow \pi_1(\tilde{Z}^{reg})$. Set $Y^0:=(\h\oplus \h^*)/W^0$ and $X^0:=Y^0\times_Y X$.
Let $\eta$ denote the projection $X^0\twoheadrightarrow X$. The preimage $Z^0$ of $\tilde{Z}^{reg}$ in $X^0$ is smooth and is a  simply connected cover of $\tilde{Z}^{reg}$, by the choice of $W^0$. The morphism $\eta:Z^0\rightarrow \tilde{Z}^{reg}$ is etale. It follows that there is a Zariski open neighborhood $U_1$ of $x$ in $U$ such that $\eta$ is etale over $U_1^{reg}$. Define $U_1^0$ from the Stein decomposition for $\pi^{-1}(U_1^{reg})
\rightarrow U_1$ so that $\eta^{-1}(U_1^{reg})$ embeds into $U_1^0$ as an open subset and $U_1^0\rightarrow U_1$
is the quotient morphism for the group $\pi_1(\tilde{Z}^{reg})$. Similarly to the proof of
\cite[Lemma 2.5]{HC}, it follows that $U_1^0$ has symplectic singularties and hence Cohen-Macaulay.

{\it Step 2}.
Let $\Ocal^{an}_X, \Ocal^{an}_{X^{reg}}$ denote the sheaves of analytic functions on
$X$ and $X^{reg}$. Then we have the analytification functor $\bullet^{an}:=\Ocal^{an}_X\otimes_{\Ocal_X}\bullet$ from the category of coherent
$\Ocal_X$-modules to the category of coherent $\Ocal^{an}_{X}$-modules, and similarly for
$X^{reg}$.
We claim that  $(\iota_* \Lcal)^{an}$ coincides with the analytic push-forward of
$\Lcal^{an}$, to be denoted by $\iota^{an}_* \Lcal^{an}$.

Note that $\mathcal{O}_X^{an}$ is flat over $\Ocal_X$. So we have an isomorphism of
functors
\begin{equation}\label{eq:dual_an_iso}\mathcal{H}om_{\mathcal{O}_X}(\bullet, \mathcal{O}_X)^{an}\xrightarrow{\sim}
\mathcal{H}om_{\mathcal{O}_X^{an}}(\bullet^{an}, \mathcal{O}_X^{an})
\end{equation}
(for $\operatorname{Coh}(X)$
to $\operatorname{Coh}(X^{an})$)
Since $\operatorname{codim}_X X^{sing}\geqslant 2$, the pushforward $\iota_*\mathcal{L}$ is a reflexive $\mathcal{O}_X$-module, i.e., it coincides with its double dual.
It follows from (\ref{eq:dual_an_iso}) that $(\iota_*\mathcal{L})^{an}$ is a reflexive $\mathcal{O}_X^{an}$-module.
Note that $\mathcal{L}^{an}$ coincides with the pullback of $(\iota_*\Lcal)^{an}$.
Since  $(\iota_*\Lcal)^{an}$ is reflexive and $\operatorname{codim}_X X^{sing}\geqslant 2$,
we see that $(\iota_*\Lcal)^{an}$ coincides with $\iota_*^{an}\Lcal^{an}$.

{\it Step 3}.
Recall, Lemma \ref{Lem:Poisson_structure}, that $\Lcal|_{U^{reg}}$ has a D-module structure.
In particular, $\Lcal^{an}|_{\tilde{Z}^{reg}}$ is a $D$-module,
i.e., a vector bundle with a flat connection. It follows that it is the direct sum of  $\pi_1(\tilde{Z}^{reg})$-isotypic
component in $\eta_*\Ocal^{an}_{\eta^{-1}(\tilde{Z}^{reg})}$.
Therefore $(\iota_* \Lcal)^{\wedge_x}$ is also the direct sum of isotypic components in the complete ring
$\C[\eta^{-1}(X^{\wedge_x})]$. The latter is ring is Cohen-Macaulay because $U_1^0$ has symplectic singularities.
 Hence
$(\iota_*\Lcal)^{\wedge_x}$ is a Cohen-Macaulay $\C[X]^{\wedge_x}$-module.
\end{proof}

\begin{Rem}\label{Rem:CM_generalization}
We expect that a direct analog of
Proposition \ref{Prop:CM} holds for the partial resolutions of general conical symplectic singularities. The proof should be similar to the one we gave above, modulo some technical issues.
\end{Rem}

%


\subsection{Construction and properties of $B_{\hbar, c+d\leftarrow c}$}\label{SS_Cherednik_deform_constr}
Let $d>0$.
Recall the space
$$B_d:=\Gamma(X^{reg}, \Pro^{reg,*}\otimes \Ocal^{reg}(d)\otimes \Pro^{reg}).$$
We view $\Pro$ as a right $H$-module, so $B_d$ becomes an $H$-bimodule. Moreover, recall that
$\Pro$  has a  $(\C^\times)^2$-equivariant structure. The line bundle $\Ocal(1)=\Pro\epsilon_-$
inherits the $(\C^\times)^2$-equivariant structure.
This equips $B_d$ with a $(\C^\times)^2$-equivariant structure. We will mostly consider a part of the action, an action of $\C^\times$ given by $t.(x,y)=(x,t^{-2}y)$ for $x\in \h, y\in \h^*$.
In this section we produce a deformation of $B_d$ to an $H_{\hbar,c+d}$-$H_{\hbar,c}$-bimodule and study
its properties.

The deformation of $B_d$ is constructed as follows.
Recall, Section \ref{SS_RCA}, that $\Pro^{reg}$ quantizes to a
$\Dcal^{reg}_{\hbar,\lambda}-\Ecal^{reg}_{\hbar,\lambda}$-bimodule $\Pro^{reg}_{\hbar,\lambda}$ for any
$\lambda\in H^2(X^{reg},\C)$. Set $\nu:=c_1(\Ocal(1))$.
Also $\Ocal^{reg}(d)$ quantizes to a
$\Dcal^{reg}_{\hbar,\lambda+d\nu}$-$\Dcal^{reg}_{\hbar,\lambda}$-bimodule to be denoted by $\Dcal^{reg}_{\hbar,\lambda+d\nu\leftarrow \lambda}$. So
$$\Gamma(X^{reg}, \Pro^{reg,*}_{\hbar,\lambda+d\nu}\otimes_{\Dcal^{reg}_{\hbar, \lambda+d\nu}}\Dcal_{\hbar,\lambda+d\nu\leftarrow \lambda}\otimes_{\Dcal^{reg}_{\hbar,\lambda}}\Pro^{reg}_{\hbar,\lambda})$$
becomes a $(\C^\times)^2$-equivariant $H^{\wedge_\hbar}_{\hbar,\lambda+d\nu}$-$H^{\wedge_\hbar}_{\hbar,\lambda}$-bimodule. We set
$$B_{\hbar,\lambda+d\nu\leftarrow \lambda}:=
\Gamma(X^{reg}, \Pro^{reg,*}_{\hbar,\lambda+d\nu}\otimes_{\Dcal^{reg}_{\hbar, \lambda+d\nu}}\Dcal^{reg}_{\hbar,\lambda+d\nu\leftarrow \lambda}\otimes_{\Dcal^{reg}_{\hbar,\lambda}}\Pro^{reg}_{\hbar,\lambda})^{fin},$$
where the superscript ``fin'' means that we take the finite part for our  $\C^\times$-action.
This is a bigraded $H_{\hbar,\lambda+d\nu}$-$H_{\hbar,\lambda}$-bimodule. Note that for $d=0$, we recover the regular $H_{\hbar,\lambda}$-bimodule.

\begin{Rem}\label{Rem:bigraded}
By the construction, $B_{\hbar,\lambda+d\nu\leftarrow \lambda}$ carries an action of $(\C^\times)^2$. So it is bigraded.
\end{Rem}

It follows from (2) of Proposition \ref{Prop:cohom_vanishing} that $B_{\hbar,\lambda+d\nu\leftarrow \lambda}$ is a free graded $\C[\hbar]$-module with
$$B_{\hbar, \lambda+d\nu\leftarrow \lambda}/(\hbar)=B_d.$$

We now explain the choice of $\lambda$ we mostly need: we want $c_\lambda$ to take the same value
(to be denoted by $c$) on all simple reflections. Then $c_{\lambda+d\nu}$
takes value $c+d$ on every reflection. So we will write $H_{\hbar,c+d}, H_{\hbar,c}$ for the algebras and $B_{\hbar,c+d\leftarrow c}$ for the bimodule.

Now we explain an important property of the bimodules $B_{\hbar, c+?\leftarrow c}$.

\begin{Lem}\label{Lem:bimod_sph_isom}
The following claims are true:
\begin{enumerate}
\item For all $c\in \C$, we have a bigraded algebra isomorphism
$\epsilon_-H_{\hbar,c+1}\epsilon_-\cong
\epsilon H_{\hbar,c}\epsilon$.
\item Thanks to (1) we can view  $\epsilon_- B_{\hbar,c+d+1\leftarrow c}$ as $\epsilon H_{\hbar,c+d}\epsilon$-$H_{\hbar,c}$-bimodule.
For all $c\in \C$ and all $d>0$, we have an
$\epsilon H_{\hbar,c+d}\epsilon$-$H_{\hbar,c}$-bilinear bigraded isomorphism
$$\epsilon_- B_{\hbar,c+d+1\leftarrow c}\cong
\epsilon B_{\hbar,c+d\leftarrow c}.$$
\end{enumerate}
\end{Lem}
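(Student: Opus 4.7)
The strategy is to decompose the quantized Procesi bundle and its dual into their $\epsilon$- and $\epsilon_-$-isotypic components at the sheaf level, then feed these into the definitions of the spherical algebras and the bimodules $B_{\hbar,c+d\leftarrow c}$. The fundamental sheaf-level identifications I need are
\[
\Pro^{reg}_{\hbar,\lambda}\epsilon = \Dcal^{reg}_{\hbar,\lambda},\qquad \Pro^{reg}_{\hbar,\lambda}\epsilon_-=\Dcal^{reg}_{\hbar,\lambda\leftarrow\lambda-\nu},
\]
together with the dual statements $\epsilon\,\Pro^{reg,*}_{\hbar,\lambda} = \Dcal^{reg}_{\hbar,\lambda}$ and $\epsilon_-\Pro^{reg,*}_{\hbar,\lambda}=\Dcal^{reg}_{\hbar,\lambda-\nu\leftarrow\lambda}$. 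Each is a $(\C^\times)^2$-equivariant quantization of the classical fact that $\Pro^{reg}\epsilon=\Ocal^{reg}$, $\Pro^{reg}\epsilon_-=\Ocal^{reg}(1)$, or the corresponding duals; uniqueness follows from the cohomology vanishing for $\mathcal{E}nd(\Pro^{reg})$ recorded in Section \ref{SS_Procesi_sheaves} and the standing choice of $\Pro$ corresponding to the unit in the Namikawa-Weyl group, which pins down the quantization of $\Ocal^{reg}(1)$.

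Granting these, part (1) is an endomorphism-sheaf computation. Writing out $H_{\hbar,c}=\Gamma(\Ecal^{reg}_{\hbar,\lambda})^{fin}=\Gamma(\mathcal{E}nd_{\Dcal^{reg}_{\hbar,\lambda}}(\Pro^{reg}_{\hbar,\lambda})^{opp})^{fin}$ and cutting by idempotents on both sides gives
\[
\epsilon H_{\hbar,c}\epsilon = \Gamma\!\left(\mathcal{E}nd_{\Dcal^{reg}_{\hbar,\lambda}}(\Dcal^{reg}_{\hbar,\lambda})^{opp}\right)^{fin}=\Gamma(\Dcal^{reg}_{\hbar,\lambda})^{fin},
\]
\[
\epsilon_- H_{\hbar,c+1}\epsilon_- = \Gamma\!\left(\mathcal{E}nd_{\Dcal^{reg}_{\hbar,\lambda+\nu}}(\Dcal^{reg}_{\hbar,\lambda+\nu\leftarrow\lambda})^{opp}\right)^{fin}=\Gamma(\Dcal^{reg}_{\hbar,\lambda})^{fin},
\]
the second equality expressing that right-endomorphisms of an invertible bimodule recover the algebra on the other side. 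Every map is $(\C^\times)^2$-equivariant, so the resulting isomorphism is bigraded.

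For part (2), I substitute the dual identifications into the defining formula of $B_{\hbar,c+d+1\leftarrow c}$. Applying $\epsilon_-$ on the left extracts $\epsilon_-\Pro^{reg,*}_{\hbar,\lambda+(d+1)\nu}=\Dcal^{reg}_{\hbar,\lambda+d\nu\leftarrow\lambda+(d+1)\nu}$, and the middle tensor product
\[
\Dcal^{reg}_{\hbar,\lambda+d\nu\leftarrow\lambda+(d+1)\nu}\otimes_{\Dcal^{reg}_{\hbar,\lambda+(d+1)\nu}}\Dcal^{reg}_{\hbar,\lambda+(d+1)\nu\leftarrow\lambda}
\]
quantizes $\Ocal^{reg}(-1)\otimes\Ocal^{reg}(d+1)=\Ocal^{reg}(d)$ and so equals $\Dcal^{reg}_{\hbar,\lambda+d\nu\leftarrow\lambda}$ by uniqueness. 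Hence
\[
\epsilon_- B_{\hbar,c+d+1\leftarrow c}\cong \Gamma\!\bigl(X^{reg},\ \Dcal^{reg}_{\hbar,\lambda+d\nu\leftarrow\lambda}\otimes_{\Dcal^{reg}_{\hbar,\lambda}}\Pro^{reg}_{\hbar,\lambda}\bigr)^{fin}.
\]
The same computation applied to $\epsilon B_{\hbar,c+d\leftarrow c}$ uses $\epsilon\Pro^{reg,*}_{\hbar,\lambda+d\nu}=\Dcal^{reg}_{\hbar,\lambda+d\nu}$ and produces the same right-hand side. $\epsilon H_{\hbar,c+d}\epsilon$-bilinearity comes for free: the left action on both sides factors through $\Gamma(\Dcal^{reg}_{\hbar,\lambda+d\nu})^{fin}$, and this subalgebra is identified with $\epsilon_- H_{\hbar,c+d+1}\epsilon_-$ precisely by (1).

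The main obstacle I expect is a careful verification of the dual identifications $\epsilon\Pro^{reg,*}$ and $\epsilon_-\Pro^{reg,*}$ at the quantized level, specifically that they are exactly the bimodules described rather than something isomorphic up to a line-bundle twist. This reduces to tracking the $H$-action through $\mathcal{H}om_{\Dcal^{reg}_{\hbar,\lambda}}(-,\Dcal^{reg}_{\hbar,\lambda})$ and invoking the classical duals $(\Pro^{reg}\epsilon)^*=\Ocal^{reg}$, $(\Pro^{reg}\epsilon_-)^*=\Ocal^{reg}(-1)$ together with uniqueness of quantizations; the $(\C^\times)^2$-equivariance needed for bigradedness then follows automatically from the equivariant construction of all the sheaves involved, cf.\ Remark \ref{Rem:bigraded}.
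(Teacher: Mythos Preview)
Your proposal is correct and follows essentially the same route as the paper: both arguments work at the sheaf level on $X^{reg}$, identify $\Pro^{reg}_{\hbar,\lambda}\epsilon_-$ with the quantized line bundle $\Dcal^{reg}_{\hbar,\lambda\leftarrow\lambda-\nu}$ by uniqueness of quantization, and then collapse the resulting tensor products to land on $\Gamma(\Dcal^{reg}_{\hbar,\lambda+d\nu\leftarrow\lambda}\otimes_{\Dcal^{reg}_{\hbar,\lambda}}\Pro^{reg}_{\hbar,\lambda})^{fin}$ from both sides. The paper's write-up derives the dual identification $\epsilon_-\Pro^{reg,*}_{\hbar,\lambda}\cong\Dcal^{reg}_{\hbar,\lambda-\nu\leftarrow\lambda}$ on the fly rather than stating all four upfront, but the content is identical, and the obstacle you flag is exactly what the paper disposes of in one sentence via uniqueness.
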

\begin{proof}

Consider the $\Ecal^{reg}_{\hbar,c+d+1}$-$\Dcal^{reg}_{\hbar,c+d}$-bimodule $(\Pro^{reg}_{\hbar,c+d+1})^*\otimes_{\Dcal^{reg}_{\hbar,c+d+1}}\Dcal^{reg}_{\hbar,c+d+1\leftarrow c+d}$. We claim that $$\epsilon_-\left((\Pro^{reg}_{\hbar,c+d+1})^*\otimes_{\Dcal^{reg}_{\hbar,c+d+1}}\Dcal^{reg}_{\hbar,c+d+1\leftarrow c+d}\right)$$
is the regular $\Dcal^{reg}_{\hbar,c+d}$-bimodule.
 Indeed, $\Pro^{reg}_{\hbar,c+d+1}\epsilon_-$
 is the unique quantization of $\Ocal^{reg}(1)$ to a left $\Dcal^{reg}_{\hbar,c+d+1}$-module. The opposite endomorphism sheaf is $\Dcal^{reg}_{\hbar,c+d}$. Hence  $\Pro^{reg}_{\hbar,c+d+1}\epsilon_-\cong \Dcal^{reg}_{\hbar,c+d+1\leftarrow c+d}$ as a bimodule. Our claim follows.

To prove (1) we use the previous paragraph to see that
\begin{equation}\label{eq:spherical_iso_sheaf}\epsilon_- \Ecal^{reg}_{\hbar,c+1}\epsilon_-=\mathcal{E}nd_{\Dcal^{reg}_{\hbar,c+1}}(\Pro^{reg}_{\hbar,c+1}\epsilon_-)^{opp}=\Dcal_{\hbar,c}= \epsilon \Ecal^{reg}_{\hbar,c}\epsilon.
\end{equation}
Since $\Gamma(X, \Ecal^{reg}_{\hbar,c})^{fin}=H_{\hbar,c}$, (\ref{eq:spherical_iso_sheaf}) implies (1).

 We proceed to (2).
 We note that
 \begin{align*}
 &\epsilon_-\Pro^{reg*}_{\hbar,c+d+1}\otimes_{\Dcal^{reg}_{\hbar, c+d+1}}\Dcal^{reg}_{\hbar,c+d+1\leftarrow c}\otimes_{\Dcal^{reg}_{\hbar,c}}\Pro^{reg}_{\hbar,c}=\\&\left(\epsilon_-\Pro^{reg*}_{\hbar,c+d+1}
 \otimes_{\Dcal^{reg}_{\hbar, c+d+1}}\Dcal^{reg}_{\hbar,c+d+1\leftarrow c+d}\right)\otimes_{\Dcal^{reg}_{\hbar,c+d}}
 \left(\Dcal^{reg}_{\hbar,c+d\leftarrow c}\otimes_{\Dcal^{reg}_{\hbar,c+d}}\Pro^{reg}_{\hbar,c}\right)=\\
 &\Dcal^{reg}_{\hbar,c+d\leftarrow c}\otimes_{\Dcal^{reg}_{\hbar,c}}\Pro^{reg}_{\hbar,c}=\epsilon \Pro^{reg*}_{\hbar,c+d}\otimes_{\Dcal^{reg}_{\hbar,c+d}}
 \Dcal^{reg}_{\hbar,c+d\leftarrow c}\otimes_{\Dcal^{reg}_{\hbar,c}}\Pro^{reg}_{\hbar,c}.
 \end{align*}
 Passing to the global sections and taking the $\C^\times$-finite part, we arrive at the statement of (2).
\end{proof}

We will also need a description of $\operatorname{End}_{H_{\hbar,c}}(B_{\hbar,c+d\leftarrow c})$. Note that we have a graded $\C[\hbar]$-algebra homomorphism
$H_{\hbar,c+d}\rightarrow \operatorname{End}_{H_{\hbar,c}}(B_{\hbar,c+d\leftarrow c})$.

\begin{Lem}\label{Lem:endomorphism}
This homomorphism is an isomorphism.
\end{Lem}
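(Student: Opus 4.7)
The plan is to compute $\operatorname{End}_{H_{\hbar,c}}(B_{\hbar,c+d\leftarrow c})$ on the sheaf level on $X^{reg}$, where a Morita-theoretic identification makes the endomorphism ring transparent, then transfer the conclusion to global sections using the cohomology vanishing from Proposition~\ref{Prop:cohom_vanishing}, and finally compare with the classical situation at $\hbar=0$ by graded Nakayama.

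On the sheaf level, $B_{\hbar,c+d\leftarrow c}$ is by construction the $\C^\times$-finite global sections of
$$\mathcal{B}:=\Pro^{reg,*}_{\hbar,c+d}\otimes_{\Dcal^{reg}_{\hbar,c+d}}\Dcal^{reg}_{\hbar,c+d\leftarrow c}\otimes_{\Dcal^{reg}_{\hbar,c}}\Pro^{reg}_{\hbar,c},$$
a sheaf of $\Ecal^{reg}_{\hbar,c+d}$-$\Ecal^{reg}_{\hbar,c}$-bimodules on $X^{reg}$. Each of the three tensor factors is a local Morita-equivalence bimodule: the Procesi sheaf quantizations $\Pro^{reg}_{\hbar,?}$ and $\Pro^{reg,*}_{\hbar,?}$ link $\Dcal^{reg}_{\hbar,?}$ with $\Ecal^{reg}_{\hbar,?}$ by the definition (\ref{eq:ecal_reg_defn}), and $\Dcal^{reg}_{\hbar,c+d\leftarrow c}$ is invertible as a quantization of a line bundle; composing yields a sheaf-level identification $\mathcal{E}nd_{\Ecal^{reg}_{\hbar,c}}(\mathcal{B})\cong \Ecal^{reg}_{\hbar,c+d}$. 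Proposition~\ref{Prop:cohom_vanishing}(2) combined with $\C[\hbar]$-flatness of $\mathcal{B}$ (and graded Nakayama applied to cohomology, using the positive grading of the contracting $\C^\times$-action) gives $H^1(X^{reg},\mathcal{B})=H^2(X^{reg},\mathcal{B})=0$, while property~(iii) of the Procesi sheaf together with deformation gives $H^{>0}(X^{reg},\Ecal^{reg}_{\hbar,c+d})=0$. Taking $\C^\times$-finite global sections then yields a map
$$H_{\hbar,c+d}=\Gamma\bigl(\mathcal{E}nd_{\Ecal^{reg}_{\hbar,c}}(\mathcal{B})\bigr)^{fin}\longrightarrow \operatorname{End}_{H_{\hbar,c}}\bigl(B_{\hbar,c+d\leftarrow c}\bigr)$$
coinciding with the natural map of the lemma.

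To see this map is an isomorphism, I would reduce modulo $\hbar$ to the classical statement $H\to \operatorname{End}_H(B_d)$, which is an isomorphism by the same argument applied to the undeformed Procesi sheaf: viewing $B_d=\Gamma(X^{reg},\mathcal{H}om(\Pro^{reg},\Pro^{reg}\otimes \Ocal^{reg}(d)))$, the sheaf-level endomorphism ring identifies with $\mathcal{E}nd(\Pro^{reg}\otimes \Ocal^{reg}(d))\cong \mathcal{E}nd(\Pro^{reg})$ (twisting by a line bundle does not change the endomorphism sheaf), and global sections give $H$ by property~(i) of the Procesi sheaf. Graded Nakayama, applied in each $\C^\times$-isotypic component, then lifts the isomorphism from $\hbar=0$ to arbitrary $\hbar$. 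The main obstacle is the globalization step---identifying $\operatorname{End}_{H_{\hbar,c}}$ of global sections with global sections of the sheaf $\mathcal{E}nd$---where the relevant $H^1$ obstruction vanishes thanks to Proposition~\ref{Prop:cohom_vanishing}, but the argument must be carried out with some care, perhaps by first checking the statement over an affine open cover of $X^{reg}$ and then gluing.
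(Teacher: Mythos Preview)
Your sheaf-level computation on $X^{reg}$ is correct: the Morita argument indeed identifies $\mathcal{E}nd_{\Ecal^{reg}_{\hbar,c}}(\mathcal{B})$ with $\Ecal^{reg}_{\hbar,c+d}$, and its $\C^\times$-finite global sections are $H_{\hbar,c+d}$. The reduction to $\hbar=0$ via graded Nakayama is also fine and matches what the paper does (one checks $\operatorname{End}(B_\hbar)/\hbar\hookrightarrow\operatorname{End}_H(B_d)$ using $\hbar$-flatness of $B_\hbar$, and then the sandwich $H=H_\hbar/\hbar\to\operatorname{End}(B_\hbar)/\hbar\hookrightarrow\operatorname{End}_H(B_d)=H$ forces both arrows to be isomorphisms).

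The gap is exactly where you flag it: passing from $\Gamma(X^{reg},\mathcal{E}nd_{\Ecal^{reg}}(\mathcal{B}))$ to $\operatorname{End}_{H}(\Gamma(X^{reg},\mathcal{B}))$. This is \emph{not} governed by $H^1(\mathcal{B})$. The vanishing in Proposition~\ref{Prop:cohom_vanishing}(2) tells you $R\Gamma(\mathcal{B})$ is concentrated in degree~$0$; it does not tell you that an $H$-linear endomorphism $\phi$ of $B_d=\Gamma(\mathcal{B})$ sheafifies over $X^{reg}$. Concretely, for an affine open $V\subset X^{reg}$ there is no reason for $\Gamma(V,\mathcal{B})$ to equal $B_d\otimes_H\Gamma(V,\Ecal^{reg})$, because $X^{reg}$ is not affine over $\operatorname{Spec}\C[Y]$; so you cannot localize $\phi$ to $V$ by tensoring, and your ``check on an affine cover and glue'' plan has no starting point. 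One \emph{could} close this gap by invoking the derived equivalence $D^b(\operatorname{Coh} X)\simeq D^b(H\operatorname{-mod})$ via the Procesi bundle (from \cite{BK} or \cite{SRA_der}), which makes $R\Gamma$ fully faithful and hence identifies $\operatorname{End}_H(B_d)$ with sheaf endomorphisms; but that is considerably heavier than what is needed.

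The paper's proof sidesteps this entirely by working over $Y^{reg}\subset Y$ rather than $X^{reg}\subset X$. Since $Y=\operatorname{Spec}\C[Y]$ is affine, restricting $B_d$ to $Y^{reg}$ is honest module localization over the center, so \emph{any} $\phi\in\operatorname{End}_H(B_d)$ automatically localizes to a section of $\mathcal{E}nd_{H^{reg}}(B_d^{reg})$ over $Y^{reg}$. Torsion-freeness of $B_d$ over $\C[Y]$ makes this localization map injective; the same Morita computation (now trivial, since over $Y^{reg}$ the line bundle is trivial and $\Pro$ is the regular bimodule) gives $\mathcal{E}nd_{H^{reg}}(B_d^{reg})\cong H^{reg}$; and since $H$ is Cohen--Macaulay over $\C[Y]$ with $\operatorname{codim}(Y\setminus Y^{reg})\ge 2$, one has $\Gamma(Y^{reg},H^{reg})=H$. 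The composite $H\to\operatorname{End}_H(B_d)\hookrightarrow H$ is then the identity, forcing both maps to be isomorphisms. This is both shorter and avoids any globalization subtlety.
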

\begin{proof}
    It is enough to prove that the homomorphism
    \begin{equation}\label{eq:endom_isom}
    H\rightarrow
    \operatorname{End}_H(B_{d})
    \end{equation}
    is an isomorphism.
    Since $B_d$ is the global sections of a vector bundle on $X^{reg}$,
    and $\C[Y]=\C[X^{reg}]$, we see that $B_d$ is a torsion-free $\C[Y]$-module. It follows that
    we have an algebra embedding
    \begin{equation}\label{eq:endom_embed}
        \operatorname{End}_H(B_{d})\hookrightarrow
        \operatorname{End}_{H^{reg}}(B_d^{reg}).
    \end{equation}
    Here we write $H^{reg}, B_d^{reg}$ for the restrictions of $H,B_d$ to $Y^{reg}$.
    So it is enough to show that the composition of (\ref{eq:endom_embed}) and (\ref{eq:endom_isom}) is an isomorphism. On the other hand,
    \begin{equation}\label{eq:endom_local}
    H^{reg}\xrightarrow{\sim}\mathcal{E}nd_{H^{reg}}(B_d^{reg}).\end{equation}
    The composition $H\rightarrow \operatorname{End}_{H^{reg}}(B_d^{reg})$ of (\ref{eq:endom_embed}) and (\ref{eq:endom_isom})
    is obtained from (\ref{eq:endom_local}) by passing to global sections. This finishes the proof.
\end{proof}

\section{Borel-Moore homology}\label{S_BM_background}
\subsection{General Properties of Borel-Moore homology}\label{SS_BM_general}
 In this section we recall the notion of equivariant Borel-Moore homology and the necessary properties needed to prove the isomorphism in Theorem \ref{Thm:iso}. The main references we use are \cite{Kivinen}, \cite{Brion} and \cite{GKM}.

Let $X$ be a projective variety. Then we can consider the dualizing sheaf $\omega_X\in D^b_c(X)$, the bounded derived category of constructible sheaves on $X$. Then we can define:
$$H_*^{BM}(X)=H^{-*}(\omega_X).$$

Now assume we have an algebraic action of a torus $T$ on $X$. To consider the equivariant Borel-Moore homology we need to define the Borel-Moore homology of the Borel construction $X\times^TET$, where $ET\rightarrow BT$ is the universal $T$ bundle. Since this is not a finite type variety we need to do this by approximating $ET$ using finite type varieties, which can be done along the lines of \cite{BL}.

Note that from the map $X\times^TET\rightarrow BT$, we get a map $\zeta:H^{BM}_T(X)\rightarrow H^{BM}_T(pt)=\C[\h]$. Also there is an action of the constant sheaf $\C\in D^b_c(X)$ on $\omega_X$, which equips $H^{BM}_T(X)$ with an $H^*_T(X)$-module structure. In particular, $H^{BM}_T(X)$
becomes an algebra over $H^*_T(pt)=\C[\h]$. The map $\zeta:H^{BM}_T(X)\rightarrow H^{BM}_T(pt)$ is $\C[\h]$-linear. We get a map
\begin{equation}\label{eq:duality_map}
H^*_T(X)\rightarrow \operatorname{Hom}_{H^*_T(pt)}(H^{BM}_T(X),H^*_T(pt))\end{equation}
by $\alpha\mapsto [\beta\rightarrow \zeta(\alpha\beta)]$.
This map is an isomorphism when $X$ is equivariantly formal, which follows from \cite[Proposition 1]{Brion}.
Also, when $X$ is equivariantly formal the dual map
\begin{equation}\label{eq:duality_map1}
H^{BM}_T(X)\rightarrow \operatorname{Hom}_{H^*_T(pt)}(H^*_T(X),H^*_T(pt))
\end{equation}
is also an isomorphism.

We further have the following two localization lemmas which follow from \cite[Lemma 1]{Brion}.
\begin{Lem}\label{Lem:localization}
	Suppose that $X$ has isolated $T$-fixed points. Consider the inclusion of the fixed points $X^T\hookrightarrow X$. This induces a map
	$$H^{BM}_T(X^T)\rightarrow H^{BM}_T(X)$$
	This map is an isomorphism after inverting finitely many characters of $T$.
\end{Lem}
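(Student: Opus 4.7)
The plan is to argue via the standard Borel-Atiyah-Bott localization technique, reducing the statement to the vanishing, after localization, of the equivariant Borel-Moore homology of the open complement $U:=X\setminus X^T$.

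First, the closed-open decomposition $X^T\sqcup U = X$ induces the long exact sequence in equivariant Borel-Moore homology
$$\cdots\to H^{BM}_T(X^T)\xrightarrow{i_*} H^{BM}_T(X)\to H^{BM}_T(U)\to H^{BM}_T(X^T)\to\cdots$$
in which the first displayed arrow is the map of the lemma. Since localization of $\C[\h]$-modules is exact, it suffices to exhibit finitely many characters $\chi_1,\ldots,\chi_k$ of $T$ such that the localization $H^{BM}_T(U)[\chi_1^{-1},\ldots,\chi_k^{-1}]$ vanishes.

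Next, I would stratify $U$ into finitely many locally closed $T$-stable subvarieties $S_1,\ldots,S_k$ on each of which the stabilizer $T_j\subsetneq T$ is a fixed proper subtorus (such a stratification by constant isotropy type exists for any algebraic torus action on a variety of finite type, and here no stabilizer can equal $T$ because $U$ has no $T$-fixed points). Iterated use of the long exact sequence for closed-open pairs within this stratification reduces the problem to showing that each $H^{BM}_T(S_j)$ is killed after inverting a single character $\chi_j$ of $T$ whose restriction to $T_j$ is trivial.

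For a fixed stratum $S_j$, the group $T_j$ acts trivially on $S_j$ while $T/T_j$ acts with finite stabilizers, and the identification $S_j\times^T ET$ with a fibration over $BT_j$ with fiber of the form $S_j/(T/T_j)$ exhibits $H^{BM}_T(S_j)$ as a module over $H^*_{T_j}(\mathrm{pt})=\C[\operatorname{Lie}(T_j)^*]$, with the $\C[\h]$-action factoring through the surjection $\C[\h]\twoheadrightarrow\C[\operatorname{Lie}(T_j)^*]$ dual to $T_j\hookrightarrow T$. Any character $\chi_j$ of $T$ vanishing on $T_j$ (which exists precisely because $T_j$ is a proper subtorus) thus acts as zero on $H^{BM}_T(S_j)$, so inverting $\chi_j$ kills this module. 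Collecting one such $\chi_j$ per stratum produces the required finite set of characters, and the main obstacle is really just the existence of the finite constant-isotropy stratification; in the applications of this lemma below the $\chi_j$ may be taken to be the weights of $T$ on the tangent spaces to $X$ at the fixed points, which is the form in which the localization theorem is used in \cite{Brion}.
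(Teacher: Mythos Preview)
Your argument is correct and is precisely the standard proof of the localization theorem; the paper does not supply its own argument but simply cites \cite[Lemma~1]{Brion}, where exactly this stratification-by-isotropy reasoning is carried out. One small notational slip: in the paper's conventions $H^*_{T_j}(\mathrm{pt})=\C[\operatorname{Lie}(T_j)]=\operatorname{Sym}(\operatorname{Lie}(T_j)^*)$, not $\C[\operatorname{Lie}(T_j)^*]$, though this does not affect the argument.
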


A dual result also holds for the cohomology $H^*_T(X)$, i.e. we have a natural map
$$ H^*_T(X)\rightarrow H^*_T(X^T)$$
that is an isomorphism after inverting the same characters as in the above lemma.

\begin{Lem}\label{rel:localization}
Let $T'\subset T$ and $X$ be a variety with $T$-action, then we have the localization map
$$H^{BM}_T(X^{T'})\rightarrow H^{BM}_T(X)$$
becomes an isomorphism after inverting those characters of Lemma \ref{Lem:localization} that do not vanish on $T'$.
\end{Lem}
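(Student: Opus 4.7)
The plan is to use the standard Atiyah--Bott localization strategy relative to the subtorus $T'$: set up a long exact sequence for the pair $(X, X\setminus X^{T'})$ and reduce to a vanishing statement on the complement, which is then handled by stratifying according to $T'$-stabilizer type. Set $U := X \setminus X^{T'}$ and let $i: X^{T'} \hookrightarrow X$ and $j: U \hookrightarrow X$ be the closed and open inclusions. The $T$-equivariant long exact sequence reads
$$\cdots \to H^{BM}_T(X^{T'}) \xrightarrow{i_*} H^{BM}_T(X) \xrightarrow{j^*} H^{BM}_T(U) \to \cdots$$
Since localization at a multiplicative system is exact, it suffices to show that $H^{BM}_T(U)$ vanishes after inverting a finite collection of characters of $T$ each of which is non-trivial on $T'$.

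To this end, stratify $U$ into finitely many $T$-stable locally closed subvarieties $S_\alpha$ on which the $T'$-stabilizer has constant identity component $T_\alpha^0$; such a stratification exists because the stabilizer type is constructible for algebraic torus actions. Since $U\cap X^{T'}=\emptyset$, each $T_\alpha^0$ is a proper subtorus of $T'$, so we can choose a character $\chi_\alpha$ of $T$ that vanishes on $T_\alpha^0$ but not on $T'$. Iterating the long exact sequence over the closure filtration of the stratification reduces the vanishing of $H^{BM}_T(U)$ to the analogous vanishing $H^{BM}_T(S_\alpha)_{\chi_\alpha}=0$ for each stratum $S_\alpha$.

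On a fixed stratum, the subtorus $T_\alpha^0$ acts trivially on $S_\alpha$, and the image of $T'$ in $T/T_\alpha^0$ acts on $S_\alpha$ with finite stabilizers. Via the Borel construction together with a splitting of tori $T \cong T_\alpha^0 \times T/T_\alpha^0$, one has
$$H^{BM}_T(S_\alpha) \cong H^{BM}_{T/T_\alpha^0}(S_\alpha) \otimes_{\C} H^*_{T_\alpha^0}(pt),$$
and the first factor is a torsion module over $H^*_{T/T_\alpha^0}(pt)$ because a torus acting with finite stabilizers on a finite-dimensional variety has equivariant cohomology concentrated in bounded degrees. Since $\chi_\alpha$ lies in $H^*_{T/T_\alpha^0}(pt)$, it acts locally nilpotently on the first tensor factor and inverting it kills the whole tensor product. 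The main obstacle is precisely this last step: one must verify carefully that the quotient-torus action with finite stabilizers produces torsion in equivariant Borel--Moore homology, which is a standard but technical spectral-sequence argument (parallel to the cohomological version of \cite[\S6]{GKM} and to \cite[Lemma 1]{Brion} cited for Lemma \ref{Lem:localization}) requiring care about grading conventions and the quasi-projectivity of the strata.
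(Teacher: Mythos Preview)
Your argument is correct and follows the standard Atiyah--Bott/Brion approach. The paper itself does not give an independent proof of this lemma; it simply records that both Lemma~\ref{Lem:localization} and Lemma~\ref{rel:localization} ``follow from \cite[Lemma~1]{Brion}'', and Brion's proof is exactly the stratification-by-stabilizer-type argument you have written out, so there is no substantive difference to report.

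One small point worth tightening: you should make explicit why the characters $\chi_\alpha$ you produce can be taken among the finitely many characters already appearing in Lemma~\ref{Lem:localization}. In Brion's setup those characters are the $T$-weights occurring in the normal directions to the fixed locus (equivalently, the weights on one-dimensional orbits), and for a stratum $S_\alpha$ with $T'$-stabilizer component $T_\alpha^0\subsetneq T'$ at least one such weight must be nontrivial on $T'$ while vanishing on $T_\alpha^0$; otherwise $T'$ would act trivially on a neighbourhood of a point of $S_\alpha$, contradicting $S_\alpha\cap X^{T'}=\emptyset$. With that remark your $\chi_\alpha$ lies in the prescribed finite set and the statement is proved exactly as claimed.
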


We also have that these two localization maps are compatible with the action of $H^*_T(X)$ on $H^{BM}_T(X)$ in the sense that we have a commuting diagram
\[\begin{tikzcd}
H^*_T(X)\otimes H^{BM}_T(X^T)\ar[d]\ar[rr]&& H^*_T(X)\otimes H^{BM}_T(X)\ar[d]\\
H^*_T(X^T)\otimes H^{BM}_T(X^T)\ar[r]& H^{BM}_T(X^T)\ar[r]& H^{BM}_T(X)
\end{tikzcd}.\]

Further, we can explicitly understand the equivariant Borel-Moore homology under certain conditions on the $T$-action on the space $X$, using the map in Lemma \ref{Lem:localization}.

We first introduce some notation, that we will need to state the result. Consider a $1$-dimensional orbit $E$ of $T$ in $X$. Then the action of $T$ on $E$ factors through some character $\chi:T\rightarrow\mathbb{G}_m$, such that the kernel of $\chi$ is precisely the stabilizer of a point in $E$. Note that there are two choices here by changing the sign, but this does not make a difference to the conditions in the following proposition. Taking the closure of $E$ we get two fixed points in the boundary, which we denote by $x_0$ and $x_\infty$. With this notation we get the following result \cite[Corollary 1]{Brion}.

\begin{Prop}\label{BMcoho.description}
	Let $X$ be a proper equivariantly formal variety with a $T$-action. Assume further that it only has finitely many $1$-dimensional orbits. Let $E_i, i=1,\ldots,k$ be these orbits and let $\chi_i, i=1,\ldots,k,$ denote the corresponding characters. Then $H^{BM}_T(X)\subset H^{BM}_T(X^T)\otimes_{H^*_T(pt)} Frac(H^*_T(pt))$ coincides with the subset of all tuples $(f_x)_{x\in X^T}$ (with $f_x\in Frac(H^*_T(pt))$, note that only finitely many $f_x$ are nonzero because we consider BM homology) satisfying the following conditions
	\begin{itemize}
		\item Let $x\in X^T$. Let $E_1,\ldots,E_k$ be all 1-dimensional orbits whose closure contains $x$, and let $\chi_1,\ldots,\chi_k$ be the corresponding characters. Then $f_x\prod_{i=1}^k \chi_i\in H^*_{T}(pt)$ for any $x\in X^T$.
		\item
		Let $E$ be a 1-dimensional $T$-orbit and let $x_0,x_\infty$
		be the two points in the boundary of $E$. Let $\chi$ be the character corresponding to $E$.
		Then
		$$Res_{\chi=0}(f_{x_0}+f_{x_\infty})=0$$
		for all 1-dimensional orbits $E$.
	\end{itemize}
\end{Prop}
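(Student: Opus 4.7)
My plan is to run the standard GKM argument for Borel--Moore homology, reducing the computation to the 1-skeleton of the $T$-action on $X$. By equivariant formality, $H^{BM}_T(X)$ is a free $H^*_T(pt)$-module, so Lemma~\ref{Lem:localization} yields an injection
$$
H^{BM}_T(X) \hookrightarrow H^{BM}_T(X^T) \otimes_{H^*_T(pt)} \operatorname{Frac}(H^*_T(pt)),
$$
under which classes are identified with tuples $(f_x)_{x\in X^T}$. It then remains to cut out the image by the two bullet conditions.

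For the pole condition, I would apply the relative localization Lemma~\ref{rel:localization} to each codimension-one subtorus $T'_i := \ker \chi_i$ coming from a 1-dimensional orbit. The fixed locus $X^{T'_i}$ is the union of $X^T$ together with the closures of those 1-dimensional orbits whose character is a rational multiple of $\chi_i$, each such closure being a copy of $\mathbb{P}^1$. A direct computation on $\mathbb{P}^1$ shows that, in the basis $\{[x_0],[\overline E]\}$ of $H^{BM}_T(\mathbb{P}^1)$, the fundamental class $[\overline E]$ localizes to $\chi^{-1}([x_0]-[x_\infty])$, so the image of $H^{BM}_T(X^{T'_i})$ in the localized fixed-point side has poles along $\chi_i$ of order at most one per incident orbit. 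Intersecting these statements for all $i$ and projecting to the $\{x\}$-component yields $f_x\cdot\prod_{E\ni x}\chi_E\in H^*_T(pt)$.

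For the residue condition, I would use functoriality of BM homology under the proper inclusion $\overline E\hookrightarrow X$ for each 1-dimensional orbit $E$. The description of $H^{BM}_T(\mathbb{P}^1)$ above shows that admissible tuples at $(x_0,x_\infty)$ have the form $(a+c\chi^{-1},\,b-c\chi^{-1})$ with $a,b,c\in H^*_T(pt)$, whose sum $a+b$ has no pole at $\chi=0$; this is exactly $\operatorname{Res}_{\chi=0}(f_{x_0}+f_{x_\infty})=0$, and pulling back along the closed embedding forces the same condition on any class in $H^{BM}_T(X)$.

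The main obstacle is the converse: showing every tuple satisfying both bullets actually lies in the image of $H^{BM}_T(X)$. I would do this by a rank count, exploiting that by equivariant formality $H^{BM}_T(X)$ is free of rank $|X^T|$ over $H^*_T(pt)$, and checking that the module of tuples satisfying both conditions has the same generic rank by an inductive argument along the GKM graph, with the base case again supplied by the $\mathbb{P}^1$ calculation. Alternatively, and more cleanly, I would deduce the statement from its dual version for equivariant cohomology (the classical GKM theorem in the form of \cite[Corollary~1]{Brion}) and transport it to Borel--Moore homology via the perfect pairing \eqref{eq:duality_map1} coming from equivariant formality.
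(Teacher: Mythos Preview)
The paper does not prove this proposition; it simply cites \cite[Corollary~1]{Brion}. Your final alternative---deducing the statement from the cohomological GKM description via the duality \eqref{eq:duality_map1}---is essentially that citation, so your proposal ends in the same place the paper does.

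Your direct argument, however, has a gap in the residue step. Borel--Moore homology is covariant for proper maps and contravariant only for open embeddings (or smooth/lci maps); there is no pullback along the closed embedding $\overline{E}\hookrightarrow X$ when $X$ is singular, so you cannot ``pull back along the closed embedding'' to impose the residue condition on a class in $H^{BM}_T(X)$. The fix is to reuse the same tool you invoked for the pole condition: apply Lemma~\ref{rel:localization} with $T'=\ker\chi$ and localize at the prime $(\chi)$. Then $X^{T'}$ breaks into connected components, one of which is $\overline{E}\cong\mathbb{P}^1$, and the localized BM homology decomposes as a direct sum over components. Projecting to the $\overline{E}$-summand (a direct-sum projection, not a pullback) lands your class in $H^{BM}_T(\mathbb{P}^1)_{(\chi)}$, where your $\mathbb{P}^1$ computation then gives the residue condition. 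In fact this single partial-localization step produces both bullets simultaneously, and is how Brion organizes the argument; separating the two conditions and appealing to a nonexistent functoriality is what introduced the error.
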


%

The above results are stated for varieties, but we will need them for ind-schemes. In this setting the corresponding functors $H^*_T$ and $H^{BM}_T$ can be defined respectively as the limit and colimit over the finite dimensional $T$-stable subvarieties and so we can use the above results for varieties to get similar results for ind-schemes.

\begin{Rem}\label{Rem:homology_duality}
Under $\operatorname{Hom}$ colimits are sent to limits. So we still have an isomorphism
$$H^*_T(X)\xrightarrow{\sim} \Hom_{H^*_T(pt)}(H^{BM}_T(X),H^*_T(pt))$$
Note that in the finite type scheme case we also have the dual map
$$H^{BM}_T(X) \rightarrow \Hom_{H^*_T(pt)}(H^*_T(X),H^*_T(pt))$$
being an isomorphim, but in the ind-scheme case this is only true when we consider continuous $\Hom$ with respect to the limit topology.
\end{Rem}

\begin{Rem}\label{Rem:BM_coho_ind}
In the case of ind-schemes, we have a direct analog of Proposition \ref{BMcoho.description} under the following conditions:
\begin{itemize}
    \item $X$ is an ind-proper equivariantly formal ind-scheme with a $T$-action.
    \item $X$ has isolated fixed points.
    \item For any two fixed points $x,x'$, there are finitely many one-dimensional orbits $E$ whose boundary is $\{x,x'\}$.
\end{itemize}
\end{Rem}

\subsection{Borel-Moore homology of equivalued unramified affine Springer fibers}\label{SS_BM_Springer}
In this section we will describe some properties of the Borel-Moore homology of our affine Springer fibers. We use the above results and the main reference for this section is \cite{GKM1} and \cite{GKM2}.

We use the notation $\mathcal{K}=\C((t))$ and $\mathcal{O}=\C[[t]]$.

We start by recalling the definition of the affine flag variety. For a reductive algebraic group $G$ with root data $(R,\mathbb{X}^*=\Lambda^*,R^\vee,\mathbb{X}_*=\Lambda)$, consider the Borel subgroup $B\subset G$ and a maximal torus $T\subset B$.  We also consider the arc and loop groups $G(\mathcal{O})\subset G(\mathcal{K})$ and the Iwahori subgroup $\mathfrak{B}\subset G(\mathcal{O})$.
Recall that the latter is defined as
the preimage of $B$ under the projection $G(\mathcal{O})\twoheadrightarrow G$.

 Using these we can define the affine flag variety $\Fl=G(\mathcal{K})/\mathfrak{B}$, which is an ind-projective variety. This space has  actions by $T$ and $T(\mathcal{K})$ given by left multiplication. Further $\C^\times$ acts by field automorphisms on $\mathcal{K}$ scaling $t$ and so we get an induced action on $\Fl$, which is referred to as the loop rotation action.

 We write $\Lambda$ for the co-character lattice of $T$.
 The fixed points of the action of both $T$ and $T\times\C^\times$ are in bijection with the affine Weyl group $\widetilde{W}=W\ltimes\Lambda$ under the natural embedding $\widetilde{W}\hookrightarrow \Fl$. To get this embedding note that $W\hookrightarrow G/B\hookrightarrow\Fl$ and that $T(\mathcal{K})/T(\mathcal{O})\cong \Lambda$ and $T(\mathcal{O})$ acts trivially on the image of $W$ in $\Fl$.

 Further, we have an action of the affine Weyl group $\widetilde{W}$ on the extended torus $T\times\C^\times$. The finite Weyl group $W$ acts only on the $T$ factor with the usual action coming from $W=N(T)/T$. The co-character lattice $\Lambda$ acts via
 \begin{align*}
     t^\lambda:T\times\C^\times&\rightarrow T\times\C^\times\\
     (t,h)&\mapsto (t\lambda(h),h).
 \end{align*}
 Note that the co-character lattice of $T\times \C^\times$ is naturally identified with $\Lambda\times \Z$ and the induced action of $\widetilde{W}$ on $\Lambda\times \Z$ is given by (\ref{eq:affine_Weyl_action_lattice}).

 Now we can introduce the affine Springer fibers we will look at. Fix a nonnegative integer $d$. Consider a regular semisimple element $s\in \h\hookrightarrow\mathfrak{g}$. Then we can consider $e_d=t^ds\in\mathfrak{g}(\mathcal{O})$ and its associated affine Springer fiber, known as the {\it equivalued unramified affine Springer fiber}
 \begin{equation}\label{eq:Springer_fiber}
     \Fl_{e_d}:=\{g\mathfrak{B}\in\Fl|Ad(g)^{-1}e_d\in Lie(\mathfrak{B})\}
 \end{equation}

 Note that $e_d$ is fixed by $T$ and thus $\Fl_{e_d}\subset\Fl$ is $T$-stable and the loop rotation scales $e_d$ hence these Springer fibers $\Fl_{e_d}$ are also stable under the loop rotation action. The image of $\widetilde{W}$ is contained in all these affine Springer fibers, thus these give the $T$-fixed and $T\times\C^\times$-fixed points for all $\Fl_{e_d}$.

 We can further consider the $1$-dimensional orbits of $T\times\C^\times$. In order to do this, we need some notation. For a root $\alpha$ of $\g$, we write $s_\alpha$ for the corresponding reflection in $W$. For an integer $k$, we write $s_{\alpha,k}$ for $t^{k\alpha}s_\alpha$, this is a reflection in $\widetilde{W}$.
 A root $\alpha$ gives a character $\alpha:
 T\rightarrow\mathbb{C}^\times$ and so also gives a character of $T\times\mathbb{C}^\times$, by acting trivially on the loop rotation factor. Further, define $\hbar:T\times\mathbb{C}^\times\rightarrow\mathbb{C}^\times$ as the projection to the loop rotation factor. We can also act on the characters of $T\times \C^\times$ by $\widetilde{W}$, the action induced from that on $T\times\mathbb{C}^\times$. So we get the character $\alpha+k\hbar$ of $T\times \C^\times$. Let ${}^x(\alpha+k\hbar)$ denote the image of $\alpha+k\hbar$ under the action of $x\in\widetilde{W}$.

 The $1$-dimensional orbits in $\Fl$ can be seen to be given by $\mathbb{P}^1$s connecting the fixed points $x$ and $xs_{\alpha,k}$ for all $x\in \widetilde{W}$, roots $\alpha$ and integers $k$. The associated character is given by ${}^x(\alpha+k\hbar)$.

 Below we will use the following notation
 \begin{equation}\label{eq:ring_notation}
 \Ring:=H^*_{T\times\C^\times}(pt), \Field:=Frac(\Ring).
 \end{equation}

 \begin{Prop}\label{GKM:results}
 \begin{enumerate}
     \item For the affine Springer fibers $\Fl_{e_d}$, the $1$-dimensional orbits are given by the $1$ dimensional orbits of $\Fl$ connecting $x$ and $xs_{\alpha,k}$ if $-d\leq k\leq d-1$.
     \item The affine Springer fibers $\Fl_{e_d}$ and the affine flag variety $\Fl$ with the $T\times\C^\times$-action are equivariantly formal.
     \item $H^{BM}_{T\times\C^\times}(\Fl_{e_d})$ is flat as an $\Ring$-module and we have
     \begin{align*}
    H^{BM}_T(\Fl_{e_d})&\cong H^{BM}_{T\times\C^\times}(\Fl_{e_d})\otimes_{H^*_{\C^\times}(pt)}\C\\
	H^{BM}(\Fl_{e_d})&\cong H^{BM}_{T\times\C^\times}(\Fl_{e_d})\otimes_{\Ring}\C.
\end{align*}
The similar claim holds for $\Fl$.
 \end{enumerate}
 \end{Prop}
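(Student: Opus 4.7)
The three assertions are essentially specializations of results of Goresky--Kottwitz--MacPherson to the equivalued unramified setting, and the plan is to verify them in sequence using explicit descriptions of $T \times \C^\times$-orbits together with an affine paving of $\Fl_{e_d}$.

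For (1), I start from the known parametrization of $1$-dimensional $T \times \C^\times$-orbits in $\Fl$: for $x \in \widetilde{W}$ and an affine reflection $s_{\alpha,k}$, the orbit $E_{x,\alpha,k}$ is a $\mathbb{P}^1$ joining $x$ and $xs_{\alpha,k}$ with weight ${}^x(\alpha+k\hbar)$, arising from the affine root subgroup $\{\exp(z t^k e_\alpha):z\in\C\}$ attached to $\alpha+k\hbar$. Since $\Fl_{e_d}$ is closed and $T \times \C^\times$-stable, $E_{x,\alpha,k} \subset \Fl_{e_d}$ iff a generic point does. Translating by $x = wt^\mu$ reduces the computation to one at the base coset: $\operatorname{Ad}(x)^{-1}(t^d s) = t^d w^{-1}(s)$, which is again of the form $t^d s'$ for a regular $s'\in\h$. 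Then using $[t^k e_\alpha, t^d s']=-\langle \alpha,s'\rangle t^{d+k}e_\alpha$ and $[e_\alpha,e_\alpha]=0$ I get
\[
\operatorname{Ad}(\exp(z t^k e_\alpha))^{-1}(t^d s') = t^d s' + z \langle \alpha, s' \rangle t^{d+k} e_\alpha.
\]
Repeating the computation in the second affine chart of the $\mathbb{P}^1$ (where $\alpha, k$ are replaced by $-\alpha, -k$ and $s'$ is replaced by $s_\alpha(s')$) produces the second condition $t^{d-k} e_{-\alpha} \in \operatorname{Lie}(\mathfrak{B})$. Taking WLOG $\alpha>0$, the two conditions read $d+k\geqslant 0$ and $d-k\geqslant 1$, i.e.\ $-d\leqslant k\leqslant d-1$.

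For (2) and (3), equivariant formality of $\Fl$ is standard: the Bruhat decomposition is a $T \times \C^\times$-stable affine paving by Schubert cells of even real dimension, so the Leray spectral sequence for $\Fl\times^{T\times\C^\times}E(T\times\C^\times)\to B(T\times\C^\times)$ degenerates. For $\Fl_{e_d}$ the analogue is the affine paving by intersections of affine Schubert cells with $\Fl_{e_d}$, constructed by Goresky--Kottwitz--MacPherson in the equivalued/homogeneous case; the pieces are again affine spaces of even real dimension, giving equivariant formality. Freeness of $H^*_{T\times\C^\times}(\Fl_{e_d})$ over $\Ring$ transfers, via the duality of Remark \ref{Rem:homology_duality} applied at each finite-type level of the ind-scheme, to freeness, hence flatness, of $H^{BM}_{T\times\C^\times}(\Fl_{e_d})$. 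The base-change identifications in (3) are then formal: tensoring with the Koszul resolution of $\C$ over $H^*_{\C^\times}(pt)$, resp.\ over $\Ring$, collapses by flatness, and the same argument applies to $\Fl$.

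The main technical input is the affine paving of $\Fl_{e_d}$ in the equivalued unramified case, which is the genuine content of GKM's analysis; I would cite their result rather than reproducing its proof. Everything else --- the orbit description in (1) and the freeness/base-change consequences in (2) and (3) --- is then either a short root-theoretic calculation or a formal consequence of equivariant formality.
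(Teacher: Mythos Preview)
Your approach mirrors the paper's: both invoke the Goresky--Kottwitz--MacPherson affine paving \cite[Theorem 0.2]{GKM1} for (2), deduce (3) from the resulting equivariant formality, and for (1) the paper simply cites \cite[Section 5.11]{GKM2} whereas you unpack the root-subgroup calculation that underlies it. So the strategies coincide; you are just supplying the details the paper outsources.

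One imprecision in your computation for (1): the parametrization $x\exp(zt^ke_\alpha)\mathfrak{B}$ only sweeps out the $1$-dimensional orbit when $\alpha+k\delta$ is a \emph{negative} affine root; when it is positive, $\exp(zt^ke_\alpha)\in\mathfrak{B}$ and the map collapses to the fixed point $x$ for all $z$. Consequently your two ``chart conditions'' $t^{d+k}e_\alpha\in\operatorname{Lie}(\mathfrak{B})$ and $t^{d-k}e_{-\alpha}\in\operatorname{Lie}(\mathfrak{B})$ are not simultaneous constraints on a single generic point: for each fixed $k$ exactly one chart is non-degenerate, and the other inequality is vacuous (it holds automatically since $d\geqslant 0$). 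The conclusion $-d\leqslant k\leqslant d-1$ is correct, but the argument should say that one checks the unique valid chart for each sign of $\alpha+k\delta$, not that both conditions must hold together.
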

\begin{proof} The first result is worked out in \cite[Section 5.11]{GKM2}. The second result follows from the existence of an affine space paving as constructed in \cite[Theorem 0.2]{GKM1} for the affine Springer fibers, whilst for the affine flag variety it follows from the Bruhat decomposition. The last result follows immediately from the second.
\end{proof}

\begin{Ex}\label{ex:d_equal_0}
Let $d=0$. Then $e_0=s$, a regular semisimple element. The Springer fiber $\Fl_{e_0}$ is discrete and is identified with the $T$-fixed point locus, $\widetilde{W}$. Claim (1) of the proposition is manifestly true.
\end{Ex}

The following claim follows from combining Proposition \ref{BMcoho.description} (or, more precisely, its ind-scheme generalization, see Remark \ref{Rem:BM_coho_ind}) and
Proposition \ref{GKM:results}.

\begin{Cor}\label{Cor:GKM_Springer}
The localization homomorphism identifies $H^{BM}_{T\times\C^\times}(\Fl_{e_d})$ with the subset of all elements $(f_x)_{x\in \widetilde{W}}\in \bigoplus_{\widetilde{W}}\Field$ satisfying the following two conditions:
\begin{itemize}
    \item[(i)] For all $x$, the product $$f_x\prod_{\alpha\in R^+}\prod_{k=-d}^{d-1}(\,^x\!\alpha+k\hbar)$$
    is an element of $\Ring$. Here $R^+$ stands for the system of positive Dynkin roots.
    \item[(ii)] For all $x\in \widetilde{W}$, $\alpha\in R^+$ and $k$ with $-d\leqslant k\leqslant d-1$, we have $$Res_{\,^x\!(\alpha+k\hbar)}(f_x+f_{xs_{\alpha,k}})=0.$$
\end{itemize}
\end{Cor}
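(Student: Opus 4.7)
The plan is to apply the ind-scheme version of Proposition \ref{BMcoho.description} (guaranteed by Remark \ref{Rem:BM_coho_ind}) directly to $\Fl_{e_d}$ with its $T\times\C^\times$-action. Essentially all the geometric input has been assembled in Proposition \ref{GKM:results}, so the proof reduces to checking the hypotheses of that remark and then matching the two abstract conditions listed there with (i) and (ii) in the statement.

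First I would verify the three hypotheses of Remark \ref{Rem:BM_coho_ind}. Ind-properness of $\Fl_{e_d}$ follows from ind-properness of $\Fl$, since $\Fl_{e_d}\subset\Fl$ is closed. Equivariant formality is Proposition \ref{GKM:results}(2). The $T\times\C^\times$-fixed point set of $\Fl_{e_d}$ is contained in $\Fl^{T\times\C^\times}=\widetilde{W}$ and in fact equals $\widetilde{W}$ as recalled right before Proposition \ref{GKM:results}; hence the fixed points are isolated. Finally, by Proposition \ref{GKM:results}(1) any two fixed points $x, y=xs_{\alpha,k}$ are connected by $1$-dimensional orbits only for the finitely many pairs $(\alpha,k)$ with $\alpha\in R^+$ and $-d\leqslant k\leqslant d-1$ satisfying $y=xs_{\alpha,k}$, giving the required finiteness.

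Next I would translate the two GKM bullets. Fix $x\in\widetilde{W}$. By Proposition \ref{GKM:results}(1) the $1$-dimensional orbits whose closure contains $x$ are indexed by the pairs $(\alpha,k)$ with $\alpha\in R^+$ and $-d\leqslant k\leqslant d-1$, and the character attached to the orbit through $x$ and $xs_{\alpha,k}$ is $\,^x\!(\alpha+k\hbar)$. Therefore the denominator condition in Proposition \ref{BMcoho.description} becomes exactly (i). For the second bullet, the boundary of this orbit is $\{x,xs_{\alpha,k}\}$ with character $\,^x\!(\alpha+k\hbar)$, so the residue vanishing condition becomes precisely (ii).

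The only point of care is passing from the scheme statement to the ind-scheme one: one chooses an exhaustion of $\Fl_{e_d}$ by $T\times\C^\times$-stable finite-type closed subvarieties and takes the colimit in Borel--Moore homology. Because each such subvariety contains only finitely many fixed points and finitely many $1$-dimensional orbits, and any tuple $(f_x)_{x\in\widetilde{W}}$ with only finitely many nonzero entries is supported in such a piece, the finite-type GKM description assembles into the stated one. I do not anticipate a serious obstacle beyond this bookkeeping, as all the nontrivial inputs (formality, affine paving, classification of $1$-dimensional orbits) are imported from \cite{GKM1,GKM2} via Proposition \ref{GKM:results}.
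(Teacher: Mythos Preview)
Your proposal is correct and follows exactly the approach indicated in the paper, which simply states that the corollary follows from combining Proposition~\ref{BMcoho.description} (via its ind-scheme version in Remark~\ref{Rem:BM_coho_ind}) with Proposition~\ref{GKM:results}. You have faithfully unpacked this: verifying the hypotheses of Remark~\ref{Rem:BM_coho_ind} using Proposition~\ref{GKM:results} and then matching the GKM conditions with (i) and (ii) via the classification of $1$-dimensional orbits in Proposition~\ref{GKM:results}(1).
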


We will also need the following corollary of (1) of Proposition \ref{GKM:results}. Recall that $e_d=t^d s$, where $s\in \h^{reg}$.

\begin{Cor}\label{Cor:indep_s}
The image of $H^{BM}_{T\times \C^\times}(\Fl_{e_d})$ in $\bigoplus_{\widetilde{W}}\Field$ is independent of the choice of a regular semisimple element $s\in \h^{reg}$.
\end{Cor}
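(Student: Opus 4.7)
The plan is to observe that Corollary \ref{Cor:GKM_Springer} already provides an intrinsic description of the image of $H^{BM}_{T\times\C^\times}(\Fl_{e_d})$ in $\bigoplus_{\widetilde{W}}\Field$ in terms of data that does not involve $s$. More precisely, conditions (i) and (ii) of Corollary \ref{Cor:GKM_Springer} are phrased purely in terms of the positive roots $\alpha\in R^+$, the elements $x\in\widetilde{W}$, the integer range $-d\leqslant k\leqslant d-1$, and the characters $\hbar$ and $\,^x\!\alpha$ of $T\times \C^\times$. None of these objects depends on the choice of the regular semisimple element $s\in\h^{reg}$.

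The first step is to recall that, via Proposition \ref{GKM:results}(2) and the ind-scheme version of Proposition \ref{BMcoho.description}, $H^{BM}_{T\times\C^\times}(\Fl_{e_d})$ embeds into $\bigoplus_{\widetilde{W}}\Field$ and is cut out precisely by the divisibility conditions at the fixed points and by the residue conditions along the one-dimensional $T\times\C^\times$-orbits. The second step is to apply Proposition \ref{GKM:results}(1), which identifies the set of one-dimensional orbits of $T\times\C^\times$ in $\Fl_{e_d}$ combinatorially: for each $x\in\widetilde{W}$, positive root $\alpha$, and integer $k$ with $-d\leqslant k\leqslant d-1$, there is precisely one such orbit connecting $x$ and $xs_{\alpha,k}$, with associated character $\,^x\!(\alpha+k\hbar)$. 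Crucially, this list depends only on $d$, not on the specific regular semisimple element $s$ used to define $e_d=t^d s$.

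Putting these two steps together, the conditions (i) and (ii) of Corollary \ref{Cor:GKM_Springer} carve out the same subspace of $\bigoplus_{\widetilde{W}}\Field$ regardless of $s$, which is exactly the claim of Corollary \ref{Cor:indep_s}. The only potential obstacle is confirming that Proposition \ref{GKM:results}(1) (drawn from \cite{GKM2}) is genuinely uniform in $s\in\h^{reg}$; this is immediate from the description there since the list of one-dimensional orbits of the equivalued unramified affine Springer fiber depends only on the valuation datum $d$ and on the centralizer of $s$ being $T$, which holds for all regular semisimple $s$. Hence no further argument is required.
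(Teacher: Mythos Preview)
Your proposal is correct and matches the paper's own reasoning: the paper states this corollary immediately after Corollary~\ref{Cor:GKM_Springer} as a direct consequence of Proposition~\ref{GKM:results}(1), precisely because the GKM description of the image involves only the combinatorial data $(\widetilde{W}, R^+, d, \hbar)$ and not the particular $s\in\h^{reg}$. Your elaboration simply unpacks what the paper leaves implicit.
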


Using this corollary we identify the spaces $H^{BM}_{T\times\C^\times}(\Fl_{e_d})$ for different choices of $s$.

 \begin{Rem}\label{Chern:Class}
We now discuss line bundles on $\Fl$.
For a weight $\lambda\in \Lambda^*\times\mathbb{Z}$ of $T\times \C^\times$ we can construct a $1$-dimensional $T(\mathcal{O})\times\C^\times$-representation $\C_\lambda$, which extends to a $\mathfrak{B}$-representation. The latter gives rise to a $G(\mathcal{K})\rtimes\C^\times$-equivariant line bundle  on $\Fl$ to be denoted by $\mathcal{L}_\lambda$.

 The proof of Proposition \ref{GKM:results} also implies that the conditions for Proposition \ref{BMcoho.description} are satisfied for $\Fl_{e_d}$ and $\Fl$. We can thus consider the localization homomorphism
 $$H^*_{T\times \C^\times}(\Fl)\hookrightarrow \prod_{\widetilde{W}}\Field.$$
 Now we  want to compute the images of the Chern classes of the line bundles $\mathcal{L}_\lambda$ under this localization map.
 To compute the localization to the fixed points of $c_1(\mathcal{L}_\lambda)$, we need to consider the $T\times\C^\times$-representations given by $\mathcal{L}_\lambda$ restricted to a fixed point, $x\in \widetilde{W}$. Note that this gives the $1$-dimensional representation $\C_{\,^x\!\lambda}$ and thus under the map
 $$H^*_{T\times\C^\times}(\Fl)\rightarrow \prod_{x\in\widetilde{W}} \Field$$
 the Chern class $c_1(\mathcal{L}_\lambda)$ is sent to $(\,^x\!\lambda)_{x\in\widetilde{W}}$.
\end{Rem}

\section{The actions on the Borel-Moore homology}\label{S_BM_actions}
In Section \ref{SS_TCA} we have recalled the trigonometric Cherednik algebras $H^\times_{\hbar,c}$.
The goal of this section is to equip $H^{BM}_{T\times \C^\times}(\Fl_{e_d})$ with a structure of an $\Hcal^\times_{\hbar,d}$-$\Hcal^\times_{\hbar,0}$-bimodule and establish some properties of this bimodule.
Recall that we write $\Ring$ for $H^*_{T\times\C^\times}(pt)$ and $\Field$
for $Frac(\Ring)$.

\subsection{Chern-Springer action}

 In this section we will establish a left action of $H^\times_{\hbar,d}$ on $H^{BM}_{T\times \C^\times}(\Fl_{e_d})$.
 Let $\iota$ denote the localization embedding
\begin{equation}\label{eq:localization_embedding} H^{BM}_{T\times \C^\times}(\Fl_{e_d})\hookrightarrow
\bigoplus_{\widetilde{W}}\Field.
\end{equation}
For $x\in \widetilde{W}$, let $\iota(?)_x$ denote the $x$-component of $\iota(?)$, this is an element of $\Field$. We note that the target of (\ref{eq:localization_embedding}) can be viewed as the space of functions $\widetilde{W}\rightarrow \Field$ that are zero outside of a finite set.

 We start by describing the action of the Chern classes. Note that $\prod_{\widetilde{W}}\Ring$ naturally acts on
 $\bigoplus_{\widetilde{W}}\Ring$. So, for a character $\lambda$ of $T\times \C^\times$, the element $c_1(\mathcal{L}_\lambda)$ acts on
$\operatorname{im}\iota$ as the multiplication with  $(\,^x\!\lambda)_{x\in\widetilde{W}}$. This is a consequence of Remark \ref{Chern:Class}. So we get an action of $\h^*\oplus \C\hbar$ on $H^{BM}_{T\times \C^\times}(\Fl_{e_d})$. Note that the operators of this action pairwise commute.

 The group $\widetilde{W}$ acts on $H^*_{T\times \C^\times}(\Fl_{e_d})$ via the Springer action, see  \cite{Lusztig}, \cite{Yun} and \cite{OY}. We will recall the construction in Section \ref{SS_affine_Springer_reminder}.



So we get two actions on $H^{BM}_{T\times \C^\times}(\Fl_{e_d})$: the action of $\h^*\oplus \C\hbar$ by the multplication with Chern classes and the Springer action of $\widetilde{W}$. The former gives rise to an action of the algebra $\C[\h,\hbar]$, while the latter gives an action of the algebra $\C\widetilde{W}$. Both actions are $\Ring$-linear and so extend to the localization $\bigoplus_{\widetilde{W}}\Field$.

\begin{Prop}\label{Springer-Chern-action}
These two actions equip $\bigoplus_{\widetilde{W}}\Field$  with an   $\Hcal^\times_{\hbar,d}$-module structure.
The subspace $H^{BM}_{T\times \C^\times}(\Fl_{e_d})$ embedded via $\iota$ is a submodule.
\end{Prop}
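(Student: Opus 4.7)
My plan is to check the defining relations (\ref{eq:trig_CA_relns}) of $H^\times_{\hbar,d}$ directly, and then observe that preservation of the submodule $\iota\bigl(H^{BM}_{T\times\C^\times}(\Fl_{e_d})\bigr) \subset \bigoplus_{\widetilde{W}}\Field$ under each generator is automatic: the Chern classes act through the $H^*_{T\times\C^\times}(\Fl_{e_d})$-module structure on $H^{BM}_{T\times\C^\times}(\Fl_{e_d})$, and the Springer action of $\widetilde{W}$ is defined on $H^{BM}_{T\times\C^\times}(\Fl_{e_d})$ itself. Since both actions are $\Ring$-linear and $\iota$ is an $\Ring$-linear embedding, it suffices to verify each relation on the larger space $\bigoplus_{\widetilde{W}}\Field$.

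First I would spell out explicit componentwise formulas on $\bigoplus_{\widetilde{W}}\Field$. By Remark \ref{Chern:Class}, the Chern class action of $y \in \h^* \oplus \C\hbar$ sends $(f_x)_{x \in \widetilde{W}}$ to $({}^x\!y \cdot f_x)_{x\in\widetilde{W}}$. The Springer action admits an analogous description: each simple reflection $s_i \in \widetilde{W}$ acts by a combination of a permutation of the fixed-point components and a correcting diagonal term built from the GKM characters, while each length-zero element $\pi \in \Lambda/\Lambda_0$ acts by the pure permutation $(\pi \cdot f)_x = f_{\pi^{-1} x}$. With these formulas, commutativity among the $y$'s is immediate, $\hbar$ is central because $\hbar \in \Ring$, and the relations $\pi y = (\pi . y) \pi$ follow from the componentwise identity ${}^{\pi^{-1} x}\!y = {}^x\!(\pi . y)$.

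The only substantive check is the cross-relation $s_i y - (s_i . y) s_i = \hbar d \langle y, \alpha_i^\vee \rangle$ for the simple reflections $s_0,\ldots,s_r$ of $\widetilde{W}$. For the finite simple reflections $s_1,\ldots,s_r$ this reduces to the classical trigonometric Dunkl identity and is verified exactly as in the usual finite Springer theory. The hard part is $s_0$: the affine simple reflection mixes a finite reflection with a loop-rotation translation, so the identity has to absorb contributions from all $1$-dimensional orbits with associated characters ${}^x(\alpha_0 + k\hbar)$ for $-d \le k \le d-1$, as described in Proposition \ref{GKM:results}(1). It is precisely these $2d$ orbits that force the Cherednik parameter to take the value $c(s_0) = d$. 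I would reduce this relation to a collection of componentwise residue identities via Corollary \ref{Cor:GKM_Springer}, and for the resulting combinatorial identity I would invoke the Appendix (joint with Kivinen), as flagged in the introduction to the section. Once the relations are verified on $\bigoplus_{\widetilde{W}}\Field$, the two actions assemble into the desired $H^\times_{\hbar,d}$-module structure, and the submodule preservation completes the proof.
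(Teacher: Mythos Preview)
Your overall strategy is right and matches the paper: verify the relations (\ref{eq:trig_CA_relns}) componentwise on $\bigoplus_{\widetilde{W}}\Field$ using explicit formulas, with the submodule claim following because the generators already act on $H^{BM}_{T\times\C^\times}(\Fl_{e_d})$.

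However, you misidentify where the difficulty lies. In the paper, the hard input is Lemma~\ref{Lem:CS_action_fixed_pts}, and specifically its first line
\[
\iota(s\beta)_x=\frac{d\hbar}{\,^x\!\alpha}\,\iota(\beta)_x+\frac{\,^{xs}\!\alpha-d\hbar}{\,^{xs}\!\alpha}\,\iota(\beta)_{xs},
\]
valid \emph{uniformly} for every simple affine reflection $s=s_\alpha$, including $s_0$. This is what the Appendix proves (via reduction to semisimple rank one), and this is where $d$ enters: it sits directly in the coefficient $d\hbar/{}^x\!\alpha$, not through any count of $2d$ orbits at the stage of verifying the cross-relation. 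Once this two-term formula is in hand, the check of $s_iy-(s_i.y)s_i=d\langle y,\alpha_i^\vee\rangle\hbar$ is a three-line computation identical for all $i=0,\ldots,r$; there is no special $s_0$ case, no residue argument via Corollary~\ref{Cor:GKM_Springer}, and no separate ``combinatorial identity'' to invoke. Your proposed route for $s_0$---absorbing contributions from $2d$ orbits and reducing to residue identities---does not match the actual mechanism and would not straightforwardly produce the relation.

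Two smaller points: the $\pi$-action formula is $(\pi\beta)_x=\beta_{x\pi}$ (right multiplication on the index), not $\beta_{\pi^{-1}x}$; and the Springer action of $s_i$ mixes only the two components at $x$ and $xs_i$, not a longer string of fixed points.
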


The key tool in the proof is as follows: we write formulas for the actions of simple affine reflections, the elements of $\Lambda/\Lambda_0\subset \widetilde{W}$ and also the elements of $\C[\h^*][\hbar]$ on the image of the embedding $\iota$. Let us state the corresponding result.

\begin{Lem}\label{Lem:CS_action_fixed_pts}
For all $\beta\in H^{BM}_{T\times\C^\times}(\Fl_{e_d})$,
$x\in \widetilde{W}$, simple affine reflections
$s=s_\alpha$, $\lambda\in \h^*\oplus \C\hbar$ and $\pi\in \Lambda/\Lambda_0\subset \widetilde{W}$ we have the following formulas:
\begin{equation}\label{eq:CS_formulas}
\begin{split}
   &\iota(s\beta)_x=\frac{d\hbar}{\,^x\!\alpha}\iota(\beta)_x+\frac{\,^{xs}\!\alpha-d\hbar}{\,^{xs}\!\alpha}\iota(\beta)_{xs},\\
   &\iota(\lambda\beta)_x=(\,^{x}\!\lambda)\iota(\beta)_x,\\
   &\iota(\pi\beta)_x=\iota(\beta)_{x\pi}.
\end{split}
\end{equation}
\end{Lem}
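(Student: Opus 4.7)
The plan is to verify each of the three displayed formulas separately, in order of increasing difficulty; the three claims are essentially independent statements about how different pieces of the Chern-Springer action interact with the localization embedding $\iota$.

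Formula (2) is immediate from the definition. By construction, $\lambda\in \h^*\oplus\C\hbar$ acts on $H^{BM}_{T\times\C^\times}(\Fl_{e_d})$ by cap product with the equivariant Chern class $c_1(\mathcal{L}_\lambda)\in H^2_{T\times\C^\times}(\Fl)$, restricted to $\Fl_{e_d}$. The localization embedding $\iota$ intertwines cap product on $H^{BM}_{T\times\C^\times}$ with componentwise multiplication by the restriction of a cohomology class to the fixed locus (Section \ref{SS_BM_general}). By Remark \ref{Chern:Class}, this restriction sends $c_1(\mathcal{L}_\lambda)$ to $(\,^x\!\lambda)_{x\in\widetilde{W}}$, giving the claimed formula.

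Formula (3) is geometric. A length-zero element $\pi\in \Lambda/\Lambda_0\subset\widetilde{W}$ lifts to an element $\widetilde\pi\in T(\mathcal{K})\cdot N_G(T)$ normalizing the Iwahori $\mathfrak{B}$, so left multiplication by $\widetilde\pi$ is a well-defined automorphism of $\Fl$. Because $\widetilde\pi$ lies in $T(\mathcal{K})$ up to a Weyl-group element, conjugation by it fixes $e_d=st^d$ up to an element that can be absorbed into the Springer-fiber conditions, so the automorphism preserves $\Fl_{e_d}$ and acts on the $T\times\C^\times$-fixed set $\widetilde W\subset\Fl_{e_d}$ by right multiplication by $\pi$. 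Functoriality of equivariant BM homology and compatibility of pushforward with the localization map then give formula (3).

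Formula (1) is the main content. Let $\mathfrak{P}_s\supset\mathfrak{B}$ be the minimal parahoric with $\mathfrak{P}_s/\mathfrak{B}\cong \mathbb{P}^1$, and let $q\colon\Fl\to G(\mathcal{K})/\mathfrak{P}_s$ be the corresponding $\mathbb{P}^1$-fibration. The Springer action of $s$ on $H^{BM}_{T\times\C^\times}$ is defined via the convolution correspondence $\Fl\times_{G(\mathcal{K})/\mathfrak{P}_s}\Fl$ (constructed in Section \ref{SS_affine_Springer_reminder}), restricted compatibly to $\Fl_{e_d}$. The plan is to compute the effect of this correspondence on localized BM homology using standard pushforward/pullback formulas for $\mathbb{P}^1$-bundles in the GKM picture of Corollary \ref{Cor:GKM_Springer}. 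The tangent characters at the two fixed points $x,xs$ of a fiber are $\,^x\!\alpha$ and $\,^{xs}\!\alpha=-\,^x\!\alpha$, accounting for the denominators; the numerator $d\hbar$ arises from the excess intersection of $\Fl_{e_d}$ with the fiber, equivalently from the contribution of the $2d-1$ additional one-dimensional $T\times\C^\times$-orbits between $x$ and $xs_{\alpha,k}$ for $k\in\{-d,\ldots,d-1\}\setminus\{0\}$ (resp.\ the analogous set for an affine simple reflection) enumerated in Proposition \ref{GKM:results}(1). The main obstacle is carrying out this Euler-class bookkeeping carefully enough to see that the cumulative correction collapses to the single factor $d\hbar$ in the stated formula; one can alternatively pin down the answer by verifying that the operator defined by the right-hand side of (1) preserves the GKM subspace of Corollary \ref{Cor:GKM_Springer}, is an involution, and reduces to the natural permutation $\iota(\beta)_x\mapsto \iota(\beta)_{xs}$ in the base case $d=0$ where $\Fl_{e_0}$ is discrete, and then comparing with the known Springer action on $\Fl$.
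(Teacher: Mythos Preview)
Your treatments of formulas (2) and (3) are essentially the same as the paper's: the Chern-class formula is immediate from Remark~\ref{Chern:Class}, and the $\Lambda/\Lambda_0$-formula follows from the geometric action constructed in Section~\ref{SS_affine_Springer_reminder}. (One small slip: the paper's map is $g\mathfrak{B}\mapsto g\dot\pi\mathfrak{B}$, i.e.\ right multiplication on cosets, not left; this is what makes the formula read $\iota(\beta)_{x\pi}$.)

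For formula (1), however, your proposal has a genuine gap, and the paper's argument is quite different from either of your sketches. Your first suggestion, an excess-intersection / Euler-class computation along the $\mathbb{P}^1$-fibers, is not carried out, and you yourself flag the bookkeeping as the main obstacle; the paper does not attempt this route at all. Your alternative---showing the right-hand side preserves the GKM subspace, is an involution, and specializes correctly at $d=0$---does not pin down the operator: a short check shows that for \emph{every} constant $z\in\C$ the operator
\[
(s\beta)_x \;=\; \frac{z\hbar}{\,^x\!\alpha}\,\beta_x \;+\; \frac{\,^x\!\alpha - z\hbar}{\,^x\!\alpha}\,\beta_{xs}
\]
is an involution, and the $d=0$ specialization only tests $z=0$. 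So these constraints leave $z$ undetermined.

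What the paper actually does (Lemma~\ref{Lem:Springer_fixed_pts} in the appendix) is: first use $\Ring$-linearity, degree preservation, the GKM conditions, and a result of \cite[\S14.4]{GKM2} on the $\hbar=0$ limit of the Springer action to reduce the formula to exactly this one-parameter family in $z$. Then, to determine $z$, it localizes with respect to the codimension-one subtorus $T_{\bar\beta}=\ker\bar\beta$, which collapses the computation to the semisimple-rank-one group $G_\beta$ and ultimately to $\mathrm{SL}_2$. Finally it writes down an explicit $\Ring$-basis of $H^{BM}_{T\times\C^\times}(\Fl_{e_d})$ for $\mathrm{SL}_2$ (Lemma~\ref{Lem:SL_2_basis}) and tracks a single well-chosen basis element $b_k^d$ under $s$; the pole structure of $s(b_k^d)$ forces $z=d$. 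The reduction to rank one plus the explicit $\mathrm{SL}_2$ basis is the missing idea in your proposal.
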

Note that the formulas make sense for an arbitrary element of $\bigoplus_{\widetilde{W}}\Field$ not just for $\iota(\beta)$. They define an action of $H^\times_{\hbar,d}$ on $\bigoplus_{\widetilde{W}}\Field$.

The second equality in (\ref{eq:CS_formulas}) has already been discussed in the beginning of the section.
The last equality easily follows from the construction of the $\Lambda/\Lambda_0$-action
to be discussed in Section \ref{SS_affine_Springer_reminder}. The first equality requires more work, it will be established in the appendix,
Section \ref{SS_CS_localiz}.

\begin{proof}[Proof of Proposition \ref{Springer-Chern-action}]
It is enough to check  the commutation relations of (\ref{eq:trig_CA_relns}).

The second and third equalities  in (\ref{eq:trig_CA_relns}) are immediate from Lemma \ref{Lem:CS_action_fixed_pts}. In the remainder of the proof we will check the first equality.
That is, for a simple reflection $s\coloneqq s_\alpha$ and $\lambda\in\h^*$, we should check the following relation:
\begin{equation}\label{eq:trig_interesting}	
	s\lambda-\,^s\!\lambda s=d\langle \lambda,\alpha^\vee\rangle\hbar
\end{equation}
To check this, we apply the summands of the left hand side to an element $\xi\in \bigoplus_{\widetilde{W}} \Field$.
\begin{align*}
    & (s\lambda\xi)_x=\frac{d\hbar}{\,^x\!\alpha}(\lambda\xi)_x+\frac{\,^{xs}\!\alpha-d\hbar}{\,^{xs}\!\alpha}(\lambda\xi)_{xs}=\frac{d\hbar}{\,^x\!\alpha}\,^x\!\lambda \xi_x+\frac{\,^{xs}\!\alpha-d\hbar}{\,^{xs}\!\alpha}\,^{xs}\!\lambda\xi_{xs},\\
    &(\,^s\!\lambda s\xi)_x=\,^{xs}\!\lambda\left(\frac{d\hbar}{\,^x\!\alpha}\xi_x +\frac{\,^{xs}\!\alpha-d\hbar}{\,^{xs}\!\alpha}\xi_{xs}\right).
\end{align*}
So
\begin{align*}
    &(s\lambda\xi-\,^s\!\lambda s\xi)_x=(\,^x\!\lambda-\,^{xs}\!\lambda)\frac{d\hbar}{\,^x\!\alpha}\xi_x=\langle\lambda,\alpha^\vee\rangle\,^x\!\alpha \frac{d\hbar}{\,^x\!\alpha}\xi_x=d\langle\lambda,\alpha^\vee\rangle \hbar\xi_x.
\end{align*}
This proves the first equality in (\ref{eq:trig_CA_relns}) and finishes the proof.
	\end{proof}

 \subsection{Equivariant-Centralizer-Monodromy action}
The goal of this section is to define  an action of
$\Hcal_{\hbar,0}^\times$ on $H^{BM}_{T\times\C^\times}(\Fl_{e_d})$.  We will view $H^{BM}_{T\times\C^\times}(\Fl_{e_d})\hookrightarrow
\bigoplus_{\widetilde{W}}\Field$ as right $\Ring$-modules, this structure on the former space was
discussed in the general situation in Section \ref{SS_BM_general}.



Define a right action of $\widetilde{W}$ on $\bigoplus_{\widetilde{W}}\Field$ by \begin{equation}\label{eq:CM_action}
    (fy)_x=\,^{y^{-1}}\!f_{yx}, \quad x,y\in \widetilde{W}, (f_x)\in \bigoplus_{\widetilde{W}}\Field.
\end{equation}

\begin{Lem}\label{Lem:ECM_action_formulas}
The right actions of $\Ring=\C[\h][\hbar]$ and $\widetilde{W}$ on $\bigoplus_{\widetilde{W}}\Field$ constitute a right action of $\Hcal^\times_{\hbar,0}$. Moreover, $\operatorname{im}\iota$ is a submodule.
\end{Lem}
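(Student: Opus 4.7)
The plan is to verify the defining relations of $H^\times_{\hbar,0}$ for the right action directly, and then to show that $\operatorname{im}\iota$ is stable under it using the GKM description of the image from Corollary~\ref{Cor:GKM_Springer}.

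Recall that $H^\times_{\hbar,0}$ is generated by $\C\widetilde{W}$ and $\Ring=\C[\h][\hbar]$ with the relations (\ref{eq:trig_CA_relns}) specialized at $c=0$. I will take the $\widetilde{W}$-action on $\Field$ to be the affine action induced from the action on $\h^*\oplus\C\hbar$, so that $\chi\in\Lambda$ sends $y\in\h^*$ to $y+\langle\chi,y\rangle\hbar$ while fixing $\hbar$. Well-definedness of the right $\widetilde{W}$-action on $\bigoplus_{\widetilde{W}}\Field$ follows from $\,^{y_2^{-1}}\!\,^{y_1^{-1}}=\,^{(y_1y_2)^{-1}}$. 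The mixed relation $sy=(s.y)s$ for a simple reflection $s$ reduces to a one-line calculation at position $x$, both sides producing $y\cdot\,^{s^{-1}}\!f_{sx}$, via the identity $\,^{s^{-1}}\!(s.y)=y$; the identical computation handles the translations $\pi\in\Lambda/\Lambda_0$. Commutativity of $\widetilde{W}$ with $\hbar$ is immediate because the affine action fixes $\hbar\in\Field$.

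For the submodule claim, I will use the explicit description of $\operatorname{im}\iota$ from Corollary~\ref{Cor:GKM_Springer}: finitely supported tuples $(f_x)$ such that $f_x\prod_{\alpha\in R^+}\prod_{k=-d}^{d-1}(\,^x\!\alpha+k\hbar)\in\Ring$ and $\mathrm{Res}_{\,^x\!(\alpha+k\hbar)}(f_x+f_{xs_{\alpha,k}})=0$. The $\Ring$-action preserves both conditions trivially. For the action of $w\in\widetilde{W}$, using $\,^w(\,^x\!\alpha+k\hbar)=\,^{wx}\!\alpha+k\hbar$ and $\widetilde{W}$-stability of $\Ring$,
\[(fw)_x\prod_{\alpha,k}(\,^x\!\alpha+k\hbar)=\,^{w^{-1}}\!\Bigl(f_{wx}\prod_{\alpha,k}(\,^{wx}\!\alpha+k\hbar)\Bigr)\in\Ring.\]
Condition (ii) uses the group identity $w(xs_{\alpha,k})=(wx)s_{\alpha,k}$ to match pairs of fixed points correctly, together with covariance of residues under the affine action: applying $\,^{w^{-1}}$ to the (ii)-relation for $f$ at $(wx,wxs_{\alpha,k})$ yields the analogous relation for $fw$ at $(x,xs_{\alpha,k})$. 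Finite support is preserved because $w$ permutes the indexing set.

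The main obstacle is bookkeeping: fixing once and for all the affine action of $\widetilde{W}$ on $\Field$ (rather than the linear action in which translations act trivially on $\h^*$) and tracking left/right conventions. With those in place, every verification reduces to the identity $\,^{w^{-1}}\!(w.y)=y$ and the covariance of $\Ring$-multiplication and residues under the $\widetilde{W}$-algebra action on $\Field$.
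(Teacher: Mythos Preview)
Your proof is correct and follows essentially the same approach as the paper's own proof. The paper also verifies the single nontrivial relation $y\lambda={}^y\!\lambda\,y$ for $y\in\widetilde{W}$ and $\lambda\in\h^*$ by the same one-line computation $(fy\lambda)_x=\lambda\,{}^{y^{-1}}\!f_{yx}={}^{y^{-1}}\!({}^y\!\lambda\,f_{yx})=(f\,{}^y\!\lambda\,y)_x$, and for the submodule claim simply declares it ``immediate from the formulas defining the action and the description of the image in Corollary~\ref{Cor:GKM_Springer}''; your explicit verification of conditions (i) and (ii) via the covariance identities ${}^{w^{-1}}({}^{wx}\!\alpha+k\hbar)={}^x\!\alpha+k\hbar$ and $w(xs_{\alpha,k})=(wx)s_{\alpha,k}$ is exactly what that word ``immediate'' unpacks to.
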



\begin{proof}
We start by proving that we indeed get an action of $\Hcal^\times_{\hbar,0}$.
    The only missing relation is the commutation relations of the $\widetilde{W}$ action and the $\Ring$ action, i.e.,
	$$
	    y\lambda-\,^y\!\lambda y=0,\quad y\in\widetilde{W},\lambda\in\mathfrak{t}^*.
	$$
	For $(f_x)\in \bigoplus_{\widetilde{W}}\Field$ we get
	$$
	    (fy\lambda)_x=\lambda(f y)_x=\lambda\,^{y^{-1}}\!f_{yx}=\,^{y^{-1}}\!(\,^y\!\lambda f_{yx})
	    =(f\,^y\!\lambda y)_x.
	$$
	This completes the proof of the claim that the actions of $\Ring$ and $\widetilde{W}$ constitute an action of $\Hcal^\times_{\hbar,0}$.
	
	The claim that the image of $\iota$ is $\Hcal^\times_{\hbar,0}$-stable is immediate from the formulas defining the action and the description of the image in
	Corollary \ref{Cor:GKM_Springer}.
\end{proof}

\begin{Rem}\label{Rem:ECM_name}
The action of  $\Lambda\subset \widetilde{W}$ on $H^{BM}_{T\times\C^\times}(\Fl_{e_d})$ comes from the action of $T(\mathcal{K})$ on $\Fl_{e_d}$. The action of $W\subset \widetilde{W}$ is more tricky. Recall, Corollary \ref{Cor:indep_s} that the spaces $H^{BM}_{T\times \C^\times}(\Fl_{e_d})$ are identified for all choices of $s$ via $\iota$. So the action of $W$ can be interpreted as the monodromy action.
However, we do not know a way to identify the BM homology space for various $s$ without the GKM description. So it is easier just to define the action on the localized BM homology spaces.

The resulting action of $\Hcal_{\hbar,0}^\times$ will be called the ECM (equivariant-centralizer-monodromy) action.
\end{Rem}

\begin{Cor}\label{Cor:Springer_bimodule_structure}
The CS action of $\Hcal^\times_{\hbar,d}$ on $\bigoplus_{\widetilde{W}}\Field$  commutes with  the ECM action of $H^\times_{\hbar,0}$.  Hence these actions also commute on $H^{BM}_{T\times\C^\times}(\Fl_{e_d})$.
\end{Cor}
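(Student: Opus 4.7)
The plan is to verify commutativity directly on generators, reducing to a handful of elementary algebraic checks in $\bigoplus_{\widetilde{W}}\Field$. Since the CS action is determined by the action of the three types of generators of $H^\times_{\hbar,d}$ (elements $\lambda\in\h^*$, simple affine reflections $s=s_\alpha$, and length-zero elements $\pi\in\Lambda/\Lambda_0$), and the ECM action is determined by the action of the generators $\mu\in\Ring$ and $y\in\widetilde{W}$ of $H^\times_{\hbar,0}$, it is enough to check that each of the former three commutes with each of the latter two. The explicit formulas are provided by Lemma~\ref{Lem:CS_action_fixed_pts} on the CS side and (\ref{eq:CM_action}) together with componentwise multiplication on the ECM side.

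Four of the six cross-checks are essentially bookkeeping. Commutation of a CS-polynomial $\lambda\in\h^*$ with an ECM-polynomial $\mu\in\Ring$ is commutativity of multiplication in $\Field$; commutation of a CS-polynomial $\lambda$ with an ECM-group element $y\in\widetilde{W}$ follows from applying $y^{-1}$ to $\,^{yx}\!\lambda$, giving $\,^{x}\!\lambda$; commutation of a CS simple reflection with an ECM-polynomial $\mu\in\Ring$ works because $\mu$ is constant across fixed points, so right-multiplication by $\mu$ commutes with the linear combination in the Demazure-type formula; and the two checks involving $\pi\in\Lambda/\Lambda_0$ are straightforward from $(\pi\beta)_x=\beta_{x\pi}$ and $(\beta y)_x=\,^{y^{-1}}\!\beta_{yx}$.

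The only case with a small subtlety is the commutation of the CS action of a simple affine reflection $s=s_\alpha$ with the ECM action of $y\in\widetilde{W}$. Here one writes out both sides:
\begin{align*}
((s\beta)y)_x &= \,^{y^{-1}}\!\left(\frac{d\hbar}{\,^{yx}\!\alpha}\beta_{yx}+\frac{\,^{yxs}\!\alpha-d\hbar}{\,^{yxs}\!\alpha}\beta_{yxs}\right),\\
(s(\beta y))_x &= \frac{d\hbar}{\,^{x}\!\alpha}\,^{y^{-1}}\!\beta_{yx}+\frac{\,^{xs}\!\alpha-d\hbar}{\,^{xs}\!\alpha}\,^{y^{-1}}\!\beta_{yxs},
\end{align*}
and uses the identity $\,^{y^{-1}}\!(\,^{yz}\!\alpha)=\,^{z}\!\alpha$ in $\Field$ (applied for $z=x$ and $z=xs$) together with $\Field$-linearity of the action of $y^{-1}$ to match the two expressions. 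This is the one step where being careful with the twisted action matters, and it is the closest thing to an obstacle—but it is still immediate.

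Once commutativity is established on the ambient space $\bigoplus_{\widetilde{W}}\Field$, the claim for $H^{BM}_{T\times\C^\times}(\Fl_{e_d})$ is automatic: by Proposition~\ref{Springer-Chern-action} and Lemma~\ref{Lem:ECM_action_formulas}, the image of $\iota$ is stable under both the CS and the ECM action, so the two actions restrict to $H^{BM}_{T\times\C^\times}(\Fl_{e_d})$ and continue to commute there.
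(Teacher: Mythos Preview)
Your proof is correct and follows essentially the same approach as the paper: the paper's proof also reduces to a direct verification that the generators of $H^\times_{\hbar,d}$ (via the CS formulas of Lemma~\ref{Lem:CS_action_fixed_pts}) commute with the generators of $H^\times_{\hbar,0}$ (via the ECM formulas of Lemma~\ref{Lem:ECM_action_formulas}), leaving the details to the reader. You have simply written out those details, including the key identity $\,^{y^{-1}}\!(\,^{yz}\!\alpha)=\,^{z}\!\alpha$ needed for the $s$-versus-$y$ case.
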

\begin{proof}
    The actions of generators are specified in Lemma \ref{Lem:CS_action_fixed_pts} for the CS action and in Lemma \ref{Lem:ECM_action_formulas} for the ECM action. One directly checks that the generators of $H^\times_{\hbar,d}$ commute with the generators of $H^\times_{\hbar,0}$.
\end{proof}

So $H^{BM}_{T\times\C^\times}(\Fl_{e_d})$ becomes an $\Hcal_{\hbar,d}^\times$-$\Hcal_{\hbar,0}^\times$-bimodule.

\begin{Ex}\label{Ex:Springer_bimodule_0}
Consider the example of $d=0$, where $\Fl_{e_0}\xrightarrow{\sim} \widetilde{W}$ by Example \ref{ex:d_equal_0}. The image of $\iota$ is just $\bigoplus_{\widetilde{W}}\Ring$ that naturally identifies with $H^\times_{\hbar,0}$. The bimodule structure on $H^{BM}_{T\times\C^\times}(pt)$ is that of the regular bimodule, as seen directly from the formulas in Lemmas \ref{Lem:CS_action_fixed_pts} and
\ref{Lem:ECM_action_formulas}.
\end{Ex}



\subsection{Properties of the bimodule}

The goal of this section is to prove some properties of the $\Hcal^\times_{\hbar,d}$-$\Hcal^\times_{\hbar,0}$-bimodule $H^{BM}_{T\times\C^\times}(\Fl_{e_d})$ that are analogous to those of the $\Hcal_{\hbar,d}$-$\Hcal_{\hbar,0}$-bimodule $B_{\hbar,d\leftarrow 0}$ in Lemma \ref{Lem:bimod_sph_isom}.

\begin{Lem}\label{Claim:iso_spherical_general}
For $d\geqslant 0$, we have a graded $\Hcal^\times_{\hbar,0}$-linear isomorphism
$\epsilon H^{BM}_{T\times \C^\times}(\Fl_{e_{d}})\cong \epsilon_- H^{BM}_{T\times \C^\times}(\Fl_{e_{d+1}})$ (where we shift the grading on one of the sides).
\end{Lem}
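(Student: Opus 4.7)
The approach is to realize both sides inside $\bigoplus_{\widetilde{W}}\Field$ via the GKM embedding of Corollary \ref{Cor:GKM_Springer} and to construct the isomorphism as component-wise multiplication by an explicit element: set
\[ \delta_x := \prod_{\alpha \in R^+}(\,^x\!\alpha + d\hbar) \in \Ring \qquad (x \in \widetilde{W}), \]
and define $\phi(\beta)_x := \delta_x \cdot \beta_x$, with proposed inverse $\psi(\gamma)_x := \gamma_x/\delta_x$. Using Lemma \ref{Lem:CS_action_fixed_pts}, for a simple reflection $s = s_\alpha$ the condition $s\beta = -\beta$ (valid with any CS parameter) is equivalent to $\beta_{xs} = -\beta_x$, while the condition $s\gamma = \gamma$ at CS parameter $d$ is equivalent to $\gamma_{xs}/\gamma_x = (\,^x\!\alpha - d\hbar)/(\,^x\!\alpha + d\hbar)$. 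Since $s_\alpha$ permutes $R^+ \setminus \{\alpha\}$ and sends $\alpha \mapsto -\alpha$, a short calculation gives $\delta_{xs}/\delta_x = (d\hbar - \,^x\!\alpha)/(d\hbar + \,^x\!\alpha)$, so $\phi$ sends antisymmetric tuples to the required symmetric ones. Right $\Hcal^\times_{\hbar,0}$-equivariance of $\phi$ reduces to the identity $\,^{y^{-1}}\!\delta_{yx} = \delta_x$, which is immediate from the definition of $\delta$.

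The principal step is matching the GKM conditions from Corollary \ref{Cor:GKM_Springer}. For $\phi$, the polynomial condition (i) at level $d$ for $\phi(\beta)$ amounts to $\beta_x \cdot \prod_\alpha \prod_{k=-d}^d (\,^x\!\alpha + k\hbar) \in \Ring$, and this is the main obstacle: condition (i) for $\beta$ at level $d+1$ at the point $x$ only bounds the denominator of $\beta_x$ by $\prod_\alpha \prod_{k=-d-1}^d(\,^x\!\alpha + k\hbar)$, which carries a surplus factor $\prod_\alpha(\,^x\!\alpha - (d+1)\hbar)$. The resolution is to invoke (i) also at $xs_\alpha$ for each $\alpha \in R^+$, use the antisymmetry $\beta_{xs_\alpha} = -\beta_x$, and apply a greatest common divisor argument in the polynomial ring $\Ring$: the two resulting divisibilities have complementary surplus factors $\prod_\alpha(\,^x\!\alpha \mp (d+1)\hbar)$, and their simultaneous cancellation over all $\alpha \in R^+$ yields the desired polynomiality. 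The interior residue conditions (ii) for $\phi(\beta)$ at hyperplanes $\,^x(\alpha + k\hbar) = 0$ with $-d \leq k \leq d-1$ follow from (ii) for $\beta$ together with the congruence $\delta_x \equiv \delta_{xs_{\alpha,k}} \pmod{\,^x(\alpha + k\hbar)}$, which is a direct consequence of the formula $s_{\alpha,k}(\alpha') = \alpha' - \langle\alpha^\vee,\alpha'\rangle(\alpha + k\hbar)$ and the fact that $\delta_x$ is invertible on these hyperplanes.

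For the inverse $\psi$, the polynomiality condition (i$'$) at level $d+1$ is immediate: the expression $(\gamma_x/\delta_x) \cdot \prod_\alpha\prod_{k=-d-1}^d(\,^x\!\alpha + k\hbar)$ simplifies to $\gamma_x \cdot \prod_\alpha(\,^x\!\alpha - (d+1)\hbar)\cdot \prod_\alpha\prod_{k=-d}^{d-1}(\,^x\!\alpha + k\hbar)$, which lies in $\Ring$ by (i) for $\gamma$, and the interior residue conditions reduce to those for $\gamma$ exactly as above. The subtlest point is the two new boundary residue conditions for $\psi(\gamma)$ at $k = d$ and $k = -d-1$: there $\delta_x$ has a simple zero along the hyperplane, so $\psi(\gamma)_x$ and $\psi(\gamma)_{xs_{\alpha,k}}$ each acquire a simple pole. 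I will show the residues cancel by combining the $W$-symmetry of $\gamma$ at both $x$ and $xs_{\alpha,k}$ with the interior residue condition (ii) for $\gamma$ at the auxiliary fixed point $xs_\alpha$; concretely, the identity $xs_{\alpha,d}\cdot s_\alpha = x \cdot t^{d\alpha^\vee}$ (and its analog for $k = -d-1$) allows both residues in question to be expressed in terms of a single quantity, after which (ii) applied at $xs_\alpha$ forces the cancellation. The grading shift of the isomorphism equals $\deg\delta_x = 2|R^+|$, matching the shift flagged in the statement.
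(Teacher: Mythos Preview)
Your approach is essentially the same as the paper's: both construct the isomorphism as component-wise multiplication by $\,^x\!\upsilon=\prod_{\alpha\in R^+}(\,^x\!\alpha+d\hbar)$ and verify the GKM conditions of Corollary~\ref{Cor:GKM_Springer} directly, with the same congruence argument for the interior residues and the same antisymmetry-based elimination of the surplus pole $\,^x\!\alpha-(d+1)\hbar$ in condition~(i).

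One small slip: the boundary case $k=-d-1$ for $\psi$ is \emph{not} parallel to $k=d$. On the hyperplane $\,^x\!\alpha-(d+1)\hbar=0$ the element $\delta_x$ has no zero, and $\gamma_x$ (hence $\psi(\gamma)_x$) has no pole there either, so the residue condition is vacuous; no cancellation argument is needed. Only $k=d$ is genuinely delicate, and there the paper's route is to first establish the antisymmetry $\psi(\gamma)_{xs_\alpha}=-\psi(\gamma)_x$ (reversing your ratio computation for $\gamma$) and then rewrite $\psi(\gamma)_x+\psi(\gamma)_{xs_{\alpha,d}}=-\bigl(\psi(\gamma)_{xs_\alpha}+\psi(\gamma)_{(xs_\alpha)s_{\alpha,-d}}\bigr)$, reducing to the already-proved $k=-d$ case at $xs_\alpha$. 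Your outlined argument via the $W$-symmetry of $\gamma$ and the identity $xs_{\alpha,d}s_\alpha=xt^{d\alpha^\vee}$ is heading toward the same reduction, but as written it does not quite close; the cleanest path is to first pass to the antisymmetry of $\psi(\gamma)$.
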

\begin{proof}
The proof is in several steps.

{\it Step 1}.
	Let $\beta\in H^{BM}_{T\times \C^\times}(\Fl_{e_{d+1}})$.
	Set $(f_x):=\iota(\beta)$.
	The condition that $\beta\in \epsilon_- H^{BM}_{T\times \C^\times}(\Fl_{e_{d+1}})$ is equivalent to $f_x=-f_{xs}$ for all simple Dynkin reflections $s$. This follows from Lemma \ref{Lem:CS_action_fixed_pts}.
	

	Now let $\beta'\in H^{BM}_{T\times\C^\times}(\Fl_{e_d})$.
	Set $(f'_x)=\iota(\beta')$.
	Thanks to Lemma \ref{Lem:CS_action_fixed_pts},
	we have $\beta'\in \epsilon H^{BM}_{T\times\C^\times}(\Fl_{e_d})$ if and only if
	 $(\,^x\!\alpha+d\hbar)f_{xs}=(\,^x\!\alpha-d\hbar)f_x$ for all simple Dynkin reflections $s=s_\alpha$.
	
{\it Step 2}.	
	We want to define mutually inverse maps between $\iota(\epsilon_- H^{BM}_{T\times \C^\times}(\Fl_{e_{d+1}}))$ and $\iota(\epsilon H^{BM}_{T\times \C^\times}(\Fl_{e_{d}}))$.
	Define the element $\upsilon\in \C[\h][\hbar]=\Ring$ by
	$$\upsilon:=\prod_{\alpha\in R^+}(\alpha+d\hbar),$$
	where we write $R^+$ for the system of positive Dynkin roots.
	Define an endomorphism of
	$\bigoplus_{\widetilde{W}}\Field$ by
	\begin{equation}\label{eq:map}
	    \Upsilon:(f_x)\mapsto (g_x)\coloneqq(\,^x\!\upsilon f_x).
	\end{equation}
	Note that $\Upsilon$ is invertible. Also note that $\upsilon$ can be viewed as an element of $H^\times_{\hbar,d}$, see Proposition
	\ref{Springer-Chern-action} and Lemma
	\ref{Lem:CS_action_fixed_pts}. From
	Corollary \ref{Cor:Springer_bimodule_structure} we deduce that $\Upsilon$ is $H^\times_{\hbar,0}$-linear. The element $\upsilon$ has degree $|R^+|$ so we can shift the grading and assume $\Upsilon$ is graded. It remains to show that
	\begin{align}\label{eq:inclusion1}
	& \Upsilon\left(\iota(\epsilon_- H^{BM}_{T\times \C^\times}(\Fl_{e_{d+1}}))\right)\subset
	\iota\left(\epsilon H^{BM}_{T\times \C^\times}(\Fl_{e_{d}})\right),\\\label{eq:inclusion2}
	& \Upsilon^{-1}\left(\iota(\epsilon H^{BM}_{T\times \C^\times}(\Fl_{e_{d}}))\right)\subset
	\iota\left(\epsilon_- H^{BM}_{T\times \C^\times}(\Fl_{e_{d+1}})\right).
	\end{align}
	
	{\it Step 3}. We start by proving (\ref{eq:inclusion1}) in this step and the next two.
	Assume $(f_x)\in \iota\left(\epsilon_- H^{BM}_{T\times \C^\times}(\Fl_{e_{d+1}})\right)$. We need to check that $(g_x)\in
	\iota\left(\epsilon H^{BM}_{T\times \C^\times}(\Fl_{e_{d}})\right)$. We begin by checking $(g_x)\in  \iota\left(H^{BM}_{T\times \C^\times}(\Fl_{e_{d}})\right)$. This will be done using Corollary \ref{Cor:GKM_Springer} (for both $d$
	and $d+1$).
	
	We  first check (i) for $d$, i.e., that $$g_x\prod_{\alpha\in R^+}\prod_{k=-d}^{d-1}(\,^x\!\alpha+k\hbar)[=\left(f_x\prod_{\alpha\in R^+}(\,^x\!\alpha+d\hbar)\right)
	\prod_{\alpha\in R^+}\prod_{k=-d}^{d-1}(\,^x\!\alpha+k\hbar)]
	\in \Ring$$
	By (i) applied to $d+1$ and the point $x$ in Corollary \ref{Cor:GKM_Springer} we have
	$$f_x
	\prod_{\alpha\in R^+}\prod_{k=-d-1}^{d}(\,^x\!\alpha+k\hbar)
	\in \Ring$$
	It remains to show that $f_x$ (hence $g_x$) cannot have poles along $\,^x\!\alpha-(d+1)\hbar$ for any positive roots $\alpha$.
Note $f_x=-f_{xs}$ so it can only have poles along $(\,^{xs}\alpha+k\hbar)$
for $k=-d-1,\ldots,d$. But, for $s=s_\alpha$, $(\,^{xs}\alpha+k\hbar)=-(\,^x\!\alpha-k\hbar)$. So $f_x$ indeed has no pole along $(\,^x\!\alpha-(d+1)\hbar)$. This establishes (i) of Corollary \ref{Cor:GKM_Springer} for $d$.
	
	{\it Step 4}.
	Now we need to check that (ii) of Corollary \ref{Cor:GKM_Springer} holds for $(g_x)$:
	$$Res_{\,^x\!\beta+k\hbar}(g_x+g_{xs_{\beta,k}})=0$$
	for all $x\in \widetilde{W},\beta\in R^+$, and $k=-d,\dots d-1$.
	Note that $$\,^x\!F\equiv\,^{xs_{\beta,k}}\!F\mod \,^x\!\beta+k\hbar,\quad \forall F\in \Ring.$$
	In particular,
	\begin{equation}\label{eq:congruence}
	\prod_{\alpha\in R^+}(\,^x\!\alpha+d\hbar)\equiv \prod_{\alpha\in R^+}(\,^{xs_{\beta,k}}\!\alpha+d\hbar)\mod \,^x\!\beta+k\hbar.
	\end{equation}
	Recall that $f_x$ has at most simple pole at $\,^x\!\beta+k\hbar$.
	It follows that
	\begin{equation}\label{eq:residue_vanish1}
	Res_{\,^x\!\beta+k\hbar}
	(f_x\prod_{\alpha\in R^+}(\,^x\!\alpha+d\hbar)-f_x\prod_{\alpha\in R^+}(\,^{xs_{\beta,k}}\!\alpha+d\hbar))=0.\end{equation}
	  Since $$Res_{\,^x\!\beta+k\hbar}(f_x+f_{xs_{\beta,k}})=0,$$
	  for all $\beta\in R^+$ and all $k=-d,\ldots,d-1$ (this is a part of (ii) of Corollary \ref{Cor:GKM_Springer}) for $d+1$, we deduce from (\ref{eq:residue_vanish1}) that
	  $$Res_{\,^x\!\beta+k\hbar}(g_x+g_{xs_{\beta,k}})=0$$
	  for $\beta$ and $k$ in the same range. This is exactly (ii) of Corollary \ref{Cor:GKM_Springer}. This finishes the proof of
	  $\Upsilon\left(\iota(\epsilon_- H^{BM}_{T\times \C^\times}(\Fl_{e_{d+1}}))\right)\subset
	\iota\left(H^{BM}_{T\times \C^\times}(\Fl_{e_{d}})\right)$.
	
	 {\it Step 5}.
	We finally check that $\epsilon(g_x)=(g_x)$, equivalently $s_\beta(g_x)=(g_x)$ for each Dynkin simple root $\beta$. This will finish the proof of (\ref{eq:inclusion1}).
	
	Using the formula for the Springer action of $s_\beta$, Lemma \ref{Lem:CS_action_fixed_pts}, and the construction of $(g_x)$ we see that the equality $s_\beta(g_x)=(g_x)$ is equivalent to
	\begin{equation}\label{eq:d_symmetric}
	 (\,^x\!\beta-d\hbar)\left(\prod_{\alpha\in R^+}(\,^x\!\alpha+d\hbar)\right)f_x=(\,^x\!\beta+d\hbar)\prod_{\alpha\in R^+}(\,^{xs}\!\alpha+d\hbar)f_{xs}
	 \end{equation}
	 for all $x\in \widetilde{W}$.
	
	 Rearranging the factors, we get
	\begin{equation}\label{eq:rearranging}
	(\,^x\!\beta-d\hbar)\prod_{\alpha\in R^+}(\,^x\!\alpha+d\hbar)=-(\,^x\!\beta+d\hbar)\prod_{\alpha\in R^+}(\,^{xs}\!\alpha+d\hbar).
	\end{equation}
	Since $(f_x)\in \iota\left(\epsilon_- H^{BM}_{T\times \C^\times}(\Fl_{e_{d+1}})\right)$, we have $f_x=-f_{xs}$.
	 Combining this with (\ref{eq:rearranging}) we get (\ref{eq:d_symmetric}). This finishes the proof of (\ref{eq:inclusion1}).
	
	{\it Step 6}. Now we check (\ref{eq:inclusion2}). Let $(g_x)\in \iota\left(\epsilon H^{BM}_{T\times \C^\times}(\Fl_{e_{d}})\right)$.
	Set $f_x:=\left(g_x \prod_{\alpha\in R^+}(\,^x\!\alpha+d\hbar)^{-1}\right)$. We need to show that
	\begin{itemize}
	\item
	$f_x=-f_{xs}$
	for all $x\in \widetilde{W}$ and simple Dynkin reflection $s$;
	\item and the collection $(f_x)$ satisfies (i) and (ii) of Corollary \ref{Cor:GKM_Springer} for $d+1$.
	\end{itemize}
	The first bullet is checked by reversing the argument of Step 5. In the remainder of the proof we will check the second bullet.

	{\it Step 7}. We start by checking (i).  Note that, by condition (i) for $d$, $g_x$ has at most simple poles along $(\,^x\!\alpha+k\hbar)$ for $\alpha\in R^+$, $k=-d,\dots d-1$. Hence  $f_x$ has at most simple poles along $(\,^x\!\alpha+k\hbar)$ for $\alpha\in R^+$, $k=-d,\dots d$.
	This verifies condition (i) for $d+1$.
	
	{\it Step 8}.
	Now we just need to check condition (ii):
	\begin{equation}\label{eq:residue_d+1}Res_{\,^x\!\beta+k\hbar}(f_x+f_{xs_{\beta,k}})=0
	\end{equation}
	for $\beta\in R^+$ and $k=-(d+1),\dots, d$. Step 7 implies that $f_x$ has no pole along the roots $(\,^x\!\alpha-(d+1)\hbar)$. (\ref{eq:residue_d+1}) for $k=-(d+1)$ and all $\beta$ follows.
	
	Now we establish (\ref{eq:residue_d+1}) for $k=-d,\ldots,d-1$.
	The function  $\left(\prod_{\alpha\in R^+}(\,^x\!\alpha+d\hbar)\right)^{-1}$ has no poles along $(\,^x\!\beta+k\hbar)$ for $k\neq d$.
	Using this and (\ref{eq:congruence}), we easily deduce (\ref{eq:residue_d+1}) from condition (ii) (of Corollary \ref{Cor:GKM_Springer}) for the collection $(g_x)$.

	 It remains to establish (\ref{eq:residue_d+1}) for $k=d$. Note that, by Step 6, $f_x+f_{xs_{\beta,d}}=-f_{xs_\beta}-f_{xs_{\beta,-d}}$. So (\ref{eq:residue_d+1}) for $k=d$  follows from the equation for $k=-d$
	 (with $x$ replaced with $xs_\beta$). The latter has been established in the previous paragraph.
\end{proof}

\section{Proofs of the main theorems}\label{S_proof_1}
In this section we will prove Theorems \ref{Thm:iso} and Theorem \ref{Thm:dim}.
\subsection{Isomorphism of deformations}\label{SS_bimod_deform}
Now we state the main result of this section that implies
Theorem \ref{Thm:iso}. We write $H^\wedge_{\hbar,c}$ for the isomorphic algebras in Lemma \ref{Lem:trig_rat_iso}.
Set
$$B^\wedge_{\hbar,d\leftarrow 0}:=B_{\hbar,d\leftarrow 0}\otimes_{\C[\h^*]}\C[\h^*]^{\wedge_0},
H^{BM}_{T\times\C^\times}(\Fl_{e_d})^\wedge:=H^{BM}_{T\times\C^\times}(\Fl_{e_d})\otimes_{\C[T^\vee]}\C[T^\vee]^{\wedge_1}.$$
Both $B^\wedge_{\hbar,d\leftarrow 0}, H^{BM}_{T\times\C^\times}(\Fl_{e_d})^\wedge$
are graded $H^\wedge_{\hbar,d}$-$H^\wedge_{\hbar,0}$-bimodules that are flat over $\C[\hbar]$. This follows from Section
\ref{SS_Cherednik_deform_constr} for the former bimodule, and from Corollary \ref{Cor:Springer_bimodule_structure} and (2) of Proposition \ref{GKM:results}
for the latter bimodule.

\begin{Thm}\label{Thm:iso_deformed}
We have a graded $H^{\wedge}_{\hbar,d}$-$H^\wedge_{\hbar,0}$-bimodule isomorphism
$B_{\hbar,d\leftarrow 0}^{\wedge}\xrightarrow{\sim}
H^{T\times \C^\times}_{BM}(\Fl_{e_d})^\wedge$.
\end{Thm}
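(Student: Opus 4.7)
The plan is to induct on $d$, combining the idempotent-translation isomorphisms on both sides to reduce the step $d\to d+1$ to a Morita-theoretic extension problem, and then to handle the extension using sphericity at integer parameters.

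\textbf{Base case} ($d=0$). As noted just after the definition of $B_{\hbar,\lambda+d\nu\leftarrow\lambda}$ in Section \ref{SS_Cherednik_deform_constr}, the bimodule $B_{\hbar,0\leftarrow 0}$ is the regular $H_{\hbar,0}$-bimodule. By Example \ref{Ex:Springer_bimodule_0}, $H^{BM}_{T\times\C^\times}(\Fl_{e_0})$ is the regular $H^\times_{\hbar,0}$-bimodule. After the completions at $0\in\h^*$ and $1\in T^\vee$, Lemma \ref{Lem:trig_rat_iso} identifies both algebras with $H^\wedge_{\hbar,0}$, so both bimodules become the regular $H^\wedge_{\hbar,0}$-bimodule, giving the isomorphism $\phi_0$.

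\textbf{Inductive step.} Assume we have a graded $H^\wedge_{\hbar,d}$-$H^\wedge_{\hbar,0}$-bilinear isomorphism $\phi_d$. Apply the idempotent $\epsilon$ on the left: by Lemma \ref{Lem:bimod_sph_isom}(1) we have a bigraded algebra identification $\epsilon H^\wedge_{\hbar,d}\epsilon=\epsilon_- H^\wedge_{\hbar,d+1}\epsilon_-$, and combining part (2) of Lemma \ref{Lem:bimod_sph_isom} with Lemma \ref{Claim:iso_spherical_general} (the latter applied after completion) produces a chain of graded $\epsilon_- H^\wedge_{\hbar,d+1}\epsilon_-$-$H^\wedge_{\hbar,0}$-bilinear isomorphisms
$$\epsilon_- B^\wedge_{\hbar,d+1\leftarrow 0}\;\cong\;\epsilon B^\wedge_{\hbar,d\leftarrow 0}\;\xrightarrow{\epsilon\phi_d}\;\epsilon H^{BM}_{T\times\C^\times}(\Fl_{e_d})^\wedge\;\cong\;\epsilon_-H^{BM}_{T\times\C^\times}(\Fl_{e_{d+1}})^\wedge,$$
giving a bilinear iso $\psi_{d+1}$ on the $\epsilon_-$-parts.

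\textbf{Extension via sphericity.} It remains to promote $\psi_{d+1}$ to a full $H^\wedge_{\hbar,d+1}$-$H^\wedge_{\hbar,0}$-bilinear isomorphism. The key observation is that the parameter $c=d+1$ is an integer, so by Lemma \ref{Lem:integr_spher} the algebra $H_{d+1}=H_{\hbar,d+1}/(\hbar-1)$ is simple, hence $\epsilon_-$-spherical. Rescaling $\hbar$ by $\C^\times$ transfers this to $\epsilon_-$-sphericity of $H_{\hbar,d+1}$ over $\C[\hbar,\hbar^{-1}]$, and this property survives the completion at $0\in\h^*$ (which commutes with inverting $\hbar$). Hence the Morita map
$$H^\wedge_{\hbar,d+1}\epsilon_-\;\otimes_{\epsilon_- H^\wedge_{\hbar,d+1}\epsilon_-}\;\epsilon_- M\;\longrightarrow\;M$$
is an isomorphism after inverting $\hbar$, for each of the two candidates $M$. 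Applying this to both sides, $\psi_{d+1}$ extends to an $H^\wedge_{\hbar,d+1}[\hbar^{-1}]$-$H^\wedge_{\hbar,0}[\hbar^{-1}]$-bilinear iso $\tilde\phi_{d+1}$.

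\textbf{Main obstacle: descending the $\C[\hbar]$-lattices.} The delicate point is that both $B^\wedge_{\hbar,d+1\leftarrow 0}$ and $H^{BM}_{T\times\C^\times}(\Fl_{e_{d+1}})^\wedge$ are $\C[\hbar]$-flat (by Proposition \ref{Prop:cohom_vanishing}(2) and Proposition \ref{GKM:results}(3), respectively), so they embed as $\C[\hbar]$-lattices in the corresponding $\C[\hbar,\hbar^{-1}]$-modules, and one must show these lattices correspond under $\tilde\phi_{d+1}$. My plan here is to use Lemma \ref{Lem:endomorphism}, which realises $H^\wedge_{\hbar,d+1}$ as the $H^\wedge_{\hbar,0}$-linear endomorphism algebra of $B^\wedge_{\hbar,d+1\leftarrow 0}$: the image $\tilde\phi_{d+1}(B^\wedge_{\hbar,d+1\leftarrow 0})$ is an $H^\wedge_{\hbar,d+1}$-stable $\C[\hbar]$-lattice containing $\psi_{d+1}(\epsilon_- B^\wedge_{\hbar,d+1\leftarrow 0})=\epsilon_- H^{BM}_{T\times\C^\times}(\Fl_{e_{d+1}})^\wedge$, and one compares both to the Morita-closure of this $\epsilon_-$-piece. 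Checking that the two $\C[\hbar]$-lattices coincide, and not merely that they generate the same module after inverting $\hbar$, is the technical core of the argument; this is where flatness and the characterization of $H^{BM}_{T\times\C^\times}(\Fl_{e_{d+1}})^\wedge$ via the localization description of Corollary \ref{Cor:GKM_Springer} should be used to pin down the image inside the $\hbar$-localization.
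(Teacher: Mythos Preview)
Your overall inductive architecture matches the paper's, but there are two genuine gaps.

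\textbf{Left-linearity of $\psi_{d+1}$.} Lemma \ref{Claim:iso_spherical_general} only asserts a \emph{right} $H^\times_{\hbar,0}$-linear isomorphism, not a bilinear one; it gives no a priori compatibility with the left $\epsilon_- H^\wedge_{\hbar,d+1}\epsilon_-$-action. The paper addresses this explicitly (Steps 2--3 of its proof): by Lemma \ref{Lem:endomorphism} the left action on $\epsilon B^\wedge_{\hbar,d\leftarrow 0}$ is its full right-$H^\wedge_{\hbar,0}$-endomorphism algebra, so the Springer-side isomorphism is automatically \emph{semilinear} for some graded $\C[\hbar]$-algebra automorphism $\zeta$ of $\epsilon H^\wedge_{\hbar,d}\epsilon$. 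One then checks $\zeta\equiv\mathrm{id}\pmod\hbar$ (because the map $\Upsilon$ in the proof of Lemma \ref{Claim:iso_spherical_general} is $\C[T^*T^\vee]^W$-linear modulo $\hbar$), hence $\zeta=\exp(\hbar\partial)$ for a degree $-1$ derivation; a Poisson argument on $\C[Y^\wedge]$ shows $\partial$ is inner, so composing with the corresponding invertible $F\in\C[\h^{*\wedge}]^W$ makes $\psi_{d+1}$ genuinely bilinear. You asserted bilinearity without this correction.

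\textbf{Extension to the full bimodule.} Here the paper's mechanism (Proposition \ref{Prop:isom_extension}) is different from yours and sharper. Rather than only inverting $\hbar$, the paper microlocalizes over the open locus $Y^{\wedge,0}_\hbar:=Y^\wedge_\hbar\setminus(Y^{\wedge,sing}\times\{0\})$. On this locus $H^{\wedge,0}_{\hbar,d+1}$ is Morita equivalent to $\epsilon_-H^{\wedge,0}_{\hbar,d+1}\epsilon_-$ (Lemma \ref{Lem:glob_sec_iso}(1)), so $\psi_{d+1}$ extends there; taking global sections and using Lemma \ref{Lem:glob_sec_iso}(2) on the $B$-side produces a \emph{filtered} $\C[\hbar]$-linear homomorphism $H^{BM}_{T\times\C^\times}(\Fl_{e_{d+1}})^\wedge\to B^\wedge_{\hbar,d+1\leftarrow 0}$ directly between the lattices. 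Its specialization at $\hbar=1$ is an isomorphism by sphericity, and its associated graded is injective because $H^{BM}_T(\Fl_{e_{d+1}})^\wedge$ is $\C[\h^{*\wedge}]$-flat and the map is an isomorphism over $(\h^{*\wedge}/W)^{reg}$; an elementary filtration argument then forces $\gr\varphi$ to be bijective. Your approach only produces a map after inverting $\hbar$, and the lattice comparison you outline is not completed: knowing the $\epsilon_-$-parts agree does not by itself determine the full lattice at $\hbar=0$, since Morita fails there. The point of microlocalizing over $Y^{\wedge,0}_\hbar$---which also contains $Y^{\wedge,reg}\times\{0\}$, not just $\{\hbar\neq 0\}$---is precisely that it manufactures the map at the integral level and reduces the remaining check to flatness plus a generic isomorphism, both of which are already available.
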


Let us explain key ideas of the proof. We use induction on $d$. Note that for $d=0$
both sides are isomorphic the regular $H^\wedge_{\hbar,0}$-bimodule: for the left hand side this follows from the construction in Section \ref{SS_Cherednik_deform_constr}.
For the right hand side the claim follows
from Example \ref{Ex:Springer_bimodule_0}.
This is our induction base. The induction step is based on Lemmas \ref{Lem:bimod_sph_isom}, \ref{Claim:iso_spherical_general} and the next proposition.

\begin{Prop}\label{Prop:isom_extension}
Let $d>0$. Any graded $\epsilon_- H^{\wedge}_{\hbar,d}\epsilon_-$-$H^{\wedge}_{\hbar,0}$-linear
isomorphism
\begin{equation}\label{eq:sign_spher_iso}
\epsilon_-H^{T\times \C^\times}_{BM}(\Fl_{e_d})^\wedge
\xrightarrow{\sim}
\epsilon_- B_{\hbar,d\leftarrow 0}^\wedge
\end{equation}
uniquely extends to a graded $H^\wedge_{\hbar,d}$-$H^\wedge_{\hbar,0}$-linear isomorphism
$$ H^{T\times \C^\times}_{BM}(\Fl_{e_d})^\wedge\xrightarrow{\sim}
B_{\hbar,d\leftarrow 0}^\wedge.$$
\end{Prop}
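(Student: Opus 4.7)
The strategy is to apply Morita-equivalence reasoning coming from $\epsilon_-$-sphericity of the Cherednik parameter $c = d$ for $d \in \Z_{>0}$.

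The first step is to establish $\epsilon_-$-sphericity of the relevant algebra, namely
\begin{equation*}
H^\wedge_{\hbar,d}\,\epsilon_-\,H^\wedge_{\hbar,d} = H^\wedge_{\hbar,d}.
\end{equation*}
At $\hbar = 1$ this reduces to $H_d = H_d\epsilon_- H_d$, which is immediate from Lemma \ref{Lem:integr_spher} since $H_d$ is simple for $d \in \Z_{>0}$. To upgrade to the graded, completed, $\hbar$-deformed setting, I would analyze the graded two-sided ideal $I := H^\wedge_{\hbar,d}\epsilon_- H^\wedge_{\hbar,d}$ and its quotient $A := H^\wedge_{\hbar,d}/I$, both inheriting the $\C^\times$-grading. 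The specialization $A/(\hbar - 1)A$ is a quotient of $H_d/H_d\epsilon_- H_d = 0$, hence vanishes. Exploiting that $\hbar$ has nonzero $\C^\times$-weight, that the $\C^\times$-grading on $H^\wedge_{\hbar,d}$ is appropriately bounded, and that the relevant $\hbar$-saturations are $\C[\hbar]$-flat, a degree-by-degree bookkeeping argument---matching the graded pieces of the relation $a = (\hbar - 1)b$ for a top-degree $a \in A$---then forces $A = 0$.

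Once $\epsilon_-$-sphericity is in hand, standard Morita theory gives that $M \mapsto \epsilon_- M$ is an equivalence from graded $H^\wedge_{\hbar,d}$-$H^\wedge_{\hbar,0}$-bimodules to graded $\epsilon_- H^\wedge_{\hbar,d}\epsilon_-$-$H^\wedge_{\hbar,0}$-bimodules, with quasi-inverse $N \mapsto H^\wedge_{\hbar,d}\epsilon_- \otimes_{\epsilon_- H^\wedge_{\hbar,d}\epsilon_-} N$. Writing $B := H^{T\times\C^\times}_{BM}(\Fl_{e_d})^\wedge$ and $B' := B^\wedge_{\hbar,d \leftarrow 0}$, the counit maps
\begin{equation*}
H^\wedge_{\hbar,d}\epsilon_- \otimes_{\epsilon_- H^\wedge_{\hbar,d}\epsilon_-} \epsilon_- B \longrightarrow B, \qquad H^\wedge_{\hbar,d}\epsilon_- \otimes_{\epsilon_- H^\wedge_{\hbar,d}\epsilon_-} \epsilon_- B' \longrightarrow B'
\end{equation*}
are then isomorphisms. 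Composing the inverse of the first with $\operatorname{id} \otimes \phi$ and then the second yields the desired extension $\tilde\phi: B \xrightarrow{\sim} B'$. Uniqueness is automatic, since any $H^\wedge_{\hbar,d}$-linear map out of $B$ is determined by its restriction to $\epsilon_- B$ once sphericity is known.

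The main obstacle lies in the first step. Sphericity manifestly \emph{fails} at the $\hbar = 0$ degeneration $H^\wedge_{\hbar,d}/\hbar \cong \C[\h \oplus \h^*]^\wedge \# W$, where $\C W \cdot \epsilon_- \cdot \C W$ recovers only the sign block of the group algebra; so one cannot deduce graded sphericity from any naive flatness or degeneration argument. The degree bookkeeping must therefore track carefully how $\hbar$ shifts $\C^\times$-weights in order to propagate $\hbar = 1$ sphericity to the full $\C[\hbar]$-family, and this is the key technical input required.
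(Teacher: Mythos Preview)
Your approach has a genuine gap in the first step: the identity $H^\wedge_{\hbar,d}\,\epsilon_-\,H^\wedge_{\hbar,d} = H^\wedge_{\hbar,d}$ is simply false, and no degree bookkeeping can rescue it. Indeed, $H^\wedge_{\hbar,d}$ is flat over $\C[\hbar]$, so reducing the quotient $A = H^\wedge_{\hbar,d}/I$ modulo $\hbar$ gives exactly $H^\wedge/(H^\wedge\epsilon_-H^\wedge)$, which you yourself observe is nonzero. Hence $A \neq 0$. The vanishing of $A/(\hbar-1)$ only tells you that $A$ is $\hbar$-torsion after inverting $(\hbar-1)$-type relations, not that $A$ vanishes; your proposed ``top-degree'' argument would need the grading to be bounded in a direction that it is not (the elements of $\C[\h^*]^{\wedge_0}$ already live in infinitely many degrees, and the nonzero quotient $H^\wedge/H^\wedge\epsilon_-H^\wedge$ sits entirely at $\hbar=0$).

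The paper's proof works around exactly this failure. Rather than seeking global $\epsilon_-$-sphericity, it microlocalizes both bimodules to sheaves on $Y^\wedge_\hbar$ and restricts to the open locus $Y^{\wedge,0}_\hbar := Y^\wedge_\hbar \setminus (Y^{\wedge,\mathrm{sing}} \times \{0\})$. The support of $H^\wedge_{\hbar,d}/H^\wedge_{\hbar,d}\epsilon_-H^\wedge_{\hbar,d}$ is contained in $Y^{\wedge,\mathrm{sing}} \times \{0\}$ (sphericity at $\hbar=1$ handles the $\hbar$-direction, and $H/H\epsilon_-H$ is supported on $Y^{\mathrm{sing}}$ handles the other), so Morita equivalence \emph{does} hold over $Y^{\wedge,0}_\hbar$, and your tensor-product construction produces an isomorphism of the restricted sheaves. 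One then takes global sections: a codimension-$2$ argument shows $\Gamma(B^{\wedge,0}_{\hbar,d\leftarrow 0}) \cong B^\wedge_{\hbar,d\leftarrow 0}$, giving a filtered map $H^{BM}_{T\times\C^\times}(\Fl_{e_d})^\wedge \to B^\wedge_{\hbar,d\leftarrow 0}$. Finally, flatness of $H^{BM}_T(\Fl_{e_d})^\wedge$ over $\C[\h^*]^\wedge$ is used to show the associated graded map is injective, hence an isomorphism. The essential idea you are missing is that the failure of sphericity is confined to a locus of codimension $\geq 2$, and this geometric fact replaces the global algebraic sphericity you were hoping for.
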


The proof will be given after a construction and a lemma.

We can  view $H^\wedge_{\hbar,c}$ as a filtered algebra
(with $\deg \hbar=\deg \C[\h]^{\wedge_0}=\deg W=0,
\deg \h=1$). Formally, the filtered algebra $H^\wedge_{\hbar,c}$ is obtained as
$\C[\hbar']\otimes_{\C[\hbar]}H^\wedge_{\hbar,c}$,
where the homomorphism $\C[\hbar]\rightarrow \C[\hbar']$ sends $\hbar$ to $\hbar'$, but
$\hbar'$ is treated as a degree  $0$ element.  In what follows we write $\hbar$ instead of $\hbar'$. Note that the resulting filtration on $H_{\hbar,c}^\wedge$ is $\C^\times$-stable.
We have $\gr H_{\hbar,c}=H^\wedge\otimes \C[\hbar]$.

Set $$\h^{*\wedge}:=\operatorname{Spec}(\C[\h^*]^{\wedge_0}), Y^\wedge:=\h^{*\wedge}/W\times_{\h^*/W}Y,
Y^{\wedge}_\hbar:=Y^\wedge\times \operatorname{Spec}(\C[\hbar]).$$
The  scheme $Y^\wedge_\hbar$ is the spectrum of the center of
of  $\gr H^\wedge_{\hbar,c}$.
So the algebra $H^\wedge_{\hbar,c}$ can be {\it microlocalized} to $Y^\wedge_\hbar$.

Now we recall some basics on the microlocalization.
The result of microlocalization of $H^\wedge_{\hbar,c}$ is a sheaf of algebras on $Y^\wedge_\hbar$ whose sections are defined on $\C^\times$-stable open subsets of $Y^\wedge_\hbar$ (for the $\C^\times$-action that is the original action on $Y^\wedge$ and is trivial on $\operatorname{Spec}(\C[\hbar])$).
Namely, pick a homogeneous element $f\in \C[Y_\hbar^\wedge]$. Consider the Rees algebra $R_{\mathsf{h}}(H^\wedge_{\hbar,c})$, where $\mathsf{h}$ is a variable of degree $1$.
Lift $f$ to a homogeneous element $\tilde{f}\in R_{\mathsf{h}}(H^\wedge_{\hbar,c})$. Then $\{\tilde{f}^k| k\geqslant 0\}$ is an Ore subset in each quotient $R_{\mathsf{h}}(H^\wedge_{\hbar,c})/(\mathsf{h}^n)$. The localization is easily seen to be independent of the lift $\tilde{f}$, denote it by $R_{\mathsf{h}}(H^\wedge_{\hbar,c})/(\mathsf{h}^n)[f^{-1}]$. The localizations inherit a grading from the grading on
$R_{\mathsf{h}}(H^\wedge_{\hbar,c})/(\mathsf{h}^n)$ that comes from the Rees construction. The graded algebras
$R_{\mathsf{h}}(H^\wedge_{\hbar,c})/(\mathsf{h}^n)[f^{-1}]$ form a projective system with respect to $n$. So we can consider the inverse limit in the category of graded algebras. Denote this inverse limit by
$R_{\mathsf{h}}(H^\wedge_{\hbar,c})[f^{-1}]$.
Set
$$H^\wedge_{\hbar,c}[f]^{-1}:=
R_{\mathsf{h}}(H^\wedge_{\hbar,c})[f^{-1}]/(\mathsf{h}-1).$$

The algebras $H^\wedge_{\hbar,c}[f^{-1}]$ glue to a sheaf with sections on $\C^\times$-stable open subsets. We denote this sheaf by $H_{\hbar,c}^{\wedge,loc}$.
This sheaf is complete and separated with respect to the filtration induced from $H^\wedge_{\hbar,c}$. Its algebra of global sections coincides with $H^\wedge_{\hbar,c}$. We note that if $f$ is homogeneous with respect to the initial $\C^\times$-action on $H^\wedge_{\hbar,c}$, then $H^\wedge_{\hbar,c}[f]^{-1}$
inherits this action. So  $H^{\wedge,loc}_{\hbar,c}$ is a $\C^\times$-equivariant sheaf
of filtered algebras.

Now consider a graded $H^\wedge_{\hbar,d}$-$H^\wedge_{\hbar,0}$-bimodule $\mathcal{B}$. We can view it as a filtered $H^{\wedge}_{\hbar,d}$-$H^\wedge_{\hbar,0}$-bimodule by doing the same base change as with the algebra.  Consider the microlocalization $\mathcal{B}^{loc}$ of $\mathcal{B}$, a microlocal filtered sheaf
on $Y^{\wedge}_\hbar$, defined similarly to the $H^{\wedge,loc}_{\hbar,c}$. The sections are defined on $\C^\times$-stable Zariski open
subsets, while the filtration is complete and separated. In particular, the space of sections
on any open subset inherits the filtration, and this filtration is complete and separated. Note that $\mathcal{B}^{loc}$ is a sheaf of $H^{\wedge,loc}_{\hbar,d}$-$H^{\wedge,loc}_{\hbar,0}$-bimodules. We have an isomorphism
$\mathcal{B}\xrightarrow{\sim} \Gamma(\mathcal{B}^{loc})$
because $Y^\wedge_\hbar$ is an affine scheme, compare to \cite[Lemma 2.10]{Bez_Losev}.

We note that, similarly to $H_{\hbar,c}^{\wedge,loc}$, the sheaf $\mathcal{B}^{loc}$ still carries a
natural $\C^\times$-action that turns it into a $\C^\times$-equivariant $H^{\wedge,loc}_{\hbar,d}$-$H^{\wedge,loc}_{\hbar,0}$.

Set
\begin{equation}\label{eq:gen_locus}
Y_\hbar^{\wedge,0}:=
Y^\wedge_\hbar\setminus [Y^{\wedge,sing}\times \{0\}].\end{equation}
Let $\mathcal{B}^0$ denote the restriction of $\mathcal{B}^{loc}$ to (\ref{eq:gen_locus}).
We get a natural homomorphism $\mathcal{B}\rightarrow \Gamma(\mathcal{B}^0)$.

\begin{Lem}\label{Lem:glob_sec_iso}
We have the following properties:
\begin{enumerate}
\item For any $\epsilon_-$-spherical parameter $c$, the microlocal sheaves of algebras $H_{\hbar,c}^{\wedge,0}$
and $\epsilon_-H_{\hbar,c}^{\wedge,0}\epsilon_-$ are Morita equivalent via the bimodule
$H_{\hbar,c}^{\wedge,0}\epsilon_-$.
\item For $\mathcal{B}=B^\wedge_{\hbar,d\leftarrow 0}$, the homomorphism $\mathcal{B}\rightarrow \Gamma(\mathcal{B}^{loc})$ is an isomorphism.
\end{enumerate}
\end{Lem}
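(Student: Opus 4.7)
The plan is to reduce both parts to local statements on $Y^\wedge_\hbar$ using microlocalization. For (1), I would cover the open locus $Y^{\wedge,0}_\hbar$ by two affine Zariski opens on which Morita equivalence can be verified by different means. For (2), interpreted as the assertion that $\mathcal{B}\rightarrow \Gamma(\mathcal{B}^0)$ is an isomorphism (the $\Gamma(\mathcal{B}^{loc})$ version being automatic by affineness of $Y^\wedge_\hbar$), the key input is the Cohen--Macaulay result from Proposition \ref{Prop:cohom_vanishing}.

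For part (1), cover $Y^{\wedge,0}_\hbar$ by $U_1 := Y^\wedge \times \{\hbar \neq 0\}$ (the $\hbar$-invertible locus) and $U_2 := Y^{\wedge, reg} \times \operatorname{Spec}\C[\hbar]$ (the regular locus). Over $U_1$, inverting $\hbar$ and normalizing via the Rees construction recovers $H_c^\wedge$, and the $\epsilon_-$-sphericity $H_c = H_c \epsilon_- H_c$ makes $H_c \epsilon_-$ into an $(H_c, \epsilon_- H_c \epsilon_-)$-progenerator; this equivalence passes to the microlocalization over $U_1$. Over $U_2$, the preimage of $Y^{\wedge, reg}$ in $\h \oplus \h^*$ is the $W$-free locus, where the classical smash product $\Ocal \# W$ is Azumaya over $\Ocal^W$, and $\Ocal \# W \cdot \epsilon_-$ corresponds under the standard equivalence $\Ocal \# W \sim \Ocal^W$ to the sign-isotypic line bundle, hence is a progenerator. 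Being a progenerator is an open condition on flat families, so $H^{\wedge, loc}_{\hbar, c}\epsilon_-|_{U_2}$ remains a progenerator and induces a Morita equivalence over all of $U_2$. On the overlap $U_1 \cap U_2$ both Morita bimodules are the same restriction of $H^{\wedge, loc}_{\hbar, c} \epsilon_-$, so they glue to a global Morita equivalence on $Y^{\wedge,0}_\hbar$.

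For part (2), the complement $Y^{\wedge, sing} \times \{0\}$ has codimension $\geqslant 3$ in $Y^\wedge_\hbar$ (codim $2$ in $Y^\wedge$ from the reflection-hyperplane arrangement, plus $1$ from $\hbar = 0$). By the Serre $S_2$ extension property on the Cohen--Macaulay scheme $Y^\wedge_\hbar$, it suffices to show that $\mathcal{B}$ is a maximal Cohen--Macaulay module over $\C[Y^\wedge_\hbar]$. Proposition \ref{Prop:cohom_vanishing}(1) says that $\iota_*(\Pro^{reg,*} \otimes \Ocal^{reg}(d) \otimes \Pro^{reg})$ is maximal Cohen--Macaulay on $X$ with vanishing higher cohomology, so its global sections $B_d$ form a maximal Cohen--Macaulay $\C[Y]$-module, and the $\h^*$-completion $B_d^\wedge$ is maximal Cohen--Macaulay over $\C[Y^\wedge]$. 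Since $\mathcal{B}$ is $\C[\hbar]$-flat with specialization $B_d^\wedge$ at $\hbar=0$ and $\hbar$ is a non-zero divisor, $\mathcal{B}$ is maximal Cohen--Macaulay over $\C[Y^\wedge_\hbar]$, as required.

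I expect the most delicate step to be the $\hbar$-lift on $U_2$ in part (1): verifying that the classical splitting of the Azumaya algebra by $\epsilon_-$ on the $W$-free locus extends through the quantized sheaf $H^{\wedge, loc}_{\hbar, c}|_{U_2}$, and that the two Morita structures on $U_1 \cap U_2$ actually coincide. Once these compatibility checks are in place, the Cohen--Macaulay deformation argument in (2) reduces to standard book-keeping given the geometric input of Proposition \ref{Prop:cohom_vanishing}.
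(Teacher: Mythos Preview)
Your treatment of (1) is correct in content but more elaborate than necessary. The paper reformulates the Morita claim as the vanishing of the quotient $H^\wedge_{\hbar,c}/H^\wedge_{\hbar,c}\epsilon_- H^\wedge_{\hbar,c}$ on $Y^{\wedge,0}_\hbar$, i.e.\ as the statement that this quotient is supported on $Y^{\wedge,sing}\times\{0\}$. Then $\epsilon_-$-sphericity forces the support into $\{\hbar=0\}$, and at $\hbar=0$ the quotient is $H/H\epsilon_-H$, supported on $Y^{sing}$. Your two opens $U_1,U_2$ encode exactly these two facts, but the support formulation needs no gluing, so the compatibility on $U_1\cap U_2$ that you flag as delicate is a non-issue. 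Also, ``being a progenerator is an open condition on flat families'' is not the right justification in the microlocal setting; what one actually uses on $U_2$ is that the associated graded of the quotient vanishes there and the filtration is complete and separated.

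Your approach to (2) has a genuine gap. The bimodule $\mathcal{B}=B^\wedge_{\hbar,d\leftarrow 0}$ is \emph{not} a $\C[Y^\wedge_\hbar]$-module: for $\hbar$ invertible the algebras $H^\wedge_{\hbar,c}$ have no large center, and $\C[Y]$ is not even a subalgebra. So it makes no sense to say that $\mathcal{B}$ is maximal Cohen--Macaulay over $\C[Y^\wedge_\hbar]$ or to invoke an $S_2$ extension property for $\mathcal{B}$ directly. This is precisely why the paper first passes to the associated graded, where $\gr\mathcal{B}=B_d^\wedge\otimes\C[\hbar]$ \emph{is} a $\C[Y^\wedge_\hbar]$-module. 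Even at that level your argument is incomplete: Proposition~\ref{Prop:cohom_vanishing} gives that the sheaf is MCM on $X$ with vanishing higher cohomology, but this does not imply that its global sections $B_d$ are MCM over $\C[Y]$ (via Grothendieck duality that would require cohomology vanishing for the dual sheaf, i.e.\ for the twist by $\Ocal^{reg}(-d)$, which is not established). The paper avoids this by working on $X^{\wedge,reg}_\hbar$, where the relevant sheaf is a vector bundle and the complement of $X^{\wedge,0}_\hbar$ has codimension $2$, so ordinary Hartogs for sections of vector bundles on a smooth variety gives the isomorphism $\gr\mathcal{B}\xrightarrow{\sim}\Gamma(\gr\mathcal{B}^0)$ directly.
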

\begin{proof}
Let us prove (1). The claim is equivalent to $H_{\hbar,c}^{\wedge,0}\epsilon_-H_{\hbar,c}^0=H_{\hbar,c}^{\wedge,0}$,
which, in its turn, is equivalent to the claim that $H^\wedge_{\hbar,c}/H^\wedge_{\hbar,c}\epsilon_-H^\wedge_{\hbar,c}$
is supported on $Y^{\wedge,sing}\times \{0\}$. First, the condition that
$c$ is $\epsilon_-$-spherical is equivalent to the claim that $H_{\hbar,c}/H_{\hbar,c}\epsilon_-H_{\hbar,c}$ is
$\hbar$-torsion. So the support of $H^\wedge_{\hbar,c}/H^\wedge_{\hbar,c}\epsilon_-H^\wedge_{\hbar,c}$
is contained in $Y^{\wedge}\times \{0\}$.
(1) follows because $H/H\epsilon_-H$ is supported on $Y^{sing}$.

Let us prove (2). Both $\mathcal{B},\Gamma(\mathcal{B}^0)$ come with complete and separated
filtrations.
The homomorphism  $\mathcal{B}\rightarrow
\Gamma(\mathcal{B}^0)$ is that of filtered bimodules. To show that it is an isomorphism
it is enough to check that the associated graded homomorphism
\begin{equation}\label{eq:assoc_gr_homom1} \gr\mathcal{B}\rightarrow
\gr\Gamma(\mathcal{B}^0)\end{equation}
is an isomorphism. We have
$\gr\mathcal{B}=B_{d\leftarrow 0}^\wedge\otimes \C[\hbar]$. Also
we have a natural inclusion
\begin{equation}\label{eq:assoc_gr_homom2}\gr\Gamma(\mathcal{B}^0)\hookrightarrow \Gamma(\gr \mathcal{B}^0),\end{equation}
and the composition of (\ref{eq:assoc_gr_homom1}) and (\ref{eq:assoc_gr_homom2})
is the natural homomorphism
\begin{equation}\label{eq:assoc_gr_homom3} \gr\mathcal{B}\rightarrow
\Gamma(\gr\mathcal{B}^0).\end{equation}
So, (2) will follow if we show that (\ref{eq:assoc_gr_homom3}) is an isomorphism.

Set
$$X^{\wedge}:=Y^\wedge\times_{Y}X, X^\wedge_\hbar:=X^\wedge\times \operatorname{Spec}(\C[\hbar]), X_\hbar^{\wedge,0}:=Y_\hbar^{\wedge,0}\times_{Y_\hbar^{\wedge}}X_\hbar^{\wedge}.$$
We have
\begin{itemize}
\item[($\heartsuit$)]
the complement to  $X_\hbar^{\wedge,0}$ in $X_{\hbar}^{\wedge}$ has codimension $2$.
\end{itemize}

Note that $\gr\mathcal{B}$ is the global sections of the vector bundle
$$\left(\Pro^{reg,*}\otimes \Ocal^{reg}(d)\otimes \Pro^{reg}\right)\boxtimes \Ocal_{\operatorname{Spec}(\C[\hbar])}$$
on $X^{\wedge,reg}_\hbar$, while $\Gamma(\gr \mathcal{B}^0)$ is the global sections of the same vector bundle restricted to
$X_\hbar^{\wedge,reg}\cap X_{\hbar}^{\wedge,0}$. Because of the codimension condition ($\heartsuit$),
(\ref{eq:assoc_gr_homom3}) is indeed an isomorphism.
\end{proof}


\begin{proof}[Proof of Proposition \ref{Prop:isom_extension}]
The proof is in several steps.

{\it Step 1}. We are going to produce a homomorphism $H^{BM}_{T\times \C^\times}(\Fl_{e_d})^\wedge
\rightarrow B^\wedge_{\hbar,d\leftarrow 0}$.
Consider the isomorphism
$$ \epsilon_-H^{T\times \C^\times}_{BM}(\Fl_{e_d})^{\wedge,0}\xrightarrow{\sim}
\epsilon_- B_{\hbar,d\leftarrow 0}^{\wedge,0} $$
induced by (\ref{eq:sign_spher_iso}). Thanks to (1) of Lemma
\ref{Lem:glob_sec_iso}, this isomorphism gives rise to
\begin{equation}\label{eq:local_iso}
H^{T\times \C^\times}_{BM}(\Fl_{e_d})^{\wedge,0}\xrightarrow{\sim}
B_{\hbar,d\leftarrow 0}^{\wedge,0}.
\end{equation}
Note that this isomorphism is $\C^\times$-equivariant, by the construction.
So we have homomorphisms
$$H^{BM}_{T\times \C^\times}(\Fl_{e_d})^\wedge\rightarrow
\Gamma(H^{BM}_{T\times \C^\times}(\Fl_{e_d})^{\wedge,0})\xrightarrow{\sim}\Gamma(B^{\wedge,0}_{\hbar,d\leftarrow 0})
\xrightarrow{\sim} B^\wedge_{\hbar,d\leftarrow 0}.$$
The first homomorphism is the natural one, see the discussion before
Lemma \ref{Lem:glob_sec_iso}, the second is obtained from (\ref{eq:local_iso})
by passing to the global sections, while the third is the inverse of the isomorphism
in (2) of Lemma \ref{Lem:glob_sec_iso}.
The composed homomorphism is graded and $H^\wedge_{\hbar,d}$-$H^\wedge_0$-bilinear by the construction.
We need to show that it is an isomorphism.

{\it Step 2}. Our proof of this  is based on the following easy general fact:
let $M,N$ by two $\Z_{\geqslant 0}$-filtered vector spaces. Let
$\varphi:M\rightarrow N$ be an isomorphism mapping $M^{\leqslant i}
\rightarrow N^{\leqslant i}$ for all $i$. If $\gr\varphi:
\gr M\rightarrow \gr N$ is injective, then it is an isomorphism
(and hence $\varphi$ intertwines the filtrations).

{\it Step 3}. We apply the observation of Step 2
to the homomorphism
$$H_{BM}^{T\times \C^\times}(\Fl_{e_d})^\wedge\rightarrow B_{\hbar,d\leftarrow 0}^\wedge $$
specialized at $\hbar=1$. Denote this specialization by $\varphi$. It is an isomorphism
by (1) of Lemma \ref{Lem:glob_sec_iso} and is filtered by the construction. To show that $\gr\varphi$ is injective
we observe that $H^{BM}_{T}(\Fl_{e_d})$ is free over $\C[\h^*]$, this follows from (2) of Proposition \ref{GKM:results}. It follows that
$H^{BM}_{T}(\Fl_{e_d})^\wedge$ is flat over $\C[\h^*]^{\wedge}$.
By the assumption of the proposition,
$\gr\varphi$ gives  an isomorphism between the sign-invariant parts.
It follows from (1) of Lemma \ref{Lem:glob_sec_iso} that $\gr\varphi$ is an isomorphism  over $(\h^{*\wedge}/W)^{reg}$. Since $H^{BM}_{T}(\Fl_{e_d})^\wedge$ is flat over $\h^{*\wedge}/W$, we see that $\gr\varphi$ is injective.

This completes the proof.
\end{proof}



\begin{proof}[Proof of Theorem \ref{Thm:iso_deformed}]
We prove the theorem by induction on $d$.
We have an isomorphism
$$B_{\hbar,0\leftarrow 0}^\wedge\xrightarrow{\sim} H^{T\times \C^\times}_{BM}(\Fl_{e_0})^\wedge$$
by the remark after the theorem. The proof of the theorem is now in several steps.

{\it Step 1}. Suppose we already have a graded bimodule isomorphism
$$B_{\hbar,d\leftarrow 0}^\wedge\xrightarrow{\sim} H^{T\times \C^\times}_{BM}(\Fl_{e_d})^\wedge$$
for some $d\geqslant 0$. Multiply by $\epsilon$ on the left.  Thanks to Lemma
\ref{Lem:bimod_sph_isom},
we have a graded algebra isomorphism $\epsilon_-H_{\hbar,d+1}^\wedge\epsilon_-\cong \epsilon H_{\hbar,d}^\wedge\epsilon$
and a graded $\epsilon H^\wedge_{\hbar,d}\epsilon$-$H^\wedge_{0,\hbar}$-bimodule isomorphism
$\epsilon_- B^\wedge_{\hbar,d+1\leftarrow 0}
\xrightarrow{\sim} \epsilon B^\wedge_{\hbar,d\leftarrow 0}$.
On the other hand, by Lemma \ref{Claim:iso_spherical_general}, we have a graded $H^\wedge_{\hbar,0}$-linear isomorphism
\begin{equation}\label{eq:spher_isom_11}\epsilon_- H^{BM}_{T\times\C^\times}(\Fl_{e_{d+1}})^\wedge\xrightarrow{\sim} \epsilon H^{BM}_{T\times\C^\times}(\Fl_{e_d})^\wedge.
\end{equation}
We are not going to check that this isomorphism is also $\epsilon H^\wedge_{\hbar,d}\epsilon$-linear. Instead, we will see that it is semilinear with respect to an automorphism of $\epsilon H^\wedge_{\hbar,d}\epsilon$ given by conjugation with an invertible element of $\C[\h^*]^{\wedge_0}$.

{\it Step 2}. We claim that the homomorphism
\begin{equation}\label{eq:endom_isom1}\epsilon H^{\wedge}_{\hbar,d}\epsilon \rightarrow
\operatorname{End}_{H^\wedge_{\hbar,0}}(\epsilon B^\wedge_{\hbar,d\leftarrow 0})
\end{equation}
is an isomorphism. From Lemma \ref{Lem:endomorphism} we deduce that
\begin{equation}\label{eq:endom_isom2}
 \epsilon H_{\hbar,d}\epsilon \xrightarrow{\sim}
\operatorname{End}_{H_{\hbar,0}}(\epsilon B_{\hbar,d\leftarrow 0}).\end{equation}
Note that $B_{\hbar,d\leftarrow 0}$ is a finitely generated right $H_{\hbar,0}$-module. Using this and the fact that $\C[\h^*]^{\wedge_0}$ is a flat $\C[\h^*]$-module, we see that (\ref{eq:endom_isom2})
implies (\ref{eq:endom_isom1}).

Recall that
$$\epsilon B_{\hbar,d\leftarrow 0}^\wedge\xrightarrow{\sim} \epsilon H^{T\times \C^\times}_{BM}(\Fl_{e_{d}})^\wedge.$$
It follows that  isomorphism  (\ref{eq:spher_isom_11}) becomes $\epsilon H^{\wedge}_{\hbar,d\leftarrow 0}\epsilon$-linear  after we twist one of the actions by a  uniquely determined graded $\C[\hbar]$-linear automorphism of the algebra $\epsilon H^{\wedge}_{\hbar,d\leftarrow 0}\epsilon$. We denote this automorphism by $\zeta$. We claim that there is an invertible element $F\in \C[\h^{*\wedge}]^W$ such that $\zeta$ is the conjugation with $F$.

{\it Step 3}. The formula for $\Upsilon$ in
the proof of Lemma \ref{Claim:iso_spherical_general} implies that $\Upsilon$ modulo $\hbar$ is $\C[T^*T^\vee]^W$-linear. It follows that $\zeta$ is the identity modulo $\hbar$. So $\zeta=\exp(\hbar \partial)$, where $\partial$ is a derivation of
$\epsilon H^\wedge_{\hbar,d} \epsilon$ that has degree $-1$
with respect to the grading.  We have $\partial=\frac{1}{\hbar}[f,\cdot]$ for some $f\in \epsilon H^\wedge_{\hbar,d} \epsilon$. This follows because every Poisson derivation of $\C[Y^\wedge]$
is restricted from a $W$-equivariant Poisson derivation of $\C[T^*\h^{*,\wedge}]$ and hence is inner. Then $f\in \C[\h^{*\wedge}]^ W$ because $f$
has degree $0$. We set $F:=\exp(f)$.
Then we can compose (\ref{eq:spher_isom_11}) with the multiplication by $F$ and achieve that (\ref{eq:spher_isom_11}) is a graded bimodule isomorphism.

{\it Step 4}.  We now have  graded $\epsilon_-H^\wedge_{\hbar,d+1}\epsilon_-$-$H^\wedge_{\hbar,0}$-bimodule isomorphisms
$$\epsilon_-B_{\hbar,d+1\leftarrow 0}^\wedge\xrightarrow{\sim}
\epsilon B_{\hbar,d\leftarrow 0}^\wedge\xrightarrow{\sim}
\epsilon H^{T\times \C^\times}_{BM}(\Fl_{e_{d}})^\wedge
\xrightarrow{\sim}
\epsilon_-H^{T\times \C^\times}_{BM}(\Fl_{e_{d+1}})^\wedge.$$
Applying Proposition \ref{Prop:isom_extension}, we extend the composed isomorphism to a graded $H^\wedge_{\hbar,d+1}$-$H^\wedge_{\hbar,0}$-bimodule isomorphism
$$B_{\hbar,d+1\leftarrow 0}^\wedge\xrightarrow{\sim} H^{T\times \C^\times}_{BM}(\Fl_{e_{d+1}})^\wedge.$$
This finishes the proof of the induction step and hence of the theorem.
\end{proof}

\begin{Cor}\label{Cor:Procesi_flatness}
 $B_{\hbar,d}$ is flat over
$\C[\h^*][\hbar]$.
\end{Cor}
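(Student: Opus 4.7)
The plan is to transport flatness from the equivariant Borel-Moore homology side via Theorem \ref{Thm:iso_deformed}, and then descend from the completion to $\C[\h^*][\hbar]$ using the graded structure.

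First, I would establish that $H^{BM}_{T\times\C^\times}(\Fl_{e_d})$ is flat over the polynomial subalgebra $\C[\h^*][\hbar]$ acting via the ECM action. By Proposition \ref{GKM:results}(3), this BM homology is flat over $\Ring = H^*_{T\times\C^\times}(pt)$, and the ECM formulas of Lemma \ref{Lem:ECM_action_formulas} show that the right action of the polynomial subalgebra of $\Hcal^\times_{\hbar,0}$ on the GKM-localized description $\bigoplus_{\widetilde W}\Field$ is just multiplication, matching the natural $\Ring$-module structure. Using the identification of Lemma \ref{Lem:trig_rat_iso}, this commutative subalgebra corresponds (after completion) to $\C[\h^*]^{\wedge_0}[\hbar] \subset H^\wedge_{\hbar,0}$. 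Thus $H^{BM}_{T\times\C^\times}(\Fl_{e_d})^\wedge$ is flat over $\C[\h^*]^{\wedge_0}[\hbar]$, and via Theorem \ref{Thm:iso_deformed} the same holds for $B^\wedge_{\hbar,d\leftarrow 0}$.

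Second, I would promote this completion-level flatness to flatness over $\C[\h^*][\hbar]$ itself. The bimodule $B_{\hbar,d\leftarrow 0}$ carries the $(\C^\times)^2$-equivariant structure of Remark \ref{Rem:bigraded}; choosing a one-parameter subgroup under which $\C[\h^*][\hbar]$ becomes positively graded with degree-zero part $\C$, the module $B_{\hbar,d\leftarrow 0}$ is graded. Flatness of a graded module over a positively graded polynomial ring with $R_0=\C$ is equivalent to the vanishing of $\operatorname{Tor}_1$ at the residue field at the origin. By flat base change along the inclusion $\C[\h^*][\hbar] \hookrightarrow \C[\h^*]^{\wedge_0}[\hbar]$,
\[
\operatorname{Tor}_1^{\C[\h^*][\hbar]}(B_{\hbar,d\leftarrow 0},\C) = \operatorname{Tor}_1^{\C[\h^*]^{\wedge_0}[\hbar]}(B_{\hbar,d\leftarrow 0}^\wedge,\C) = 0,
\]
and a graded Nakayama argument then yields freeness, hence flatness, of $B_{\hbar,d\leftarrow 0}$ over $\C[\h^*][\hbar]$.

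The most delicate point will be the first step: correctly matching the ECM action of $\C[\h^*][\hbar]$ with the $\Ring$-module structure, which requires tracking the identifications of commutative subalgebras across the rational Cherednik, trigonometric Cherednik, and equivariant cohomology pictures via Lemma \ref{Lem:trig_rat_iso} and the explicit localization formulas. Once this book-keeping is in place, both the transfer of flatness through Theorem \ref{Thm:iso_deformed} and the graded Nakayama descent are formal.
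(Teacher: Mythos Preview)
Your proposal is correct and follows essentially the same route as the paper: flatness of $H^{BM}_{T\times\C^\times}(\Fl_{e_d})$ over $\Ring$ from Proposition~\ref{GKM:results}, transfer to $B^\wedge_{\hbar,d\leftarrow 0}$ via Theorem~\ref{Thm:iso_deformed}, and descent from the completion using the bigrading of Remark~\ref{Rem:bigraded}. The paper compresses all of this into two sentences; you have unpacked the graded Nakayama step and the identification of the ECM polynomial subalgebra with $\Ring$, but these are exactly the ingredients the paper is invoking implicitly.
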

\begin{proof}
     The bimodule $B_{\hbar,d}$ is bigraded, Remark \ref{Rem:bigraded}, and, thanks to Theorem \ref{Thm:iso_deformed} combined with Proposition \ref{GKM:results}, $B_{\hbar,d}^{\wedge_0}$ is flat over
$\C[\h^{*,\wedge}][\hbar]$. The claim
of the corollary follows.
\end{proof}

\begin{Rem}\label{Rem:bimod_isom}
In fact, the proof of Theorem
\ref{Thm:iso_deformed} gives us a characterization of the family of bimodules $B_{\hbar,d}$ for $d\geqslant 0$. Suppose we have another family of finitely generated graded $H_{\hbar,d}$-$H_{\hbar,0}$-bimodules $B'_{\hbar,d}$ satisfying the following conditions:
\begin{itemize}
    \item[(i)] $B'_{\hbar,d}$ is flat over $\C[\h^*][\hbar]$ for all $d$.
    \item[(ii)] $B'_{\hbar,0}$ is isomorphic to $H_{\hbar,0}$ as a graded $H_{\hbar,0}$-bimodule.
    \item[(iii)] We have an isomorphism of graded right $H_{\hbar,0}$-modules $\epsilon B'_{\hbar,d}\cong \epsilon_-B'_{\hbar,d+1}$.
\end{itemize}
Then the argument of the proof of Theorem \ref{Thm:iso_deformed} shows that for all $d$ we have a graded $H_{\hbar,d}$-$H_{\hbar,0}$-bimodule isomorphism $B'_{\hbar,d}\xrightarrow{\sim} B_{\hbar,d}$. Moreover, if we require the isomorphisms in (ii) and (iii) to be bigraded, then we get a bigraded isomorphism $B'_{\hbar,d}\xrightarrow{\sim} B_{\hbar,d}$. In fact, the proof simplifies: $\zeta$ from Step 2 of the proof of Theorem \ref{Thm:iso_deformed} is automatically the identity.

This observation will be applied in a subsequent paper  to establish, for $G=\operatorname{GL}_n$, an isomorphism between $B_{\hbar,d\leftarrow 0}$ and $H^{BM}_{T\times\C^\times}(\Fl_{e_d}^+)$, where  $\Fl_{e_d}^+$ is the ``polynomial part'' of $\Fl_{e_d}$.
\end{Rem}


\subsection{Proof of Theorem \ref{Thm:dim}}
Recall that we are going to prove that
$$B_d\otimes_{H}\C_{triv}\cong
H_T^{BM}(\Fl_{e_d})\otimes_{H^\times}\C_{triv}=\C(\Lambda_0/(dh+1)\Lambda_0),$$
where $h$ denotes the Coxeter number of
$W$ and $\Lambda_0$ is the root lattice of $\g$. We write $\C_{triv}$ for the one-dimensional trivial $W$-module, and we assume that $\C[\h^*\oplus \h]\subset H$ acts on $\C_{triv}$ via the specialization
to $0$, while $\C[T^\vee\times \h]\subset H^\times$ acts on $\C_{triv}$ via the specialization
to $(1,0)$.

We already know that the dimensions are the same
thanks to Theorem \ref{Thm:iso}. We will prove that
\begin{align}\label{eq:lower_bound}& B_d\otimes_{H}\C_{triv}\twoheadrightarrow \C(\Lambda_0/(dh+1)\Lambda_0),\\\label{eq:upper_bound}
&\dim H_T^{BM}(\Fl_{e_d})\otimes_{H^\times}\C_{triv}
\leqslant (dh+1)^{\dim \h}.
\end{align}
This will prove Theorem \ref{Thm:dim}.

We first establish (\ref{eq:lower_bound}).

\begin{Prop}\label{Prop:lower_bound}
We have $ B_d\otimes_{H}\C_{triv}\twoheadrightarrow \C(\Lambda_0/(dh+1)\Lambda_0)$,
an epimorphism of $W$-modules.
\end{Prop}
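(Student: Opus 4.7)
\medskip\noindent\emph{Proof proposal.} The plan is to deform along a line of Cherednik parameters on which $\C_{triv}$ remains a module, exhibit the desired $W$-quotient at the deformed end point via Proposition \ref{Prop:BEG_comput}, and then transport it back to $B_d\otimes_H\C_{triv}$ by a flat-family argument.

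Consider the one-parameter family $(\hbar,c)=(t,t/h)$, with $c$ taken as the constant function $t/h$. Kostant's identity $\tfrac{1}{h}\sum_{s\in S}\alpha_s^\vee\otimes\alpha_s=\mathrm{id}_{\h\oplus\h^*}$ guarantees that the defining Cherednik relation $[y,x]=\hbar(\langle y,x\rangle-\sum_s c(s)\langle y,\alpha_s^\vee\rangle\langle x,\alpha_s\rangle s)$ holds identically on $\C_{triv}$ along this line, so $\C_{triv}$ is an $H_{t,t/h}$-module for every $t$. At $t=0$ this recovers the trivial $H$-module, while at $t=1$ it is the unique one-dimensional irreducible $L(1/h)$ of $H_{1/h}$ supplied by Proposition \ref{Prop:BEG_comput} at $d=0$. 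Restricting $B_{\hbar,c+d\leftarrow c}$ to this line produces a $\C[t]$-flat family $B_t^{fam}$ of $H_{t,t/h+d}$-$H_{t,t/h}$-bimodules, with flatness parallel to Corollary \ref{Cor:Procesi_flatness} (using Proposition \ref{Prop:cohom_vanishing} with the parameter $\lambda$ allowed to vary along the line).

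Define $N_t := B_t^{fam}\otimes_{H_{t,t/h}}\C_{triv}$, a family of left $H_{t,t/h+d}$-modules, and in particular of $W$-modules; at the endpoints $N_0 = B_d\otimes_H\C_{triv}$ and $N_1 = B_{1,d+1/h\leftarrow 1/h}\otimes_{H_{1/h}}L(1/h)$. The central step is to show that $N_1$ surjects onto $L(d+1/h)$ as an $H_{d+1/h}$-module; by Proposition \ref{Prop:BEG_comput} this yields the desired $W$-surjection $N_1\twoheadrightarrow \C(\Lambda_0/(dh+1)\Lambda_0)$. Non-vanishing of $N_1$ is verified by passing to spherical idempotents via Lemma \ref{Lem:bimod_sph_isom} and using the $\epsilon$-sphericity of both $c=1/h$ and $c=d+1/h$ (Proposition \ref{Prop:spheric_param}); finite-dimensionality of $N_1$ follows from a central-support argument, since $L(1/h)$ has central character concentrated at $0\in Y$ and the bimodule $B_{1,d+1/h\leftarrow 1/h}$, coming from the geometric construction in Section \ref{SS_Cherednik_deform_constr}, is supported over the diagonal of $Y\times Y$. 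Since $L(d+1/h)$ is the unique finite-dimensional simple $H_{d+1/h}$-module, $N_1$ must surject onto it.

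Finally, to descend from $t=1$ to $t=0$, the finite-dimensionality of $N_1$ combined with flatness of $B_t^{fam}$ makes the total space coherent over $\C[t]$. Decomposing as a $\C[t]$-module and using semisimplicity of $\C W$, the special fiber $N_0$ contains the generic fiber $N_\eta$ as a $W$-subrepresentation; spreading the surjection at $t=1$ to a Zariski neighborhood identifies $\C(\Lambda_0/(dh+1)\Lambda_0)$ as a $W$-subrepresentation of $N_\eta$ and hence of $N_0$, which by semisimplicity promotes to the required $W$-surjection $B_d\otimes_H\C_{triv}\twoheadrightarrow \C(\Lambda_0/(dh+1)\Lambda_0)$. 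The main obstacle will be the finite-dimensionality of $N_1$, for which the central-support argument must be verified carefully; if this fails directly, a fallback is to work with $\epsilon_- N_t$ using Lemma \ref{Lem:bimod_sph_isom} and Lemma \ref{Lem:endomorphism}, where the spherical reductions give cleaner control.
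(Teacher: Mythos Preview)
Your overall strategy---deform the right-hand parameter to $c=1/h$, use that $\C_{triv}=L_{1/h}$, and invoke Proposition~\ref{Prop:BEG_comput} at the endpoint---is exactly the paper's.  However, two points in your execution need correction.

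\textbf{The parametrization.}  Your own Kostant identity shows that the Cherednik relation on $\C_{triv}$ reads $0=\hbar\langle y,x\rangle(1-ch)$, so $\C_{triv}$ is an $H_{\hbar,c}$-module precisely when $c=1/h$ (for $\hbar\neq 0$).  Along the line $(\hbar,c)=(t,t/h)$ the relation becomes $t(1-t)\langle y,x\rangle=0$, which fails for $t\neq 0,1$; thus $\C_{triv}$ is \emph{not} a module in your family except at the endpoints.  The paper fixes $c=1/h$ and lets only $\hbar$ vary, working with $B_{\hbar,d+1/h\leftarrow 1/h}$; this is the family you should use.

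\textbf{The descent.}  ``Spreading the surjection at $t=1$ to a neighborhood'' requires lifting a map $N_1\twoheadrightarrow V$ to $N\to V[t]$, which is obstructed exactly by $(t-1)$-torsion in $N$; you give no reason why such torsion is absent.  Multiplicities in fibers are upper semicontinuous, so surjectivity onto a fixed $W$-type is a \emph{closed} condition, and knowing it at one closed point says nothing about the generic fiber.  The paper avoids this entirely: with the correct (graded) parametrization, $B_{d+1/h\leftarrow 1/h}$ is filtered with $\gr=B_d$, so the tensor-product filtration always yields
\[
B_d\otimes_H\C_{triv}\twoheadrightarrow \gr\!\left(B_{d+1/h\leftarrow 1/h}\otimes_{H_{1/h}}L_{1/h}\right),
\]
and passing to $\gr$ does not change the $W$-module structure.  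Equivalently, the grading forces all fibers $N_{\hbar}$ for $\hbar\neq 0$ to be isomorphic, so $\hbar=1$ \emph{is} generic.

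Finally, the paper establishes the stronger fact that $B_{d+1/h\leftarrow 1/h}$ is a Morita equivalence bimodule, by induction on $d$ via Lemma~\ref{Lem:bimod_sph_isom} and Proposition~\ref{Prop:spheric_param}; this gives $N_1\cong L_{d+1/h}$ outright, so your separate ``non-vanishing'' and ``finite-dimensionality'' steps become unnecessary.  (Your central-support sketch for finite-dimensionality is in fact overkill: $B_d$ is finitely generated as a right $H$-module, so $B_d\otimes_H\C_{triv}$ and hence $\gr N_1$ are finite-dimensional for free.)
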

\begin{proof}
Consider the $H_{d+1/h}$-$H_{1/h}$-bimodule
$B_{\hbar,d+1/h\leftarrow 1/h}$. By the construction in
Section \ref{SS_Cherednik_deform_constr}, this is a $\C[\hbar]$-flat bimodule with
\begin{equation}\label{eq:quotient}
    B_{\hbar,d+1/h\leftarrow 1/h}/\hbar B_{\hbar,d+1/h\leftarrow 1/h}\xrightarrow{\sim} B_d.
\end{equation}
Set
\begin{equation}\label{eq:specialization}
    B_{d+1/h\leftarrow 1/h}:=B_{\hbar,d+1/h\leftarrow 1/h}/(\hbar-1)B_{\hbar,d+1/h\leftarrow 1/h}.
\end{equation}
Since $B_{\hbar,d+1/h\leftarrow 1/h}$ is flat over $\C[\hbar]$, (\ref{eq:quotient}) is equivalent to $\gr B_{d+1/h\leftarrow 1/h}=B_d$.
Recall, Proposition \ref{Prop:BEG_comput}, that $H_{d+1/h}$ has a unique finite dimensional representation to be denoted by $L_{d+1/h}$.
By the proposition, this representation is isomorphic to $\C(\Lambda_0/(dh+1)\Lambda_0)$ as a $W$-representation.
In particular,  $L_{1/h}$ is the trivial one-dimensional representation of $W$.
The subspaces $\h,\h^*\subset H_{1/h}$ act
by $0$ on $L_{1/h}$.
Equip $B_{d+1/h\leftarrow 1/h}\otimes_{H_{1/h}}L_{1/h}$ with the tensor product filtration. Then we have
$$B_d\otimes_{H}\C_{triv}\twoheadrightarrow
\gr(B_{d+1/h\leftarrow 1/h}\otimes_{H_{1/h}}L_{1/h}).$$
To show that $\dim B_d\otimes_{H}\C_{triv}
\twoheadrightarrow \C(\Lambda_0/(dh+1)\Lambda_0)$ it is therefore sufficient to show that
\begin{equation}\label{eq:rep_iso}
B_{d+1/h\leftarrow 1/h}\otimes_{H_{1/h}}L_{1/h} \cong L_{d+1/h}.
\end{equation}
Thanks to Proposition \ref{Prop:BEG_comput},
(\ref{eq:rep_iso}) will follow once we show that $B_{d+1/h\leftarrow 1/h}$ is a Morita equivalence bimodule. We will prove this
by induction on $d$ starting with $d=0$, where
$B_{1/h\leftarrow 1/h}=H_{1/h}$ and the claim is vacuous.

Suppose we already know that $B_{d+1/h\leftarrow 1/h}$ is a Morita equivalence bimodule.
Since $d+1/h$ is $\epsilon$-spherical,
see Proposition \ref{Prop:spheric_param},
we see that $\epsilon B_{d+1/h\leftarrow 1/h}$
is a Morita equivalence bimodule between
$H_{1/h}$ and $\epsilon H_{d+1/h\leftarrow 1/h}
\epsilon$.
It follows from (2) of Lemma \ref{Lem:bimod_sph_isom} that
we have  a bimodule isomorphism $$\epsilon_- B_{d+1+1/h\leftarrow 1/h}\cong \epsilon B_{d+1/h\leftarrow 1/h}.$$
So $\epsilon_- B_{d+1+1/h\leftarrow 1/h}$ is a Morita equivalence bimodule between $\epsilon_- H_{d+1+1/h}\epsilon_-$ and $H_{1/h}$.
But, according to Proposition \ref{Prop:spheric_param}, $d+1+1/h$ is $\epsilon_-$-spherical so $B_{d+1+1/h\leftarrow 1/h}$ is also a Morita equivalence bimodule between $H_{d+1+1/h}$ and $H_{1/h}$. This finishes the proof.
\end{proof}

Now we proceed to the upper bound. The proof here is an easy generalization of a proof due to the first named author joint with Bezrukavnikov, Shan and Vasserot in \cite{BBASV}, but we include it here for completeness.

\begin{Prop}\label{Prop:upper_bound}
We have $\dim H_T^{BM}(\Fl_{e_d})\otimes_{H^\times}\C_{triv}\leqslant (dh+1)^{\dim \h}$.
\end{Prop}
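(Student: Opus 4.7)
The plan is to adapt the argument of \cite{BBASV} to the present setting, working directly on the topological side without invoking Theorem \ref{Thm:iso}. Since $W$ acts trivially on $\C_{triv}$, I would first apply the averaging idempotent $\epsilon\in\C W$ to reduce the problem to bounding
\[
\dim\bigl(\epsilon H^{BM}_T(\Fl_{e_d})\otimes_{\epsilon H^\times\epsilon}\C_{(1,0)}\bigr)\leqslant (dh+1)^{\dim\h},
\]
where $\epsilon H^\times\epsilon = \C[T^*T^\vee]^W$ and $\C_{(1,0)}$ is the one-dimensional evaluation module at $(1,0)\in T^\vee\times\h$. Equivalently, this is the fiber of the coherent sheaf $\epsilon H^{BM}_T(\Fl_{e_d})$ on $(T^*T^\vee)/W$ at the image of $(1,0)$.

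To analyze this fiber, the next step would be to use the GKM description of Corollary \ref{Cor:GKM_Springer}. At $\hbar=0$ the polar condition becomes $f_x\prod_{\alpha\in R^+}(\,^x\!\alpha)^{2d}\in\C[\h]$, so each $f_x$ has poles of order at most $2d$ along the finite root hyperplanes, while the residue conditions at $\,^x\!(\alpha+k\hbar)$ for $-d\leqslant k\leqslant d-1$ coalesce into linear constraints relating $f_x$ and $f_{xs_{\alpha,k}}$ through their principal parts at $\,^x\!\alpha=0$. A useful device is to pass to the completion at $(1,0)$: by the spherical-subalgebra form of Lemma \ref{Lem:trig_rat_iso}, we have an isomorphism $(\epsilon H^\times\epsilon)^{\wedge_{(1,0)}}\cong(\epsilon H\epsilon)^{\wedge_0}$, so the fiber can equally well be studied on the rational-Cherednik side.

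The bound itself is to be obtained by an explicit generating-set argument: produce $(dh+1)^{\dim\h}$ classes in $\epsilon H^{BM}_T(\Fl_{e_d})$, indexed by $\Lambda_0/(dh+1)\Lambda_0$ (the set appearing on the right-hand side of Theorem \ref{Thm:dim}), whose images span the fiber modulo the augmentation ideal of $\epsilon H^\times\epsilon$ at $(1,0)$. The main obstacle will be verifying this spanning property. In \cite{BBASV} the verification is carried out for $d=1$ by a direct analysis of the pole structure at the $T$-fixed points and of the residue cancellations. For general $d$ I would expect the same combinatorial structure to go through, with the count $(dh+1)^{\dim\h}$ arising from the polar-and-residue constraints of Corollary \ref{Cor:GKM_Springer} together with the residual $\widetilde{W}$-equivariance, under which the $\Lambda$-translation action on $\widetilde{W}$ is responsible for the passage to the quotient $\Lambda_0/(dh+1)\Lambda_0$.
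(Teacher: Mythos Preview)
Your proposal is not a proof but a plan, and the plan has a genuine gap at exactly the point where the work lies. You acknowledge this yourself: ``The main obstacle will be verifying this spanning property\ldots I would expect the same combinatorial structure to go through.'' That expectation is the entire content of the proposition; without a mechanism for showing that your $(dh+1)^{\dim\h}$ classes span the fiber, nothing has been established. The pole-coalescence heuristic you describe at $\hbar=0$ does not by itself bound the rank, and the passage from the $d=1$ analysis in \cite{BBASV} to general $d$ is not a formality.

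There is also a technical wrinkle in your very first reduction. The identification
\[
H^{BM}_T(\Fl_{e_d})\otimes_{H^\times}\C_{triv}\;\cong\;\epsilon H^{BM}_T(\Fl_{e_d})\otimes_{\epsilon H^\times\epsilon}\C_{(1,0)}
\]
is not automatic: $\epsilon$ is not central in $H^\times$, and passing to the spherical subalgebra requires either a Morita equivalence (which one does not have at these parameters in any obvious way) or a separate argument that the non-spherical part contributes nothing to the tensor product. You should justify this step before relying on it.

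The paper's argument is structurally different from what you outline. It does not pass to the spherical side, does not work with the GKM description of the fiber, and does not produce generators indexed by $\Lambda_0/(dh+1)\Lambda_0$. Instead it identifies $H^{BM}_T(\Fl_{e_d})\otimes_{H^\times}\C_{triv}$ with the $\widetilde{W}$-coinvariants of $H^{BM}(\Fl_{e_d})$, takes the basis of affine-cell classes $\varphi_x$ (indexed by $x\in\widetilde{W}$) coming from the paving of \cite{GKM1}, and shows by a tangent-space comparison that whenever a certain root condition fails one has $\varphi_{sx}=s\varphi_x+\text{(lower Bruhat terms)}$. This yields an equivalence relation on $\widetilde{W}$ whose classes each meet a fixed dilated fundamental alcove, and that alcove contains exactly $(dh+1)^{\dim\h}$ alcoves. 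The count therefore comes from alcove combinatorics and the Bruhat order, not from residue bookkeeping on the spherical side.
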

\begin{proof}
The proof is in several steps. Note that it is enough to assume that $G$ is simply connected and hence $\widetilde{W}=W^a$. For example, this follows from the isomorphism $H^{BM}_T(\Fl_{e_d})\otimes_{H^\times}\C_{triv}\cong B_d\otimes_{H}\C_{triv}$ as the right hand side manifestly depends only on $W$.

{\it Step 1}.
We note that $H^{BM}_T(\Fl_{e_d})\otimes_{H^\times}\C_{triv}$ is nothing else as the space of coinvariants $H^{BM}(\Fl_{e_d})_{\widetilde{W}}$ for the action of the affine Weyl group $\widetilde{W}$ on $H^{BM}(\Fl_{e_d})$.
Recall, Proposition \ref{GKM:results}, that the affine Springer fiber $\Fl_{e_d}$ has a paving by affine cells. Each cell is the intersection of $\Fl_{e_d}$ with Schubert cells by \cite[Theorem 0.2]{GKM1}. This gives a basis in $H^{BM}(\Fl_{e_d})$ consisting of the fundamental classes of cells.

We will study the action of $\widetilde{W}$ on this basis to get a spanning set of $H^{BM}(\Fl_{e_d})_{\widetilde{W}}$ with $(dh+1)^{\dim \h}$ elements.

{\it Step 2}.
Let us introduce some notation.
In this proof $\mathfrak{b}$
will denote the Lie algebra of the Iwahori subgroup $\mathfrak{B}\subset G(\mathcal{K})$. For the Schubert cell   $\mathfrak{B}x\mathfrak{B}/\mathfrak{B}$, we denote the corresponding basis element in $H^{BM}(\Fl_{e_d})$ (or $H^{BM}_T(\Fl_{e_d})$) by $\varphi_x$.
For $x\in \widetilde{W}$ we will write $\,^x\mathfrak{b}$ for $\operatorname{Ad}(\dot{x})\mathfrak{b}$ for a lift $\dot{x}$ of $x$ to the normalizer of $T(\mathcal{K})$. Also for a $T(\mathcal{O})$-stable subset $Z\subset \Fl$ we use the notation $\,^x\!Z$ for $\dot{x}Z$, this is well-defined. Finally, we set $e_d^x:=\operatorname{Ad}(\dot{x})^{-1}(e_d)$. We note that $\,^x\!\Fl_{e_d}=\Fl_{e^x_d}$ for all $x\in \widetilde{W}$. Finally, for $x\in\widetilde{W}$ we will write $A_x$ for the corresponding (closed) alcove in $\h_{\mathbb{R}}$.

{\it Step 3}. For $w\in \widetilde{W}$, consider the subvariety $\Fl^{\leq w}_{e_d}=\Fl_{e_d}\cap \sqcup_{x\leq w}\mathfrak{B}x\mathfrak{B}/\mathfrak{B}$ of $\Fl_{e_d}$. It is $T\times\C^\times$-stable. The Borel-Moore homology $H^{BM}_T(\Fl^{\leq w}_{e_d})\subset H^{BM}_T(\Fl_{e_d})$ is spanned by the classes $\varphi_x$ for $x\leq w$ as a $H^*_T(pt)$-module. The image $\iota(H^{BM}_T(\Fl^{\leq w}_{e_d}))$ is precisely the subset of $\iota(H^{BM}_T(\Fl_{e_d}))$ consisting of all elements $(g_y)_{y\in \widetilde{W}}$ that satisfy that $g_y\neq 0\Rightarrow y\leq w$. This follows by applying Proposition \ref{BMcoho.description} to the space $\Fl^{\leq w}_{e_d}$. Further, note that we have the long exact sequence
$$\dots\rightarrow H^{BM,i}_{T}(\Fl^{< w}_{e_d})\rightarrow H^{BM,i}_T(\Fl^{\leq w}_{e_d})\rightarrow H^{BM,i}_T(\Fl_{e_d}\cap \mathfrak{B}w\mathfrak{B})\rightarrow\dots,$$
where the superscript $i$ indicates the cohomological grading.

Note that odd homology vanishes as all spaces involved have affine pavings and so the long exact sequence breaks up into short exact sequences. Assembling these exact sequences for all degrees we get
$$0\rightarrow H^{BM}_{T}(\Fl^{< w}_{e_d})\rightarrow H^{BM}_T(\Fl^{\leq w}_{e_d})\rightarrow H^{BM}_T(\Fl_{e_d}\cap \mathfrak{B}w\mathfrak{B})\rightarrow 0.$$
Further, by construction $\varphi_w$ is mapped to the basis element spanning $H^{BM}_T(\Fl_{e_d}\cap \mathfrak{B}w\mathfrak{B})$. Using compatibility with the localization map
noted above in this step and
the description of the Borel-Moore homology for an affine space with a linear $T$-action, we see that that $\iota(\varphi_w)_w=\frac{1}{\prod \chi}$, where the product is over all characters $\chi$ appearing in the $T$-representation $\Fl_{e_d}\cap \mathfrak{B}w\mathfrak{B}$.

{\it Step 4}.
Pick a simple affine reflection $s\coloneqq s_\alpha$ at a root $\alpha$. We want to get a necessary and sufficient condition on $x$ for $\varphi_{sx}=s\varphi_x+l.o.t.$  when $sx> x$ in the Bruhat order. Here ``$l.o.t.$'' indicates an $H^*_T(pt)$-linear combination of the elements $\varphi_y$ with $y< sx$
in the Bruhat order.  We claim that this equality holds  if  the cells $\,^s\!(\Fl_{e_d}\cap\mathfrak{B}x\mathfrak{B}/\mathfrak{B})$ and $\Fl_{e^s_d}\cap\mathfrak{B}sx\mathfrak{B}/\mathfrak{B}$ are equal. Indeed, if $\,^s\!(\Fl_{e_d}\cap\mathfrak{B}x\mathfrak{B}/\mathfrak{B})=\Fl_{e^s_d}\cap\mathfrak{B}sx\mathfrak{B}/\mathfrak{B}$, then $\iota(s\varphi_x)_{sx}=\iota(\varphi_{sx})_{sx}$ and so $\varphi_{sx}-s\varphi_x$ is a class in $H^{BM}_T(\Fl_{e_d}^{< sx})$ and thus a combination of  $\varphi_y$ with $y<sx$.
We conclude that the equality $\varphi_{sx}=s\varphi_x+l.o.t.$ also holds in $H^{BM}(\Fl_{e_d})$.

Note that, for all $x$, one of $\,^s\!(\mathfrak{B}x\mathfrak{B}/\mathfrak{B})$ and $\mathfrak{B}sx\mathfrak{B}/\mathfrak{B}$ contains the other. Therefore one of the two cells $\,^s\!(\Fl_{e_d}\cap\mathfrak{B}x\mathfrak{B}/\mathfrak{B})$ and $\Fl_{e^s_d}\cap\mathfrak{B}sx\mathfrak{B}/\mathfrak{B}$ contains the other.
Note that both cells are contracting loci for suitable tori actions. So they coincide if and only if their tangent spaces at their common $T\times\C^\times$-fixed point $sx$ are the same, equivalently, have the same dimension.

Note that the tangent space of $\Fl_{e_d}\cap\mathfrak{B}x\mathfrak{B}/\mathfrak{B}$ at the fixed point $x$  is $T\times\C^\times$-equivariantly isomorphic to
\begin{equation}\label{eq:tangent}
\frac{\mathfrak{b}\cap t^{-d}(\,^x\mathfrak{b})}{\mathfrak{b}\cap \,^x\mathfrak{b}}.
\end{equation}
So the tangent spaces of interest are
\begin{equation}\label{eq:tangent_spaces}
\frac{\mathfrak{b}\cap t^{-d}(\,^{sx}\mathfrak{b})}{\mathfrak{b}\cap \,^{sx}\mathfrak{b}},
\,^s\!\left(\frac{\mathfrak{b}\cap t^{-d}(\,^{x}\mathfrak{b})}{\,\mathfrak{b}\cap \,^x\mathfrak{b}}\right).
\end{equation}

The roots that appear as weights of (\ref{eq:tangent}) are exactly from
\begin{equation}\label{eq:root_tangent}
    R^+_{aff}\cap x\left(\bigsqcup_{1\leq r\leq d} (R^+-r\delta)\sqcup\bigsqcup_{0\leq r\leq d-1} (R^--r\delta)\right),
\end{equation}
where we write $R^+_{aff}$ for the set of positive affine roots, $R^+,R^-$ for the sets of positive and negative Dynkin roots, and $\delta$ for the indecomposable imaginary root. Note that every element in $R^+_{aff}\setminus \{\alpha\}$ appears as a weight in one of the spaces in (\ref{eq:tangent_spaces}) if and only if it appears in the other. On the other hand, $-\alpha$ does not appear as a weight in the first space and $\alpha$ does not appear as a weight of the second space. It thus follows that
$${}^s(\Fl_{e_d}\cap\mathfrak{B}x\mathfrak{B}/\mathfrak{B})=\Fl_{e^s_d}\cap\mathfrak{B}sx\mathfrak{B}/\mathfrak{B}$$
if and only if
\begin{equation}\label{equation:root}
\alpha\not\in  x(\bigsqcup_{1\leq r\leq d} (R^+-r\delta)\sqcup\bigsqcup_{0\leq r\leq d-1} (R^--r\delta)).
\end{equation}


{\it Step 5}.
 In particular, if (\ref{equation:root}) holds, the projection of $\varphi_{sx}$ (for $s=s_\alpha$) to $H^{BM}(\Fl_{e_d})_{\widetilde{W}}$
 coincides with a linear combination of projections of $\varphi_y$ with $y<sx$.
Consider the equivalence relation on $\widetilde{W}$ generated by the relation $x\rightarrow s_\alpha x$ for
$\alpha$ satisfying (\ref{equation:root}).

In the next step we will prove that
\begin{itemize}
\item[(*)]
each equivalence class has a representative $x$ satisfying $\langle\alpha_i,A_x\rangle\geq -d$ and $\langle\alpha_0,A_w\rangle\leq d+1$,
\end{itemize}
where we write $\alpha_i$ for the simple Dynkin roots and  $\alpha_0$ for the longest root.

Showing (*) will finish the proof of the proposition because the set of alcoves $A$ satisfying  $\langle\alpha_i,A\rangle\geqslant -d$ and $\langle\alpha_0,A\rangle\leqslant d+1$ forms a poset ideal in the Bruhat order and has exactly $(dh+1)^{\dim \h}$ elements. To see the latter we argue as follows. Shifting by $d\rho^\vee$
we can instead consider the set of alcoves $A'$ satisfying $\langle\alpha_i,A'\rangle\geqslant 0$ and $\langle\alpha_0,A\rangle\leqslant d+1+d(h-1)=dh+1$. There are exactly $(dh+1)^{\dim \h}$ such alcoves.

{\it Step 6}. Fix an equivalence class for the equivalence relation
specified in Step 5 and pick a representative $x$ that is minimal with respect to the Bruhat order.
To show (*) it is enough to check that if $\langle\alpha_i,A_x\rangle\leqslant -d$, then  (\ref{equation:root}) holds for $x$ and $\alpha_i$ (and that the similar claim holds for the affine simple reflection). Indeed, since $\langle\alpha_i,A_x\rangle\leqslant  -d\leqslant 0$, we see that $s_ix$ is less than $x$ in the Bruhat order, while $s_i x$ is equivalent to $x$. This will give a contradiction with the choice of $x$.

We will only consider the case of simple Dynkin roots, the remaining case is similar.


Assume $x=wt^\beta$ for $w\in W$ and $\beta\in \Lambda$. Then $\langle\alpha_i,A_x\rangle=\langle w^{-1}(\alpha_i),A_{1}+\beta\rangle$, thus  $\langle\alpha_i,A_x\rangle\leqslant -d$ holds  if and only if one of the following conditions hold
\begin{itemize}
    \item $w^{-1}(\alpha_i)\in R^+$ and $\langle w^{-1}(\alpha_i),\beta\rangle\leqslant -d-1$
    \item $w^{-1}(\alpha_i)\in R^-$ and $\langle w^{-1}(\alpha_i),\beta\rangle\leqslant -d$.
\end{itemize}
(\ref{equation:root}) follows from
$x^{-1}(\alpha_i)=w^{-1}(\alpha_i)+ \langle x^{-1}(\alpha_i),\beta\rangle\delta$.
\end{proof}

\section{Applications}\label{S_small_quantum}
The goal of this section is to obtain some corollaries of Theorems \ref{Thm:iso}
and \ref{Thm:dim} for $d=1$, mostly in type $A$. We will write $e$ for $e_1$.

\subsection{Statements of the results}\label{SS_appl_statements}
Until the further notice, $\mathfrak{g}=\mathfrak{sl}_n$. Then  $W=S_n$, $X$ is the normalized version of the
Hilbert scheme (of dimension $2n-2$), and $\Pro$ is the restriction of Haiman's Procesi
bundle to $X\subset \operatorname{Hilb}_n(\C^2)=X\times \C^2$. Let $\tilde{X}$ denote
the preimage of $X$ in the isospectral Hilbert scheme, in other words, $\tilde{X}$
is $(\h\oplus \h^*)\times_Y X$ with its reduced scheme structure. Let $\zeta$
denote the natural finite morphism $\widetilde{X}\rightarrow X$. Haiman's $n!$ theorem
says that $\tilde{X}$ is a Cohen-Macaulay scheme, equivalently, $\zeta$ is flat (of degree $n!$).
The bundle $\Pro$ can be obtained as $\zeta_* \Ocal_{\widetilde{X}}$. That the bundle we consider
coincide with Haiman's follows, for example, from the main result of \cite{Losev_Procesi}.

Set $B^{sgn}:=\Gamma(\Pro\otimes \Pro)$, this is an $H^{\otimes 2}$-module
(equivalently, an $H$-bimodule).
It follows from Haiman's construction -- or the main result of \cite{Losev_Procesi} --
that $\Pro\cong \Pro^*\otimes \Ocal(1)$, a  $(\C^\times)^2\times S_n$-equivariant isomorphism,
where the action of $(\C^\times)^2\times S_n$ on $\Ocal(1)$ comes from the isomorphism
$\Ocal(1)\cong \Pro\epsilon_-$. It follows that $B^{sgn}$ is obtained from $B(:=B_1)$ by twisting
the left $S_n$-action with the sign.

In particular, $B^{sgn}$
has an algebra structure, in fact, this is the algebra $\C[\tilde{X}\times_X\tilde{X}]$.
Our first goal is to describe this algebra structure.

Consider the algebra
$$\tilde{B}:=\C[\h\oplus \h^*]
\otimes_{\C[Y]}
\C[\h\oplus \h^*].$$
Note that both $B^{sgn}$ and $\tilde{B}$ are graded $\C[\h\oplus\h^*]^{\otimes 2}$-algebras.

\begin{Thm}\label{Thm:B_structure}
We have a graded $\C[\h\oplus \h^*]^{\otimes 2}$-algebra isomorphism $B^{sgn}\cong \tilde{B}/\operatorname{rad} \tilde{B}$.
\end{Thm}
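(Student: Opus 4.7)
The plan is to identify both sides as the same subring of $\bigoplus_{w\in W}\C[V]$ (with $V:=\h\oplus\h^*$), making essential use of Haiman's $n!$ theorem.

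By the discussion preceding the statement, $\Pro = \zeta_*\Ocal_{\widetilde X}$, and $\zeta$ is flat of degree $|W|$ by the $n!$ theorem. Hence
\[\Pro\otimes_{\Ocal_X}\Pro = \zeta^{(2)}_*\Ocal_{\widetilde X\times_X\widetilde X}, \qquad\text{so}\qquad B^{sgn} = \Gamma(\widetilde X\times_X\widetilde X,\Ocal).\]
The scheme $\widetilde X\times_X\widetilde X$ is Cohen-Macaulay (as $\widetilde X$ is Cohen-Macaulay and $\zeta$ is flat) and generically reduced (since $\widetilde X\to X$ is generically a $W$-torsor over $X^{reg}$), hence reduced. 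Thus $B^{sgn}$ is a reduced ring.

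Next, the two projections $\widetilde X\to V$ are proper with connected fibers (each fiber is a fiber of the Hilbert--Chow morphism $X\to Y$, known to be connected), onto a smooth target; Stein factorization gives a graded isomorphism $\C[V]\xrightarrow{\sim}\Gamma(\widetilde X,\Ocal)$ for each projection. Combining them yields a $\C[V]^{\otimes 2}$-algebra map $\C[V]\otimes\C[V]\to B^{sgn}$ that factors through $\tilde B$, since the two projections agree on the common subring $\C[Y]$, and, by reducedness of $B^{sgn}$, further through a graded algebra map
\[\Phi\colon\tilde B/\operatorname{rad}\tilde B\longrightarrow B^{sgn}.\]

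To show $\Phi$ is an isomorphism, I would decompose $\widetilde X\times_X\widetilde X=\bigcup_{w\in W}Z_w$, where $Z_w\cong\widetilde X$ is the image of the closed embedding $\widetilde X\hookrightarrow\widetilde X\times_X\widetilde X$ sending $(v,x)\mapsto((v,x),(wv,x))$; analogously, $(V\times_Y V)_{\mathrm{red}}=\bigcup_w V_w$ with $V_w\cong V$ via $(v,wv)\mapsto v$, and $V_w\cap V_{w'}=V^{w^{-1}w'}$. By definition $\tilde B/\operatorname{rad}\tilde B$ is the subring of $\bigoplus_w\C[V_w]=\bigoplus_w\C[V]$ cut out by the matching conditions $f_w|_{V^{w^{-1}w'}}=f_{w'}|_{V^{w^{-1}w'}}$. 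Using the reduced structure of $\widetilde X\times_X\widetilde X$, a Mayer--Vietoris argument identifies $B^{sgn}$ with the analogous subring of $\bigoplus_w\Gamma(Z_w,\Ocal)=\bigoplus_w\C[V]$ cut out by matching on the intersections $Z_w\cap Z_{w'}$. The final step is checking the two matching conditions agree: since $\Gamma(Z_w,\Ocal)=\C[V]$ has no nilpotents, each restriction factors through $\C[(Z_w\cap Z_{w'})_{\mathrm{red}}]$, and Stein factorization applied to the proper connected-fiber map from $(Z_w\cap Z_{w'})_{\mathrm{red}}$ onto the smooth subspace $V^{w^{-1}w'}$ identifies this with evaluation at $V^{w^{-1}w'}$.

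The main obstacle will be this last step, namely controlling the scheme-theoretic intersections $Z_w\cap Z_{w'}$. The key leverage comes from the reducedness of the ambient $\widetilde X\times_X\widetilde X$ --- itself a consequence of Haiman's $n!$ theorem --- without which nilpotents upstairs could obstruct the identification of the two matching conditions.
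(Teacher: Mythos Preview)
Your reducedness argument for $B^{sgn}$ and the construction of $\Phi$ match the paper's Steps 1--2. The divergence is in proving that $\Phi$ is surjective, and here your argument has a real gap.

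You assert that $\tilde B/\operatorname{rad}\tilde B$ is ``by definition'' the subring of $\bigoplus_w\C[V]$ cut out by the pairwise matching conditions $f_w|_{V^{w^{-1}w'}}=f_{w'}|_{V^{w^{-1}w'}}$, and that a ``Mayer--Vietoris argument'' gives the same for $B^{sgn}$. Neither holds in general. For a reduced affine scheme $Z$ with irreducible components $Z_1,\ldots,Z_m$, the injection $\C[Z]\hookrightarrow\prod_i\C[Z_i]$ has image \emph{contained in} the pairwise-matching subring, but equality is the statement that $Z$ is seminormal, which can fail already for three concurrent lines in $\mathbb A^2$: there the matching subring imposes only $f_1(0)=f_2(0)=f_3(0)$, but the image of $\C[x,y]/(xy(x-y))$ also forces the linear relation $f_3'(0)=f_1'(0)+f_2'(0)$. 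So your chain of inclusions gives only $A_1\subseteq A_2\subseteq M$ (with $A_1=\operatorname{im}(\tilde B/\operatorname{rad}\tilde B)$, $A_2=\operatorname{im}(B^{sgn})$, $M$ the matching subring), and you would still need $A_1=M$, i.e.\ seminormality of the graph arrangement $\bigcup_{w\in W}V_w\subset V\times V$. That is a nontrivial statement about reflection-group combinatorics which you have not addressed; the obstacle you flag (controlling $Z_w\cap Z_{w'}$) is actually the part you handled correctly via Stein factorization.

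The paper bypasses this entirely. Its Step 3 shows surjectivity of $\tilde B\to B^{sgn}$ by graded Nakayama: it computes the fiber $B^{sgn}_0$ at $(0,0)\in(\h\oplus\h^*)^2$ and shows it is one-dimensional. This uses Proposition~\ref{Prop:trivial_action} (itself equivalent to the $n!$ theorem) to see that the left and right $S_n$-actions on $B^{sgn}_0$ are trivial, and then Theorem~\ref{Thm:dim} to see that the $S_n$-invariants in $B\otimes_H\C_{triv}\cong\operatorname{sgn}\otimes\C(\Lambda_0/(n{+}1)\Lambda_0)$ are one-dimensional. So the paper's argument draws on the main results of the paper (the dimension formula), whereas your approach would need an independent seminormality statement for $(V\times_Y V)_{\mathrm{red}}$.
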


We can also describe the $\C[\h\oplus \h^*]$-bimodule structure on $B^{sgn}$.

\begin{Thm}\label{Thm:B_bimod}
We have a graded $\C[\h\oplus \h^*]$-bimodule isomorphism
$B^{sgn}\cong H \epsilon H$, where the latter is viewed as
a subbimodule in $H$.
\end{Thm}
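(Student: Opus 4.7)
My plan is to deduce Theorem \ref{Thm:B_bimod} from Theorem \ref{Thm:B_structure} by exhibiting a concrete graded $\C[\h\oplus\h^*]$-bimodule isomorphism $\tilde{B}/\operatorname{rad}\tilde{B}\xrightarrow{\sim} H\epsilon H$.

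The candidate map is $\varphi\colon\tilde{B}\longrightarrow H$, $f\otimes g\mapsto f\epsilon g$. It is well-defined because $\epsilon$ commutes with every $h\in\C[\h\oplus\h^*]^W=\C[Y]$ (as $wh=hw$ for $W$-invariant $h$), so $fh\otimes g$ and $f\otimes hg$ both map to $fh\epsilon g=f\epsilon hg$. The map is manifestly graded (with $\deg\epsilon=0$) and $\C[\h\oplus\h^*]$-bilinear. Its image is the two-sided ideal $H\epsilon H$: since $w\epsilon=\epsilon=\epsilon w$, we have $H\epsilon=\C[\h\oplus\h^*]\epsilon$ and $\epsilon H=\epsilon\C[\h\oplus\h^*]$, so $H\epsilon H=\C[\h\oplus\h^*]\epsilon\C[\h\oplus\h^*]$.

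To identify $\ker\varphi$, I would use the PBW decomposition $H=\bigoplus_{w\in W}\C[\h\oplus\h^*]\cdot w$. Since $\epsilon g=\tfrac{1}{|W|}\sum_{w}({}^w g)w$, a direct computation gives
$$\varphi(f\otimes g)=\frac{1}{|W|}\sum_{w\in W}\bigl(f\cdot{}^w g\bigr)w,$$
so $\sum_i f_i\otimes g_i\in\ker\varphi$ iff $\sum_i f_i\cdot{}^w g_i=0$ in $\C[\h\oplus\h^*]$ for every $w\in W$. On the other hand, $\operatorname{Spec}(\tilde{B})=(\h\oplus\h^*)\times_Y(\h\oplus\h^*)$ has underlying reduced scheme $\bigcup_{w\in W}\Gamma_w$, where $\Gamma_w\subset(\h\oplus\h^*)^2$ is the graph of $w$; restricting $f\otimes g$ to $\Gamma_w$ (via the first projection) yields, up to the reindexing $w\leftrightarrow w^{-1}$, precisely $f\cdot{}^w g$. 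Hence the preceding condition is equivalent to $\sum_i f_i\otimes g_i$ vanishing on every $\Gamma_w$, i.e., $\ker\varphi=\operatorname{rad}\tilde{B}$. Thus $\varphi$ descends to the desired graded bimodule isomorphism, and composition with Theorem \ref{Thm:B_structure} yields Theorem \ref{Thm:B_bimod}.

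Granted Theorem \ref{Thm:B_structure}, no substantial obstacle remains. The only point requiring care is that the $\C[\h\oplus\h^*]$-bimodule structure on $B^{sgn}$ in Theorem \ref{Thm:B_bimod} (restricted from the $H$-bimodule structure) matches the one coming from Theorem \ref{Thm:B_structure} (restricted from the $\C[\h\oplus\h^*]^{\otimes 2}$-algebra structure on $\tilde{B}/\operatorname{rad}\tilde{B}$); this is automatic from the constructions, and $\varphi$ is bilinear by design.
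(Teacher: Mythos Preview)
Your argument is correct and takes a genuinely different route from the paper's proof. The paper does not write down the map $\tilde{B}\to H$ directly; instead it constructs an embedding $B\hookrightarrow H$ geometrically, using Haiman's identification $\Gamma(\Pro\otimes\Ocal(1))\cong J$ (the ideal generated by $S_n$-alternants) to produce a bimodule map $B\to\Hom_{\C[Y]}(\C[\h\oplus\h^*],J)\hookrightarrow H$, checks injectivity via torsion-freeness and the isomorphism over $Y^{reg}$, and only then invokes Theorem~\ref{Thm:B_structure} to see that $B$ is generated over $\C[\h\oplus\h^*]^{\otimes 2}$ by a single degree-$0$ sign-isotypic element whose image in $H$ is $\epsilon_-$. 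Your approach is more elementary and self-contained: once Theorem~\ref{Thm:B_structure} is granted, the explicit map $f\otimes g\mapsto f\epsilon g$ and the identification of its kernel with the nilradical via the graph decomposition $\bigcup_{w}\Gamma_w$ dispense with any further geometric input (in particular, no appeal to Haiman's description of $J$). The paper's route, on the other hand, yields the embedding $B\hookrightarrow H$ as a sheaf-level construction, which is what allows the generalization recorded in Remark~\ref{Rem:d_generalization} to $B_d\hookrightarrow\Hom_{\C[Y]}(\C[\h\oplus\h^*],J^d)$.
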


\begin{Rem}\label{Rem:CM_conj}
We note that Theorems \ref{Thm:B_bimod}, \ref{Thm:iso} and Proposition \ref{Prop:cohom_vanishing}
imply \cite[Conjecture 3.7]{CM}. Namely, their $M$ is $H\epsilon H$, the higher cohomology
of $\Pro\otimes \Pro$ vanish thanks to Proposition \ref{Prop:cohom_vanishing}, and
the claim that $B$ is flat over $\C[\h^*]$ follows from Corollary
\ref{Cor:Procesi_flatness}.
\end{Rem}


Now we proceed to prospective applications to the center of the principal block of  of the small quantum group. We assume that $\g$ is an arbitrary simple Lie algebra -- but we still get more complete results in type $A$.

Recall the notations $Z,G^\vee,T^\vee$ from Introduction. Also recall that $H^*_T(\Fl_e)\xrightarrow{\sim} \Hom_{\Ring}(H^{BM}_T(\Fl_e),\Ring)$, see Remark \ref{Rem:homology_duality}. This gives a $\widetilde{W}$-action 
on $H^*_T(\Fl_e)$ corresponding to the centralizer-monodromy action on $H^{BM}_T(\Fl_e)$. The $\widetilde{W}$-action on
$H^*_T(\Fl_e)$ gives rise to a $W$-action on $H^*(\Fl_e)^\Lambda$. 

The following conjecture is due to the first named author joint with Bezrukavnikov,
Shan and Vasserot, \cite{BBASV}.

\begin{Conj}\label{Thm:BBASV}
There is an algebra isomorphism
$H^*(\Fl_e)^\Lambda\xrightarrow{\sim} Z^{T^\vee}$. This isomorphism is $W$-equivariant, where on the left hand side  we have the action described above and on the right hand side the action comes
from the identification $W=N_{G^\vee}(T^\vee)/T^\vee$.
\end{Conj}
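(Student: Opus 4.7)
The plan is to transfer Conjecture 7.4 across the coherent/constructible bridge built by Theorem 1.1 and to reduce it to a comparison on the coherent side via an expected (monoidal) description of the principal block of $\mathfrak{u}_\epsilon(\g^\vee)$. By analogy with the Bezrukavnikov--Mirkovic--Rumynin correspondence for quantum groups at roots of unity, one expects the principal block of the small quantum group to be equivalent (at least as an abelian, and ideally as a monoidal, category) to coherent sheaves on a formal neighborhood inside the partial resolution $X$; under such an equivalence, $Z^{T^\vee}$ should be computed as the endomorphisms of the identity functor, namely as the center of the endomorphism algebra of a Procesi-type tilting generator. Crucially, the $W$-action on this coherent model is intrinsic via $\operatorname{End}(\Pro) = H = \C[\h \oplus \h^*]\#W$, and should match the $W = N_{G^\vee}(T^\vee)/T^\vee$-action on $Z^{T^\vee}$.

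The first concrete step I would take is to reformulate $H^*(\Fl_e)^\Lambda$ coherently. The identification $(H^T_{BM}(\Fl_{e_1})^\wedge \otimes_{\C[T^*T^\vee]^\wedge} \C_{triv})^* = H^*(\Fl_e)^\Lambda$ noted in the introduction, combined with Theorem 1.1, turns the left-hand side of Conjecture 7.4 into the dual of $B_1^\wedge \otimes_{H^\wedge} \C_{triv}$; by the construction of $B_1$ as global sections of $\Pro^{reg,*} \otimes \Ocal^{reg}(1) \otimes \Pro^{reg}$, this is a fiber of a concrete coherent object on $X$. Next I would produce a candidate algebra homomorphism $Z^{T^\vee} \to H^*(\Fl_e)^\Lambda$ by identifying both with the zeroth Hochschild cohomology (equivalently, the center) of an endomorphism algebra of the Procesi-type generator; $W$-equivariance would follow automatically from the constructions. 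Dimension matching is already supplied by Theorem 1.2, which yields $(h+1)^{\dim \h}$, and in type $A$ this can be combined with Haiman's $n!$ theorem, as the paper indicates, to deduce that $W$ acts trivially and recover the Lachowska--Yakimov count $(n+1)^{n-1}$.

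The hard part will be upgrading a comparison of $W$-modules to a comparison of algebras. The tools developed in this paper control the coherent side and the Borel--Moore side quite precisely as bimodules, but they say essentially nothing intrinsic about the ring structure on the block center of $\mathfrak{u}_\epsilon(\g^\vee)$; closing this gap seems to require either a full monoidal statement on the principal block (extending existing work of Bezrukavnikov and collaborators from the regular to the small setting) or an independent geometric description of the multiplication on $Z^{T^\vee}$, for instance via a convolution or $E_2$-structure on the equivariant cohomology of $\Fl_e$. I would expect the bulk of the difficulty of Conjecture 7.4 to lie here, not in the vector-space or $W$-module comparison, which should be within reach of the machinery already assembled in this paper.
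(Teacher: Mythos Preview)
The paper does not prove this statement; it is explicitly labeled a \emph{Conjecture} (the environment is \texttt{Conj}, not \texttt{Thm}) and is attributed to forthcoming work of Bezrukavnikov, the first author, Shan and Vasserot. Immediately after stating it, the paper notes that \cite{BBASV} constructs a $W$-equivariant algebra \emph{monomorphism} $H^*(\Fl_e)^\Lambda \hookrightarrow Z^{T^\vee}$, and that the conjectural part is precisely surjectivity. The results of the present paper that reference it (Theorem~\ref{center:thm}) are conditional: they begin ``Assume Conjecture~\ref{Thm:BBASV} holds.''

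Your proposal is therefore not something to compare against a proof in the paper; it is a sketch of how one might attack an open problem. Your own final paragraph correctly locates the difficulty: the vector-space and $W$-module level comparisons are accessible via Theorems~\ref{Thm:iso} and~\ref{Thm:dim}, and indeed the paper extracts exactly those conditional consequences, but the algebra-structure statement requires input about the principal block of $\mathfrak{u}_\epsilon(\g^\vee)$ that lies outside the scope of this paper. Note also that one direction (injectivity and the algebra-homomorphism property) is already claimed in \cite{BBASV}, so any attempt should focus on surjectivity; your outline via a Bezrukavnikov--Mirkovi\'c--Rumynin style monoidal equivalence for the small quantum group is a plausible line, but as you acknowledge, that equivalence is itself not available in the literature in the form you would need.
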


In fact, \cite{BBASV} establishes the existence of a $W$-equivariant algebra monomorphism. The conjectural part is that this monomorphism is surjective.  


Here is our result on the structure of $Z$.
\begin{Thm}\label{center:thm}
Assume Conjecture \ref{Thm:BBASV} holds. Then the following claims are true.
\begin{enumerate}
    \item The dimension of the subalgebra of $N_{G^\vee}(T^\vee)$-invariants in $Z$ is $(h+1)^{\dim \h}$.
    \item If $\g=\mathfrak{sl}_n$, then the $G^\vee$-action on $Z$ is trivial. In particular, $\dim Z=(n+1)^{n-1}$.
\end{enumerate}
\end{Thm}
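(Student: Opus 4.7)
The plan is to deduce part (1) directly from Theorem \ref{Thm:dim} and the conjecture, and to reduce part (2) to a computation on the $B_1$-side that I would carry out using Haiman's $n!$ theorem.

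For part (1), I would use that Conjecture \ref{Thm:BBASV} identifies $Z^{T^\vee}$ with $H^*(\Fl_e)^\Lambda$ as $W$-modules, giving
\[
Z^{N_{G^\vee}(T^\vee)} = (Z^{T^\vee})^W \cong (H^*(\Fl_e)^\Lambda)^W = H^*(\Fl_e)^{\widetilde W}.
\]
The affine paving of Proposition \ref{GKM:results} makes $H^*(\Fl_e)$ and $H^{BM}(\Fl_e)$ mutually dual, so this equals $\dim H^{BM}(\Fl_e)_{\widetilde W}$. Taking $\widetilde W$-coinvariants on the BM side amounts to setting the equivariant parameters to zero, specialising $T^\vee$ to $1$, and taking $W$-coinvariants; that is, to forming $H^{BM}_T(\Fl_e)\otimes_{\Hcal^\times}\C_{triv}$, whose dimension is $(h+1)^{\dim\h}$ by Theorem \ref{Thm:dim}.

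For part (2), I would specialise $\mathfrak g=\mathfrak{sl}_n$ (so part (1) already gives $\dim Z^{N_{G^\vee}(T^\vee)}=(n+1)^{n-1}$) and aim to show that the right ECM $W$-action is trivial on $H^{BM}(\Fl_e)_\Lambda$; by the conjecture this yields $W$-triviality on $Z^{T^\vee}$, hence $Z^{T^\vee}=Z^{N_{G^\vee}(T^\vee)}$ of dimension $(n+1)^{n-1}$. Combining Theorem \ref{Thm:iso} with Lemma \ref{Lem:trig_rat_iso}, I would identify $H^{BM}(\Fl_e)_\Lambda$ with $B_1/B_1\cdot(\h\oplus\h^*)_R$; since the sign twist affects only the left $W$-action, this coincides as a right $W$-module with $B^{sgn}_1/B^{sgn}_1\cdot(\h\oplus\h^*)_R$. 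Theorem \ref{Thm:B_structure} and Haiman's $n!$ theorem give $B^{sgn}_1\cong \C[\tilde X\times_X\tilde X]$, and the quotient in question is $\Gamma(\tilde X\times_X X_0)$, where $X_0\subset X$ is the scheme-theoretic fibre over $0\in Y$. Haiman's description then identifies this with the $(n+1)^{n-1}$-dimensional algebra of $S_n$-diagonal coinvariants; the right $W$-action permutes the second $\tilde X$-factor by $(v,x)\mapsto (wv,x)$, but $0\in\h\oplus\h^*$ is $W$-fixed and $W$ acts trivially on the $X$-component, so the induced action on this fibre is manifestly trivial.

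The hard part will be the final deduction from ``$W$ acts trivially on $Z^{T^\vee}$'' to ``$G^\vee$ acts trivially on $Z$'': this does not follow by pure representation theory (for example, in $\mathfrak{sl}_2$-representations the longest element acts trivially on $V(2k)_0$ for every $k$), so one must invoke structural properties of the block center of the small quantum group. The expected mechanism is a complementary upper bound $\dim Z\leqslant (n+1)^{n-1}$ coming from the quantum group side, which, combined with $\dim Z\geqslant \dim Z^{T^\vee}=(n+1)^{n-1}$, forces $Z=Z^{T^\vee}$; since $Z^{T^\vee}$ is a $G^\vee$-submodule of pure $T^\vee$-weight zero, this forces the $G^\vee$-action to be trivial, confirming both claims of part (2) and the Lachowska--Yoshida formula.
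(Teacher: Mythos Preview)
Your argument for part (1) is correct and coincides with the paper's (which packages the duality step as Lemma \ref{coho:duality}). Your route to ``$W$ acts trivially on $Z^{T^\vee}$'' in part (2) is also valid and close in spirit to the paper's: the paper isolates this as Proposition \ref{Prop:trivial_action}, proved by showing it is equivalent to Haiman's $n!$ theorem, whereas you read it off more directly from the geometric identification $B^{sgn}=\C[\tilde X\times_X\tilde X]$ (your citation of Theorem \ref{Thm:B_structure} is unnecessary here and in fact circular, since the paper proves that theorem \emph{using} Proposition \ref{Prop:trivial_action}; what you actually need is just $\Pro\cong\zeta_*\Ocal_{\tilde X}$, i.e.\ Haiman).

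The genuine gap is exactly where you flag it, and your proposed resolution does not work. You invoke ``a complementary upper bound $\dim Z\leqslant (n+1)^{n-1}$ coming from the quantum group side'', but no such bound is available independently: it is precisely the conjectured dimension you are trying to establish. The paper closes the gap by a completely different mechanism that does not require any a priori bound on $\dim Z$. First, a crude weight bound (Lemma \ref{Lem:center_bound}): any highest weight $\mu$ of $G^\vee$ occurring in $Z$ satisfies $\ell\mu\leqslant 2(\ell-1)\rho$, simply because the $G^\vee$-action factors through the quantum Frobenius and the weights of $\mathfrak u_\epsilon$ are bounded by $2(\ell-1)\rho$. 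Second, a classification (Lemma \ref{Lem:wt_triv_action}): the only irreducible $\operatorname{PGL}_n$-modules on whose zero weight space $S_n$ acts trivially are $S^{2kn}(\C^n)$ and its dual. For $k>0$ the highest weight $2kn\varpi_1$ violates the inequality $\ell\mu\leqslant 2(\ell-1)\rho$, so every irreducible constituent of $Z$ is trivial. Your $\mathfrak{sl}_2$ example illustrates why the second lemma is needed: for $n=2$ the representations with trivial $W$-action on the zero weight space are exactly $V(4k)=S^{4k}(\C^2)$, and it is the weight bound that kills $k>0$.
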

Note that (2) confirms a conjecture from \cite{LY}.

The following result is used to prove Theorems \ref{Thm:B_structure},\ref{Thm:B_bimod}
as well as (2) of Theorem \ref{center:thm}. Consider the 1-dimensional representation
$\C_0$ of $\C[\h\oplus \h^*]$ corresponding to the point $0\in \h\oplus \h^*$.

\begin{Prop}\label{Prop:trivial_action}
For $\g=\mathfrak{sl}_n$,
we have $B\otimes_{\C[\h\oplus \h^*]}\C_0=(B\otimes_{\C[\h\oplus \h^*]}\C_0)\epsilon$.
\end{Prop}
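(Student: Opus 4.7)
The plan is to argue through the sign-twisted bimodule $B^{sgn}=\Gamma(\Pro\otimes\Pro)$. Since $B^{sgn}$ differs from $B$ only by a sign twist on the \emph{left} $S_n$-action, the right $H$-actions on $B$ and on $B^{sgn}$ coincide, so $B\otimes_{\C[\h\oplus\h^*]}\C_0$ and $B^{sgn}\otimes_{\C[\h\oplus\h^*]}\C_0$ agree as right $W$-modules. Because the condition $M\epsilon=M$ is exactly that the right $W$-action on $M$ is trivial, it will suffice to verify triviality of the right $W$-action on $B^{sgn}\otimes_{\C[\h\oplus\h^*]}\C_0$.

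I would next invoke Haiman's $n!$ theorem, which makes $\zeta:\tilde X\to X$ finite flat and identifies $\Pro=\zeta_*\Ocal_{\tilde X}$. This gives $\Pro\otimes_{\Ocal_X}\Pro=(\zeta\times\zeta)_*\Ocal_{\tilde X\times_X\tilde X}$ and hence $B^{sgn}=\C[\tilde X\times_X\tilde X]$. Using $\tilde X=(\h\oplus\h^*)\times_Y X$, the fiber product unfolds as
\[
\tilde X\times_X\tilde X \;\cong\; (\h\oplus\h^*)\times_Y(\h\oplus\h^*)\times_Y X,
\]
so that $B^{sgn}\cong \C[\h\oplus\h^*]\otimes_{\C[Y]}\C[\h\oplus\h^*]\otimes_{\C[Y]}\C[X]$. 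Under this description the right $H$-action on $B^{sgn}$ is supported entirely on the middle tensor factor: $\C[\h\oplus\h^*]\subset H$ acts there by multiplication, and $W\subset H$ acts there by its standard action on $\h\oplus\h^*$ (the right $W$-action does not touch the $\C[X]$-factor, since the partial resolution $X\to Y$ carries no lifted $W$-action).

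Finally, I would tensor over the right $\C[\h\oplus\h^*]$ with $\C_0=\C[\h\oplus\h^*]/\mathfrak{m}_0$, which collapses the middle $\C[\h\oplus\h^*]$-factor to $\C$ and yields
\[
B^{sgn}\otimes_{\C[\h\oplus\h^*]}\C_0 \;\cong\; \bigl(\C[\h\oplus\h^*]/\mathfrak{m}_{0,Y}\C[\h\oplus\h^*]\bigr)\otimes_{\C}\C[X_0],
\]
where $\mathfrak{m}_{0,Y}\subset\C[Y]$ is the maximal ideal at $0$ and $X_0$ denotes the scheme-theoretic fiber of $X\to Y$ at $0$. The induced right $W$-action on the middle factor $\C=\C[\h\oplus\h^*]/\mathfrak{m}_0$ is trivial because $W$ fixes $0\in\h\oplus\h^*$, so the right $W$-action on the whole quotient is trivial, as desired. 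The only substantive input is Haiman's $n!$ theorem (which is precisely why the argument is restricted to type A); the rest is bookkeeping about how the right $H$-action localizes inside the triple fiber product.
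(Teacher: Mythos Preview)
Your reduction in the first paragraph is correct and is exactly how the paper begins: it suffices to show that the right $W$-action on $B^{sgn}\otimes_{\C[\h\oplus\h^*]}\C_0$ is trivial. The problem is in the next step.

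The identification $\tilde X=(\h\oplus\h^*)\times_Y X$ holds only after passing to the \emph{reduced} scheme structure on the right-hand side; as schemes they differ. The reason is that $\h\oplus\h^*\to Y$ is \emph{not} flat: $S_n$ does not act on $\h\oplus\h^*$ as a reflection group, so Chevalley's freeness theorem does not apply. Concretely, for $n=2$ the fiber of $(\h\oplus\h^*)\to Y$ over $0$ is $\operatorname{Spec}\C[x,y]/(x^2,xy,y^2)$ of length $3$, whereas the fiber of $\tilde X\to X$ has length $|S_2|=2$. Hence $(\h\oplus\h^*)\times_Y X$ is strictly larger than $\tilde X$, and your fiber-product unfolding $\tilde X\times_X\tilde X\cong(\h\oplus\h^*)\times_Y(\h\oplus\h^*)\times_Y X$ fails. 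In fact, your formula would give $B^{sgn}\cong\tilde B:=\C[\h\oplus\h^*]\otimes_{\C[Y]}\C[\h\oplus\h^*]$ (note $\C[X]=\C[Y]$), but Theorem~\ref{Thm:B_structure} in the paper says $B^{sgn}\cong\tilde B/\operatorname{rad}\tilde B$, and the proof of that theorem \emph{uses} Proposition~\ref{Prop:trivial_action} as input. So your route is circular.

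The paper instead proves the proposition by showing it is \emph{equivalent} to Haiman's $n!$ theorem. It reformulates the $n!$ theorem as the statement that the fibers $\Pro_\mu$ have trivial $S_n$-head, then as $\Loc(\tau)=0$ for nontrivial irreducible $\tau$, and finally relates this to the vanishing of $B^{sgn}\otimes_H\tau$ via the functor identity $R\tilde\Gamma(L\Loc(\bullet)(1))\cong R\Gamma(\Pro\otimes\Pro)\otimes^L_H\bullet$ and a symmetry argument between the left and right $H$-actions on $B^{sgn}$. There does not appear to be a shortcut that avoids this: controlling the nilpotents in $\tilde B$ is precisely the content of the result.
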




\subsection{Proposition \ref{Prop:trivial_action} and $n!$ theorem}\label{n!connection}
In this section we prove Proposition \ref{Prop:trivial_action}. In fact, we will show that
Proposition \ref{Prop:trivial_action} is equivalent to the $n!$ theorem of Haiman, \cite{Haiman}. We need some preparation
for the proof.

For a partition $\mu$ on $n$, let
$x_\mu$ denote the fixed point in $X$
labelled by $\mu$  and  $\Pro_\mu$ denote the fiber  of  $\Pro$ at $x_\mu$. This is a $(\C^\times)^2$-equivariant $H$-module
of dimension $n!$.
The following is  a consequence of the $n!$ theorem.
\begin{itemize}
    \item[(A)] For each $\mu$, the head of the $H$-module $\Pro_\mu$ is a trivial $S_n$-module.
\end{itemize}
In fact, more is true. If we use the Bezrukavnikov-Kaledin construction of $\Pro$ as a definition, then (A) is equivalent to the $n!$ theorem. Indeed, (A) implies the similar claim for all fibers of $\Pro$.
So $\Pro$ acquires a sheaf of algebras structure. The relative spectrum of $\Pro$ is easily seen to
coincide with $\tilde{X}$.

We will give several equivalent formulations of (A). Consider
the adjoint pair  $$\Loc:= \Pro\otimes_H \bullet: H\operatorname{-mod}
\rightleftarrows
\operatorname{Coh}(X):\tilde{\Gamma}:=\Hom_{\Ocal_X}(\Pro,\bullet).$$
Note that the derived functors $L\Loc,
R\tilde{\Gamma}$ are mutually quasi-inverse
equivalences, see, e.g., \cite[Proposition 2.2]{BK}.

Note that we can view every irreducible
representation $\tau$ of $S_n$ as an
irreducible $H$-module by making $\h\oplus \h^*$ act by $0$.

\begin{Lem}
(A) is equivalent to the following claim:
\begin{itemize}
    \item[(B)] For a
    nontrivial irreducible representation $\tau$ of $S_n$, we have
    $\Loc(\tau)=0$.
\end{itemize}
\end{Lem}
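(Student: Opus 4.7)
The plan is to pass between (A) and (B) via a formula for the stalks of $\Loc(\tau)$ at the torus-fixed points $x_\mu \in X$, and then upgrade pointwise vanishing to vanishing of the whole sheaf using a support-and-contraction argument.

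First I would check that $\h \oplus \h^* \subset H$ acts nilpotently on the $n!$-dimensional $H$-module $\Pro_\mu$. Indeed, $x_\mu$ lies in the fiber $\pi^{-1}(0)$ of $\pi\colon X\to Y$, so $\Ocal_Y = \C[\h\oplus \h^*]^W \subset Z(H)$ acts on $\Pro_\mu$ through the augmentation at $0$; since every $y \in \h \oplus \h^*$ is integral over $\Ocal_Y$ with defining polynomial $\prod_{w}(t - w y)$ whose specialization at $0$ is $t^{|W/W_y|}$, such $y$ act nilpotently. Consequently any simple quotient of $\Pro_\mu$ has $\h\oplus\h^*$ acting by zero, hence is an irreducible $\C S_n$-module; equivalently, $\operatorname{rad}(\Pro_\mu) = \Pro_\mu(\h\oplus\h^*)$ and $\operatorname{Head}(\Pro_\mu) = \Pro_\mu/\Pro_\mu(\h\oplus\h^*)$ is a genuine semisimple $\C S_n$-module.

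Next, for any irreducible $S_n$-representation $\tau$ viewed as an $H$-module (with $\h\oplus\h^*$ acting trivially), I would compute the stalk of $\Loc(\tau) = \Pro \otimes_H \tau$ at $x_\mu$. Since $\Pro$ carries commuting $\Ocal_X$- and $H$-actions, taking the fiber over $\Ocal_X$ commutes with tensoring over $H$, and the relation $m\!\cdot\! y \otimes \xi = m \otimes y\xi = 0$ for $y \in \h\oplus \h^*$ gives
\begin{equation*}
\Loc(\tau)_{x_\mu} \;=\; \Pro_\mu \otimes_H \tau \;=\; \operatorname{Head}(\Pro_\mu) \otimes_{\C S_n} \tau.
\end{equation*}
If (A) holds, the right-hand side vanishes for every nontrivial irreducible $\tau$. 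Conversely, if $\Loc(\tau)_{x_\mu} = 0$ for every nontrivial $\tau$ and every $\mu$, then decomposing $\operatorname{Head}(\Pro_\mu)$ into $S_n$-isotypes shows that only the trivial isotype survives, which is (A).

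Finally, to see that pointwise vanishing at the $x_\mu$ implies $\Loc(\tau) = 0$ as a sheaf, I would use that $\Ocal_Y$ acts on $\tau$ via augmentation at $0$, so $\Loc(\tau)$ is scheme-theoretically supported on the projective variety $\pi^{-1}(0)$. Moreover $\Loc(\tau)$ is equivariant for the contracting $\C^\times$-action (of weight $(t,t)$ or similar), whose fixed-point set on $\pi^{-1}(0)$ is exactly $\{x_\mu\}$. A standard equivariant Nakayama argument then forces a $\C^\times$-equivariant coherent sheaf on a projective contracting $\C^\times$-variety to vanish as soon as it vanishes at the torus-fixed points. This closes the equivalence (A) $\Leftrightarrow$ (B). The only nontrivial point to verify with care is the fiber identification $\Loc(\tau)_{x_\mu} = \Pro_\mu \otimes_H \tau$, since $\tau$ is not flat over $H$; but both sides are computed by the right-exact functor $\bullet \otimes_H \tau$ applied to the $\Ocal_X$-coherent $\Pro$, and the formula above is really just saying that the nonderived fiber and nonderived $\otimes_H$ commute here, which is a straightforward check.
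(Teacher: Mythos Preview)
Your proof is correct and follows essentially the same strategy as the paper's: reduce (A) to a statement about $\Loc(\tau)$ at the torus-fixed points $x_\mu$, then use a contracting $\C^\times$-action on the equivariant sheaf $\Loc(\tau)$ to pass from pointwise vanishing to global vanishing. The only real difference is the bridge between (A) and the fixed-point data: you compute the \emph{fiber} $\Loc(\tau)_{x_\mu}=\Pro_\mu\otimes_H\tau=\operatorname{Head}(\Pro_\mu)\otimes_{\C S_n}\tau$ directly (which costs you the preliminary nilpotency argument for $\h\oplus\h^*$), whereas the paper uses the adjunction $(\Loc,\tilde{\Gamma})$ together with $\Pro_\mu^*=\tilde{\Gamma}(\C_\mu)$ to get $\Hom_H(\tau,\Pro_\mu^*)=\Hom_{\Ocal_X}(\Loc(\tau),\C_\mu)$, thereby detecting $\tau$ in the socle of $\Pro_\mu^*$ (equivalently, by self-duality of $S_n$-irreducibles, in the head of $\Pro_\mu$). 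Your route is a bit more hands-on; the paper's is slightly slicker since the adjunction absorbs the nilpotency step, but both arrive at the identical contracting-torus endgame.
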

\begin{proof}
Let us write $\C_\mu$ for the skyscraper sheaf
at $x_\mu$. Then $\Pro_\mu^*=\tilde{\Gamma}(\C_\mu)$. Therefore
$$\Hom_H(\tau,\Pro_\mu^*)=\Hom_{\Ocal_X}( \Loc(\tau), \C_\mu).$$
So (A) is equivalent to the claim that
$\Hom_{\Ocal_X}(\Loc(\tau),\C_\mu)=0$
for all $\mu$ as long as $\tau\neq \operatorname{triv}$. Hence (B)$\Rightarrow$(A).
To show the implication in the opposite direction, we must show that for a nonzero $(\C^\times)^2$-equivariant coherent sheaf
$\mathcal{F}$ on $X$ there is a
partition $\mu$ such that $\Hom_{\Ocal_X}(\mathcal{F},\C_\mu)\neq 0$.  The action of $(\C^\times)^2$ contains a contracting one-dimensional subtorus whose fixed points are precisely the points $x_\mu$
for all $\mu$. So if
the fiber $\mathcal{F}_{x_\mu}$ is zero for all $\mu$, then every fiber of $\mathcal{F}$ is zero.
(A)$\Rightarrow$(B) follows.
\end{proof}

To prove Proposition \ref{Prop:trivial_action} we now need to show that (A)$\Leftrightarrow$(B)
is equivalent to the following condition:
\begin{itemize}
    \item[(C)] For an irreducible representation $\tau$ of $S_n$, $\Loc (\tau)\neq 0\Leftrightarrow
    B^{sgn}\otimes_H \tau\neq \{0\}$.
\end{itemize}

In the proof  we will need to following lemma.

\begin{Lem}\label{Lem:functor_formula}
We have an isomorphism of
endofunctors of $D^b(H\operatorname{-mod})$, $$R\tilde{\Gamma}(L\Loc(\bullet)(1))\cong R\Gamma(\Pro\otimes \Pro)\otimes^L_H\bullet.
    $$
\end{Lem}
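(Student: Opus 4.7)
The plan is to verify the isomorphism by a direct derived-category computation, exploiting that in the $\mathfrak{sl}_n$ setting $X$ is smooth, $\Pro$ is a genuine vector bundle on $X$, and $\Ocal(1) = \Pro\epsilon_-$ is a line bundle, so that higher $\mathcal{E}xt$ sheaves vanish and tensoring with $\Ocal(1)$ is exact. Both sides of the claimed formula are endofunctors of $D^b(H\operatorname{-mod})$ (using the equivalences $L\Loc$ and $R\tilde{\Gamma}$), and it suffices to produce a natural isomorphism between them.

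First, since $\Ocal(1)$ is $\Ocal_X$-flat, twisting commutes with $\otimes^L_H$, giving
\[
L\Loc(M)(1) = (\Pro \otimes^L_H M) \otimes_{\Ocal_X} \Ocal(1) \cong \Pro(1) \otimes^L_H M,
\]
where $\Pro(1) := \Pro \otimes_{\Ocal_X} \Ocal(1)$ is viewed as an $\Ocal_X$-$H$-bimodule.

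Next, I would apply $R\tilde{\Gamma} = R\Hom_{\Ocal_X}(\Pro,\cdot)$. Since $\Pro$ is locally free on the smooth variety $X$, the sheaf-Hom $\mathcal{H}om_{\Ocal_X}(\Pro,\cdot) = \Pro^* \otimes_{\Ocal_X} (\cdot)$ is exact, so
\[
R\tilde{\Gamma}\bigl(\Pro(1) \otimes^L_H M\bigr) = R\Gamma\bigl(X,\, \Pro^* \otimes_{\Ocal_X} \Pro(1) \otimes^L_H M\bigr).
\]
The $(\C^\times)^2 \times S_n$-equivariant isomorphism $\Pro \cong \Pro^* \otimes \Ocal(1)$, coming from $\Ocal(1) = \Pro\epsilon_-$ together with the Haiman/Losev description of the Procesi bundle, then identifies $\Pro^* \otimes \Pro(1) \cong \Pro \otimes \Pro$ as $\Ocal_X$-$H^{\otimes 2}$-bimodules, and we obtain
\[
R\tilde{\Gamma}(L\Loc(M)(1)) \cong R\Gamma\bigl(X,\, (\Pro \otimes \Pro) \otimes^L_H M\bigr).
\]

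Finally, I would commute $R\Gamma$ past $\otimes^L_H M$ via a free resolution. Picking $F_\bullet \twoheadrightarrow M$ with $F_i$ free over $H$, the complex $(\Pro \otimes \Pro) \otimes_H F_\bullet$ represents $(\Pro \otimes \Pro) \otimes^L_H M$, and applying $\Gamma$ term by term yields $B \otimes_H F_\bullet$, which represents $B \otimes^L_H M$. Proposition \ref{Prop:cohom_vanishing} guarantees $H^i(X, \Pro \otimes \Pro) = 0$ for $i > 0$, hence also for every free summand $(\Pro \otimes \Pro)^{\oplus k}$; the Cartan-Eilenberg hypercohomology spectral sequence therefore collapses, giving $R\Gamma(X, (\Pro \otimes \Pro) \otimes^L_H M) \cong B \otimes^L_H M$, as required. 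The main technical subtlety will be the bookkeeping of $H$-bimodule structures throughout, in particular tracking the role of $W \subset H$ and the sign twist implicit in $\Pro \cong \Pro^* \otimes \Ocal(1)$, to ensure all identifications are compatible with the bimodule structure used in the target of the lemma.
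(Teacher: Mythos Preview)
Your proposal is correct and follows essentially the same route as the paper's proof, just spelled out in more detail: the paper's argument is the one-liner that the left hand side is derived tensor product with its value on $H$, namely $R\tilde{\Gamma}(L\Loc(H)(1))=R\Hom_X(\Pro,\Pro(1))$, which it then identifies with $R\Gamma(\Pro\otimes\Pro)$. One small remark: since the lemma is stated with $R\Gamma(\Pro\otimes\Pro)$ rather than $\Gamma(\Pro\otimes\Pro)$, your appeal to Proposition~\ref{Prop:cohom_vanishing} in step~4 is not strictly needed to establish the isomorphism as written---the hypercohomology spectral sequence gives $R\Gamma\bigl((\Pro\otimes\Pro)\otimes_H F_\bullet\bigr)\cong R\Gamma(\Pro\otimes\Pro)\otimes_H F_\bullet$ without any vanishing hypothesis---though of course the vanishing is what lets one replace $R\Gamma$ by $\Gamma=B^{sgn}$ in the subsequent applications.
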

\begin{proof}
This is standard: the left hand side is the derived tensor product with $$R\tilde{\Gamma}(L\Loc(H)(1))=R\Hom_{X}(\Pro, \Pro(1)).$$
The right hand side in the last equation is $B^{sgn}$.
\end{proof}

\begin{proof}[Proof of Proposition \ref{Prop:trivial_action}]
Now we show that (C) holds. Assume first that $B\otimes_H \tau=\{0\}$. Note that since the algebra $H$
has finite homological dimension, only finitely many of homologies of $L\Loc(\tau)$
are nonzero. Pick $m$ large enough for so that the sheaves $H_i(L\Loc(\tau))(m)$ are generated by their global sections and their higher cohomology groups vanish. By Lemma
\ref{Lem:functor_formula},
$$R\tilde{\Gamma}(L\Loc(\tau)(m))=
(B^{sgn})^{\otimes_H^L m}\tau.$$
The zeroth homology group of the right hand side is zero. By our choice of $m$
this implies that $\Loc(\tau)=0$.

Now assume that $\Loc(\tau)=0$. By the previous paragraph, for some $m$ we have
$(B^{sgn})^{\otimes_H m}\tau=0$. Let $S$ denote the set of all
irreducible $S_n$-representations $\tau$ such that $B^{sgn}\otimes_H \tau\neq \{0\}$.
Note that $\tau\in S$ if and only if
\begin{itemize}
\item[(*)]
$\tau$ appears in the $S_n$-module
$B^{sgn}/B^{sgn}(\h\oplus \h^*)$ (where $S_n$ acts from the right).
\end{itemize} But the $H$-actions on $B^{sgn}=\Gamma(\Pro\otimes\Pro)$ from the left and from the right are completely symmetric. So (*) is equivalent to the condition that $\tau$ appears in  $B^{sgn}/(\h\oplus \h^*)B^{sgn}$ (where $S_n$ acts from the left). The latter condition in its turn is equivalent to $\Hom_H(B^{sgn},\tau)\neq 0$. So we see that $\tau\in S$ if and only if $\tau$ appears in the head of some $H$-module of the form $B^{sgn}\otimes_H\tau'$ (where $\tau'$ is automatically in $S$).
This shows that $\tau\in S$ if and only if $(B^{sgn})^{\otimes_H m}\tau\neq 0$
for all $m$. This finishes the proof of (C)
and hence shows that the proposition is equivalent to the $n!$ theorem.
\end{proof}

\subsection{Proofs of Theorems \ref{Thm:B_structure},\ref{Thm:B_bimod}}\label{SS_proofs_type_A}
\begin{proof}[Proof of Theorem \ref{Thm:B_structure}]
{\it Step 1.} Here we prove that $B^{sgn}$ is reduced.
First of all, note that $B^{sgn}=\Gamma(\Pro\otimes \Pro)$ is nothing else but the algebra $\C[\tilde{X}\times_{X}\tilde{X}]$. The scheme $\tilde{X}\times_{X}\tilde{X}$ is flat and finite over the Cohen-Macaulay scheme $\tilde{X}$, hence is Cohen-Macaulay. It is generically reduced and therefore reduced. The algebra of regular functions on a reduced scheme is always reduced.

{\it Step 2.} Here we produce an algebra homomorphism
$\varphi:\tilde{B}\rightarrow B^{sgn}$. This comes  as the pullback of the morphism
$$\tilde{X}\times_{X}\tilde{X}\rightarrow
(\h\oplus \h^*)\times_{Y}(\h\oplus \h^*)$$
induced by the morphisms $\tilde{X}\rightarrow \h\oplus \h^*, X\rightarrow Y$.
Note that $\varphi$ is the unique $\C[\h\oplus \h^*]^{\otimes 2}$-algebra
homomorphism $\tilde{B}\rightarrow B^{sgn}$.

{\it Step 3}. We show that the homomorphism
$\varphi:\tilde{B}\rightarrow B^{sgn}$ is surjective.
This is a crucial step in the proof that
uses Proposition \ref{Prop:trivial_action}. Namely,
note that both $\tilde{B},B^{sgn}$ are
finitely generated graded  $\C[\h\oplus \h^*]^{\otimes 2}$-modules.
Let $\tilde{B}_0,B^{sgn}_0$ denote the specializations of $\tilde{B},B^{sgn}$
to $(0,0)\in (\h\oplus \h^*)^2$. We need to show that the induced algebra
homomorphism $\tilde{B}_0\rightarrow B^{sgn}_0$ is surjective. Clearly,
$\tilde{B}_0$ is one-dimensional. Now consider $B^{sgn}_0$. This space
is acted by $S_n$ on the left and on the right. Proposition
\ref{Prop:trivial_action} implies that the action from the right
is trivial. By symmetry, the action on the left is trivial as well.
By Theorem \ref{Thm:dim}, we have $B^{sgn}\otimes_H \C_{triv}\cong
\operatorname{sgn}\otimes \C(\Lambda_0/(n+1)\Lambda_0)$. The space
of $S_n$-invariants in the latter module is one-dimensional. So
$\dim B^{sgn}_0=1$ and our claim follows.

{\it Step 4}. It is easy to see that $\varphi:\tilde{B}\rightarrow B^{sgn}$ is an isomorphism over
$Y^{reg}$. Since $\varphi$ is surjective and $B^{sgn}$ is reduced, we conclude that
$\varphi$ induces an isomorphism $\tilde{B}/\operatorname{rad}\tilde{B}\xrightarrow{\sim} B^{sgn}$.
This completes the proof of Theorem \ref{Thm:B_structure}.
\end{proof}

\begin{proof}[Proof of Theorem \ref{Thm:B_bimod}]
We need to prove that
$B\cong H \epsilon_-H$.

According to \cite[Proposition 6.1.5]{Haiman_CDM}, the $\C[\h\oplus \h^*]$-module
$\Gamma(\Pro\otimes \Ocal(1))$
is identified with the ideal $J$ in
$\C[\h\oplus \h^*]$ generated
by the $\operatorname{sgn}$-invariant polynomials.
Therefore we get a graded bimodule homomorphism
$$B\rightarrow \Hom_{\C[Y]}
(\C[\h\oplus \h^*], J)$$
-- from the global sections of the sheaf Hom to the Hom between the global sections.
Composing this with the inclusion $J\hookrightarrow \C[\h\oplus \h^*]$ we get  a bimodule homomorphism
\begin{equation}\label{eq:bimodule_homom}
B\rightarrow \End_{\C[\underline{x},\underline{y}]^{S_n}}
(\C[\underline{x},\underline{y}])=H.
\end{equation}
For the latter equality, see, e.g., \cite[Theorem 1.5]{EG}.
By the construction,
$B$ is torsion free as a module over $\C[Y]$. Also over the localization
$\C[\h^*]^{reg}$ of $\C[\h^*]$ at the Vandermond determinant,
(\ref{eq:bimodule_homom}) becomes an isomorphism. We conclude that (\ref{eq:bimodule_homom}) is injective.
So $B$ is a two-sided ideal in $H$.

It follows from Theorem \ref{Thm:B_structure}, that the $\C[\h\oplus \h^*]$-bimodule $B$ is generated by
a single element in degree $0$ that is sign invariant.
The corresponding element in $\Gamma(\Pro\otimes \Pro)$ is the image of
the identity under the inclusion $\C[Y]$ arising from the direct summand
$\Ocal$ of $\Pro\otimes\Pro$. So the element in
 $B=\Hom_{\Ocal_X}(\Pro,\Pro(1))$ we need  is described as the decomposition
$\Pro\twoheadrightarrow \mathcal{O}(1)
\hookrightarrow \Pro(1)$, where the first map
is $\epsilon_-$ and the second is the inclusion of $\Ocal(1)$ into $\Pro$. The image of this
element in $H$
is $\epsilon_-$. We conclude that
$B\cong H\epsilon_-H$.
\end{proof}

\begin{Rem}\label{Rem:d_generalization}
By \cite[Proposition 6.1.5]{Haiman_CDM}, we have $\Gamma(\Pro\otimes \mathcal{O}(d))=J^d$. For the same reason as in the proof of the proposition, we get $B_d\hookrightarrow \Hom_{\C[Y]}(\C[\h\oplus \h^*],J^d)$.
\end{Rem}

\subsection{Proof of Theorem \ref{center:thm}}\label{SS_proof_center}
In the proof we will need the following three lemmas.
\begin{Lem}\label{coho:duality}
For any $\g$,  we have a $W$-equivariant identification (for the right action)
$$\left(H^*(\Fl_e)^{\Lambda}\right)^*\cong
B\otimes_{\C[\h\oplus \h^*]}\C_0.$$
\end{Lem}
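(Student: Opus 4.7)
The plan is to deduce the lemma from Theorem \ref{Thm:iso} at $d=1$ by specializing to the fiber at the origin and invoking duality between cohomology and Borel--Moore homology.

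First, apply Theorem \ref{Thm:iso} with $d=1$ to obtain an $H^\wedge$-bilinear isomorphism $B^\wedge \xrightarrow{\sim} H^{BM}_T(\Fl_e)^\wedge$. Tensor both sides from the right with the one-dimensional module $\C_0$ over $\C[\h\oplus\h^*]$; under the identification $\C[\h^*]^{\wedge_0}\cong \C[T^\vee]^{\wedge_1}$ of Lemma \ref{Lem:trig_rat_iso} this corresponds to tensoring with $\C_{(0,1)}$ over $\C[\h\oplus T^\vee]$ on the right hand side. Since the residue field $\C_0$ at the point where the completion is centered kills the completion (tensoring with the residue field factors through the uncompleted module), we obtain
$$B\otimes_{\C[\h\oplus\h^*]}\C_0 \;\cong\; H^{BM}_T(\Fl_e)\otimes_{\C[\h\oplus T^\vee]}\C_{(0,1)}.$$
The next step simplifies the right hand side.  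By (3) of Proposition \ref{GKM:results}, $H^{BM}_T(\Fl_e)$ is flat over $\C[\h]=H^*_T(pt)$, so $H^{BM}_T(\Fl_e)\otimes_{\C[\h]}\C = H^{BM}(\Fl_e)$. Tensoring further with $\C_1$ over $\C[T^\vee]=\C[\Lambda]$ amounts to taking coinvariants under the ECM action of $\Lambda\subset\widetilde{W}$, yielding
$$H^{BM}_T(\Fl_e)\otimes_{\C[\h\oplus T^\vee]}\C_{(0,1)} \;\cong\; H^{BM}(\Fl_e)_\Lambda.$$

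It remains to identify $H^{BM}(\Fl_e)_\Lambda$ with $(H^*(\Fl_e)^\Lambda)^*$. Using the affine paving of $\Fl_e$ from \cite[Theorem 0.2]{GKM1} (cited in the proof of (2) of Proposition \ref{GKM:results}), both $H^*(\Fl_e)$ and $H^{BM}(\Fl_e)$ are finite-dimensional, concentrated in even degrees, and canonically dual to each other via the natural pairing. Now $\Lambda\subset T(\mathcal{K})$ acts on $\Fl_e$ by left multiplication, which sends the affine cell indexed by $w\in\widetilde{W}$ to the one indexed by $\lambda w$; one checks that this geometric action coincides, under the localization embedding $\iota$, with the combinatorial ECM action of $\Lambda$ described in Lemma \ref{Lem:ECM_action_formulas}. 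Consequently, $\Lambda$ acts on $H^*(\Fl_e)$ and $H^{BM}(\Fl_e)$ by dual permutations of the two bases. For any finite-dimensional permutation representation $V$ of $\Lambda$, the natural map $(V^*)_\Lambda \to (V^\Lambda)^*$ is a canonical isomorphism (both sides have dimension equal to the number of $\Lambda$-orbits), so applying this with $V=H^*(\Fl_e)$ completes the identification as vector spaces.

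For the $W$-equivariance, under Theorem \ref{Thm:iso} the right $W$-action on $B$ coming from $W\subset H$ corresponds to the right $W$-action on $H^{BM}_T(\Fl_e)$ via ECM. Since $\Lambda$ is normal in $\widetilde{W}=W\ltimes\Lambda$ with quotient $W$, this action descends through the specializations above to a right $W$-action on $H^{BM}(\Fl_e)_\Lambda$. By the definition recalled in Section \ref{SS_appl_statements}, the $W$-action on $H^*(\Fl_e)^\Lambda$ is dual to the ECM action on $H^{BM}_T(\Fl_e)$, so the identification of the previous paragraph is automatically $W$-equivariant. The main obstacle is tracking the entire web of completions, specializations and dualities simultaneously; in particular, one must carefully verify that the purely combinatorial ECM $\Lambda$-action on the localized Borel--Moore homology agrees with the geometric left-translation action so that the permutation-representation argument applies — once this is established the remaining identifications are routine bookkeeping.
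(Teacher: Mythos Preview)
Your overall strategy matches the paper's: apply Theorem~\ref{Thm:iso}, specialize at the origin, and invoke the duality between cohomology and Borel--Moore homology. However, there is a genuine error in the duality step. You assert that $H^*(\Fl_e)$ and $H^{BM}(\Fl_e)$ are finite-dimensional; this is false. The affine Springer fiber $\Fl_e=\Fl_{e_1}$ is an ind-scheme paved by infinitely many affine cells (one for each element of $\widetilde{W}$), so both spaces are infinite-dimensional. Consequently your identification $H^{BM}(\Fl_e)=H^*(\Fl_e)^*$ does not hold: for ind-schemes the valid duality (Remark~\ref{Rem:homology_duality}) goes the \emph{other} way, $H^*(\Fl_e)\cong H^{BM}(\Fl_e)^*$. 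Your permutation-representation argument also fails as stated: the ECM $\Lambda$-action does not literally permute the Schubert cells, since left multiplication by $t^\lambda$ does not preserve the Iwahori stratification.

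The repair is exactly what the paper does and requires neither finite-dimensionality of $H^{BM}(\Fl_e)$ nor any permutation structure. From $H^*(\Fl_e)\cong H^{BM}(\Fl_e)^*$ one obtains
\[
H^*(\Fl_e)^\Lambda\;\cong\;(H^{BM}(\Fl_e)^*)^\Lambda\;\cong\;(H^{BM}(\Fl_e)_\Lambda)^*,
\]
using only the general identity $(M^*)^G=(M_G)^*$, which holds for arbitrary representations. At this point the first half of your argument already shows $H^{BM}(\Fl_e)_\Lambda\cong B\otimes_{\C[\h\oplus\h^*]}\C_0$, and the latter is finite-dimensional because $B$ is a finitely generated $\C[\h\oplus\h^*]$-module. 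Hence one may safely dualize once more to conclude.
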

\begin{proof}
Recall, Theorem \ref{Thm:iso}, that we have an $H^\wedge$-bimodule isomorphism $H^{BM}_T(\Fl_e)^\wedge\cong
B^\wedge$.
Also $H^{BM}_T(\Fl_e)/ H^{BM}_T(\Fl_e)\h^* \cong H^{BM}(\Fl_e)$. Next, we have an identification
$H^*(\Fl_e)\cong H^{BM}(\Fl_e)^*$, this was discussed in Section \ref{SS_BM_general} (in the equivariant setting). This identification is $\widetilde{W}$-equivariant. It follows that $H^*(\Fl_e)^{\Lambda}
\cong \left(H^{BM}(\Fl_e)_\Lambda\right)^*$, where the subscript $\Lambda$ indicates taking the  coinvariants. Note that
$$H^{BM}(\Fl_e)_\Lambda\xrightarrow{\sim} H^{BM}(\Fl_e)^\wedge/H^{BM}(\Fl_e)^\wedge\h.$$
The claim of the lemma follows.
\end{proof}

Part (1) of Theorem \ref{center:thm} follows from Lemma \ref{coho:duality} combined with Theorem \ref{Thm:dim}.

In the remainder of this section we will prove (2) of Theorem \ref{center:thm}.
Recall that $G=\operatorname{SL}_n$ and hence
$G^\vee=\operatorname{PGL}_n$.

\begin{Lem}\label{Lem:center_bound}
Let $\mu$ be a highest weight of $G^\vee$ in the center of $\mathfrak{u}_\epsilon$. Let $\ell$ denote the order of $\epsilon$, recall that it is an odd number. Then we have $\ell \mu\leqslant 2(\ell-1)\rho$ in the usual order on the dominant weights.
\end{Lem}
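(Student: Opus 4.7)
\emph{Plan.} Let $v\in Z$ be a nonzero $G^\vee$-highest weight vector of weight $\mu$. My plan combines two constraints on $\mu$: a \emph{congruence} coming from centrality in $\mathfrak{u}_\epsilon$, and a \emph{support} condition on the PBW expansion of $v$ coming from the $G^\vee$-highest-weight property together with the divided-power structure of $U_\epsilon^{res}$.

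First, centrality alone gives $(\alpha_i,\mu)\in \ell\Z$ for every simple $\alpha_i$: indeed $v$ commutes with every $K_i\in \mathfrak{u}_\epsilon$, and the standard relation $K_i v K_i^{-1}=\epsilon^{(\alpha_i,\mu)}v$ forces $\epsilon^{(\alpha_i,\mu)}=1$. Equivalently, $\mu\in \ell X$, with $X$ the weight lattice of $G^\vee$. Combined with the naive PBW upper bound $\mu\leqslant 2(\ell-1)\rho$ (since the weights in $\mathfrak{u}_\epsilon$ under the $T^\vee$-adjoint action lie in $\sum_{\alpha>0}[-(\ell-1),\ell-1]\alpha$), this already restricts $\mu$ to a finite list of candidates.

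To eliminate all candidates that violate $\ell\mu\leqslant 2(\ell-1)\rho$, I would next invoke the higher highest-weight condition $\operatorname{ad}(E_i^{(\ell)})(v)=0$ in $U_\epsilon^{res}$ (the conditions $\operatorname{ad}(E_i^{(n)})(v)=0$ for $1\leqslant n<\ell$ follow automatically from centrality together with the first step, since $E_i^{(n)}\in\mathfrak{u}_\epsilon$ for $n<\ell$ and $v$ is already $T^\vee$-weight $\mu$). Expanding via the divided-power coproduct $\Delta(E_i^{(\ell)})=\sum_{j=0}^{\ell}E_i^{(\ell-j)}K_i^{j}\otimes E_i^{(j)}$ and using centrality of $v$ to collapse the interior terms, this identity reduces to a constraint on the support of $v$ in the PBW basis, enforcing a balance between the $E$-exponents $a_\alpha$ and the $F$-exponents $b_\alpha$ in each positive root direction (roughly, at each $\alpha$ we get $a_\alpha+(\ell-1-b_\alpha)\geqslant$ something coming from $\mu$, or symmetrically). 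Summing these constraints over positive roots and writing $\mu=\sum_\alpha(a_\alpha-b_\alpha)\alpha$ yields the desired inequality $\ell\mu\leqslant 2(\ell-1)\rho$.

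The hard part will be the careful expansion of the divided-power adjoint identity at the root of unity $\epsilon$: managing the $q$-binomial coefficients that degenerate at $q=\epsilon$, passing correctly between computations in $U_\epsilon^{res}$ and $\mathfrak{u}_\epsilon$, and extracting the precise factor of $\ell$ rather than a weaker bound of the form $c\mu\leqslant 2(\ell-1)\rho$ for some $c<\ell$.
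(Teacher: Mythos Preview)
Your proposal conflates two different weights. In the statement, $\mu$ is the highest weight of $v$ for the $G^\vee$-action, which by definition is obtained by letting the Lusztig form $\dot U_\epsilon$ act on $Z$ by adjunction and then factoring through the quantum Frobenius $\dot U_\epsilon\twoheadrightarrow U(\g^\vee)$. The relation $K_i v K_i^{-1}=\epsilon^{(\alpha_i,\nu)}v$ that you write down computes a different thing: the internal weight $\nu$ of $v$ as an element of $\mathfrak{u}_\epsilon$ under the adjoint torus. These two are \emph{not} equal; the quantum Frobenius inflates weights by $\ell$, so in fact $\nu=\ell\mu$. Once you recognize this, the lemma is immediate: the PBW bound you already wrote, namely $\nu\leqslant 2(\ell-1)\rho$ (since every weight occurring in $\mathfrak{u}_\epsilon$ lies below $(\ell-1)\sum_{\alpha>0}\alpha$), becomes exactly $\ell\mu\leqslant 2(\ell-1)\rho$. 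This is the paper's proof in full.

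Your congruence argument $(\alpha_i,\nu)\in\ell\Z$ is correct but redundant (it is automatic from $\nu=\ell\mu$), and your proposed second phase---expanding $\operatorname{ad}(E_i^{(\ell)})v=0$ in the PBW basis and summing over positive roots---is unnecessary. It is also not clear it would work as sketched: the ``balance between $a_\alpha$ and $b_\alpha$'' you allude to is not made precise, and the degeneration of $q$-binomials at $\epsilon$ that you flag as the hard part is a genuine obstacle to turning the coproduct expansion into a clean inequality. The missing idea is simply to remember how the $G^\vee$-action on $Z$ is defined.
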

\begin{proof}
    We note that $2(\ell-1)\rho=(\ell-1)\sum_{\alpha>0}\alpha$ is the maximal weight in $\mathfrak{u}_\epsilon$. The action of the Lusztig form $\dot{U}_\epsilon$ on $Z$ factors through the quantum Frobenius epimorphism to give an action of $G^\vee$. The pullback inflates the weights $\ell$ times. This gives the required inequality.
\end{proof}

\begin{Lem}\label{Lem:wt_triv_action}
Let $V$ be an irreducible $\operatorname{PGL}_n$-module with the following property: the action of $S_n$ on the weight zero subspace, $V_0$, is trivial. Then $V\cong S^{2kn}(\C^n)$ or
$S^{2kn}(\C^n)^*$ for some $k\in \Z_{\geqslant 0}$.
\end{Lem}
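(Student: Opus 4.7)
Irreducible $\operatorname{PGL}_n$-modules correspond to dominant $\operatorname{SL}_n$-weights in the root lattice; equivalently, to partitions $\lambda=(\lambda_1\ge\cdots\ge\lambda_n\ge 0)$ with $n\mid|\lambda|$. Writing $k=|\lambda|/n$, and realizing the corresponding $\operatorname{PGL}_n$-module as $V_\lambda\otimes\det^{-k}$, the zero weight space $V_\lambda[0]$ is the $(k,\ldots,k)$-weight space of $V_\lambda$ as a $\operatorname{GL}_n$-module, of dimension $K_{\lambda,(k^n)}$. The Weyl group $W=S_n$ acts via any lift to $N_{\operatorname{PGL}_n}(T)$; since two lifts differ by a torus element, which acts trivially on $V_\lambda[0]$, I may use the images of the usual permutation matrices.

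The plan is in two steps. First I would verify that the two displayed families satisfy the hypothesis. For $V=S^{2kn}(\C^n)\otimes\det^{-2k}$ the zero weight space is one-dimensional, spanned by $v:=e_1^{2k}\cdots e_n^{2k}$. The permutation matrix $P_{ij}$ of a transposition swaps the equal monomial factors $e_i^{2k},e_j^{2k}$ without sign and contributes a factor $\det(P_{ij})^{-2k}=(-1)^{-2k}=1$ from the $\det$-twist, so its total action on $v$ is trivial; the dual family is handled by the symmetric argument applied to $(\C^n)^*$. The same computation for odd multiples $(2k+1)n$ yields the sign character on the one-dimensional zero weight space, confirming that the parity in the exponent is necessary.

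For the converse I would compute $\operatorname{tr}(s\mid V_\lambda[0])$ for a simple transposition $s=(1,2)$ using the integral formula $\operatorname{tr}(s\mid V_\lambda[0])=\int_T\chi_{V_\lambda}(t\tilde s)\,dt$. With $\tilde s=\widetilde{P}_{12}\in\operatorname{SL}_n$ sending $e_1\mapsto -e_2$, $e_2\mapsto e_1$ and fixing $e_3,\ldots,e_n$, the matrix $t\widetilde{P}_{12}$ has eigenvalues $(iu,-iu,t_3,\ldots,t_n)$ with $u=\sqrt{t_1 t_2}$, so $\operatorname{tr}(s\mid V_\lambda[0])$ equals the coefficient of $t_1^k\cdots t_n^k$ in $s_\lambda(iu,-iu,t_3,\ldots,t_n)$. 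Combining the two-variable Schur specialization
\[
s_{(a,b)}(iu,-iu)=\tfrac{i^{a+b}}{2}\bigl((-1)^a+(-1)^b\bigr)u^{a+b}
\]
(which vanishes unless $a\equiv b\pmod 2$) with the branching identity $s_\lambda(x_1,x_2,x_3,\ldots)=\sum_{(a,b)\subset\lambda}s_{(a,b)}(x_1,x_2)\,s_{\lambda/(a,b)}(x_3,\ldots)$ rewrites the trace as a signed sum of Kostka numbers $K_{\lambda,(a,2k-a,k,\ldots,k)}$ weighted by $(-1)^a$. Triviality of the $W$-action on $V_\lambda[0]$ then requires this signed sum to equal the unsigned $K_{\lambda,(k^n)}$, with the analogous identity for every simple reflection.

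The main obstacle is extracting from this system of identities that $\lambda$ must be either the single row $(2kn,0,\ldots,0)$ or the single column $(2kn,\ldots,2kn,0)$. I plan to proceed by a branching argument to $\mathfrak{sl}_2$-triples: decomposing $V_\lambda|_{\mathfrak{sl}_2^{\alpha_i}}=\bigoplus_k V_k^{\oplus m_k}$ along each simple root $\alpha_i$ and using that the Weyl reflection of $\operatorname{SL}_2$ acts on $V_k[0]$ by $(-1)^{k/2}$, triviality of $(i,i+1)$ on $V_\lambda[0]$ forces the image of $V_\lambda[0]$ inside $\bigoplus_k V_k[0]^{\oplus m_k}$ to lie in the summands with $k\equiv 0\pmod 4$. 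Iterating these branching constraints over all simple roots, together with highest-weight analysis controlling which $V_k$-isotypics can carry zero weight vectors, should collapse $\lambda$ to one of the two claimed shapes; step one then selects the even multiple.
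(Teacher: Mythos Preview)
Your verification that the two families satisfy the hypothesis is fine, and your observation about the $\det$-twist producing the sign character in the odd case is exactly right. The gap is in the converse. You set up two machines --- a character/Kostka identity for $\operatorname{tr}(s\mid V_\lambda[0])$ and an $\mathfrak{sl}_2$-restriction argument --- but you do not run either of them to a conclusion; the phrases ``the main obstacle is extracting\ldots'' and ``should collapse $\lambda$'' are honest, but they mark precisely the point where the proof is missing. In particular, the $\mathfrak{sl}_2^{\alpha_i}$-constraint you state (that $V_\lambda[0]$ lands in the $k\equiv 0\pmod 4$ part of the string decomposition) is just a reformulation of ``$s_i$ acts by $+1$ on $V_\lambda[0]$''; it does not by itself say anything about the global shape of $\lambda$, and you give no mechanism for propagating these local constraints across the Dynkin diagram to force a one-row or one-column partition.

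The paper's argument is quite different and avoids both character computations and $\mathfrak{sl}_2$-strings. It proceeds by induction on $n$ via the $\operatorname{GL}_{n-1}\subset\operatorname{GL}_n$ branching rule: a $\operatorname{GL}_{n-1}$-constituent $V_\lambda$ of $V_\mu$ meets the $\operatorname{PGL}_n$ zero weight space iff $\lambda$ interlaces $\mu$ and has the correct average. The inductive hypothesis then says each such $\lambda$ is (up to $\det$-twist) a one-row or one-column partition for $\operatorname{GL}_{n-1}$. If the set $I=\{i:\mu_i>\mu_{i+1}\}$ has more than one element, one can perturb an admissible $\lambda$ by moving a box between two positions in $I$ while preserving both the interlacing and the average; the perturbed $\lambda$ is no longer of one-row/one-column type, contradicting the induction hypothesis. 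Hence $|I|=1$, and a short computation with the average condition pins $I$ to $\{1\}$ or $\{n-1\}$, giving the result. If you want to salvage your approach, you would need to replace the ``should collapse'' step with a comparably sharp combinatorial argument; as written, the proposal stops short of one.
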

\begin{proof}
In what follows it will be convenient to view $V$ as a representation of $\operatorname{GL}_n$. Our proof of the lemma is by induction on $n$.

The base is $n=2$, where our claim is easy. Now suppose it is proved for $n-1$, we are going to prove it for $n$. Let $\mu=(\mu_1,\ldots,\mu_n)$ be the highest weight of $V$.  The $\operatorname{GL}_{n-1}$-module with highest weight $\lambda=(\lambda_1,\ldots,\lambda_{n-1})$ occurs in the restriction of $V$ if and only if
\begin{equation}\label{eq:wt_ineq}
    \mu_1\geqslant \lambda_1\geqslant \mu_2\geqslant\ldots \geqslant \lambda_{n-1}\geqslant \mu_n.
\end{equation}
And this $\operatorname{GL}_{n-1}$-module intersects the zero weight space for $\operatorname{PGL}_n$ if and only if
\begin{equation}\label{eq:mean}
    \frac{\lambda_1+\ldots+\lambda_{n-1}}{n-1}=
    \frac{\mu_1+\ldots+\mu_n}{n}.
\end{equation}
Clearly, at least one $\lambda$ satisfying (\ref{eq:wt_ineq})
and (\ref{eq:mean}) exists.

Let $I$ be the set of indices $i\in \{1,\ldots,n-1\}$ such that $\mu_i>\mu_{i+1}$. Assume that $|I|>1$. The claim that a solution $\lambda$ to (\ref{eq:wt_ineq}) and (\ref{eq:mean}) satisfies the induction assumption easily implies  that one of the following possibilities holds:
\begin{enumerate}
    \item $\lambda_i=\mu_i$ for all $i\in I$
    \item $\lambda_i=\mu_{i+1}$ for all $i\in I$.
\end{enumerate}
Indeed, otherwise we can increase one component of $\lambda$ by $1$
and decrease another by $1$ so that (\ref{eq:wt_ineq}) continues to hold. But if $\lambda$ is the highest weight of $S^{2k(n-1)}(\C^{n-1})$ or its dual (up to a twist with a power of the determinant), then the modification is not of that form.

Replacing $V$ with $V^*$ if necessary we can assume that (1) holds. Also if $i\not\in I$, then $\lambda_i=\mu_i(=\mu_{i+1})$. So we have $\lambda_i=\mu_i$
for all $i=1,\ldots,n-1$. From (\ref{eq:mean}) we deduce $$\mu_n=\frac{\mu_1+\ldots+\mu_{n-1}}{n-1}.$$
Together with $\mu_1\geqslant\ldots\geqslant \mu_n$, this implies $\mu_1=\ldots=\mu_n$, a contradiction with $|I|>1$.

So $|I|=1$ meaning that $\mu$ has two different entries. Since $\lambda$ is the highest weight of $S^{2k(n-1)}(\C^{n-1})$ or its dual, this implies that $I=\{1\}$ or $I=\{n-1\}$, which, in turn, easily implies the claim of the lemma.
\end{proof}

\begin{proof}[Proof of (2) of Theorem \ref{center:thm}]
    Recall that we have a $W$-equivariant isomorphism $H^*(\Fl_e)^\Lambda\xrightarrow{\sim} Z^{T^\vee}$ by Corollary \ref{Thm:BBASV}.
    Using Lemma \ref{coho:duality} combined with
    Proposition \ref{Prop:trivial_action}, we see that $S_n$ acts trivially on $Z^{T^\vee}$. By Lemma \ref{Lem:wt_triv_action}, all irreducible summands of the $\operatorname{PGL}_n$-module $Z$ are of the form
    $S^{2kn}(\C^n)$ or $S^{2kn}(\C^n)^*$. But for $k>0$,
    the highest weights $\mu$ of these modules do not satisfy the inequality of Lemma \ref{Lem:center_bound}. It follows that $Z$ is a trivial $\operatorname{PGL}_n$-module, implying the claim of the theorem.
\end{proof}

\section{Appendix (by P. Boixeda Alvarez, I. Losev, and O. Kivinen): Springer action on $H^{BM}_{T\times\C^\times}(\Fl_{e_d})$}
In this Appendix we include some of constructions and  proofs for Section \ref{S_BM_actions}.

\subsection{Reminder on the affine Springer action}\label{SS_affine_Springer_reminder}
In this section we recall the generalities on the affine Springer action. We use the notation from Section \ref{SS_BM_Springer}.

The action of $W^a$ was constructed in \cite[Section 5.4]{Lusztig}. 
To construct the operators corresponding to simple affine reflections we introduce certain auxiliary  spaces. For a parahoric subgroup $\mathfrak{P}$ of $G(\mathcal{K})$ containing $\mathfrak{B}$, we can consider the partial affine flag variety
 $$\Fl^\mathfrak{P}=G(\mathcal{K})/\mathfrak{P}.$$
 Using this space, we can introduce the affine Springer fibers in the partial flag variety $\Fl^\mathfrak{P}$:
 $$\Fl^\mathfrak{P}_{e_d}\coloneqq \{g\mathfrak{P}\in\Fl|Ad(g)^{-1}e_d\in \text{Lie}(\mathfrak{P})\}.$$

 Now we introduce certain stacks. To do this we need some notation. Let $L$ be the standard Levi subgroup of $\mathfrak{P}$. Let  $B_L$ denote the image of $\mathfrak{B}$ in $L$, this is a Borel subgroup of $L$.
 We write $\mathfrak{l},\mathfrak{b}_L$ for the Lie algebras of these groups.

 With this notation, we have a Cartesian diagram
\begin{equation}\label{eq:Springer_diagram}
\begin{tikzcd}
 	\Fl_{e_d} \arrow[r,"q_1"] \arrow[d,"\pi_2"]
 	& \mathfrak{b}_L/B_L \arrow[d,"\pi_1"] \\
 	\Fl_{e_d}^\mathfrak{P} \arrow[r,"q_2"]
 	& \mathfrak{l}/L
 \end{tikzcd}
 \end{equation}

The map $\Fl^\mathfrak{P}_{e_d}\rightarrow \mathfrak{l}/L$ sends $g\mathfrak{P}$ to the image of $Ad(g)^{-1}e_d$ and  $\operatorname{Lie}(\mathfrak{P})\rightarrow \mathfrak{l}$. The map  $\Fl_{e_d}\rightarrow \mathfrak{b}_L/B_L$
is defined in  a similar way.

Note that we have the following canonical isomorphisms of objects in the $T\times \C^\times$-equivariant derived category:
$$(\pi_2)_*(\omega_{\Fl_{e_d}})\xrightarrow{\sim}(\pi_2)_*(q_1^!(\C_{\mathfrak{b}_L/B_L}))\xrightarrow{\sim}q_2^!(\pi_1)_*(\C_{\mathfrak{b}_L/B_L}). $$

Using these isomorphisms we can define the action of $W^a$ on $H^{BM}_{T\times\C^\times}(\Fl_{e_d})$, see \cite{Lusztig} and \cite[Construction 7.1.3]{OY}
Namely, fix a simple affine reflection $s\in W^a$.
If $s$ is a reflection in the Weyl group $W_L$ of $L$, then we can define an action of $s$ on $(\pi_1)_*(\C_{\mathfrak{b}_L/B_L})$ via the usual finite dimensional Springer correspondence.
This gives rise to an action of $s$ on  \begin{equation}\label{eq:sheaf_equality}(\pi_2)_*(\omega_{\Fl_{e_d}})=q_2^!(\pi_1)_*(\C_{\mathfrak{b}_L/B_L}).
\end{equation}
Since $q_1,q_2$ are $T\times \C^\times$-equivariant, we get an action of $s$ on $H^{BM}_{T\times \C^\times}(\Fl_{e_d})$
(via pushforward of the left hand side of (\ref{eq:sheaf_equality}) to the point).
This action of $s$ is independent on the choice of $L$.
To check that the actions of the simple affine reflections satisfy the braid relations, it is enough to consider two simple reflections at a time, which reduces to the finite case, because any two simple reflections lie in $W_L$ for some choice of $\mathfrak{P}$.



 To extend the $W^a$-action on $H^{BM}_{T\times \C^\times}(\Fl_{e_d})$  to an action of $\widetilde{W}$, recall that $\widetilde{W}=(\Lambda/\Lambda_0)\ltimes W^a$. We note that $\Lambda/\Lambda_0$ acts on $\Fl$.
 This action is constructed as follows. Take a lift of $\pi\in \Lambda/\Lambda_0\subset \widetilde{W}$ to $\dot{\pi}$ in the normalizer of $ T(\mathcal{K})$ and define the map $\Fl\rightarrow\Fl$ by $\g\mathfrak{B}\rightarrow g\dot{\pi}\mathfrak{B}$. This is well defined as the lift of any element of $(\Lambda/\Lambda_0)\subset \widetilde{W}$ normalizes $\mathfrak{B}$ and the map is independent of the chosen lift.
 From the definition of $\Fl_{e_d}$, see, e.g., (\ref{eq:Springer_fiber}),  it follows that this action preserves $\Fl_{e_d}$.  So we get an action of $\Lambda/\Lambda_0$ on $H^{BM}_{T\times\C^\times}(\Fl_{e_d})$.

 Recall that we write $\Ring$ for $H^*_{T\times \C^\times}(pt)$ and $\Field$ for $Frac(\Ring)$.

 \begin{Lem}\label{Springer-action.basics}
 \begin{enumerate}
     \item The actions of $W^a,\Lambda/\Lambda_0$ give an action of the affine Weyl group $\widetilde{W}$ on $H^{BM}_{T\times\C^\times}(\Fl_{e_d})$.
     \item This $\widetilde{W}$-action is by $\Ring$-linear automorphisms.
     \item The action of $\widetilde{W}$ preserves the homological grading on $\Ring$.
 \end{enumerate}
 \end{Lem}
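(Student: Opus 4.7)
The plan is to address the three claims in the order (2), (3), (1), since (2) and (3) follow almost formally from the construction while (1) requires a genuine compatibility check.

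First I will establish $\Ring$-linearity in (2). For $W^a$, the action is built entirely out of $T\times\C^\times$-equivariant sheaf operations---smooth pullback $q_2^!$, proper pushforward $(\pi_2)_*$, and an endomorphism of the finite Springer sheaf $(\pi_1)_*\C_{\mathfrak{b}_L/B_L}$---and each of these induces an $\Ring$-linear map on equivariant BM-homology by functoriality. For $\Lambda/\Lambda_0$, the point is that right multiplication $R_{\dot\pi}$ by a lift $\dot\pi$ of $\pi$ in the normalizer of $T(\mathcal{K})$ is a $T\times\C^\times$-equivariant self-map of $\Fl_{e_d}$: it descends to $\Fl$ because $\dot\pi$ normalizes $\mathfrak{B}$; it preserves $\Fl_{e_d}$ because $\operatorname{Ad}(\dot\pi)^{-1}$ preserves $\operatorname{Lie}(\mathfrak{B})$; it commutes with the left $T$-action trivially; and if we take $\dot\pi=t^\lambda$ then $\phi_c(\dot\pi)\dot\pi^{-1}=c^\lambda\in T\subset\mathfrak{B}$, so $R_{\dot\pi}$ commutes with the loop rotation $\phi_c$ modulo $\mathfrak{B}$. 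Hence the induced action on $H^{BM}_{T\times\C^\times}(\Fl_{e_d})$ is $\Ring$-linear.

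Claim (3) is then immediate. The finite Springer automorphism acts on $(\pi_1)_*\C_{\mathfrak{b}_L/B_L}$ without a cohomological shift, so after applying $q_2^!$ and $(\pi_2)_*$ the resulting automorphism of $(\pi_2)_*\omega_{\Fl_{e_d}}$ preserves the cohomological grading, and therefore preserves the homological grading on $H^{BM}_{T\times\C^\times}(\Fl_{e_d})$; the $\Lambda/\Lambda_0$-action is induced by an honest isomorphism of varieties and hence trivially preserves grading.

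Finally, for (1), since $\widetilde{W}=(\Lambda/\Lambda_0)\ltimes W^a$, it suffices to show that for each $\pi\in\Lambda/\Lambda_0$ and each simple affine reflection $s$, the operator $R_{\dot\pi}\circ s\circ R_{\dot\pi}^{-1}$ on $H^{BM}_{T\times\C^\times}(\Fl_{e_d})$ agrees with the action of ${}^\pi s\in W^a$. Conjugation by $\dot\pi$ carries the standard parahoric $\mathfrak{P}$ used to define $s$ onto the standard parahoric $\mathfrak{P}'$ used to define ${}^\pi s$, with Levi subgroups $L,L'$ related by the same conjugation; this identifies the Springer diagrams (\ref{eq:Springer_diagram}) for $\mathfrak{P}$ and $\mathfrak{P}'$ and intertwines the two finite Springer automorphisms. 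The main obstacle is precisely this bookkeeping step: one has to verify, at the level of the finite Springer correspondence, that the automorphism transported through the conjugation by $\dot\pi$ matches the one used in the definition of ${}^\pi s$, with correct labelling of the simple reflection in the Weyl group of $L'$. Once that identification is established, the remaining commutativity is purely functorial in the equivariant derived category.
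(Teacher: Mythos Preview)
Your arguments for (2) and (3) are correct and match the paper's approach, which simply records these as ``direct consequences of the construction.'' One small correction in (2): a length-zero element $\pi\in\Lambda/\Lambda_0\subset\widetilde{W}$ is in general not a pure translation but has the form $w\,t^\lambda$ with $w\in W$ nontrivial, so the lift ``$\dot\pi=t^\lambda$'' is not always available. The equivariance check still goes through with the correct lift $\dot{w}\,t^\lambda$, since $\phi_c(\dot{w}\,t^\lambda)=(w\lambda)(c)\cdot\dot{w}\,t^\lambda$ and $(w\lambda)(c)\in T\subset\mathfrak{B}$.

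For (1), the paper takes a different route: it does not verify the semidirect product relation directly but simply cites \cite[Theorem~3.3.3]{Yun} and \cite[Theorem~7.1.5]{OY}, where the compatibility of the $W^a$- and $\Lambda/\Lambda_0$-actions is established in the framework of global/affine Springer theory. Your direct approach via conjugation of parahorics is sound in outline, but the ``bookkeeping step'' you flag---identifying the transported finite Springer automorphism with the one defining ${}^\pi s$---is precisely the nontrivial content supplied by those references, and requires more than purely functorial bookkeeping (one must match normalizations of the Springer correspondence under the outer automorphism induced by $\pi$). So your sketch is correct but incomplete at exactly the point where the paper defers to the literature.
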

 \begin{proof}
 (1) follows from \cite[Theorem 3.3.3]{Yun} or \cite[Theorem 7.1.5]{OY}.

     (2) is a direct consequence of the construction.

     (3) follows from the construction of the action in \cite[Construction 7.13, 7.14]{OY}.

 \end{proof}

 \begin{Rem}\label{connected:components}
In this remark we recall a classical description of the connected components
of $\Fl, \Fl_{e_d}$.

The connected components of the affine flag variety $\Fl$ are  in a natural  bijection with $\pi_1(G)$. Namely, recall the decomposition $\widetilde{W}=(\Lambda/\Lambda_0)\ltimes W^a$. The union of Schubert cells corresponding to the left $W^a$-orbits in $\widetilde{W}$ give the connected components. The group $\Lambda/\Lambda_0$ acts on $\Fl$ as  recalled above  in this section.  This action induces a simply transitive action on the set of components.

Let $\widetilde{G}$ be the simply connected cover of the derived subgroup $(G,G)\subset G$. Its extended affine Weyl group is $W^a$. In fact, $\Fl_{\widetilde{G}}$ is isomorphic to any of the connected components of $\Fl_{G}$. To see this note that we have a natural map $\Fl_{\widetilde{G}}\rightarrow\Fl_G$. This map is injective because the kernel of $\widetilde{G}\rightarrow G$ is contained in the center and thus contained in any Iwahori subgroup. The image contains precisely the $T$-fixed points given by $W^a$. The $\mathfrak{B}$-orbits coincide with the orbits of the pro-unipotent radical of $\mathfrak{B}$. Thus we see the image is precisely one connected component of $\Fl_G$. Since all connected components are isomorphic via the $\Lambda/\Lambda_0$-action the result follows.

 Moreover, the action of $\Lambda/\Lambda_0$ preserves $\Fl_{e_d}$. The embedding $\Fl_{\widetilde{G}}\hookrightarrow \Fl_{G}$ restricts to an embedding of the Springer fibers associated to $e_d$. This embedding realizes the Springer fiber for $\widetilde{G}$ as a connected component of the Springer fiber for $G$. It follows that every connected component of $\Fl_{e_d}$ for $G$ is identified with the Springer fiber of $e_d$ for $\widetilde{G}$.
\end{Rem}

\subsection{Springer action vs localization}\label{SS_CS_localiz}


The goal of this section is to prove
Lemma \ref{Lem:Springer_fixed_pts}, which is the hard part of Lemma \ref{Lem:CS_action_fixed_pts}.

Recall that we write
$\iota$ for the localization homomorphism
$$H^{BM}_{T\times\C^\times}(\Fl_{e_d})\rightarrow
\bigoplus_{\widetilde{W}}\Field.$$

In the proof we will need an explicit description of $\operatorname{im}\iota$ for $SL_2$.
We identify $\mathbb{Z}$ with $\widetilde{W}_{SL_2}$ via $2m\mapsto t^{m\alpha}$, $2m+1\mapsto t^{m\alpha}s$, where $\alpha$ is the finite simple root of $SL_2$ and $s$ the corresponding simple reflection. Let $y$ be the basis element in $\h^*$
corresponding to the simple root. So $\Ring=\C[y,\hbar]$.

Pick an element $r\in \{0,\ldots,d\}$. For $k,m\in \Z$ set
\begin{equation}\label{eq:f_element}
    f_k^{r,(m)}=\prod_{i=1}^{r}(y+(k+m+i-1)\hbar).
\end{equation}
We then define elements $$b^r_k=(b^r_{k,\ell})_{\ell\in \Z}\in \bigoplus_{\Z} \C(y,\hbar)$$ for $r,k$ as above as follows. For $r=0$, we set $b^0_{k,\ell}:=\delta_{k,\ell}$, the Kroneker delta.  For $r\in \{1,\ldots,d\}$, define $m\in \Z, \epsilon\in \{0,1\}$ by $\ell=k+2m+\epsilon$
and set
\begin{equation}\label{eq:b_equation}
    b^r_{k,\ell}:=(-1)^{m+\epsilon}{r\choose m}\left(f_{k}^{r,(m)}\right)^{-1}.
\end{equation}

\begin{Lem}\label{Lem:SL_2_basis}
Let $G=SL_2$. Then
		 $\iota(H^{BM}_{T\times \C^\times}(\Fl_{e_d}))\subset \bigoplus_{\Z}\C(y,\hbar)$ has a basis over $\C[y,\hbar]$ given by $b_k^r$ where
		 \begin{itemize}
		 \item
		 either  $k=0,1$ and $r=0,...d-1$
		 \item or $r=d$ and $k\in \mathbb{Z}$.
		 \end{itemize}
	\end{Lem}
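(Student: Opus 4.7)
I would apply Corollary \ref{Cor:GKM_Springer} to $G=SL_2$, where $\widetilde{W}=\Z$ (via the identification $2m\leftrightarrow t^{m\alpha^\vee}$, $2m+1\leftrightarrow t^{m\alpha^\vee}s$) and $R^+=\{\alpha\}$, to obtain an explicit description of $\iota(H^{BM}_{T\times\C^\times}(\Fl_{e_d}))\subset\bigoplus_\Z\C(y,\hbar)$. Using $\,^{2m}\alpha=\alpha+2m\hbar$ and $\,^{2m+1}\alpha=-\alpha-2m\hbar$, together with the computation that right multiplication by $s_{\alpha,k'}$ sends $2m\mapsto 2(m+k')+1$ and $2m+1\mapsto 2(m-k')$, a tuple $(f_\ell)$ will belong to $\iota(H^{BM}_{T\times\C^\times}(\Fl_{e_d}))$ iff at each $\ell$ it has at most simple poles along the $2d$ hyperplanes $y+j\hbar=0$ with $j\in\{\ell-d,\ldots,\ell+d-1\}$, and the residues across each such hyperplane balance between $f_\ell$ and $f_{\ell s_{\alpha,k'}}$.

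The first step is to verify conditions (i) and (ii) of Corollary \ref{Cor:GKM_Springer} for each $b_k^r$ in the proposed basis. For condition (i), the denominators in $f_k^{r,(m)}$ appearing in (\ref{eq:b_equation}) at $\ell=k+2m+\epsilon$ consist of factors $y+j\hbar$ with $j\in\{k+m,\ldots,k+m+r-1\}$, and I would check that these sit inside the allowed window $\{\ell-d,\ldots,\ell+d-1\}$ for every $(m,\epsilon)$ with ${r\choose m}\ne 0$; this is where the upper bound $r\leq d$ enters naturally, and the shape of the basis (with $k\in\{0,1\}$ for $r<d$ and $k\in\Z$ for $r=d$) will match the number of cells in an affine paving of $\Fl_{e_d}$. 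For condition (ii), the residue across the hyperplane at $\,^\ell(\alpha+k'\hbar)=0$ receives contributions from at most the two components $b_{k,\ell}^r$ and $b_{k,\ell s_{\alpha,k'}}^r$, and the cancellation reduces to a Pascal-type identity on the binomial coefficients combined with the adjacency of the shifted products $f_k^{r,(m)}$ and $f_k^{r,(m\pm 1)}$.

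The second step is to establish that the $b_k^r$ form a $\C[y,\hbar]$-basis of the image. I would prove linear independence by a leading-index argument: in any $\C[y,\hbar]$-linear relation $\sum_{k,r}p_{k,r}\,b_k^r=0$, the smallest $\ell_0$ in the combined support receives contributions only from basis elements with $k=\ell_0$, whose leading values $b_{\ell_0,\ell_0}^r=\prod_{j=0}^{r-1}(y+(\ell_0+j)\hbar)^{-1}$ have strictly distinct pole structures and hence force the coefficients to vanish one $r$ at a time. For spanning, I would proceed by descending induction on the smallest nontrivial index $\ell_0$ of a given tuple $(f_\ell)$ in the image: conditions (i) and (ii) at $\ell_0$, combined with the vanishing of $f_\ell$ for $\ell<\ell_0$, restrict $f_{\ell_0}$ to a specific $\C[y,\hbar]$-module, which I would match with the span of the leading values of the admissible $b_{\ell_0}^r$; subtracting the corresponding $\C[y,\hbar]$-combination reduces the problem to a tuple with strictly larger minimal index, and the descent terminates.

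The main obstacle will be the residue verification in condition (ii), particularly at the edges of the support of $b_k^r$ where the combinatorics is tight: the signs $(-1)^{m+\epsilon}$ and binomial coefficients ${r\choose m}$ must conspire precisely with the adjacency of the products $f_k^{r,(m)}$ to annihilate the residues at each of the $2d$ reflection hyperplanes passing through the support. A conceptually cleaner alternative would be to identify each $b_k^r$ with a nonzero multiple of the equivariant fundamental class of an explicit cell in the affine paving of $\Fl_{e_d}$ given by \cite{GKM1}, but that identification is itself nontrivial, so the direct GKM-style calculation sketched above will likely be the most tractable route.
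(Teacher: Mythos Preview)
Your proposal is close to the paper's argument in spirit, but the spanning step contains a genuine gap. You propose to span by induction on the smallest nonzero index $\ell_0$: subtract a $\C[y,\hbar]$-combination of the admissible $b_{\ell_0}^r$ to kill the $\ell_0$-entry, then repeat with the residual tuple. The problem is termination. When $\ell_0\notin\{0,1\}$, the only admissible basis element with leftmost support at $\ell_0$ is $b_{\ell_0}^d$, whose support is $[\ell_0,\ell_0+2d-1]$; subtracting a multiple of it increases the minimal index by one but can push the maximal index to $\ell_0+2d-1$. Iterating, the support window of fixed width $2d-1$ simply slides to the right forever, and nothing in your argument forces the residual to vanish. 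So ``the descent terminates'' is exactly the unproved point.

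The paper's fix is to reduce from \emph{both} ends. Using $b_k^d$ for $k$ equal to the current minimal index (when negative) and $b_k^d$ for $k$ equal to (current maximal index $-2d+1$) (when the maximum exceeds $2d-2$), one first confines the support to $[0,2d-2]$; only then does one peel off using $b_0^r,b_1^r$ for $r=0,\dots,d-1$. This two-sided reduction terminates because each step strictly shrinks the support interval. Your linear-independence argument (minimal index plus nested pole structure at that index) is different from the paper's (a custom partial order on $\Z$ in which each $b_k^r$ has a unique maximal nonzero entry) but is also correct; the only substantive issue is the spanning termination.
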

\begin{proof}
The elements $b_k^r$ are indeed in  $\operatorname{im}\iota$: condition (i) of Corollary \ref{Cor:GKM_Springer} is immediate, while condition (ii)  is straightforward.
		
		Now we check that the elements $b^r_k$ for $r,k$ as in the statement of the lemma span that $\C[y,\hbar]$-module $\operatorname{im}\iota$. Pick $(g_k)\in \operatorname{im}\iota$.
		
		 Replacing $(g_k)$ with its sum with a linear combination of the elements $b^d_k$ we can assume that $(g_k)$ is supported between $0$ and $2d-2$. To see this assume that $g_r\neq 0$ for some $r<0$ and let $k$ be the minimal such number $r$.
		 Then the entry $g_k$ can have at most the same singularities as $1/f_k^{d,(0)}$ by Corollary \ref{Cor:GKM_Springer} and so is a multiple of this. Hence we can subtract a multiple of $b_k^d$ from $(g_k)$, such that the index of the minimal non-zero entry is bigger than $k$. Thus by induction we can assume that for all negative $k$ we have $g_k=0$.
		
		 A similar argument works for non-zero entries of $(g_k)$ for $k>2d-2$. Here the inequality $k>2d-2$ comes from the fact that $b_k^d$ has support exactly between $k$ and $k+2d-1$. So, subtracting the elements $b_k^d$ for $k\geq 0$  from $(g_r)$ doesn't change the condition that $g_r=0$ for $r<0$. So we can assume that $g_k\neq 0\Rightarrow 0\leqslant k\leqslant 2d-2$.
		
		Now using $b_k^r$ for $k=0,1$ and $r=0,...d-1$ we can continue reducing the support and using conditions (i) and (ii) of Corollary \ref{Cor:GKM_Springer} to ensure the maximal entries are indeed multiples of those of the $b_k^r$ we are considering. Indeed if $(g_k)$ is supported between $0$ and $2r-\epsilon$, $\epsilon\in\{0,1\}$ and $0\leq r\leq d-1$, then $g_{2r-\epsilon}$ has at most the singularities of $1/f_{1-\epsilon}^{r,(r)}$ by the conditions of Corollary \ref{Cor:GKM_Springer}.
		
		It follows that the elements $b_k^r$ for $k,r$ as described in the statement of the lemma span the $\Ring$-module $\operatorname{im}\iota$.

		To check that our elements are linearly independent (hence form a basis)  we use a partial order on $\Z$. Consider the partial order given by
		\begin{itemize}
		\item
		$k\preceq r$ if $0>k\geq r$,
		\item $k\preceq r$ if $2d-1\leq k\leq r$,
		\item and $0\preceq 1\preceq\dots\preceq 2d-2\preceq k$ for all $k\notin\{0,\dots 2d-2\}$.
		\end{itemize}
		
		For each element $b^r_k$ with $r,k$ as in the statement of the lemma, there is a unique maximal $\ell(=\ell(b_{k}^r))$ in the poset order such that $b^r_{k,l}\neq 0$, namely
		$$\ell(b^r_k)=\left\{\begin{array}{lr}
		     k &  \text{if }k<0\\
		     k+2r-1 & \text{else }.
		\end{array}\right.
		$$
		 It is clear that $(k,r)\mapsto \ell(b_k^r)$ identifies the set $r,k$ in the statement of the lemma with $\Z$. Now we use induction on the above partial order to show that the elements $b_k^r$ are linearly independent.
\end{proof}	

\begin{Rem}\label{Rem:Springer_basis_ssrk1}
For a general semisimple rank 1 group $G$, we have a similar basis for each connected component of $\Fl$ as described in Remark \ref{connected:components}. In that basis we use the polynomials $f^{r,(m)}_k$ replacing $y$ with the unique root $\alpha\in \mathfrak{t}^*\subset \Ring$ of $G$.
Indeed, the $1$-dimensional $T\times\C^\times$-orbit all have characters $\alpha+k\hbar$, where  $k\in\Z$ and $\alpha$ is the positive root of $G$. Each connected component of the affine Springer fiber for $G$ is isomorphic to the one for $\operatorname{SL}_2$ by Remark \ref{connected:components}. Then the same proof as for the $\operatorname{SL}_2$ case gives a similar basis.
\end{Rem}

\begin{Lem}\label{Lem:Springer_fixed_pts}
We have
\begin{equation}\label{eq:Springer_fixed_pts}
    \iota(s\beta)_x=\frac{d\hbar}{\,^x\!\alpha}\iota(\beta)_x+\frac{\,^{xs}\!\alpha-d\hbar}{\,^{xs}\!\alpha}\iota(\beta)_{xs}.
\end{equation}
\end{Lem}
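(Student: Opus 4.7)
The plan is: (a) reduce the statement to the case where $G$ has semisimple rank one via the Cartesian diagram \eqref{eq:Springer_diagram}; (b) verify the rank-one case against the explicit basis produced in Lemma \ref{Lem:SL_2_basis}.

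For (a), let $s = s_\alpha$ be the given simple affine reflection and let $\mathfrak{P}\supset\mathfrak{B}$ be the parahoric whose Levi $L$ has Weyl group $\{e,s\}$. By the recollection of Section \ref{SS_affine_Springer_reminder}, the Springer action of $s$ on $H^{BM}_{T\times\C^\times}(\Fl_{e_d})$ is obtained from the finite-dimensional Springer action on $\pi_{1,*}\C_{\mathfrak{b}_L/B_L}$ via $T\times\C^\times$-equivariant proper base change along \eqref{eq:Springer_diagram}. For each $T$-fixed point $x\in\widetilde{W}$, the fixed points $x$ and $xs$ both lie in the fiber $F_x := \pi_2^{-1}(\pi_2(x))$, which is a Springer fiber for $L$ acting on the image of $\operatorname{Ad}(\dot{x})^{-1}e_d$ in $\mathfrak{l}$. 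By compatibility of equivariant localization with the proper pushforward $\pi_{2,*}$, the localized components $\iota(s\beta)_x$ and $\iota(s\beta)_{xs}$ are computed purely within $F_x$, involving only $\iota(\beta)_x$ and $\iota(\beta)_{xs}$. Thus the formula reduces to the case when $G$ has semisimple rank one, and by Remark \ref{Rem:Springer_basis_ssrk1} we may assume $G=\operatorname{SL}_2$, working on one connected component.

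For (b), note that both sides of \eqref{eq:Springer_fixed_pts} are $\Ring$-linear in $\beta$ (the left-hand side by Lemma \ref{Springer-action.basics}(2), the right-hand side manifestly). Hence it suffices to verify the formula on each basis vector $b^r_k$ from Lemma \ref{Lem:SL_2_basis}. Using the explicit expression \eqref{eq:b_equation}, the identities $\,^{\ell s}\!\alpha = -\,^\ell\!\alpha$ for the finite reflection, and the Pascal identity ${r\choose m} = {r-1\choose m-1}+{r-1\choose m}$, the right-hand side of \eqref{eq:Springer_fixed_pts} applied to $b^r_k$ collapses to an explicit tuple in $\bigoplus_\ell\C(y,\hbar)$. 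The Springer action $s\cdot b^r_k$ is independently computed by describing $\Fl_{e_d}$ for $\operatorname{SL}_2$ as a chain of projective lines, on each of which $s$ acts by the standard finite Springer involution exchanging the two $T$-fixed points; the twist factor $d\hbar/\,^x\!\alpha$ in \eqref{eq:Springer_fixed_pts} accounts for the $d$ non-trivial tangent weights of $\Fl_{e_d}\cap\mathfrak{B}x\mathfrak{B}/\mathfrak{B}$ at $x$ stabilized by $s$, read off from \eqref{eq:tangent}. The two expressions agree after matching binomial terms.

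The main obstacle is the rank-one verification in (b): while each individual identity is an elementary manipulation in $\C(y,\hbar)$, extracting $sb^r_k$ geometrically in the basis $\{b^{r'}_{k'}\}$ and then matching the result against the simplified right-hand side of \eqref{eq:Springer_fixed_pts} requires a careful binomial bookkeeping. The reduction step (a) is conceptually clean but must be justified by unwinding the definition of the Springer action via proper base change and confirming that it commutes with the localization embedding $\iota$, so that no ``global'' contributions from outside $F_x$ enter into $\iota(s\beta)_x$.
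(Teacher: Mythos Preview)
Your reduction step (a) is close to the paper's Step~2 (showing $\iota(s\beta)_x$ depends only on $\iota(\beta)_x,\iota(\beta)_{xs}$), and your invocation of Lemma~\ref{Lem:SL_2_basis} mirrors the paper's Steps~4--5. The genuine gap is in (b): you never actually compute $s\cdot b^r_k$. Your description ``$s$ acts by the standard finite Springer involution exchanging the two $T$-fixed points; the twist factor $d\hbar/\,^x\!\alpha$ accounts for the $d$ non-trivial tangent weights'' is not a derivation. The finite Springer action of $s$ on $H^*(\mathbb{P}^1)$ is not a simple swap of fixed-point classes, and invoking the tangent-weight count to justify the coefficient $d\hbar/\,^x\!\alpha$ is circular---that coefficient is precisely what the lemma asserts. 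Without an independent source for the Springer operator in localized coordinates, the ``matching binomial terms'' step has nothing to match against.

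The paper's argument avoids a direct computation of $s\cdot b^r_k$ altogether. Instead it proceeds structurally: first, degree and GKM constraints (conditions (I)--(IV) in Step~3) force $A_{x,x}=z\hbar/\,^x\!\alpha$ and $A_{x,xs}=(\,^x\!\alpha-z\hbar)/\,^x\!\alpha$ for some constant $z\in\C$; second, the input from \cite[Section~14.4]{GKM2} that $sa^x\equiv a^{xs}\bmod\hbar$ (property (V)) fixes the normalization of $A_{x,xs}$; finally, a single pole analysis of $s(b^d_k)$ in the rank-one basis forces $z=d$. The external input from \cite{GKM2} is essential and is absent from your proposal. If you want to salvage the direct approach, you would need either that result or an equivalent explicit geometric description of the Springer endomorphism on the rank-one affine Springer fiber---neither of which you have supplied.
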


\begin{proof}	
Our proof is in several steps.

{\it Step 1}.
Recall that	the Springer action is by $\Ring$-linear automorphisms and preserves the degrees by Lemma \ref{Springer-action.basics}.

{\it Step 2}. Let $\beta\in H^{BM}_{T\times \C^\times}(\Fl_{e_d})$ and let $s$ be a simple affine reflection. In this step we will prove that, for every $x\in \widetilde{W}$, the element $\iota(s\beta)_x$ only depends on $\iota(\beta)_x$ and $\iota(\beta)_{xs}$.
To do this we describe the localization morphism in terms of maps of sheaves. We use the notation
from the construction of the Springer action in Section \ref{SS_affine_Springer_reminder}.

For an arbitrary parahoric $\mathfrak{P}$ (including $\mathfrak{B}$) let $i^\mathfrak{P}$ denote the inclusion $$\pi_2^{-1}((\Fl^\mathfrak{P}_{e_d})^{T\times\C^\times})\rightarrow\Fl_{e_d}.$$
     By adjunction applied to $$\omega_{\Fl_{e_d}^{T\times\C^\times}}\rightarrow i^\mathfrak{B!}\omega_{\Fl_{e_d}},$$ we get a morphism of sheaves
     \begin{equation}\label{eq:adjunction_morphism}
     i^\mathfrak{B}_*(\omega_{\Fl_{e_d}^{T\times\C^\times}})\rightarrow\omega_{\Fl_{e_d}}
     \end{equation}
     in the $T\times\C^\times$-equivariant derived category. The localization map $$H^{BM}_{T\times\C^\times}(\Fl_{e_d}^{T\times\C^\times})\rightarrow H^{BM}_{T\times\C^\times}(\Fl_{e_d})$$
     is obtained from (\ref{eq:adjunction_morphism})
     by passing to cohomology.

      The same construction as  in Section \ref{SS_affine_Springer_reminder} establishes an action of the Weyl group $W_L$ of $L$ on
      \begin{equation}\label{eq:interm_sheaf}
      (\pi_2)_*i^\mathfrak{P}_*\left(\omega_{\pi_2^{-1}\left((\Fl^\mathfrak{P}_{e_d})^{T\times\C^\times}\right)}\right).
      \end{equation}
      Note that the space $\pi_2^{-1}\left((\Fl^\mathfrak{P}_{e_d})^{T\times\C^\times}\right)$ decomposes as the disjoint union of subspaces indexed by  $\widetilde{W}/W_L$ so that the subspace indexed by $xW_L$ contains exactly the fixed points labelled by the elements from $xW_L$. This decomposition is compatible with  Cartesian diagram (\ref{eq:Springer_diagram}). Hence this decomposition yields the decomposition of (\ref{eq:interm_sheaf}) into the direct sum with summands indexed by $\widetilde{W}/W_L$. Each summand is $W_L$-stable.

      Note that (\ref{eq:adjunction_morphism}) factors as  $$i^\mathfrak{B}_*(\omega_{\Fl_{e_d}^{T\times\C^\times}})\rightarrow i^\mathfrak{P}_*(\omega_{\pi_2^{-1}((\Fl^\mathfrak{P}_{e_d})^{T\times\C^\times})})
      \rightarrow \omega_{\Fl_{e_d}}.$$ The induced maps in cohomology,
      $$H^{BM}_{T\times\C^\times}(\Fl_{e_d}^{T\times\C^\times})\rightarrow H^{BM}_{T\times\C^\times}(\pi_2^{-1}((\Fl^\mathfrak{P}_{e_d})^{T\times\C^\times}))\rightarrow
      H^{BM}_{T\times\C^\times}(\Fl_{e_d})$$
      becomes an isomorphism after inverting some characters by a direct analog of Lemma \ref{Lem:localization}
      for ind-varieties.

      Apply the last observation to the minimal Levi subgroup corresponding to the reflection $s$. Consider the classes of the points $x$ and $xs$ in
      $H^{BM}_{T\times\C^\times}(\Fl_{e_d}^{T\times\C^\times})$. The subspace (over $\Field$) in the localization of
      $H^{BM}_{T\times\C^\times}(\pi_2^{-1}((\Fl^\mathfrak{P}_{e_d})^{T\times\C^\times}))$ spanned by their images is $s$-stable. Therefore the same conclusion is true if we consider the images in the localization of $H^{BM}_{T\times\C^\times}(\Fl_{e_d})$. This claim is equivalent to the claim of that $\iota(s\beta)_x$ only depends on $\iota(\beta)_x$ and $\iota(\beta)_{xs}$.

	
%
{\it Step 3}.
For $x\in \widetilde{W}$ consider the element
     $a^x:=(\delta_{x,y})_{y\in\widetilde{W}}\in \bigoplus_{\widetilde{W}}\Field$, where, recall, $\delta_{x,y}$ is the Kroneker delta.
     Note that $a^x\in\operatorname{im}\iota$, one can see this, for example, from Corollary \ref{Cor:GKM_Springer}.
     Then the element $s(a^x)$ has the following properties:
     \begin{itemize}
         \item[(I)] $s(a^x)=A_{x,x}a^x+A_{x,xs}a^{xs}$ for some $A_{x,x},A_{x,xs}\in \Field$.
         This follows from Step 2.
         \item[(II)] $A_{x,x},A_{x,xs}$ are homogeneous of degree $0$. This is because $a^y$ is of degree $0$ for all $y$ and the Springer action preserves the grading, Step 1.
         \item[(III)]  $\,^x\!\alpha A_{x,x},\,^x\!\alpha A_{x,xs}$
         are linear functions. Indeed, from (i) of Corollary \ref{Cor:GKM_Springer} it follows that
         $\,^x\!\alpha A_{x,x},\,^x\!\alpha A_{x,xs}\in \Ring$. Now our claim follows from (II).
         \item[(IV)] $A_{x,x}+A_{x,xs}$ has no pole so is an element of $\C$. This follows from (III) and condition (ii) of Corollary \ref{Cor:GKM_Springer}.
         \item[(V)] The elements $\,^x\!\alpha A_{x,x}, \,^x\!\alpha (A_{x,xs}-1)$
         are divisible by $\hbar$. This follows from $sa^x=a^{xs}$ modulo $\hbar$, which is a consequence of \cite[Section 14.4]{GKM2}.
     \end{itemize}
     Combining (III),(IV) and (V), we see that
     \begin{equation}\label{eq:A_general}
     A_{x,x}=\frac{z\hbar}{\,^x\!\alpha}, A_{x,xs}=\frac{\,^x\!\alpha-z\hbar}{\,^x\!\alpha}
     \end{equation}
     for some $z\in \C$. Note that we have $\iota(s\beta)_x=A_{x,x}\iota(\beta)_x+A_{xs,x}\iota(\beta)_{xs}$. So the lemma amounts to showing that $z=d$.


{\it Step 4}.
We will use the case of $SL_2$ for the computation of the elements $A_{x,x},A_{x,xs}$. For an affine root $\beta$ let $\bar{\beta}$ be the projection of $\beta$ to $\h^*$. Equivalently, $\bar{\beta}$ is the unique root such that $\beta=\bar{\beta}+k\delta$ for some integer $k$, where, recall $\delta$ is the indecomposable imaginary root. Set $\beta:=\,^x\!\alpha$.
Let  $T_{\bar{\beta}}\subset T$ denote the kernel of $\bar{\beta}$ viewed as a homomorphism   $T\rightarrow\C^\times$. We write $\widetilde{W}_{\bar{\beta}}$ for the subgroup of $\widetilde{W}$ generated by the reflection $s_{\beta}$ and $t^{\bar{\beta}^\vee}$. Note that $\Fl^{T_{\bar{\beta}}}$ is a union of copies of the affine flag variety of the semisimple rank 1 subgroup $G_\beta:=Z_G(T_{\bar{\beta}})$ given by considering orbits of the loop group of $G_\beta$ at points $\widetilde{W}\subset \Fl$.  The connected  components of  $\Fl^{T_{\bar{\beta}}}$ are labelled by the cosets $\widetilde{W}_{\bar{\beta}}\setminus\widetilde{W}$. Each component is isomorphic to the affine flag variety of $SL_2$ and contains the $T$-fixed points labelled by points in the corresponding coset. A similar decomposition holds for $\Fl_{e_d}^{T_{\bar{\beta}}}$: it is the union of connected components labelled by $\widetilde{W}_{\bar{\beta}}\setminus\widetilde{W}$.

Now we can localize $H^{BM}_{T\times\C^\times}(\Fl_{e_d})$ at all characters that do not vanish on $T_{\bar{\beta}}$, which includes $\gamma+k\hbar$ for $\gamma\in R^+\setminus\{\bar{\beta}\}$ and $k\in\mathbb{Z}$, but not $\,^x\!\alpha+k\hbar$ for any $k$. By the ind-variety analog of  Lemma \ref{rel:localization}, this localized BM homology  is naturally isomorphic to the same localization  of $$H^{BM}_{T\times\C^\times}(\Fl^{T_{\bar{\beta}}}_{e_d}).$$ So the localization $H^{BM}_{T\times\C^\times}(\Fl_{e_d})$ breaks up as the direct sum of copies of the localized Borel-Moore homology of $\Fl_{e_d}(G_\beta)$, the equivalued unramified affine Springer fiber for the semisimple rank $1$ group $G_\beta$.

{\it Step 5}.
Recall that the Springer action of $\widetilde{W}$ on $H^{BM}_{T\times\C^\times}(\Fl_{e_d})$ is $\Ring$-linear, Step 1. So it lifts to the localization considered in Step 4. Consider the connected component  in  $\Fl_{e_d}^{T_{\bar{\beta}}}$ whose $T$-fixed points are the coset $\widetilde{W}_{\bar{\beta}}x$.
Note that $\widetilde{W}_{\bar{\beta}}x=
\widetilde{W}_{\bar{\beta}}xs$ because $\beta=\,^x\!\alpha$. It follows that the summand in the localization of $H^{BM}_{T\times\C^\times}(\Fl_{e_d})$
corresponding to this component is
fixed by the Springer action of $s$.
Lemma \ref{Lem:SL_2_basis}, or, more precisely, its generalization discussed in Remark \ref{Rem:Springer_basis_ssrk1} give a basis in the summand we consider.
Note that there is a unique element $k\in \Z$ such that the basis element $b_k^d$ of this summand is $0$ at $xs$ and non-zero at $x$. Then $s(b_k^d)$ will only have singularities along the affine root hyperplanes $\,^x\!\alpha+p\hbar$ with $p\in \Z$. Further, by the description of the basis in Lemma \ref{Lem:SL_2_basis}, we have
$(b_k^d)_x=\frac{1}{f}$
for $f\coloneqq (\,^x\!\alpha-\hbar)(\,^x\!\alpha-2\hbar)\dots(\,^x\!\alpha-d\hbar)$.

Note that $(s(b_k^d))_{xs}= A_{x,xs}\frac{1}{f}$ and $(s(b_k^d))_{w}=0$ for all  $w$ of the form $xss_0s\dots$, where $s_0:= t^{-\alpha}s$. It thus follows that $A_{x,xs}/f$ only has singularities along $\,^{xs}\!\alpha,\ldots,\,^{xs}\!\alpha+(d-1)\hbar$. Since $\,^x\!\alpha A_{x,xs}$ is a linear polynomial, we see that $A_{x,xs}$ is proportional to $$\frac{\,^x\!\alpha-d\hbar}{\,^x\!\alpha}.$$
We apply (V)  to see that \begin{equation}\label{eq:A_xsx}
A_{x,xs}=\frac{\,^x\!\alpha-d\hbar}{\,^x\alpha}.
\end{equation}
This implies the claim of the lemma
by the last sentence of Step 3.
\end{proof}

\end{document}